\theoremstyle{plain}
\newtheorem{thm}{\protect\theoremname}
\theoremstyle{plain}
\newtheorem{lem}[thm]{\protect\lemmaname}
\theoremstyle{definition}
\newtheorem{rem}[thm]{\protect\remarkname}
\newtheorem{remarks}[thm]{\protect\remarksname}
\theoremstyle{definition}
\newtheorem{example}[thm]{\protect\examplename}
\theoremstyle{plain}
\newtheorem{cor}[thm]{\protect\corollaryname}
\theoremstyle{plain}
\newtheorem{prop}[thm]{\protect\propositionname}
\theoremstyle{plain}
\newtheorem{assumption}[thm]{\protect\assumptionname}
\numberwithin{equation}{section}
\numberwithin{thm}{section}
\providecommand{\assumptionname}{Assumption}
\providecommand{\examplename}{Example}
\providecommand{\lemmaname}{Lemma}
\providecommand{\propositionname}{Proposition}
\providecommand{\corollaryname}{Corollary}
\providecommand{\remarkname}{Remark}
\providecommand{\remarksname}{Remarks}
\providecommand{\theoremname}{Theorem}
\theoremstyle{definition}
\newtheorem{examples}[thm]{Examples}
\DeclareMathOperator{\E}{{\mathbb E}}
\DeclareMathOperator{\D}{{\mathbb D}}
\DeclareMathOperator{\R}{{\mathbb R}}
\DeclareMathOperator{\N}{{\mathbb N}}
\DeclareMathOperator{\PP}{{\mathbb P}}
\DeclareMathOperator{\Q}{{\mathbb Q}}
\DeclareMathOperator{\supp}{supp}
\DeclareMathOperator{\argmin}{argmin}
\DeclareMathOperator{\Var}{Var} \DeclareMathOperator{\Cov}{Cov}
\DeclareMathOperator{\dom}{dom}
\providecommand{\eps}{\varepsilon}
\renewcommand{\phi}{\varphi}
\renewcommand{\theta}{\vartheta}
\renewcommand{\subset}{\subseteq}
\providecommand{\abs}[1]{\lvert #1 \rvert}
\providecommand{\norm}[1]{\lVert #1 \rVert}
\providecommand{\bnorm}[1]{{\Bigl\lVert #1 \Bigr\rVert}}
\providecommand{\babs}[1]{{\Bigl\lvert #1 \Bigr\rvert}}
\providecommand{\scapro}[2]{\langle #1,#2 \rangle}
\providecommand{\wraum}{$(\Omega,{\scr F},\PP)$}
\providecommand{\fwraum}{$(\Omega,{\scr F},\PP,({\scr F}_t))$}
\renewcommand{\Im}{\operatorname{Im}}
\renewcommand{\le}{\leqslant}
\renewcommand{\leq}{\le}
\renewcommand{\ge}{\geqslant}
\renewcommand{\geq}{\ge}
\newtheorem*{assumption*}{\assumptionnumber}\providecommand{\assumptionnumber}{}
\newenvironment{assumptionWithArgs}[2]{%
  \stepcounter{thm}
  \renewcommand{\assumptionnumber}{\arabic{section}.\arabic{thm}($#1$;$#2$) Assumption}%
  \begin{assumption*}%
  \protected@edef\@currentlabel{\arabic{section}.\arabic{thm}}
 }{%
  \end{assumption*}
 }
\newcommand{\aswithargsref}[3]{\ref{#1}($#2$;$#3$)}
\providecommand{\corollaryname}{Corollary}
\providecommand{\examplename}{Example}
\providecommand{\lemmaname}{Lemma}
\providecommand{\propositionname}{Proposition}
\providecommand{\remarkname}{Remark}
\providecommand{\remarksname}{Remarks}
\providecommand{\theoremname}{Theorem}
\begin{document}
\global\long\def\epsilon{\varepsilon}

\global\long\def\E{\mathbb{E}}

\global\long\def\I{\mathbf{1}}

\global\long\def\N{\mathbb{N}}

\global\long\def\R{\mathbb{R}}

\global\long\def\C{\mathbb{C}}

\global\long\def\Q{\mathbb{Q}}

\global\long\def\P{\mathbb{P}}

\global\long\def\D{\Delta_{n}}

\global\long\def\dom{\operatorname{dom}}

\global\long\def\b#1{\mathbb{#1}}

\global\long\def\c#1{\mathcal{#1}}

\global\long\def\s#1{{\scriptstyle #1}}

\global\long\def\u#1#2{\underset{#2}{\underbrace{#1}}}

\global\long\def\r#1{\xrightarrow{#1}}

\global\long\def\mr#1{\mathrel{\raisebox{-2pt}{\ensuremath{\xrightarrow{#1}}}}}

\global\long\def\t#1{\left.#1\right|}

\global\long\def\l#1{\left.#1\right|}

\global\long\def\f#1{\lfloor#1\rfloor}

\global\long\def\sc#1#2{\langle#1,#2\rangle}

\global\long\def\abs#1{\lvert#1\rvert}

\global\long\def\bnorm#1{\Bigl\lVert#1\Bigr\rVert}

\global\long\def\wraum{(\Omega,\c F,\P)}

\global\long\def\fwraum{(\Omega,\c F,\P,(\c F_{t}))}

\global\long\def\norm#1{\lVert#1\rVert}

\global\long\def\theta{\vartheta}

\date{}

\author{Randolf Altmeyer, ~~~Markus Rei\ss  \\
\textit{ Humboldt-Universit\"at zu Berlin \footnote{Institut f\"ur Mathematik, Humboldt-Universit\"at zu Berlin, Unter den Linden 6, 10099 Berlin, Germany. Email: altmeyrx@math.hu-berlin.de, mreiss@math.hu-berlin.de.
\newline We are grateful to Sylvie Roelly, Nicolas Perkowski, Josef Janák  and two anonymous referees for very helpful comments and questions.
This research has been partially funded by Deutsche Forschungsgemeinschaft (DFG) - SFB1294/1 - 318763901.
}}}

\title{Nonparametric estimation for linear SPDEs\\
 from local measurements} 
 
\maketitle
\begin{abstract}
The coefficient function of the leading differential operator is estimated from observations of a linear stochastic partial differential equation (SPDE). The estimation is based on continuous time observations which are localised in space. For the asymptotic regime with fixed time horizon and with the spatial resolution of the observations tending to zero, we provide rate-optimal estimators and establish scaling limits of the deterministic PDE and of the SPDE on growing domains. The estimators are robust to lower order perturbations of the underlying differential operator and achieve the parametric rate even in the nonparametric setup with a spatially varying coefficient. A numerical example illustrates the main results.

\medskip

\noindent\textit{MSC 2000 subject classification}: Primary: 60H15, 60F05;\ Secondary: 62G05, 35J15

\noindent\textit{Keywords:} Nonparametric estimation, fourth moment theorem, Gaussian process, Feynman-Kac, localised scaling limits. 

\end{abstract} 

\section{Introduction}

While there is a large amount of work on probabilistic, analytical
and recently also computational aspects of stochastic partial differential
equations (SPDEs), many natural statistical questions are open. With
this work we want to enlarge the scope of statistical methodology
in two major directions. First, we consider observations of a solution
path that are local in space and we ask whether the underlying differential
operator or rather its local characteristics can be estimated from
this local information only. Second, we allow the coefficients in
the differential operator to vary in space and we pursue nonparametric
estimation of the coefficient functions, as opposed to parametric
estimation approaches for finite-dimensional global parameters in
the coefficients. Naturally, both directions are intimately connected.

As a concrete model we consider the parabolic SPDE
\[
dX(t)=A_{\theta}X(t)dt+BdW(t),\quad t\in[0,T],
\]
with the second-order differential operator $A_{\theta}z:=\text{div}(\theta\nabla z)+\scapro{a}{\nabla z}+bz$
on some bounded domain $\Lambda\subset\R^{d}$ with Dirichlet boundary conditions, see Section \ref{sec:2}
for formal details. The coefficient functions $\theta,a,b$ are unknown
on $\Lambda$ and we aim at estimating $\theta:\Lambda\to\R^{+}$,
which models the diffusivity in a stochastic heat equation. The functions
$a,b$ as well as the operator $B$ in front of the driving space-time
white noise process $dW$ form an unknown nuisance part. Linear SPDEs of this form appear in many applications, including neuroscience (\citet{Walsh:1981bf}), oceanography (\citet{Frankignoul:1985bd}), geostatistics (\citet{Sigrist:2015jn}), surface growth  (\citet{10.2307/2397363}) and finance (\citet{Cont:2011ku}).

Measurements of a solution process $X$ necessarily have a minimal
spatial resolution $\delta>0$ and we dispose of the observations
$\scapro{X(t)}{K_{\delta,x_{0}}}$, where the solution is integrated in the spatial domain against
a kernel function $K_{\delta,x_{0}}$ with support of diameter
$\delta$ around some $x_{0}\in\Lambda$. We keep the time span $T$
fixed and construct an estimator, called \textit{proxy MLE}, which
for the resolution asymptotics $\delta\to0$ converges at rate $\delta$
to $\theta(x_{0})$ and satisfies a CLT, which we derive in the case of a local multiplication covariance operator $B$ in the SPDE. Another estimator, the so-called
\textit{augmented MLE}, will even converge under far more general
conditions and exhibit a smaller asymptotic variance, but requires
a second local observation process $\scapro{X(t)}{\Delta K_{\delta,x_{0}}}$
 in terms
of the Laplace operator $\Delta$. Clearly, if we have access to these
observations around all $x_{0}\in\Lambda$, then both estimators can
be used to estimate the diffusivity function $\theta$ nonparametrically
on all of $\Lambda$.

These results are statistically remarkable. First of all, even for
the parametric case that $\theta$ is a constant, it is not immediately
clear that $\theta$ is identified (i.e., exactly recovered) from
local observations in a shrinking neighbourhood around some $x_{0}\in\Lambda$
only. Probabilistically, this means that the local observation laws
are mutually singular for different values of $\theta$. What is more,
the bias-variance trade-off paradigm in nonparametric statistics does
not apply: asymptotic bias and standard deviation are both of order
$\delta$ and the CLT provides us even with a simple pointwise confidence
interval for $\theta$. The robustness of the estimators to lower
order parts in the differential operator and unknown $B$ is very
attractive for applications. The rate $\delta$ is shown to be the
best achievable rate in a minimax sense even for constant $\theta$
without nuisance parts.

The fundamental probabilistic structure behind these results is a
universal scaling limit of the observation process for $\delta\to0$.
At a highly localised level, the differential operator $A_{\theta}$
behaves like $\theta(x_{0})\Delta$, as expressed in Corollary \ref{thm:convToHeatEq}
below, and the construction of the estimators shows a certain scaling
invariance with respect to $B$. To study these scaling limits, we
need to consider the deterministic PDE on growing domains via the
stochastic Feynman-Kac approach and to deduce tight asymptotics for
the action of the semigroup and the heat kernels. Further tools like
the fourth moment theorem or the Feldman-Hajek Theorem rely on the
underlying Gaussian structure, but extensions to semi-linear SPDEs
seem possible.

Let us compare our localisation approach to the spectral approach,
introduced by \citet{Huebner:1993uy} and  then in \citet{Huebner1995}  for parametric estimation. In
the simplest case $A_{\theta}=\theta\Delta$ for some $\theta>0$
and $B$ commuting with $A_{\theta}$, the SPDE solution can be expressed
in the eigenbasis of the Laplace operator $\Delta$. If the first
$N$ coefficient processes (Fourier modes of $X$) are observed, then
a maximum-likelihood estimator for $\theta$ is asymptotically efficient
as $N\to\infty$. This approach has turned out to be very versatile,
allowing also for estimating time-dependent $\theta(t)$ nonparametrically
(\citet{Huebner:2000bi}) or to cover nonlinear SPDEs (\citet{Cialenco2011},
\citet{Pasemann:2019vw}). In particular, it helps to understand that the coefficient in the leading order of the differential operator
can be estimated with better rates than lower order coefficients. The methodology, however, is intrinsically
bound to observations in the spectral domain and to operators $A_{\theta}$
whose eigenfunctions, at least in the leading order, are independent
of $\theta$. In contrast, we work with local observations in space
and the unknown spectrum of the operators $A_{\theta}$ does not harm
us. More conceptually, we rely on the local action of the differential
operator $A_{\theta}$, while the spectral approach also applies to
an abstract operator in a Hilbert space setting.

Our case of spatially varying coefficients has been considered first
by \citet{Aihara:1988vqb} (with $a=b=0$) in a filtering problem.
The corresponding nonparametric estimation problem is then addressed
by \citet{Aihara:1989ih} with a sieve least squares estimator, but
they achieve consistency only for global observations with a growing
time horizon $T\to\infty$. In a stationary one-dimensional setting
\citet{Bibinger:2019de} ask whether the parameter $\theta>0$ can be
estimated when observing the solution only at $x_{0}$ over a fixed
time interval $[0,T]$. Interestingly, in the case $B=\sigma^{2}I$
the parameter $\theta$ cannot be recovered if the level $\sigma$
of the space-time white noise is unknown (see also lower bounds of \citet{Hildebrandt2019}). For a recent and exhaustive
survey on statistics for SPDEs we refer to \citet{Cialenco:2018en}.

In Section \ref{sec:2} the SPDE and the observation model are introduced
and in Section \ref{sec:3} the scaling properties along with the
resolution level $\delta$ are discussed. Section \ref{sec:Construction of the estimators}
derives our estimators via a least-squares and a likelihood approach
and provides basic insight into their error analysis. The main
convergence results as well as a minimax lower bound are presented
in Section \ref{sec:Main-results}. The findings are illustrated by
a numerical example in Section \ref{sec:Discussion-and-numerical}.
While the main steps in the proofs are presented together with the
results, all more technical arguments are delegated to the Appendix.

\section{The model}

\label{sec:2}

\subsection{Notation}

Let $\Lambda$ be a bounded open set in $\R^{d}$ with $C^{2}$-boundary
$\partial\Lambda$ and consider $L^{2}(\Lambda)$ with the usual $L^{2}$-norm
$\norm{\cdot}:=\norm{\cdot}_{L^{2}(\Lambda)}$. For any open set $U$ in $\R^d$ and any linear operator $A:L^2(U)\rightarrow L^2(U)$ let $\norm{A}_{L^2(U)}:=\norm{A}_{L^2(U)\rightarrow L^2(U)}$ denote the operator norm, and let $H^k(U)$ for $k\in\N$ be the $L^2$-Sobolev spaces. Define $H_{0}^{1}(\Lambda)$ as the closure of $C_{c}^{\infty}(\overline{\Lambda})$ in $H^{1}(\Lambda)$. We write $\sc{\cdot}{\cdot}_{\R^{d}}$
for the Euclidean inner product and $|\cdot|$ for the norm. Let us define a second order elliptic operator with
Dirichlet boundary conditions
\[
A_{\theta}=\Delta_{\theta}+A_{0},\,\,\,\,\c D(A_{\theta})=H_{0}^{1}(\Lambda)\cap H^{2}(\Lambda),
\]
where $\Delta_{\vartheta}z=\text{div}(\theta\nabla z)=\sum_{i=1}^{d}\partial_{i}(\vartheta\partial_{i}z)$
is the weighted Laplace operator with spatially varying diffusivity
$\theta\in C^{1+\alpha}(\overline{\Lambda})$ for $\alpha>0$, $\min_{x\in\overline{\Lambda}}\theta(x)>0$, and where $A_{0}z=\sc a{\nabla z}_{\R^{d}}+bz$ with functions $a\in C^{1+\alpha}(\overline{\Lambda};\R^d)$,
$b\in C^{\alpha}(\overline{\Lambda})$. The
regularity conditions on $\theta,a,b$ are such that the deterministic
PDE $\tfrac{d}{dt}u(t)=A_{\theta}^{*}u(t)$ with initial value $z\in L^{2}(\Lambda)$
has a sufficiently smooth solution (see proof of Proposition \ref{prop:semigroup_convergence}
below). Let $(S_{\theta}(t))_{t\geq0}$ denote the analytic semigroup on $L^{2}(\Lambda)$ generated by $A_{\theta}$ (cf. Theorem 3.1.3 of  \citet{lunardi2012analytic}), while $(e^{t\Delta})_{t\geq0}$ is the heat semigroup on $L^{2}(\R^{d})$ generated
by $\Delta=\Delta_1$ with domain $H^{2}(\R^{d})$.

\subsection{The SPDE model}

Throughout this work $T<\infty$ is fixed. Let $(\Omega,\c F,(\c F_{t})_{0\le t\le T},\P)$
be a filtered probability space with a cylindrical Brownian motion
$W$ on $L^{2}(\Lambda)$ ($dW$ is also referred to as \emph{space-time}
white noise), and let $B:L^{2}(\Lambda)\rightarrow L^{2}(\Lambda)$
be a bounded linear operator, which is not assumed to be trace class.
We study the linear stochastic partial differential equation
\begin{equation}
\begin{cases}
dX\left(t\right)=A_{\vartheta}X\left(t\right)dt+BdW\left(t\right),\,\,\,\,0<t\le T,\\
X\left(0\right)=X_{0},\\
X(t)|_{\partial\Lambda}=0,\,\,\,\,0<t\le T,
\end{cases}\label{eq:SPDE}
\end{equation}
with deterministic initial value $X_{0}\in L^{2}(\Lambda)$.

Our statistical analysis below relies on linear functionals of $X(t)$
rather than on $X(t)$ itself. We therefore use the weak solution
concept of \citet{DaPrato:2014wq}. If $\int_{0}^{T}\norm{S_{\theta}\left(t\right)B}_{HS(L^{2}(\Lambda))}^{2}dt<\infty$
with Hilbert-Schmidt norm $\norm{\cdot}_{HS(L^{2}(\Lambda))}$, then
the unique weak solution $(X(t))_{0\leq t\leq T}$ of the SPDE (\ref{eq:SPDE})
is given by the variation of constants formula, cf. Theorem 5.4 of
\citet{DaPrato:2014wq},
\begin{equation}
X(t)=S_{\theta}(t)X_{0}+\int_{0}^{t}S_{\theta}(t-s)B\,dW(s).\label{eq:weak_solution_X}
\end{equation}
It takes values in $L^{2}(\Lambda)$ and satisfies for $z\in H_{0}^{1}(\Lambda)\cap H^{2}(\Lambda)$
\begin{equation}
d\sc{X(t)}z=\sc{X(t)}{A_{\theta}^{*}z}dt+d\left\langle BW\left(t\right),z\right\rangle .\label{eq:weakFormulation}
\end{equation}
Clearly, for $z\in L^{2}(\Lambda)$
\begin{equation}
\sc{X(t)}z=\left\langle S_{\theta}\left(t\right)X_{0},z\right\rangle +\int_{0}^{t}\left\langle S_{\theta}^{*}\left(t-s\right)z,BdW\left(s\right)\right\rangle .\label{eq:weak_solution_l}
\end{equation}
If $\int_{0}^{T}\norm{S_{\theta}\left(t\right)B}_{HS(L^{2}(\Lambda))}^{2}dt=\infty$,
then the stochastic integral in (\ref{eq:weak_solution_X}) is well-defined
only in a space of distributions. For example, if $H^{-s}(\Lambda)$ is a fractional Sobolev space of negative order with $s>d/4$, then the natural
embedding $\iota:L^{2}(\Lambda)\rightarrow H^{-s}(\Lambda)$ is a
Hilbert-Schmidt operator such that $\int_{0}^{T}\norm{\iota S_{\theta}\left(t\right)B}_{HS(L^{2}(\Lambda),H^{-s}(\Lambda))}^{2}dt<\infty$,
and $X(t)$ takes values in $H^{-s}(\Lambda)$ (cf. Remark 5.6 of
\citet{Hairer:2009tt}). Still, (\ref{eq:weakFormulation}) and (\ref{eq:weak_solution_l})
remain valid, if $\sc{X(t)}z$ and $\sc{X(t)}{A_{\theta}^{*}z}$ are
interpreted as dual pairings between $H^{-s}(\Lambda)$ and its dual space
for $z\in C_{c}^{\infty}(\overline{\Lambda})$. %
{}

On the other hand, denote the right hand side of the equation in (\ref{eq:weak_solution_l})
by $\ell(t,z)$ and observe that it is always well-defined for any $z\in L^{2}(\Lambda)$,
independent of the space in which (\ref{eq:weak_solution_X}) makes
sense, cf. Lemma 2.4.2 of \citet{liu2015stochastic}. The resulting process $\ell:=(\ell(t,z))_{0\leq t\leq T,z\in L^{2}(\Lambda)}$
thus extends the linear forms $z\mapsto\sc{X(t)}z$ from $C_c^\infty(\overline{\Lambda})$ to $L^{2}(\Lambda)$. It has the following properties.

\begin{prop} \label{prop:solution} $\ell$ is a Gaussian process with
mean function $(t,z)\mapsto\left\langle S_{\theta}\left(t\right)X_{0},z\right\rangle $
and covariance function at $0\leq t,t'\leq T$, $z,z'\in L^{2}(\Lambda)$
given by
\begin{equation}
\operatorname{Cov}(\ell(t,z),\ell(t',z'))=\int_{0}^{t\wedge t'}\sc{B^{*}S_{\theta}^{*}\left(t-s\right)z}{B^{*}S_{\theta}^{*}\left(t'-s\right)z'}ds.\label{eq:cov_function}
\end{equation}
Moreover, $\ell$ satisfies (\ref{eq:weakFormulation}) for $z\in H_{0}^{1}(\Lambda)\cap H^{2}(\Lambda)$,
if $\sc{X(t)}z$ and $\sc{X(t)}{A_{\theta}^{*}z}$ are replaced by
$\ell(t,z)$ and $\ell(t,A_{\theta}^{*}z)$.

\end{prop}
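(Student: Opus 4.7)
The plan is to handle the three claims separately: Gaussianity with mean, the covariance formula, and the weak formulation.

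First, for Gaussianity and the mean, I would exploit that $\ell(t,z) = \langle S_\theta(t) X_0, z\rangle + \int_0^t \langle B^* S_\theta^*(t-s) z, dW(s)\rangle$ is an affine functional of the cylindrical Brownian motion $W$ with deterministic integrand $s\mapsto B^* S_\theta^*(t-s) z \in L^2(\Lambda)$. Square-integrability on $[0,t]$ follows from the boundedness of $B^*$ and the uniform bound $\|S_\theta^*(r) z\|\le C\|z\|$ for $r\in[0,T]$. Any finite linear combination $\sum_i \alpha_i \ell(t_i,z_i)$ is therefore a sum of a constant and a Wiener integral of a deterministic $L^2([0,T];L^2(\Lambda))$-function, hence Gaussian. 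This immediately gives Gaussianity of $\ell$ as a process. The mean is $\langle S_\theta(t)X_0,z\rangle$ because the Wiener integral is centered.

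For the covariance I would apply the Itô isometry for $Q$-Wiener integrals with $Q=I$ (cylindrical case). Using the polarisation identity,
\[
\operatorname{Cov}(\ell(t,z),\ell(t',z')) = \E\Bigl[\int_0^t \langle B^* S_\theta^*(t-s)z, dW(s)\rangle \int_0^{t'}\langle B^* S_\theta^*(t'-s)z', dW(s)\rangle\Bigr],
\]
which by the isometry equals $\int_0^{t\wedge t'}\langle B^* S_\theta^*(t-s)z, B^* S_\theta^*(t'-s)z'\rangle ds$, exactly \eqref{eq:cov_function}.

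For the weak formulation, fix $z\in H_0^1(\Lambda)\cap H^2(\Lambda) = \mathcal D(A_\theta^*)$ (since $A_\theta^*$ has the same domain on this $C^2$-smooth bounded domain). Using that $S_\theta^*$ is the analytic semigroup generated by $A_\theta^*$, I have for $s<r$ the identity $\tfrac{d}{dr} S_\theta^*(r-s)z = S_\theta^*(r-s) A_\theta^* z$, with $\|S_\theta^*(r-s)A_\theta^* z\|$ uniformly bounded by $\|A_\theta^* z\|$ up to a constant. Hence
\[
B^* S_\theta^*(t-s)z = B^* z + \int_s^t B^* S_\theta^*(r-s) A_\theta^* z\, dr.
\]
Inserting this into the Wiener integral defining $\ell(t,z)-\langle S_\theta(t)X_0,z\rangle$ and applying the stochastic Fubini theorem (e.g., Theorem 4.33 of Da Prato–Zabczyk, whose hypotheses are met thanks to the preceding uniform bound), I obtain
\[
\int_0^t \langle B^* S_\theta^*(t-s)z, dW(s)\rangle = \int_0^t \langle B^* z, dW(s)\rangle + \int_0^t \int_0^r \langle B^* S_\theta^*(r-s) A_\theta^* z, dW(s)\rangle\, dr.
\]
Combining with the deterministic identity $\langle S_\theta(t)X_0,z\rangle = \langle X_0,z\rangle + \int_0^t \langle S_\theta(r)X_0, A_\theta^* z\rangle\, dr$ gives $\ell(t,z) = \ell(0,z) + \int_0^t \ell(r,A_\theta^* z)\, dr + \int_0^t \langle z, B\, dW(s)\rangle$, which is \eqref{eq:weakFormulation} with $\ell$ in place of $\langle X(t),\cdot\rangle$.

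The main obstacle is the application of stochastic Fubini, since $B$ is only bounded (not Hilbert–Schmidt), so the integrand lives naturally in the scalar Wiener-integral formulation rather than in a Bochner sense. I expect this to be routine once one verifies the joint measurability and the $L^2(\Omega\times[0,T]^2\times\Lambda)$-integrability of $(r,s)\mapsto B^* S_\theta^*(r-s) A_\theta^* z\,\mathbf 1_{s<r}$; the analytic-semigroup bound above makes the integrand uniformly bounded on $[0,T]^2$, so Fubini applies without issue.
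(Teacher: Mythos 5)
Your treatment of Gaussianity, the mean, and the covariance (parts one and two) is exactly the paper's argument: the mean follows from the explicit stochastic-integral representation, and the covariance is Itô's isometry. For the weak formulation you take a genuinely different route. The paper observes that for $z\in C_c^{\infty}(\overline\Lambda)$ the identity $\ell(t,z)=\langle X(t),z\rangle$ holds and the weak formulation is already known from the solution concept; it then extends to all $z\in H_0^1(\Lambda)\cap H^2(\Lambda)$ by density, using the $L^2(\PP)$-continuity of $z\mapsto\ell(t,z)$ furnished by the covariance formula. You instead rederive the weak formulation from scratch: you differentiate $r\mapsto S_\theta^*(r-s)z$, plug the resulting integral identity into the Wiener integral, and interchange orders via a stochastic Fubini theorem. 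Your argument is correct, and it is more self-contained since it does not appeal to the already-established weak-solution identity for smooth test functions; the price is that you must justify stochastic Fubini in the cylindrical setting where $B$ is merely bounded. The uniform bound $\sup_{0\le r\le T}\|S_\theta^*(r)A_\theta^* z\|<\infty$ you invoke does make the integrand $(r,s)\mapsto B^*S_\theta^*(r-s)A_\theta^* z\,\I_{\{s<r\}}$ jointly measurable with the required square-integrability, so the step is legitimate, but you should cite a version of Fubini adapted to scalar Wiener integrals against a cylindrical Brownian motion (or embed $W$ into a larger Hilbert space where it becomes a genuine $Q$-Wiener process and then apply Theorem 4.33 of Da Prato--Zabczyk). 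The paper's density argument sidesteps this technical point entirely, which is why it is shorter.
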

\begin{proof}
By (\ref{eq:weak_solution_l}), $\ell(t,z)$ for $z\in L^{2}(\Lambda)$
is Gaussian with mean $\left\langle S_{\theta}\left(t\right)X_{0},z\right\rangle $.
Itô's isometry (Proposition 4.28 of \citet{DaPrato:2014wq}) proves
(\ref{eq:cov_function}). If $z\in C_{c}^{\infty}(\overline{\Lambda})$,
then $\ell(t,z)=\sc{X(t)}z$ satisfies $d\,\ell(t,z)=\ell(t,A_{\theta}^{*}z)dt+d\left\langle BW\left(t\right),z\right\rangle $. This
extends to $z\in H_{0}^{1}(\Lambda)\cap H^{2}(\Lambda)$ by approximation
and continuity of $\ell:[0,T]\times L^{2}(\Lambda)\mapsto L^{2}(\P)$
from (\ref{eq:cov_function}).
\end{proof}
In the following, justified by this proposition, we write $\left\langle X(t),z\right\rangle $
for $0\leq t\leq T$ and $z\in L^{2}(\Lambda)$ instead of $\ell(t,z)$.

\subsection{Local observations}

Throughout this work let $x_{0}\in\Lambda$ be fixed. The following
rescaling will be useful in the sequel: for $z\in L^{2}(\R^{d})$
and $\delta>0$ set
\begin{align*}
\Lambda_{\delta,x_{0}} & :=\delta^{-1}(\Lambda-x_{0})=\{\delta^{-1}(x-x_{0})\,:\,x\in\Lambda\}\text{ and }\Lambda_{0,x_{0}}:=\R^{d},\\
z_{\delta,x_{0}}(x) & :=\delta^{-d/2}z(\delta^{-1}(x-x_{0})),\quad x\in\R^{d}.
\end{align*}

Fix a function
$K\in H^{2}(\R^{d})$, called kernel, with compact support in $\Lambda_{\delta,x_{0}}$.
The compact support ensures that $K_{\delta,x_{0}}$ is localized
around $x_{0}$ and $K_{\delta,x_{0}}\in H_{0}^{1}(\Lambda)\cap H^{2}(\Lambda)$,
$\norm{K_{\delta,x_{0}}}=\norm K_{L^{2}(\R^{d})}$. Local measurements
of $X$ at $x_{0}$ with resolution level $\delta$ until time $T$
are described by the real-valued processes $X_{\delta,x_{0}}=(X_{\delta,x_{0}}(t))_{0\le t\le T}$,
$X_{\delta,x_{0}}^{\Delta}=(X_{\delta,x_{0}}^{\Delta}(t))_{0\le t\le T}$,
\begin{align}
X_{\delta,x_{0}}(t) & =\left\langle X(t),K_{\delta,x_{0}}\right\rangle ,\label{EqXdelta}\\
X_{\delta,x_{0}}^{\Delta}(t) & =\left\langle X(t),\Delta K_{\delta,x_{0}}\right\rangle .\label{EqXdeltaDelta}
\end{align}
Note that it is sufficient to observe $X_{\delta,x}(t)$ for $x$
in a neighbourhood of $x_{0}$ in order to provide us with $X_{\delta,x_{0}}^{\Delta}(t)=\Delta X_{\delta,\cdot}(t)|_{x=x_{0}}$.
Examples for $K$ can be found in Section \ref{sec:Discussion-and-numerical}.

The process $X_{\delta,x_{0}}$ satisfies $X_{\delta,x_{0}}(0)=\langle X_{0},K_{\delta,x_{0}}\rangle$
and
\begin{equation}
dX_{\delta,x_{0}}\left(t\right)=\left\langle X\left(t\right),A_{\theta}^{*}K_{\delta,x_{0}}\right\rangle dt+\norm{B^{*}K_{\delta,x_{0}}}d\overline{W}\left(t\right)\label{eq:tested_eq}
\end{equation}
with the scalar Brownian motion $\overline{W}(t)=\scapro{BW(t)}{K_{\delta,x_{0}}}/\|B^{\ast}K_{\delta,x_{0}}\|$,
whenever $\norm{B^{*}K_{\delta,x_{0}}}>0$.

\section{Scaling assumptions}

\label{sec:3}

\subsection{Rescaled operators and semigroups}

Let us study how $A_{\theta}^{*}$ and $S_{\theta}^{*}(t)$
act on localized functions $z_{\delta,x_{0}}$. For this note first
that $A_{\theta}^{*}=\Delta_{\theta}+A_{0}^{*}$ with $A_{0}^{*}z=-\text{div}(az)+bz$
has domain $\c D(A_{\theta}^{*})=H_{0}^{1}(\Lambda)\cap H^{2}(\Lambda)$.
For $\delta>0$ define similarly the operator $A_{\theta,\delta,x_{0}}^{*}=\Delta_{\theta(x_{0}+\delta\cdot)}+A_{0,\delta,x_{0}}^{*}$
with domain $\c D(A_{\theta,\delta,x_{0}}^{*})=H_{0}^{1}(\Lambda_{\delta,x_{0}})\cap H^{2}(\Lambda_{\delta,x_{0}})$,
where for $z\in C_{c}^{\infty}({\Lambda}_{\delta,x_{0}})$
\begin{align}
A_{0,\delta,x_{0}}^{*}z & =-\delta\text{div}(a(x_{0}+\delta\cdot)z)+\delta^{2}b\left(x_{0}+\delta\cdot\right)z.\label{eq:A_0_star_delta}
\end{align}
The operator $A_{\theta,\delta,x_{0}}^{*}$
generates again an analytic semigroup $(S_{\theta,\delta,x_{0}}^{*}(t))_{t\geq0}$ on $L^{2}(\Lambda_{\delta,x_{0}})$ (Lemma 7.3.4 of \citet{Pazy:1983us}). The following scaling properties are fundamental for our analysis:

\begin{lem}
\label{lem:scaling} For $\delta>0$:
\begin{enumerate}
\item If $z\in H_{0}^{1}(\Lambda_{\delta,x_{0}})\cap H^{2}(\Lambda_{\delta,x_{0}})$,
then $A_{\vartheta}^{*}z_{\delta,x_{0}}=\delta^{-2}(A_{\theta,\delta,x_{0}}^{*}z)_{\delta,x_{0}}$.
\item If $z\in L^{2}(\Lambda_{\delta,x_{0}})$, then $S_{\theta}^{*}\left(t\right)z_{\delta,x_{0}}=(S_{\theta,\delta,x_{0}}^{*}(t\delta^{-2})z)_{\delta,x_{0}}$,
$t\geq0$.
\end{enumerate}
\end{lem}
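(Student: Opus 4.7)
Both parts reduce to the chain-rule behaviour of the second-order differential operator $A_\theta^\ast$ under the affine change of variables $y=\delta^{-1}(x-x_0)$, together with the usual analytic-semigroup correspondence between operator and Cauchy problem.

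For part (i) the plan is a direct computation. Writing $z_{\delta,x_0}(x)=\delta^{-d/2}z(y)$, the chain rule gives $\partial_i z_{\delta,x_0}(x)=\delta^{-d/2-1}(\partial_i z)(y)$. I would first verify that $z\in H_0^1(\Lambda_{\delta,x_0})\cap H^2(\Lambda_{\delta,x_0})$ implies $z_{\delta,x_0}\in H_0^1(\Lambda)\cap H^2(\Lambda)$ (the support of $z_{\delta,x_0}$ lies in $x_0+\delta\Lambda_{\delta,x_0}\subseteq\Lambda$), so both sides are well-defined. Then I would split $A_\theta^\ast=\Delta_\theta+A_0^\ast$ and handle each summand separately. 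For $\Delta_\theta z_{\delta,x_0}$, the product rule yields
\[
\Delta_\theta z_{\delta,x_0}(x)=\delta^{-d/2-1}\sum_i(\partial_i\theta)(x)(\partial_i z)(y)+\delta^{-d/2-2}\theta(x)\sum_i(\partial_i^2 z)(y),
\]
which, upon pulling out a factor $\delta^{-2}$, is precisely $\delta^{-2}(\Delta_{\theta(x_0+\delta\cdot)}z)_{\delta,x_0}(x)$ after noting that the inner chain rule for $\theta(x_0+\delta y)$ creates the compensating factor of $\delta$. The same bookkeeping applied to $A_0^\ast z_{\delta,x_0}=-\operatorname{div}(az_{\delta,x_0})+bz_{\delta,x_0}$ matches the definition \eqref{eq:A_0_star_delta} of $A_{0,\delta,x_0}^\ast$ (with its built-in prefactors $\delta$ and $\delta^2$) up to the common $\delta^{-2}$. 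Adding the two identities gives (i).

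For part (ii) I would use (i) to pass from semigroups to their generators. Set
\[
u(t):=S_\theta^\ast(t)z_{\delta,x_0},\qquad w(t):=\bigl(S_{\theta,\delta,x_0}^\ast(t\delta^{-2})z\bigr)_{\delta,x_0},
\]
for $z\in \mathcal D(A_{\theta,\delta,x_0}^\ast)=H_0^1(\Lambda_{\delta,x_0})\cap H^2(\Lambda_{\delta,x_0})$. Both $u,w\in C([0,\infty);L^2(\Lambda))\cap C^1((0,\infty);L^2(\Lambda))$ by analyticity of the two semigroups, and $w(0)=z_{\delta,x_0}=u(0)$. Differentiating $w$ in $t$ gives $w'(t)=\delta^{-2}(A_{\theta,\delta,x_0}^\ast S_{\theta,\delta,x_0}^\ast(t\delta^{-2})z)_{\delta,x_0}$, which by part (i) (applied to $\tilde z=S_{\theta,\delta,x_0}^\ast(t\delta^{-2})z\in\mathcal D(A_{\theta,\delta,x_0}^\ast)$) equals $A_\theta^\ast w(t)$. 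Hence $w$ solves the same abstract Cauchy problem as $u$, and uniqueness for the analytic semigroup on $L^2(\Lambda)$ generated by $A_\theta^\ast$ forces $u\equiv w$. The identity for arbitrary $z\in L^2(\Lambda_{\delta,x_0})$ then follows by density of $\mathcal D(A_{\theta,\delta,x_0}^\ast)$ in $L^2(\Lambda_{\delta,x_0})$, boundedness of both semigroups, and the isometry of the rescaling $z\mapsto z_{\delta,x_0}$ from $L^2(\Lambda_{\delta,x_0})$ to $L^2(\Lambda)$.

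The only real obstacle is careful bookkeeping of the $\delta$-powers in part (i); no estimate is needed, just the chain rule and matching of terms. The extension to general $L^2$-data in (ii) is routine once density and isometric rescaling are invoked.
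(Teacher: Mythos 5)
Your proposal is correct and follows essentially the same route as the paper: part (i) is the direct chain-rule computation (the paper carries it out for $z\in C_c^\infty(\overline\Lambda_{\delta,x_0})$ and lets density do the rest, whereas you track the $\delta$-powers directly in $H^2$), and part (ii) is the uniqueness-of-the-Cauchy-problem argument followed by a density/isometry extension to $L^2(\Lambda_{\delta,x_0})$. The only difference is organizational — the paper first reduces both statements to smooth compactly supported data, while you make the density step explicit only in (ii) — but the underlying ideas and computations coincide.
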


\begin{proof}
It suffices to prove the result for $z\in C_{c}^{\infty}(\overline{\Lambda}_{\delta,x_{0}})$. In this case, (i) follows immediately, noting that $z_{\delta,x_0}\in C_c^\infty (\overline{\Lambda})$. For (ii) set  $w(t)=(S_{\theta,\delta,x_{0}}^{*}(t\delta^{-2})z)_{\delta,x_{0}}\in L^{2}(\Lambda)$.
As $(S_{\theta,\delta,x_{0}}^{*}(t))_{t\geq0}$ is an analytic semigroup,
we have $S_{\theta,\delta,x_{0}}^{*}(t)z\in\c D(A_{\theta,\delta,x_{0}}^{*})=H_{0}^{1}(\Lambda_{\delta,x_{0}})\cap H^{2}(\Lambda_{\delta,x_{0}})$
and so by (i)
\[
\frac{d}{dt}w(t)=\delta^{-2}(A_{\theta,\delta,x_{0}}^{*}S_{\theta,\delta,x_{0}}^{*}(t\delta^{-2})z)_{\delta,x_{0}}=A_{\theta}^{*}w(t).
\]
Since $w(0)=z_{\delta,x_{0}}$, we conclude that $w(t)=S_{\theta}^{*}(t)z_{\delta,x_{0}}$ from $u(t)=S_{\theta}^{*}(t)z_{\delta,x_{0}}$
being the unique solution in $C([0,\infty);H_{0}^{1}(\Lambda)\cap H^{2}(\Lambda))\cap C^{1}([0,\infty);L^{2}(\Lambda))$
of
\[
\frac{d}{dt}u(t) =A_{\theta}^{*}u(t),\,\,\,t\geq0,\,\,\,\,u(0)=z_{\delta,x_{0}}.\qedhere
\]
\end{proof}
Applying $S_{\theta}^{*}(t)$ to a localized function $z_{\delta,x_{0}}$
is therefore equivalent to applying a different semigroup, rescaled
in time and space, to the fixed function $z$.

\subsection{Scaling of $B$}

Just as with $A_{\theta}^{*}$ we also need that $B^{*}$ behaves
nicely when applied to localized functions. For this we shall assume
a scaling limit for $B^{*}$, which does not degenerate in combination
with $K$.

\begin{assumption} \label{assu:B} There are bounded linear operators
$B_{\delta,x_{0}},B_{0,x_{0}}:L^{2}(\R^{d})\rightarrow L^{2}(\R^{d})$
such that $B^{*}(z_{\delta,x_{0}})=(B_{\delta,x_{0}}^{*}z)_{\delta,x_{0}}$
for $z\in L^{2}(\R^{d})$ with support in $\Lambda_{\delta,x_{0}}$
and $B_{\delta,x_{0}}^{*}z\rightarrow B_{0,x_{0}}^{*}z$ for $z\in L^{2}(\R^{d})$
and $\delta\rightarrow0$. Introducing
\begin{align}
\Psi(z,z') & :=\int_{0}^{\infty}\left\langle B_{0,x_{0}}^{*}e^{s\Delta}z,B_{0,x_{0}}^{*}e^{s\Delta}z'\right\rangle _{L^{2}(\R^{d})}ds,\quad z,z'\in L^{2}(\R^{d}),\label{eq:Psi}
\end{align}
assume the \emph{non-degeneracy conditions} $\norm{B_{0,x_{0}}^{*}K}_{L^{2}(\R^{d})}>0$,
$\Psi(\Delta K,\Delta K)>0$.

\end{assumption}
\begin{rem}
\label{rem:psi} We shall see that after an appropriate rescaling
$\theta(x_{0})^{-1}\Psi(z,z')$ becomes the limiting covariance in
(\ref{eq:cov_function}) (cf. Proposition \ref{prop:cov_asymp} below).
$\Psi(\Delta K,\Delta K)$ is always nonnegative and finite because
\begin{align*}
 & \Psi\left(\Delta K,\Delta K\right)\leq\norm{B_{0,x_{0}}^{*}}_{L^{2}(\R^d)}^{2}\int_{0}^{\infty}\norm{e^{s\Delta}\Delta K}_{L^{2}(\R^{d})}^{2}ds\leq\tfrac{\norm{B_{0,x_{0}}^{*}}_{L^{2}(\R^{d})}^{2}}{2}\norm{\nabla K}_{L^{2}(\R^{d})}^{2},
\end{align*}
using $\norm{e^{s\Delta}\Delta K}_{L^{2}(\R^{d})}^{2}=\sc{e^{2s\Delta}\Delta K}{\Delta K}_{L^{2}(\R^{d})}$
and $\int_{0}^{\infty}e^{2s\Delta}\Delta K\,ds=-\frac{1}{2}K$.
\end{rem}

\begin{examples} \label{exa:B_multiplicationOp}
\mbox{}
\begin{enumerate}[label=(\alph*)]
\item For a bounded continuous function $\sigma:\R^{d}\to(0,\infty)$ define
the multiplication operator $M_{\sigma}:L^{2}(\Lambda)\rightarrow L^{2}(\Lambda)$,
$M_{\sigma}z(x):=(\sigma z)(x)=\sigma(x)z(x)$. With $B=B^{*}=M_{\sigma}$
the SPDE in (\ref{eq:SPDE}) can be written informally as
\[
\dot{X}\left(t,x\right)=A_{\vartheta}X\left(t,x\right)+\sigma(x)\dot{W}\left(t,x\right),\,\,\,\,0<t\le T,\,x\in\Lambda.
\]
Note that $B^*$ commutes with $A_{\theta}$ only if $\sigma$ is constant.
For $z\in L^{2}(\Lambda_{\delta,x_{0}})$ we find that $B^{*}z_{\delta,x_{0}}=(M_{\sigma(\delta\cdot+x_{0})}z)_{\delta,x_{0}}$
and so $B_{\delta,x_{0}}=M_{\sigma(\delta\cdot+x_{0})}$. Then $\norm{B_{\delta,x_{0}}^{*}z-\sigma(x_{0})z}_{L^{2}(\R^{d})}\rightarrow0$
for $z\in L^{2}(\R^{d})$, $\delta\rightarrow0$, and thus $B_{0,x_{0}}^{*}=M_{\sigma(x_{0})}$
is the multiplication operator on $L^{2}(\R^{d})$ with the constant
$\sigma(x_{0})$. For $z\in H^{2}(\R^{d})$, $z'\in L^{2}(\R^{d})$
we have (cf. Remark \ref{rem:psi})
\begin{equation}
\Psi(\Delta z,z')=-\frac{\sigma^{2}(x_{0})}{2}\scapro{z}{z'}_{L^{2}(\R^{d})}\label{eq:Psi_Delta_example}
\end{equation}
and integration by parts shows $\Psi(\Delta K,\Delta K)=\frac{\sigma^{2}(x_{0})}{2}\norm{\nabla K}_{L^{2}(\R^{d})}^{2}$.
The non-degeneracy conditions are clearly satisfied.
\item Let $\sigma$ be as in (a) and consider with bounded $\eta\in C^{2}(\R^{d})$,
$\min_{x\in\R^{d}}\eta(x)>0$, the perturbed multiplication operator
$B=B^{*}=M_{\sigma}+(-\Delta_{\eta})^{-\gamma}$, $\gamma>0$. By
functional calculus $B^{*}z_{\delta,x_{0}}=(B_{\delta,x_{0}}^{*}z)_{\delta,x_{0}}$
for $z\in L^{2}(\Lambda_{\delta,x_{0}})$ with $B_{\delta,x_{0}}=M_{\sigma(\delta\cdot+x_{0})}+\delta^{2\gamma}(-\Delta_{\eta(\delta\cdot+x_{0})})^{-\gamma}$
and $\norm{B_{\delta,x_{0}}^{*}z-\sigma(x_{0})z}_{L^{2}(\R^{d})}\rightarrow0$
for $z\in L^{2}(\R^{d})$, $\delta\rightarrow0$. $B_{0,x_{0}}$ and
$\Psi(\Delta K,\Delta K)$ are as in (a).
\item Assumption \ref{assu:B} excludes $B=(-\Delta)^{-\gamma}$, $\gamma>0$,
a typical choice to obtain smooth solutions $X$, cf. \citet[Chapter 5.5]{DaPrato:2014wq}.
Indeed, by (b) $B_{\delta,x_{0}}^{*}=\delta^{2\gamma}(-\Delta)^{-\gamma}$
and so $B_{0,x_{0}}^{*}=0$, violating the non-degeneracy conditions.
This problem can be solved by modifying the test function $K_{\delta,x_{0}}$.
For example, if $A_{\theta}=\theta\Delta$ for constant $\theta>0$
and $X_{0}\in\c D((-\Delta)^{\gamma})$, then assume we have access to $\sc{X(t)}{(-\Delta)^{\gamma}K_{\delta,x_{0}}}$,
$\sc{X(t)}{(-\Delta)^{\gamma}\Delta K_{\delta,x_{0}}}$ instead of (\ref{EqXdelta}),
(\ref{EqXdeltaDelta}). Since $B$ and $A_{\theta}$ commute, $\sc{X(\cdot)}{(-\Delta)^{\gamma}K_{\delta,x_{0}}}$
has the same distribution as $\sc{\tilde{X}(\cdot)}{K_{\delta,x_{0}}}$,
where $\tilde{X}$ corresponds to the SPDE (\ref{eq:SPDE}) with $B=I$
and $\tilde{X}_{0}=(-\Delta)^{\gamma}X_{0}$, and so Assumption \ref{assu:B}
is satisfied.
\end{enumerate}
\end{examples}

\subsection{\label{subsec:From-bounded-to}From bounded to unbounded domains}

Lemma \ref{lem:scaling} and Assumption \ref{assu:B} allow us to
rewrite the covariance function of $X_{\delta,x_{0}}$ for $t,t'\geq0$:
\begin{align}
 & \text{Cov}(\delta^{-1}X_{\delta,x_{0}}(t\delta^{2}),\delta^{-1}X_{\delta,x_{0}}(t'\delta^{2}))\nonumber \\
 & =\int_{0}^{t\wedge t'}\left\langle B_{\delta,x_{0}}^{*}S_{\theta,\delta,x_{0}}^{*}\left(t-s\right)K,B_{\delta,x_{0}}^{*}S_{\theta,\delta,x_{0}}^{*}\left(t'-s\right)K\right\rangle _{L^{2}(\Lambda_{\delta,x_{0}})}ds.
\label{eq:scaled_cov_function}
\end{align}
In order to see how this behaves when $\delta\rightarrow0$, note that the domain $\Lambda_{\delta,x_{0}}$ grows
and we find from (\ref{eq:A_0_star_delta}) that $A_{\theta,\delta,x_{0}}^{*}K\rightarrow\theta(x_{0})\Delta K$
in $L^{2}(\R^{d})$. This motivates the following result, proved in Appendix \ref{subsec:Analytical-results}.

\begin{prop}
\label{prop:semigroup_convergence} For $t>0$:
\begin{enumerate}
\item If $\delta>0$ and $z\in C(\overline{\Lambda}_{\delta,x_{0}})$, then
$|(S_{\theta,\delta,x_{0}}^{*}\left(t\right)z)(x)|\leq c_{3}e^{c_{1}\delta^{2}t}(e^{c_{2}t\Delta}\left|z\right|)(x)$
for all $x\in\Lambda_{\delta,x_{0}}$ with universal constants $c_{1},c_{2},c_{3}>0$.
\item If $z\in L^{2}(\R^{d})$, then $S_{\theta,\delta,x_{0}}^{*}\left(t\right)(z|_{\Lambda_{\delta,x_{0}}})\rightarrow e^{\theta(x_{0})t\Delta}z$
in $L^{2}(\R^{d})$ for $\delta\rightarrow0$.
\end{enumerate}
\end{prop}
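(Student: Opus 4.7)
My plan is to use a Feynman--Kac representation of the Dirichlet semigroup $S^*_{\theta,\delta,x_0}$ on $\Lambda_{\delta,x_0}$, then for (i) dominate it by a constant-coefficient Gaussian heat kernel via Aronson's bound, and for (ii) pass to the limit $\delta\to 0$ using stability of the underlying diffusion. Rewriting $A^*_{\theta,\delta,x_0}$ in non-divergence form gives
\[
A^*_{\theta,\delta,x_0} z = \theta_\delta\,\Delta z + \delta\, q_\delta \cdot \nabla z + \delta^2\, V_\delta\, z,
\]
with $\theta_\delta := \theta(x_0+\delta\,\cdot)$, $q_\delta := (\nabla\theta - a)(x_0+\delta\,\cdot)$ and $V_\delta := (b - \operatorname{div} a)(x_0+\delta\,\cdot)$. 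By the $C^{1+\alpha}$/$C^\alpha$ hypotheses these are uniformly bounded in $\delta\in(0,1]$, and $\theta_\delta$ is uniformly elliptic. Letting $Y^\delta$ be the diffusion started at $x\in\Lambda_{\delta,x_0}$ with generator $\theta_\delta\Delta + \delta q_\delta\cdot\nabla$, killed at the exit time $\tau_\delta$ from $\Lambda_{\delta,x_0}$, Feynman--Kac yields
\[
(S^*_{\theta,\delta,x_0}(t)z)(x) = \E_x\!\left[z(Y^\delta_t)\exp\!\left(\delta^2\!\int_0^t V_\delta(Y^\delta_s)\,ds\right)\I_{\tau_\delta>t}\right].
\]

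For (i), bounding $|\delta^2 V_\delta|\le c_1\delta^2$ with $c_1 := \|b\|_\infty+\|\operatorname{div}a\|_\infty$ and dropping the killing indicator (which only decreases the density) gives $|(S^*_{\theta,\delta,x_0}(t)z)(x)| \le e^{c_1\delta^2 t}\int_{\R^d}|z(y)|p^\delta_t(x,y)\,dy$ where $p^\delta_t$ is the free transition density of $Y^\delta$ on $\R^d$. Aronson's Gaussian upper bound, applicable thanks to uniform ellipticity of $\theta_\delta$ and uniform boundedness of $\delta q_\delta$, supplies $p^\delta_t(x,y) \le c_3(4\pi c_2 t)^{-d/2}e^{-|x-y|^2/(4c_2 t)}$ with $\delta$-independent constants, from which (i) follows immediately.

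For (ii), as $\delta\to 0$ the coefficients of the SDE for $Y^\delta$ converge locally uniformly to the constants of the limit SDE $dY^0 = \sqrt{2\theta(x_0)}\,dB$ (generator $\theta(x_0)\Delta$), so $Y^\delta \Rightarrow Y^0$ by standard SDE stability; meanwhile $\tau_\delta\to\infty$ in probability because $\Lambda_{\delta,x_0}\nearrow\R^d$, and $\delta^2 V_\delta \to 0$ uniformly. For $z\in C_c(\R^d)$ (eventually supported in $\Lambda_{\delta,x_0}$) bounded convergence in the Feynman--Kac representation yields the pointwise limit
\[
(S^*_{\theta,\delta,x_0}(t)(z|_{\Lambda_{\delta,x_0}}))(x) \longrightarrow \E_x[z(Y^0_t)] = (e^{\theta(x_0)t\Delta}z)(x).
\]
The envelope $c_3(e^{c_2 t\Delta}|z|)\in L^2(\R^d)$ from (i) upgrades this to $L^2$-convergence by dominated convergence, and the uniform operator bound $\|S^*_{\theta,\delta,x_0}(t)\|_{L^2\to L^2}\le c_3 e^{c_1\delta^2 t}$ (also from (i), since $\|e^{c_2 t\Delta}\|_{L^2\to L^2}\le 1$) combined with density of $C_c(\R^d)$ in $L^2(\R^d)$ extends the claim to arbitrary $z\in L^2(\R^d)$.

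The principal obstacle is securing Aronson's Gaussian bound with $\delta$-independent constants; this relies on the observation that rescaling by $\delta\le 1$ preserves the $L^\infty$ bounds on $\theta_\delta, q_\delta, V_\delta$ and even tightens the local Hölder oscillation, so the uniform parabolic regularity estimates pass through cleanly. A subsidiary point is the correct handling of the moving Dirichlet boundary, but this is benign: the killed density is dominated by the free density on $\R^d$ via the parabolic maximum principle, and the growth $\Lambda_{\delta,x_0}\nearrow\R^d$ ensures that the killing becomes irrelevant in the limit.
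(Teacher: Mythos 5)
Your proof takes essentially the same route as the paper: a Feynman--Kac representation for the Dirichlet semigroup, a Gaussian heat-kernel upper bound with $\delta$-uniform constants (you cite Aronson, the paper cites Sheu (1991) -- the same estimate), and for (ii) weak convergence of the underlying diffusion together with control of the potential term and the exit time, upgraded to $L^2$-convergence by dominated convergence and a density argument. The only place you are lighter on detail than the paper is the quantitative estimate showing $\tilde{\P}_x(\tau_\delta\le t)\to 0$, which requires a martingale/running-maximum bound rather than following purely from $\Lambda_{\delta,x_0}\nearrow\R^d$, but the idea is the same.
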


This means that the solution of
\begin{align*}
\frac{d}{dt}u^{(\delta)}(t) & =(A_{\theta,\delta,x_{0}}^{*}u^{(\delta)})(t),\,\,\,\,u^{(\delta)}(0)=z,
\end{align*}
on $L^{2}(\Lambda_{\delta,x_{0}})$ with \emph{bounded domain} $\Lambda_{\delta,x_{0}}$
converges to the solution of the heat equation
\[
\frac{d}{dt}u(t)=\theta(x_{0})\Delta u(t),\,\,\,\,u(0)=z,
\]
on $L^{2}(\R^{d})$ with \emph{unbounded domain} $\R^{d}$. This
scaling limit,
which seems natural but is nevertheless non-trivial, lies at the heart
of the analysis for the covariance function. Yet, the convergence in Proposition \ref{prop:semigroup_convergence}(ii)
does not hold uniformly in $z$, which complicates the approximations in the covariance analysis.

Applying the proposition to \eqref{eq:scaled_cov_function} also implies a scaling limit for the SPDE in (\ref{eq:SPDE}),
where for simplicity a zero initial condition is assumed:
\begin{thm}
\label{thm:convToHeatEq} Let $X_{0}=0$ and set $Z_{\delta}(t,z):=\delta^{-1}\sc{X(t\delta^{2})}{(z|_{\Lambda_{\delta,x_{0}}})_{\delta,x_{0}}}$
for $t\geq0$, $z\in L^{2}(\R^{d})$. Under Assumption \ref{assu:B}
the finite dimensional distributions of $(Z_{\delta}(t,z))_{t\ge0,z\in L^{2}(\R^{d})}$
converge to those of $(Z_{0}(t,z))_{t\geq0,z\in L^{2}(\R^{d})}$,
$Z_{0}(t,z)=\sc{Y(t)}z_{L^{2}(\R^{d})}$, solving the stochastic heat
equation on $L^{2}(\R^{d})$ with space-time white noise $dW$ on $L^{2}(\R^{d})$:
\begin{align*}
\begin{cases}
dY\left(t\right)=\theta(x_{0})\Delta Y(t)dt+B_{0,x_{0}}dW(t),\,\,\,\,t>0,\\
Y\left(0\right)=0.
\end{cases}
\end{align*}
\end{thm}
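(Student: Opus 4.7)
Since $X_0=0$, the process $\ell$ from Proposition \ref{prop:solution} is centered Gaussian, so each $Z_\delta$ is centered Gaussian, and the target $Z_0$ is centered Gaussian as well. Convergence of finite-dimensional distributions therefore reduces to convergence of the covariances $\operatorname{Cov}(Z_\delta(t,z),Z_\delta(t',z'))\to\operatorname{Cov}(Z_0(t,z),Z_0(t',z'))$ for all $t,t'\geq 0$ and $z,z'\in L^2(\R^d)$.

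The plan is to bring the left-hand side into a form amenable to a limit. Starting from (\ref{eq:cov_function}) with the test functions $(z|_{\Lambda_{\delta,x_0}})_{\delta,x_0}$, $(z'|_{\Lambda_{\delta,x_0}})_{\delta,x_0}$ at times $t\delta^2,t'\delta^2$, I would substitute $s\mapsto s\delta^2$ and then apply Lemma \ref{lem:scaling}(ii), the localisation identity from Assumption \ref{assu:B}, and the isometry $\norm{(u)_{\delta,x_0}}_{L^2(\R^d)}=\norm{u}_{L^2(\Lambda_{\delta,x_0})}$, to obtain
\[
\operatorname{Cov}(Z_\delta(t,z),Z_\delta(t',z'))=\int_0^{t\wedge t'}\sc{B^*_{\delta,x_0} S^*_{\theta,\delta,x_0}(t-s) z_\delta}{B^*_{\delta,x_0} S^*_{\theta,\delta,x_0}(t'-s) z'_\delta}_{L^2(\Lambda_{\delta,x_0})}ds,
\]
with $z_\delta:=z|_{\Lambda_{\delta,x_0}}$, $z'_\delta:=z'|_{\Lambda_{\delta,x_0}}$. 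The target covariance admits the analogous representation
\[
\operatorname{Cov}(Z_0(t,z),Z_0(t',z'))=\int_0^{t\wedge t'}\sc{B^*_{0,x_0} e^{\theta(x_0)(t-s)\Delta} z}{B^*_{0,x_0} e^{\theta(x_0)(t'-s)\Delta} z'}_{L^2(\R^d)}ds,
\]
via It\^o isometry applied to the weak solution $Y$.

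The remaining task is pointwise-in-$s$ convergence of the integrand together with a uniform dominant. For the pointwise limit, Proposition \ref{prop:semigroup_convergence}(ii) gives $S^*_{\theta,\delta,x_0}(r)z_\delta\to e^{\theta(x_0)r\Delta}z$ in $L^2(\R^d)$ for each $r>0$, and then the splitting
\[
B^*_{\delta,x_0} u_\delta - B^*_{0,x_0} u = B^*_{\delta,x_0}(u_\delta - u) + (B^*_{\delta,x_0}-B^*_{0,x_0})u
\]
combined with the Banach--Steinhaus bound $\sup_{\delta\in(0,1]}\norm{B^*_{\delta,x_0}}<\infty$ (a consequence of the pointwise convergence in Assumption \ref{assu:B}) yields the desired limit of the integrand. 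The main obstacle is the integrable dominant: here I would use Proposition \ref{prop:semigroup_convergence}(i), extended from $C(\overline{\Lambda}_{\delta,x_0})$ to $L^2$ by density, together with $L^2$-contractivity of $(e^{c_2 r\Delta})_{r\geq 0}$, to obtain $\norm{S^*_{\theta,\delta,x_0}(r)z_\delta}\leq c_3 e^{c_1\delta^2 T}\norm{z}$ uniformly for $r\in[0,T]$ and $\delta\in(0,1]$. Together with the Banach--Steinhaus bound, this dominates the integrand by a constant multiple of $\norm{z}\norm{z'}$ on $[0,t\wedge t']$, and dominated convergence closes the argument.
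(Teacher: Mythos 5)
Your proposal is correct and follows essentially the same route as the paper: identify $Z_\delta$ as centered Gaussian, rewrite the covariance via Lemma \ref{lem:scaling} and Assumption \ref{assu:B} into the rescaled form (which is exactly the identity \eqref{eq:scaled_cov_function} applied with $K$ replaced by $z|_{\Lambda_{\delta,x_0}}$), use Proposition \ref{prop:semigroup_convergence}(ii) for pointwise convergence, and dominate via Proposition \ref{prop:semigroup_convergence}(i) together with the uniform boundedness principle for the family $B^*_{\delta,x_0}$. The only differences are cosmetic: you spell out the Banach--Steinhaus step, the $L^2$-contractivity of $e^{c_2 r\Delta}$, and the density argument more explicitly than the paper, which simply quotes these facts.
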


\begin{proof}
According to \eqref{eq:scaled_cov_function} $Z_{\delta}$
is a centered Gaussian process with covariance function $\text{Cov}(Z_{\delta}(t,z),Z_{\delta}(t',z'))$
for $t,t'\geq0$, $z,z'\in L^{2}(\R^{d})$ equal to
\begin{align*}
 & \int_{0}^{t\wedge t'}\langle B_{\delta,x_{0}}^{*}S_{\theta,\delta,x_{0}}^{*}(t-s)(z|_{\Lambda_{\delta,x_{0}}}),B_{\delta,x_{0}}^{*}S_{\theta,\delta,x_{0}}^{*}(t'-s)(z'|_{\Lambda_{\delta,x_{0}}})\rangle _{L^{2}(\Lambda_{\delta,x_{0}})}ds.
\end{align*}
It is enough to show that this converges to
\[
\Cov(Z_{0}(t,z),Z_{0}(t',z'))=\int_{0}^{t\wedge t'}\sc{B_{0,x_{0}}^{*}e^{\theta(x_{0})(t-s)\Delta}z}{B_{0,x_{0}}^{*}e^{\theta(x_{0})(t'-s)\Delta}z'}_{L^{2}(\R^{d})}ds.
\]
Approximating $z$ by continuous functions, the semigroup bound in Proposition \ref{prop:semigroup_convergence}(i)
gives $\sup_{0<\delta\leq 1}\sup_{s\leq t}\norm{S_{\theta,\delta,x_{0}}^{*}(s)(z|_{\Lambda_{\delta,x_0}})}_{L^{2}(\Lambda_{\delta,x_0})}<\infty$,
while Assumption \ref{assu:B} implies $B_{\delta,x_0}^*u^{(\delta)}\to B_{0,x_0}^*u$ for any $u^{(\delta)}\to u$, invoking the uniform boundedness principle.
By Proposition \ref{prop:semigroup_convergence}(ii)
we have $S_{\theta,\delta,x_{0}}^{*}(s)(z|_{\Lambda_{\delta,x_{0}}})\rightarrow e^{\theta(x_{0})s\Delta}z$
in $L^{2}(\R^{d})$. Arguing in the same way with respect to $z'$, the dominated
convergence theorem shows the claim.
\end{proof}

This theorem demonstrates the strength of local measurements that
at small scales  the highest order differential operator dominates,
together with the local coefficient $\theta(x_{0})$ and the local
operator $B_{0,x_{0}}$ in the noise.

\subsection{The initial condition}

For $X_0$ we require the following scaling behaviour:
\begin{assumptionWithArgs}{z}{\beta}\label{assu:X0} For $\beta>0$
and $z\in H^{2}(\R^{d})$ with compact support in $\Lambda_{\delta,x_{0}}$
for $\delta>0$, the initial condition $X_{0}$ satisfies
\[
\int_{0}^{T}\langle S_{\theta}(t)X_{0},(\Delta z)_{\delta,x_{0}}\rangle^{2}dt=o(\ell_{d,2}(\delta)^{-1}\delta^{\beta}),\,\,\,\,\delta\rightarrow0,
\]
where $\ell_{d,2}(\delta)=\log(\delta^{-1})$ for $d=2$ and $\ell_{d,2}(\delta)=1$
otherwise.

\end{assumptionWithArgs}

Under this assumption the initial condition becomes negligible in the estimation procedure. It is true under general conditions.

\begin{lem}
\label{rem:X0_holds} Assumption \aswithargsref{assu:X0}{z}{\beta}
is satisfied for all $z\in H^{2}(\R^{d})$ with compact support in
$\Lambda_{\delta,x_{0}}$ for $\delta>0$ and
\begin{enumerate}
\item $\beta=2$ if $X_{0}\in L^{p}(\Lambda)$ for some $p>2$, in particular
if $X_{0}\in C(\overline{\Lambda})$,
\item $\beta=3$ if $X_{0}\in\c D(A_{\theta})$.
\end{enumerate}
\begin{proof}
This follows from Lemma \ref{lem:X0}(ii,iii) below, noting $\gamma(d,p)>2$ in (ii)  for $p>2$, $d\ge 1$.
\end{proof}
\end{lem}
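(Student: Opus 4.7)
The plan is to derive, for each regularity class of $X_0$, a quantitative decay rate for $\int_0^T\langle S_\theta(t)X_0,(\Delta z)_{\delta,x_0}\rangle^2\,dt$ and then verify that it beats $\ell_{d,2}(\delta)^{-1}\delta^\beta$. First, by duality and Lemma \ref{lem:scaling}(ii),
\[
\langle S_\theta(t)X_0,(\Delta z)_{\delta,x_0}\rangle=\langle X_0,f_\delta(t)\rangle,\qquad f_\delta(t):=(S^*_{\theta,\delta,x_0}(t\delta^{-2})\Delta z)_{\delta,x_0},
\]
with $f_\delta(t)$ localized on scale $\sqrt{t}\vee\delta$ near $x_0$. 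The compact support of $z$ provides the cancellations $\int\Delta z=0$ and $\int y\,\Delta z(y)\,dy=0$ (integration by parts with compact support), which will be the source of the improved rates below.

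For (i), I would apply H\"older against $X_0\in L^p$ with conjugate exponent $p'=p/(p-1)<2$ and split $[0,T]$ at $t=\delta^2$. For $t\le\delta^2$, the $L^{p'}$-contractivity of $S^*_{\theta,\delta,x_0}$ (itself a consequence of the pointwise Gaussian bound of Proposition \ref{prop:semigroup_convergence}(i)) combined with the rescaling factor $\delta^{2d(1/p'-1/2)}$ yields a contribution $\lesssim\delta^{2+d(1-2/p)}$. For $t\ge\delta^2$, the cancellation of $\Delta z$ is exploited via $e^{s\Delta}\Delta=\Delta e^{s\Delta}$ and the heat smoothing bound $\|\Delta e^{s\Delta}\|_{L^1\to L^{p'}}\lesssim s^{-1-d/(2p)}$, which produces a term of the same order. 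The exponent $\gamma(d,p)=2+d(1-2/p)$ is strictly greater than $2$ whenever $p>2$ and $d\ge1$, with the $d=2$ logarithm arising exactly at the critical integrability threshold $d/p=1$ in the time integral.

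For (ii), using $X_0\in\c D(A_\theta)\subset H^2(\Lambda)\cap H^1_0(\Lambda)$ together with the scaling identity $(\Delta z)_{\delta,x_0}=\delta^2\Delta(z_{\delta,x_0})$, two applications of Green's identity transfer the Laplacian onto $X_0$:
\[
\langle S_\theta(t)X_0,(\Delta z)_{\delta,x_0}\rangle=\delta^2\,\langle\Delta S_\theta(t)X_0,z_{\delta,x_0}\rangle,
\]
where the boundary terms vanish because both $S_\theta(t)X_0$ and $S^*_\theta(t)z_{\delta,x_0}$ have zero Dirichlet trace. Cauchy-Schwarz with $\|z_{\delta,x_0}\|_{L^2}=\|z\|_{L^2}$ and the uniform bound $\|\Delta S_\theta(t)X_0\|_{L^2}\lesssim\|A_\theta X_0\|+\|X_0\|_{H^1}$ then give $|\langle\cdot\rangle|\lesssim\delta^2$, hence $\int_0^T\langle\cdot\rangle^2\,dt=O(\delta^4)=o(\delta^3)$.

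The main obstacle is in part (i): the sharp cancellation $\|e^{s\Delta}\Delta z\|_{L^{p'}}\lesssim s^{-1-d/(2p)}$ requires the free heat semigroup on $\R^d$, whereas $f_\delta(t)$ is governed by the Dirichlet semigroup $S^*_{\theta,\delta,x_0}$ on the bounded but growing domain $\Lambda_{\delta,x_0}$. The pointwise upper bound in Proposition \ref{prop:semigroup_convergence}(i) controls $|S^*_{\theta,\delta,x_0}(s)\Delta z|$ but loses the zero-mean cancellation (since $|\Delta z|$ has positive integral). Replacing it by a genuine comparison with $e^{\theta(x_0)s\Delta}\Delta z$ plus a controlled boundary error in the relevant window $t\in[\delta^2,T]$ is the technically delicate step that keeps $\gamma(d,p)$ strictly above $2$ for all admissible $p,d$.
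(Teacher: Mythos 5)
The paper's own proof of this lemma is a one-line reduction to Lemma~\ref{lem:X0}(ii,iii), so the substance to compare against is the proof of that lemma in Appendix~A.

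Your argument for part~(ii) is correct and is in fact a cleaner route than the paper's, and it yields a slightly stronger bound. You use $X_0\in\mathcal{D}(A_\theta)$ and the rescaling identity $(\Delta z)_\delta=\delta^2\Delta(z_\delta)$ to integrate by parts twice (legitimate since $z_\delta$ has compact support in the interior of $\Lambda$), then invoke elliptic regularity $\|\Delta S_\theta(t)X_0\|\lesssim\|S_\theta(t)X_0\|_{H^2}\lesssim\|A_\theta S_\theta(t)X_0\|+\|S_\theta(t)X_0\|\lesssim\|A_\theta X_0\|+\|X_0\|$ uniformly over $t\in[0,T]$, using that $A_\theta$ commutes with $S_\theta(t)$ on $\mathcal D(A_\theta)$. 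This gives $\int_0^T\langle\cdot\rangle^2\,dt=O(\delta^4)$ for every $d$, strictly stronger than the $\ell_{d,2}(\delta)\delta^3$ the paper obtains by the decomposition $\theta(x_0)(\Delta z)_\delta=-\delta v^{(\delta)}_\delta+\delta^2 A_\theta^* z_\delta$ followed by Lemma~\ref{lem:X0}(i).

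Part~(i), however, has the gap you yourself flag, and it is a genuine one. Splitting at $t=\delta^2$ and aiming at the rate $2+d(1-2/p)$, the near piece is fine, but the far piece requires an $L^1\to L^{p'}$ derivative-smoothing bound of the form $\norm{S^*_{\theta,\delta}(s)\Delta z}_{L^{p'}(\Lambda_\delta)}\lesssim s^{-1-d/(2p)}\norm{z}_{L^1}$ for the Dirichlet semigroup on the growing domain. The only heat-kernel input the paper proves, Proposition~\ref{prop:semigroup_convergence}(i), is an Aronson-type pointwise bound that controls $\abs{S^*_{\theta,\delta}(s)\Delta z}$ by $q_{c_2 s}\!*\!\abs{\Delta z}$ and thereby destroys exactly the factor $s^{-1}$ you need; it yields only $s^{-d/(2p)}$. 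With that weaker decay, the far piece with split at $\delta^2$ contributes only $O(\delta^{d\wedge 2})$, which does not beat $\ell_{d,2}(\delta)^{-1}\delta^2$ when $d\le 2$. The partial half-power gain of Lemma~\ref{lem:S_delta_bounds}(ii), obtained via $\Delta z=\theta(x_0)^{-1}(A^*_{\theta,\delta}z-\delta v^{(\delta)})$ and analyticity, is an $L^2$ estimate and does not feed into your $L^{p'}$-based far-piece bound. The paper's proof of Lemma~\ref{lem:X0}(ii) avoids the issue entirely: it pairs the H\"older $L^p$--$L^{p'}$ bound on the near piece with a plain $L^2$ Cauchy--Schwarz and Lemma~\ref{lem:S_delta_bounds}(ii) on the far piece, and then chooses the split $\epsilon=\delta^{2(1+d/p)/(1+d/2)}$ (strictly larger than $\delta^2$ for $p>2$) to balance the two, arriving at the weaker exponent $\gamma(d,p)=\tfrac{2(1+d/p)}{1+d/2}+d(1-2/p)$, which is nevertheless strictly greater than $2$ for all $p>2$, $d\ge1$ and hence still suffices for Assumption~\ref{assu:X0} with $\beta=2$. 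A small side remark: the moment conditions $\int\Delta z=0$, $\int y\,\Delta z(y)\,dy=0$ mentioned in your opening paragraph are not actually what produces $\|\Delta e^{s\Delta}\|_{L^1\to L^{p'}}\lesssim s^{-1-d/(2p)}$; that bound already follows from $\|\Delta q_s\|_{L^{p'}}\lesssim s^{-1-d/(2p)}$ and requires no cancellation in $z$.
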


\section{\label{sec:Construction of the estimators}The two estimation methods}

\subsection{Motivation and construction}

We give two motivations for deriving the estimators in the parametric
case $A_{\theta}=\theta\Delta$ with constant $\theta>0$, $B=I$.
As we shall see later, these estimators will then work quite universally
for nonparametric specifications of $\theta$ and general $A_{\theta}$
and $B$.

\paragraph*{Least squares approach.}

In the deterministic situation of (\ref{eq:tested_eq}) without driving
noise (i.e. $A_{\theta}=\theta\Delta$ and $B=0$) we recover $\theta$
via $\dot{X}_{\delta,x_{0}}(t)=\theta X_{\delta,x_{0}}^{\Delta}(t)$
for all $t\in[0,T]$. A standard least-squares ansatz in the noisy
situation would therefore lead to an estimator $\hat{\theta}=\argmin_{\theta}\int_{0}^{T}(\dot{X}_{\delta,x_{0}}(t)-\theta X_{\delta,x_{0}}^{\Delta}(t))^{2}dt$.
While this itself is certainly not well defined, the corresponding
normal equations yield the feasible estimator
\[
\hat{\theta}_{\delta}^{LS}=\frac{\int_{0}^{T}X_{\delta,x_{0}}^{\Delta}(t)dX_{\delta,x_{0}}(t)}{\int_{0}^{T}X_{\delta,x_{0}}^{\Delta}(t)^{2}dt},
\]
compare with the approach by \citet{Maslowski:2013epa} for fractional
noise.

\paragraph*{Likelihood approach.}

Assume that only $X_{\delta,x_{0}}$ in \eqref{eq:tested_eq} is observed with $A_\theta=\theta\Delta$, $B=I$. Denote by $\P_{\theta}^{\delta,x_{0}}$ and
$\P_{0}$ the laws of $X_{\delta,x_{0}}$ and $\norm K_{L^{2}(\R^{d})}\overline{W}$
on the canonical path space $(C([0,T]),\norm{\cdot}_{\infty})$
equipped with its Borel sigma algebra. Typically, the likelihood of
$\P_{\theta}^{\delta,x_{0}}$ with respect to $\P_{0}$ is determined
via Girsanov's theorem. This is not immediate from (\ref{eq:tested_eq}),
because $X_{\delta,x_{0}}^{\Delta}$ cannot be obtained from $X_{\delta,x_{0}}$
for fixed $x_{0}$. Therefore we employ \citet[Theorem 7.17]{Liptser2001}
and write $X_{\delta,x_{0}}$ as the diffusion-type process
\[
dX_{\delta,x_{0}}(t)=\theta m_{\theta}(t)dt+\norm K_{L^{2}(\R^{d})}d\tilde{W}(t),\quad t\in[0,T],
\]
with a different scalar Brownian motion $\tilde{W}=(\tilde{W}(t))_{0\le t\le T}$,
adapted to the filtration generated by $X_{\delta,x_{0}}$, and
\begin{align*}
m_{\theta}(t) & =\E_{\theta}\left[\l{X_{\delta,x_{0}}^{\Delta}(t)}(X_{\delta,x_{0}}(s))_{0\leq s\leq t}\right].
\end{align*}
Girsanov's theorem in the form of \citet[Theorem 7.18]{Liptser2001}
applies and we find that $\P_{\theta}^{\delta,x_{0}}$ has with respect
to $\P_{0}$ the likelihood
\begin{align*}
\c L\left(\theta,X_{\delta,x_{0}}\right) & =\exp\bigg(\theta\int_{0}^{T}\frac{m_{\theta}(t)}{\norm K_{L^{2}(\R^{d})}^{2}}dX_{\delta,x_{0}}\left(t\right)-\frac{\theta^{2}}{2}\int_{0}^{T}\frac{m_{\theta}(t)^{2}}{\norm K_{L^{2}(\R^{d})}^{2}}dt\bigg).
\end{align*}
Computing the conditional expectation $m_{\theta}(t)$ is a non-explicit
filtering problem, even in the parametric case $A_{\theta}=\theta\Delta$.
In particular, $m_{\theta}$ depends on $\theta$ in a highly nonlinear
way. We pursue two different modifications:

\paragraph*{Augmented MLE.}

If we observe $X_{\delta,x_{0}}^{\Delta}$ additionally, then we can
just replace the conditional expectation $m_{\theta}(t)$ in the likelihood
by its argument $X_{\delta,x_{0}}^{\Delta}(t)$, which is in particular
independent of $\theta$. Maximizing this modified likelihood leads
to the augmented MLE
\begin{equation}
\hat{\vartheta}_{\delta}^{A}=\frac{\int_{0}^{T}X_{\delta,x_{0}}^{\Delta}(t)dX_{\delta,x_{0}}\left(t\right)}{\int_{0}^{T}X_{\delta,x_{0}}^{\Delta}(t)^{2}dt}.\label{eq:augMLE}
\end{equation}
We remark that $\hat{\vartheta}_{\delta}^{A}=\hat{\theta}_{\delta}^{LS}$.

\paragraph*{Proxy MLE.}

If we do not dispose of additional observations, we can approximate
$m_{\theta}(t)$ by the conditional expectation $\E_{\theta}[X_{\delta,x_{0}}^{\Delta}(t)\,|\,X_{\delta,x_{0}}(t)]$.
In our simplified setup with $A_{\theta}=\theta\Delta$ and $B=I$
the projected finite-dimensional process $(\sc{X(t)}{z_i})_{1\le i\le m}$ for $z_i\in L^{2}(\Lambda)$ admits a stationary solution
$\sc{X(t)}{z_i}=\int_{-\infty}^{t}\sc{S_{\theta}(t-s){z_i}}{dW(s)}$, $i=1,\ldots,m$,
with a two-sided cylindrical Brownian motion $(W(t))_{t\in\R}$, provided
the variances remain finite. Note that we need not require $X_0=\int_{-\infty}^{t}S_{\theta}^*(t-s)dW(s)$ to exist, but only that the finite-dimensional projection $(\sc{X_0}{z_i})_{1\le i\le m}$ follows the right law, which is always feasible.  If we choose $z_1=K_{\delta,x_0}$, $z_2=\Delta K_{\delta,x_0}$, then the process $(X_{\delta,x_{0}},X_{\delta,x_{0}}^{\Delta})$ is stationary with
\begin{align}
\Var(X_{\delta,x_{0}}(t)) & =\int_{-\infty}^{t}\scapro{S_{\theta}(2t-2s)K_{\delta,x_{0}}}{K_{\delta,x_{0}}}ds\nonumber \\
 & =\frac{1}{2\theta}\scapro{(-\Delta)^{-1}K_{\delta,x_{0}}}{K_{\delta,x_{0}}},\label{eq:varstat}\\
\Cov(X_{\delta,x_{0}}^{\Delta}(t),X_{\delta,x_{0}}(t)) & =\int_{-\infty}^{t}\scapro{S_{\theta}(2t-2s)\Delta K_{\delta,x_{0}}}{K_{\delta,x_{0}}}ds=\frac{-1}{2\theta}\norm K_{L^{2}(\R^{d})}^{2}.\nonumber
\end{align}
In general, $\scapro{(-\Delta)^{-1}K_{\delta,x_{0}}}{K_{\delta,x_{0}}}$
may not exist, but if we assume the existence of $\tilde{K}\in H^{4}(\R^{d})$
with $\Delta\tilde{K}=K$ and compact support in $\Lambda_{\delta,x_{0}}$,
then by the scaling in Lemma \ref{lem:scaling}, $\Var(X_{\delta,x_{0}}(t))=\frac{\delta^{2}}{2\theta}\norm{\nabla\tilde{K}}_{L^{2}(\R^{d})}^{2}<\infty$
follows. In this situation we therefore find that $\E_{\theta}[X_{\delta,x_{0}}^{\Delta}(t)\,|\,X_{\delta,x_{0}}(t)]$
equals
\[
\frac{\Cov(X_{\delta,x_{0}}^{\Delta}(t),X_{\delta,x_{0}}(t))}{\Var(X_{\delta,x_{0}}(t))}X_{\delta,x_{0}}(t)=\frac{-\delta^{-2}\norm K_{L^{2}(\R^{d})}^{2}}{\norm{\nabla\tilde{K}}_{L^{2}(\R^{d})}^{2}}X_{\delta,x_{0}}(t).
\]
This expression is again independent of $\theta$. Using it as an
approximation of $m_{\theta}(t)$ in the likelihood and neglecting
the boundary terms in the identity $2\int_{0}^{T}X_{\delta,x_{0}}(t)dX_{\delta,x_{0}}\left(t\right)=(X_{\delta,x_{0}}^{2}(T)-X_{\delta,x_{0}}^{2}(0))-\langle X_{\delta,x_{0}}\rangle_{T}$
with quadratic variation $\langle X_{\delta,x_{0}}\rangle_{T}$, we
obtain the proxy MLE
\begin{equation}
\hat{\theta}_{\delta}^{P}=\frac{\norm{\nabla\tilde{K}}_{L^{2}(\R^{d})}^{2}}{2\norm K_{L^{2}(\R^{d})}^{2}}\frac{\langle X_{\delta,x_{0}}\rangle_{T}}{\delta^{-2}\int_{0}^{T}X_{\delta,x_{0}}(t)^{2}dt}.\label{eq:proxyMLE}
\end{equation}
Note that the quadratic variation $\langle X_{\delta,x_{0}}\rangle_{T}=T\|B^*K_{\delta,x_0}\|^2$
is known to us from observing $X_{\delta,x_{0}}$ continuously in
time.
\begin{rem}
A sufficient condition for the existence of $\tilde{K}$ is $\int_{\R^{d}}K(x)dx=0$,
$\int_{\R^{d}}xK(x)dx=0$ by Lemma \ref{lem:v0}(iii) below.
\end{rem}

\subsection{Basic error decomposition }

\label{sec:erranalysis}

Let us discuss the basic error analysis for the augmented MLE $\hat{\vartheta}_{\delta}^{A}$
and the proxy MLE $\hat{\vartheta}_{\delta}^{P}$ in the general nonparametric
framework of Section \ref{sec:2}. Since we only use local measurements
around $x_{0}$, it might be expected that asymptotically we are lead to estimating
$\theta(x_{0})$. Let us point out that this is indeed true, but a priori not  clear because all values of $\theta(x)$ enter into the observations $X_{\delta,x_0}$ and it must be excluded that the resulting bias spoils the estimator.

\paragraph*{Augmented MLE.}

Consider $\hat{\vartheta}_{\delta}^{A}(x_{0})=\hat{\vartheta}_{\delta}^{A}$
from (\ref{eq:augMLE}). Then insertion of Equation (\ref{eq:tested_eq})
for $dX_{\delta,x_{0}}(t)$ yields the decomposition
\begin{equation}
\hat{\vartheta}_{\delta}^{A}(x_{0})=\vartheta\left(x_{0}\right)+\norm{B^{*}K_{\delta,x_{0}}}(\c I_{\delta}^{A})^{-1}M_{\delta}^{A}+(\c I_{\delta}^{A})^{-1}R_{\delta}^{A},\label{eq:augm_decomp}
\end{equation}
with
\begin{align*}
M_{\delta}^{A} & =\int_{0}^{T}X_{\delta,x_{0}}^{\Delta}(t)d\overline{W}(t)\text{ (martingale part),}\\
\c I_{\delta}^{A} & =\int_{0}^{T}X_{\delta,x_{0}}^{\Delta}(t)^{2}dt,\text{ (observed Fisher information),}\\
R_{\delta}^{A} & =\int_{0}^{T}X_{\delta,x_{0}}^{\Delta}(t)\left\langle X(t),\left(A_{\vartheta}^{*}-\theta\left(x_{0}\right)\Delta\right)K_{\delta,x_{0}}\right\rangle dt\text{ (remaining bias).}
\end{align*}
Let us note that $\c I_{\delta}^{A}$ is not the observed Fisher information
in a strict sense (due to the appearance of $m_{\theta}$ in the likelihood),
but it plays the same role, compare the analysis of the MLE for Ornstein-Uhlenbeck
processes in \citet{kutoyants2013statistical}. In particular, it
forms the quadratic variation of the martingale $M_{\delta}^{A}$.
In the specific case $A_{\theta}=\theta\Delta$ for some parametric
$\theta>0$ the term $R_{\delta}^{A}$ vanishes, otherwise it induces
a bias due to the variations of $\theta$ around $\theta(x_{0})$
and due to first and zero order differential operators that may appear
in $A_{\theta}$.

As the error structure suggests, the augmented MLE $\hat{\vartheta}_{\delta}^{A}(x_{0})$
is a consistent estimator for $\delta\to0$ if the observed Fisher
information satisfies $\c I_{\delta}^{A}\to\infty$. In the simple
stationary case of (\ref{eq:varstat}) we obtain $\E[\c I_{\delta}^{A}]=\frac{T}{2\theta}\scapro{(-\Delta)K_{\delta,x_{0}}}{K_{\delta,x_{0}}}$,
which by the scaling properties is of order $\delta^{-2}$. Physically,
this can be interpreted as an increase in energy in $X_{\delta,x_{0}}^{\Delta}$
under $\delta$-localisation due to the Laplacian in the drift, while
the energy from the space-time white noise remains unchanged. This
is in fact the same phenomenon as the increasing signal-to-noise ratio
for high Fourier modes in the spectral approach by \citet{Huebner1995}.

\paragraph*{Proxy MLE.}

Consider $\hat{\vartheta}_{\delta}^{P}(x_{0})=\hat{\vartheta}_{\delta}^{P}$
from (\ref{eq:proxyMLE}). The only stochastic part is
\begin{equation}
\c I_{\delta}^{P}:=\delta^{-2}\int_{0}^{T}X_{\delta,x_{0}}(t)^{2}dt\label{eq:IP}
\end{equation}
in the denominator. In the general model (\ref{eq:tested_eq}) we
shall see that $\c I_{\delta}^{P}$ converges to $\theta(x_{0})^{-1}T\Psi(K,K)$,
compare also Remark \ref{rem:psi} with $K=\Delta\tilde{K}$. Asking
for consistency $\hat{\vartheta}_{\delta}^{P}(x_{0})\to\theta(x_{0})$
leads to requiring the identity $\norm{\nabla\tilde{K}}_{L^{2}(\R^{d})}^{2}\norm{B_{0,x_{0}}^{*}K}_{L^{2}(\R^{d})}^{2}=2\norm K_{L^{2}(\R^{d})}^{2}\Psi(K,K)$.
This does not hold for any operator $B_{0,x_{0}}$. We therefore restrict
to our main specification $B=M_{\sigma}$, for which the identity
holds by (\ref{eq:Psi_Delta_example}). In contrast to the augmented
MLE, the proxy MLE works with the observation of $X_{\delta,x_{0}}$
alone, but asks for new structural assumptions on $B$ and $K$. If
they are not fulfilled, other likelihood approximations should be
pursued. Compare also the suboptimal behaviour of $\hat{\vartheta}_{\delta}^{P}(x_{0})$
for the kernel $K^{(2)}$ in the simulations of Section \ref{sec:Discussion-and-numerical}
below.

\section{\label{sec:Main-results}Main results}

\subsection{\label{subsec:Augmented-MLE}Results for the augmented MLE}

Recall the function $\Psi$ from (\ref{eq:Psi}) and
the error decomposition (\ref{eq:augm_decomp}). We show first that
the observed Fisher information and the bias, after rescaling, converge
to deterministic quantities. The propositions are proved in Appendix \ref{app:1}.

\begin{prop}
\label{prop:pseudo_Fisher_info_A} Grant Assumptions \ref{assu:B}
and \aswithargsref{assu:X0}{K}{2}. Then for any $d\geq1$ as
$\delta\rightarrow0$
\begin{align*}
\delta^{2}\E[\c I_{\delta}^{A}] & \rightarrow T\theta(x_{0})^{-1}\Psi(\Delta K,\Delta K),\qquad
\c I_{\delta}^{A}/\E[\c I_{\delta}^{A}]  \r{\P}1.
\end{align*}
\end{prop}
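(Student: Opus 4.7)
\bigskip

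The plan is to split $\c I_\delta^A$ into its mean and a centered Gaussian quadratic functional, identify the scaling limit of the mean via Section \ref{sec:3}, and kill the fluctuations by Chebyshev combined with Isserlis' formula. Since $X_{\delta,x_0}^\Delta(t) = \langle X(t), \Delta K_{\delta,x_0}\rangle$ is Gaussian, Proposition \ref{prop:solution} yields
\[
\E[X_{\delta,x_0}^\Delta(t)^2] = \langle S_\theta(t)X_0, \Delta K_{\delta,x_0}\rangle^2 + \int_0^t \|B^*S_\theta^*(t-s)\Delta K_{\delta,x_0}\|^2\,ds.
\]
Because $\Delta K_{\delta,x_0} = \delta^{-2}(\Delta K)_{\delta,x_0}$, the initial-condition term multiplied by $\delta^2$ equals $\delta^{-2}\int_0^T \langle S_\theta(t)X_0, (\Delta K)_{\delta,x_0}\rangle^2\,dt = o(\ell_{d,2}(\delta)^{-1})\to 0$ by Assumption \aswithargsref{assu:X0}{K}{2}.

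For the dominant stochastic-integral term I would use Lemma \ref{lem:scaling}(ii), the identity $B^*(z_{\delta,x_0}) = (B_{\delta,x_0}^*z)_{\delta,x_0}$ from Assumption \ref{assu:B}, and the isometry $\|z_{\delta,x_0}\|_{L^2(\R^d)} = \|z\|_{L^2(\R^d)}$ to rewrite
\[
\|B^*S_\theta^*(t-s)\Delta K_{\delta,x_0}\|^2 = \delta^{-4}\|B_{\delta,x_0}^*S_{\theta,\delta,x_0}^*((t-s)\delta^{-2})\Delta K\|^2_{L^2(\Lambda_{\delta,x_0})}.
\]
After substituting $u = (t-s)\delta^{-2}$, the $\delta^{-4}$ is absorbed into the prefactor $\delta^2$ and the Jacobian $\delta^2$, so
\[
\delta^2\,\E[\c I_\delta^A] = o(1) + \int_0^T\!\!\int_0^{t\delta^{-2}} \|B_{\delta,x_0}^*S_{\theta,\delta,x_0}^*(u)\Delta K\|^2_{L^2(\Lambda_{\delta,x_0})}\,du\,dt.
\]
Pointwise in $(t,u)$ the integrand converges to $\|B_{0,x_0}^*e^{\theta(x_0)u\Delta}\Delta K\|^2_{L^2(\R^d)}$, by Proposition \ref{prop:semigroup_convergence}(ii) together with the strong convergence $B_{\delta,x_0}^* \to B_{0,x_0}^*$ (extended to convergent inputs via the uniform boundedness principle, as in the proof of Theorem \ref{thm:convToHeatEq}). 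Dominated convergence plus the change of variable $v = \theta(x_0)u$ then identify the limit as $T\theta(x_0)^{-1}\Psi(\Delta K, \Delta K)$.

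For $\c I_\delta^A/\E[\c I_\delta^A] \to 1$ in probability, Chebyshev reduces the task to $\Var(\c I_\delta^A) = o(\delta^{-4})$. Writing $m_\delta, C_\delta$ for the mean and covariance of $X_{\delta,x_0}^\Delta$, Isserlis' formula gives
\[
\Var(\c I_\delta^A) = 2\int_0^T\!\!\int_0^T C_\delta(s,t)^2\,ds\,dt + 4\int_0^T\!\!\int_0^T m_\delta(s)m_\delta(t)C_\delta(s,t)\,ds\,dt.
\]
With $f_\delta(u) := B_{\delta,x_0}^*S_{\theta,\delta,x_0}^*(u)\Delta K$ and $s \leq t$, the same rescaling yields $C_\delta(s,t) = \delta^{-2}\int_0^{s\delta^{-2}}\langle f_\delta(u), f_\delta(u + (t-s)\delta^{-2})\rangle\,du$. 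Writing $G_\delta(r) := \int_r^\infty\|f_\delta(v)\|^2\,dv$ and applying Cauchy--Schwarz,
\[
|C_\delta(s,t)|^2 \leq \delta^{-4}\,G_\delta(0)\,G_\delta\bigl((t-s)\delta^{-2}\bigr),
\]
so after substituting $u = (t-s)\delta^{-2}$ the leading quadratic term is bounded by $2T\delta^{-2}G_\delta(0)\int_0^\infty G_\delta(u)\,du = O(\delta^{-2})$, hence $o(\delta^{-4})$. The mean--covariance cross term is controlled analogously via Cauchy--Schwarz and Assumption \aswithargsref{assu:X0}{K}{2}.

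The main technical obstacle is producing a $\delta$-uniform, $L^2$-integrable dominating function for $\|f_\delta(u)\|^2$ on $[0,\infty)$ in order to justify dominated convergence and to ensure $\int_0^\infty G_\delta(u)\,du < \infty$. The pointwise bound in Proposition \ref{prop:semigroup_convergence}(i) dominates $S_{\theta,\delta,x_0}^*(u)\Delta K$ by $c_3 e^{c_1\delta^2 u}e^{c_2 u\Delta}|\Delta K|$, and standard heat-kernel estimates give $\|e^{cu\Delta}|\Delta K|\|_{L^2}^2 = O(u^{-d/2})$ for large $u$, which fails to be integrable when $d\in\{1,2\}$. The remedy is to exploit that $\Delta K$ is a perfect divergence: for the free semigroup the Fourier identity $\|e^{u\Delta}\Delta K\|_{L^2}^2 = \int_{\R^d}|\xi|^4 e^{-2u|\xi|^2}|\hat K(\xi)|^2\,d\xi = O(u^{-d/2 - 2})$ is integrable in every dimension. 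Transporting this cancellation to the perturbed semigroup $S_{\theta,\delta,x_0}^*$, whose lower-order part $A_{0,\delta,x_0}^*$ is $O(\delta)$ by (\ref{eq:A_0_star_delta}) and whose principal part has the variable coefficient $\theta(x_0+\delta\cdot)$, is the delicate step; the same refined semigroup estimates then feed directly into both the dominated convergence argument and the variance bound.
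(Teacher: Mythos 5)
Your overall architecture matches the paper's: split off the initial-condition contribution via Assumption (K; 2), rescale via Lemma \ref{lem:scaling} and Assumption \ref{assu:B}, identify the limit of the mean by pointwise convergence of the rescaled semigroup plus dominated convergence, and kill the fluctuations by Wick/Isserlis plus Chebyshev. The Cauchy--Schwarz bound $|C_\delta(s,t)|^2 \le \delta^{-4}G_\delta(0)G_\delta((t-s)\delta^{-2})$ is a slightly coarser route than the paper's (Proposition \ref{prop:var_with_delta}), which splits the semigroup decay across \emph{both} time arguments $s$ and $s'$ in the kernel $f_\delta((s+s',\cdot),(s',\cdot))$, but, as you correctly observe, your bound still yields $\Var(\c I_\delta^A)=o(\delta^{-4})$ for all $d\ge1$ once the needed kernel decay is available, and the paper itself reaches the mean limit by Fatou plus monotonicity rather than an explicit dominating function; both are fine.

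The genuine gap is the one you flag yourself: you do not prove the crucial $\delta$-uniform decay estimate
\[
\norm{B_{\delta,x_{0}}^{*}S_{\theta,\delta,x_{0}}^{*}(u)\Delta K}_{L^{2}(\Lambda_{\delta,x_{0}})}^{2}\ \lesssim\ 1\wedge u^{-1-d/2},\qquad 0<u\le T\delta^{-2},
\]
uniformly in $\delta$. This is precisely Lemma \ref{lem:S_delta_bounds}(ii) (applied with $w^{(\delta)}=\Delta K$, $z=K$, $\alpha'=1$), whose proof needs the analytic-semigroup smoothing bound $\norm{tA_{\theta,\delta,x_0}^{*}S_{\theta,\delta,x_0}^{*}(t)}\le M_1 e^{C\delta^2 t}$ \emph{on the growing domains $\Lambda_{\delta,x_0}$ with constants independent of $\delta$}; the paper devotes Proposition \ref{prop:semigroup_bounds} (resolvent estimates uniform in $\delta$, via a perturbation argument for $A_{0,\delta,x_0}^{*}$) to this. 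Everything downstream in your sketch --- dominated convergence for the mean, integrability of $G_\delta$, $o(\delta^{-4})$ for the variance --- hinges on this bound, so the proposal is incomplete exactly where the real mathematical content lies. Two smaller imprecisions in your sketch of that missing piece: (a) the Fourier identity gives $O(u^{-d/2-2})$ for the \emph{free} semigroup because $\Delta K$ brings down two powers of $|\xi|^2$; the perturbed operator with variable $\theta(x_0+\delta\cdot)$ only buys one power of $1/u$ via $\norm{uA_{\theta,\delta,x_0}^*S_{\theta,\delta,x_0}^*(u)}\le M_1$, so the attainable (and, fortunately, sufficient) bound is $O(u^{-1-d/2})$, not $O(u^{-d/2-2})$; (b) Proposition \ref{prop:semigroup_convergence}(i) bounds $|S_{\theta,\delta,x_0}^*(u)z|$ by $e^{c_2 u\Delta}|z|$, which destroys the divergence structure of $z=\Delta K$, so it alone cannot supply the extra factor of $u^{-1}$ --- as you noted, one must route the $\Delta$ through $A_{\theta,\delta,x_0}^*$, not through the pointwise Feynman--Kac bound.
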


\begin{prop}
\emph{\label{prop:bias_A_with_delta}} Grant Assumptions \ref{assu:B},
\aswithargsref{assu:X0}{K}{2}, and for $d=1$ assume $\theta\in C^{1+\alpha'}(\overline{\Lambda})$ for $\alpha'>1/2$
and $\int_{\R}K(x)dx=0$. Then for $\delta\rightarrow0$
\begin{align*}
\delta^{-1}(\c I_{\delta}^{A})^{-1}R_{\delta}^{A} & \r{\P}\mu^A\text{ with }\mu^{A}:=(\Psi(\Delta K,\Delta K))^{-1}\Psi(\Delta K,\beta),
\end{align*}
where $\beta(x)=\Delta(\scapro{\nabla\theta(x_{0})}{x}_{\R^{d}}K)(x)-\scapro{\nabla\theta(x_{0})-a(x_{0})}{\nabla K(x)}_{\R^{d}},$
 $x\in\R^{d}$.
\end{prop}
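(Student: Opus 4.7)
The plan is to use Slutsky's theorem by writing $\delta^{-1}(\c I_\delta^A)^{-1}R_\delta^A = (\delta R_\delta^A)/(\delta^2\c I_\delta^A)$. By Proposition \ref{prop:pseudo_Fisher_info_A} the denominator converges in probability to $T\theta(x_0)^{-1}\Psi(\Delta K,\Delta K)$, so the whole problem reduces to showing $\delta R_\delta^A\r{\P}T\theta(x_0)^{-1}\Psi(\Delta K,\beta)$; dividing yields $\mu^A=\Psi(\Delta K,\beta)/\Psi(\Delta K,\Delta K)$.

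To expose the leading order of $R_\delta^A$, I first rewrite $A_\theta^*-\theta(x_0)\Delta = (\theta-\theta(x_0))\Delta+\langle\nabla\theta-a,\nabla\cdot\rangle+(b-\operatorname{div}a)$, apply it to $K_{\delta,x_0}$, and Taylor-expand the coefficients around $x_0$. Under rescaling $y=\delta^{-1}(x-x_0)$, the first-order Taylor contribution $\langle\nabla\theta(x_0),x-x_0\rangle\Delta K_{\delta,x_0}$ and the frozen first-order part $\langle\nabla\theta(x_0)-a(x_0),\nabla K_{\delta,x_0}\rangle$ are of order $\delta^{-1}$ relative to $K_{\delta,x_0}$; using the identity $\Delta(fg)=f\Delta g+2\nabla f\cdot\nabla g+g\Delta f$ with $f(x)=\langle\nabla\theta(x_0),x\rangle$ (which has $\Delta f=0$) these combine into precisely the function $\beta$ stated in the proposition, giving the leading contribution $\delta^{-1}\beta_{\delta,x_0}$. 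All remaining terms (the Taylor remainders of $\theta$ and $a$, and the zero-order piece $b-\operatorname{div}a$) are smaller by a factor $\delta^{\alpha}$ or $\delta$ and are placed into a residual $R_\delta^{(2)}$, so that $R_\delta^A=\delta^{-1}R_\delta^{(1)}+R_\delta^{(2)}$ with $R_\delta^{(1)}:=\int_0^T X_{\delta,x_0}^\Delta(t)\langle X(t),\beta_{\delta,x_0}\rangle\,dt$.

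To identify the limit I compute the mean of $R_\delta^{(1)}$ by mirroring the scaling calculation underlying Proposition \ref{prop:pseudo_Fisher_info_A}: the covariance formula of Proposition \ref{prop:solution}, Lemma \ref{lem:scaling} and Assumption \ref{assu:B} convert $\operatorname{Cov}(X_{\delta,x_0}^\Delta(t),\langle X(t),\beta_{\delta,x_0}\rangle)$ into $\int_0^{t\delta^{-2}}\langle B_{\delta,x_0}^*S_{\theta,\delta,x_0}^*(v)\Delta K, B_{\delta,x_0}^*S_{\theta,\delta,x_0}^*(v)\beta\rangle_{L^2(\R^d)}\,dv$, and Proposition \ref{prop:semigroup_convergence}(i)-(ii) with dominated convergence (as in the proof of Corollary \ref{thm:convToHeatEq}) produce the limit $\theta(x_0)^{-1}\Psi(\Delta K,\beta)$ after the substitution $w=\theta(x_0)v$; the contribution from $X_0$ is killed by Assumption \aswithargsref{assu:X0}{K}{2} and Cauchy-Schwarz. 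For the variance I invoke Isserlis' theorem: $R_\delta^{(1)}$ lies in the second Wiener chaos, so $\operatorname{Var}(R_\delta^{(1)})$ is the sum of two iterated time integrals of products of covariance kernels of the same form, each of which inherits the $|t-t'|$-decay of the rescaled semigroup via the pointwise bound in Proposition \ref{prop:semigroup_convergence}(i) and the Gaussian heat kernel. This gives $\operatorname{Var}(R_\delta^{(1)})\to 0$, hence $R_\delta^{(1)}\r{L^2}T\theta(x_0)^{-1}\Psi(\Delta K,\beta)$.

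The main obstacle is verifying $\delta R_\delta^{(2)}\r{\P}0$, and this is where the extra one-dimensional hypothesis enters. After Cauchy-Schwarz, the pieces of $R_\delta^{(2)}$ are dominated by quantities of the form $\int_0^T\E[X_{\delta,x_0}(t)^2]\,dt$ and analogous cross-terms, which by the same scaling machinery behave like $T\theta(x_0)^{-1}\Psi(K,K)$; the quantity $\Psi(K,K)=\int_0^\infty\|B_{0,x_0}^*e^{s\Delta}K\|_{L^2(\R^d)}^2\,ds$ is finite in $d\ge 2$ unconditionally, but in $d=1$ it is infinite unless $\int_\R K(x)\,dx=0$, since otherwise $\|e^{s\Delta}K\|_{L^2(\R)}^2\sim s^{-1/2}$ at infinity. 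Additionally, the Taylor remainder $(\theta(x)-\theta(x_0)-\langle\nabla\theta(x_0),x-x_0\rangle)\Delta K_{\delta,x_0}$ supplies a factor $|x-x_0|^{1+\alpha}$ that rescales to an extra $\delta^{1+\alpha}$ in the integrals; combining this gain with the centering $\int K=0$ and the additional Hölder exponent $\alpha'>1/2$ just balances against the slow one-dimensional heat kernel decay, making the residual $o(\delta^{-1})$. Once both $R_\delta^{(1)}$ and $R_\delta^{(2)}$ are controlled, Slutsky's theorem applied to $(\delta R_\delta^A)/(\delta^2\c I_\delta^A)$ delivers the claimed convergence to $\mu^A$.
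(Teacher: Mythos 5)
Your high-level strategy is the paper's: write $\delta^{-1}(\mathcal{I}_\delta^A)^{-1}R_\delta^A = (\delta R_\delta^A)/(\delta^2\mathcal{I}_\delta^A)$, let Proposition \ref{prop:pseudo_Fisher_info_A} handle the denominator via Slutsky, and reduce to showing $\delta R_\delta^A\to T\theta(x_0)^{-1}\Psi(\Delta K,\beta)$ in probability by computing mean and variance of the resulting quadratic functional via the rescaled semigroups and Wick's formula. Your splitting off of a fixed-$\beta$ term $R_\delta^{(1)}$ plus a remainder is a cosmetic difference from the paper, which instead keeps the $\delta$-dependent $\beta^{(\delta)}:=\delta^{-1}(A_{\theta,\delta,x_0}^*-\theta(x_0)\Delta)K$ inside Propositions \ref{prop:cov_asymp} and \ref{prop:var_with_delta}, which are formulated to allow $\delta$-dependent test functions. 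A smaller issue: your appeal to Isserlis/Wick for $R_\delta^{(1)}$ being in the second Wiener chaos holds only after centering; the paper first passes to $\tilde{X}$ (i.e.\ sets $X_0=0$) and separately controls the two $X_0$-dependent cross-terms $V_{1,\delta},V_{2,\delta}$ via Assumption \ref{assu:X0} and Lemma \ref{lem:X0}, whereas you gesture at this but leave it implicit.

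The genuine gap is the $d=1$ discussion. Your central claim that $\Psi(K,K)$ ``is finite in $d\ge 2$ unconditionally'' is false: if $\int_{\R^d}K(x)\,dx\neq 0$ then $\norm{e^{s\Delta}K}_{L^2(\R^d)}^2\sim s^{-d/2}$ as $s\to\infty$, so $\int_0^\infty\norm{e^{s\Delta}K}_{L^2}^2\,ds$ also diverges for $d=2$. More to the point, $\Psi(K,K)$ is not the quantity that appears after Cauchy--Schwarz: the test functions in play are $\Delta K$ and $\beta^{(\delta)}$ (equivalently $\beta$), not $K$ itself. The actual role of the hypothesis $\int_\R K(x)\,dx=0$ is, through the Paley--Wiener argument in Lemma \ref{lem:v0}(ii), to write $\beta=\Delta\tilde\beta$ for a compactly supported $\tilde\beta\in H^2(\R)$. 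This Laplacian structure buys an extra factor $s^{-1/2}$ of decay in $\norm{S_{\theta,\delta}^*(s)\beta}$ (Lemma \ref{lem:S_delta_bounds}(iii)), which is precisely what makes the $s$-integrals converge in $d=1$; the Hölder condition $\alpha'>1/2$ then gives the quantitative rate $\norm{\beta^{(\delta)}-\Delta\tilde\beta}_{L^1\cap L^2}\lesssim\delta^{\alpha'}$ needed to dominate uniformly in $\delta$. This mechanism is needed already for the \emph{mean} of $R_\delta^A$ in $d=1$ (Proposition \ref{prop:cov_asymp}(iii) requires $d\ge 2$; $d=1$ goes through polarisation of part (ii) with $\Delta K$ and $\Delta\tilde\beta$) as well as for the variance (Proposition \ref{prop:var_with_delta}(ii) vs.\ (i)). Your heuristic about $\Psi(K,K)$ supplies neither of these steps.
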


From this it follows that the augmented MLE $\hat{\vartheta}_{\delta}^{A}(x_{0})$
satisfies a central limit theorem with rate $\delta$.
\begin{thm}
\label{thm:augMLE_CLT} Grant Assumptions \ref{assu:B}, \aswithargsref{assu:X0}{K}{2},
and for $d=1$ assume $\theta\in C^{1+\alpha'}(\overline{\Lambda})$ for $\alpha'>1/2$ and $\int_{\R}K(x)dx=0$.
Then for $\delta\rightarrow0$
\begin{align}
\delta^{-1}\left(\hat{\vartheta}_{\delta}^{A}\left(x_{0}\right)-\theta\left(x_{0}\right)\right) & \r dN\left(\mu^{A},\theta(x_{0})\Sigma^{A}\right),\nonumber \\
\text{with } & \Sigma^{A}=T^{-1}(\Psi(\Delta K,\Delta K))^{-1}\norm{B_{0,x_{0}}^{*}K}_{L^{2}(\R^{d})}^{2},\label{eq:Sigma_A}
\end{align}
and with $\mu^{A}$ from Proposition \ref{prop:bias_A_with_delta}.
\end{thm}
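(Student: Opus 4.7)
From the error decomposition \eqref{eq:augm_decomp}, the plan is to rewrite
\[
\delta^{-1}\bigl(\hat\vartheta_\delta^A(x_0) - \theta(x_0)\bigr) = \norm{B^* K_{\delta,x_0}} \cdot \bigl(\delta^2 \mathcal I_\delta^A\bigr)^{-1} \cdot \bigl(\delta M_\delta^A\bigr) + \delta^{-1}\bigl(\mathcal I_\delta^A\bigr)^{-1} R_\delta^A,
\]
and to obtain a limit (in distribution or probability) for each factor. The second summand converges in probability to $\mu^A$ directly by Proposition \ref{prop:bias_A_with_delta}, so the real work concerns the first summand.

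The deterministic prefactors are dealt with first. Assumption \ref{assu:B}, combined with the scaling identity $B^*(K_{\delta,x_0}) = (B_{\delta,x_0}^* K)_{\delta,x_0}$ and the $L^2$-isometry of the rescaling $z\mapsto z_{\delta,x_0}$, yields
\[
\norm{B^* K_{\delta,x_0}} = \norm{B_{\delta,x_0}^* K}_{L^2(\R^d)} \to \norm{B_{0,x_0}^* K}_{L^2(\R^d)}.
\]
Proposition \ref{prop:pseudo_Fisher_info_A} provides $\delta^2 \mathcal I_\delta^A \r{\P} T\theta(x_0)^{-1}\Psi(\Delta K,\Delta K)>0$, hence the same convergence for the reciprocal.

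The crux is a martingale CLT for $\delta M_\delta^A$. In the filtration generated by the driving cylindrical Brownian motion $W$, the process $M_\delta^A$ is a continuous martingale because $X_{\delta,x_0}^\Delta$ is adapted and $\overline W$ is a scalar Brownian motion; its quadratic variation at time $T$ equals $\mathcal I_\delta^A$. Consequently the bracket of $\delta M_\delta^A$ is $\delta^2 \mathcal I_\delta^A$, and by Proposition \ref{prop:pseudo_Fisher_info_A} it converges in probability to the \emph{deterministic} constant $T\theta(x_0)^{-1}\Psi(\Delta K,\Delta K)$. The standard CLT for continuous local martingales with deterministically converging bracket then yields
\[
\delta M_\delta^A \r d N\bigl(0,\,T\theta(x_0)^{-1}\Psi(\Delta K,\Delta K)\bigr).
\]

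Slutsky's theorem combines the three convergences; the asymptotic variance simplifies as
\[
\norm{B_{0,x_0}^* K}_{L^2(\R^d)}^2 \cdot \left(\tfrac{\theta(x_0)}{T\Psi(\Delta K,\Delta K)}\right)^2 \cdot T\theta(x_0)^{-1}\Psi(\Delta K,\Delta K) = \theta(x_0)\Sigma^A,
\]
with $\Sigma^A$ as in \eqref{eq:Sigma_A}, and the bias term contributes the shift $\mu^A$. The only genuinely analytic step is the CLT for $\delta M_\delta^A$, but it is not an obstruction: although $X_{\delta,x_0}^\Delta$ and $\overline W$ are dependent (both being driven by the cylindrical noise $W$), only convergence of the bracket to a deterministic limit is needed, and this is exactly Proposition \ref{prop:pseudo_Fisher_info_A}. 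All remaining input is supplied by Propositions \ref{prop:pseudo_Fisher_info_A} and \ref{prop:bias_A_with_delta}, so the argument is essentially bookkeeping once these are in hand.
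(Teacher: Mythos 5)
Your proof is correct and follows essentially the same route as the paper: a continuous--martingale CLT (the bracket of the normalized martingale converges in probability to a positive deterministic constant, here $T\theta(x_0)^{-1}\Psi(\Delta K,\Delta K)>0$ via Proposition \ref{prop:pseudo_Fisher_info_A}), combined with the convergence $\norm{B^*K_{\delta,x_0}}\to\norm{B_{0,x_0}^*K}_{L^2(\R^d)}$ from Assumption \ref{assu:B}, the bias convergence from Proposition \ref{prop:bias_A_with_delta}, and Slutsky. The only cosmetic difference is that the paper normalizes the martingale by $\E[\mathcal I_\delta^A]^{1/2}$ so that the bracket tends to $1$, whereas you normalize by $\delta$ so that the bracket tends to the constant directly; these are interchangeable given $\delta^2\E[\mathcal I_\delta^A]\to T\theta(x_0)^{-1}\Psi(\Delta K,\Delta K)$.
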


\begin{proof}
In terms of $Y_{t}^{(\delta)}:=X_{\delta,x_{0}}^{\Delta}(t)/\E[\c I_{\delta}^{A}]^{1/2}$
we obtain $M_{\delta}^{A}/\E[\c I_{\delta}^{A}]^{1/2}=\int_{0}^{T}Y_{t}^{(\delta)}d\bar{W}(t)$,
the quadratic variation of which satisfies $\int_{0}^{T}(Y_{t}^{(\delta)})^{2}dt=\c I_{\delta}^{A}/\E[\c I_{\delta}^{A}]\r{\P}1$
by Proposition \ref{prop:pseudo_Fisher_info_A}. A standard continuous
martingale CLT, e.g. \citet[Theorem 1.19]{kutoyants2013statistical},
shows $M_{\delta}^{A}/\E[\c I_{\delta}^{A}]^{1/2}\xrightarrow{d}N(0,1)$.
Moreover,
\begin{equation}
\norm{B^{*}K_{\delta,x_{0}}}=\norm{B_{\delta,x_{0}}^{*}K}_{L^{2}(\Lambda_{\delta,x_{0}})}\to\norm{B_{0,x_{0}}^{*}K}_{L^{2}(\R^{d})}\label{eq:B_star_convergence}
\end{equation}
due to Assumption \ref{assu:B} and $\delta^{-1}(\c I_{\delta}^{A})^{-1}R_{\delta}^{A}\r{\P}\mu^{A}$
by Proposition \ref{prop:bias_A_with_delta}. We conclude by applying
Slutsky's lemma.
\end{proof}

\begin{remarks}\mbox{}
\begin{enumerate}

\item Both, bias and standard deviation of
$\hat{\vartheta}_{\delta}^{A}(x_{0})$, are of order $\delta$. The
asymptotic bias $\mu^{A}$ is independent of $T$, while the variance
$\Sigma^{A}$ decays in $T$.

\item $B$, $\nabla\theta$ and $a$ appear
in the limit only via the localized terms $B_{0,x_{0}}^{*}$, $\nabla\theta(x_{0})$,
$a(x_{0})$, while $b$ does not appear at all. This demonstrates
again the universality property of local measurements, in the spirit of
Theorem \ref{thm:convToHeatEq}.

\item The estimator and thus also its asymptotic bias and variance are invariant under constant scaling factors in the kernel. In fact, using the scaling such that $\norm{K_{\delta,x_0}}=\norm{K}_{L^2(\R^d)}$ is arbitrary, but simplifies the analysis.

\item The additional assumptions for the convergence of the remaining bias $R^{A}_{\delta}$ in $d=1$ allow for compensating the slower decay of the heat kernel compared to $d>1$, cf. Lemma A.6(ii,iii). This is not necessary for constant $\theta$ and in that case Theorem $\ref{thm:augMLE_CLT}$ holds without these assumptions.

\item When we dispose of observations at different locations $x$, then we can estimate $\theta(x)$ pointwise at each location $x$. In the case of multiplicative covariance $B=M_\sigma$ it can be shown that estimators at different locations become asymptotically independent. The argument relies on a multivariate martingale difference CLT, using that at points $x_0,x_1\in\Lambda$ the corresponding Brownian motions $\overline W_0,\overline W_1$ in \eqref{eq:tested_eq} are independent whenever $\supp(K_{\delta,x_0})\cap\supp(K_{\delta,x_1})=\varnothing$.
\end{enumerate}
\end{remarks}

From Proposition \ref{prop:bias_A_with_delta}
we see that $\mu^{A}$ vanishes if $A_{\theta}=\theta\Delta+b$ for
parametric $\theta>0$. Another important situation where $\mu^{A}=0$
is given next.
\begin{example}
\label{exa:cont} (Example \ref{exa:B_multiplicationOp}(a) ctd.)
Let $B=M_{\sigma}$ and recall the identities $\Psi(\Delta K,\Delta K)=\frac{\sigma(x_{0})^{2}}{2}\norm{\nabla K}_{L^{2}(\R^{d})}^{2}$,
$\Psi(\Delta K,\beta)=-\frac{\sigma(x_{0})^{2}}{2}\sc K{\beta}_{L^{2}(\R^{d})}$ with $\beta$ from Proposition \ref{prop:bias_A_with_delta}.
By Lemma \ref{lem:laplace_to_nabla} with $z=K$ this means
\begin{align*}
\langle K,\beta\rangle_{L^{2}(\R^{d})} & =-\langle\scapro{\nabla\theta(x_{0})}{x}_{\R^{d}},|\nabla K(x)|^{2}\rangle_{L^{2}(\R^{d})},
\end{align*}
and Theorem \ref{thm:augMLE_CLT} yields
\[
\mu^{A}=\frac{\int_{\R^{d}}\scapro{\nabla\theta(x_{0})}{x}_{\R^{d}}|\nabla K(x)|^{2}dx}{\norm{\nabla K}_{L^{2}(\R^{d})}^{2}},\,\,\,\,\Sigma^{A}=\frac{2\norm K_{L^{2}(\R^{d})}^{2}}{T\norm{\nabla K}_{L^{2}(\R^{d})}^{2}}.
\]
In particular, if $\nabla K$ is symmetric in the sense $|\nabla K(-x)|=|\nabla K(x)|$,
$x\in\R^{d}$, then the asymptotic bias vanishes:
\[
\delta^{-1}\left(\hat{\vartheta}_{\delta}^{A}\left(x_{0}\right)-\theta\left(x_{0}\right)\right)\r dN\left(0,\frac{2\theta(x_{0})\norm K_{L^{2}(\R^{d})}^{2}}{T\norm{\nabla K}_{L^{2}(\R^{d})}^{2}}\right).
\]
The rougher $K$ is, the
smaller is the asymptotic variance, which bears some similarity with
deconvolution problems.
\end{example}

If the asymptotic bias $\mu^{A}$ vanishes, we can construct a simple
confidence interval in terms of the augmented MLE. Note that in the
setting of Example \ref{exa:cont}, $\Sigma^{A}=2T^{-1}\norm K_{L^{2}(\R^{d})}^{2}\norm{\nabla K}_{L^{2}(\R^{d})}^{-2}$
is easily accessible.
\begin{cor}
\label{cor:confInterval_A} Assume the setting of Theorem \ref{thm:augMLE_CLT},
$\mu^{A}=0$ and let $\overline{\alpha}\in(0,1)$. Then the confidence interval for
$\theta(x_{0})$
\[
I_{1-\overline{\alpha}}^{A}=\left[\hat{\vartheta}_{\delta}^{A}(x_{0})-\delta(\hat{\vartheta}_{\delta}^{A}(x_{0})\Sigma^{A})^{1/2}q_{1-\overline{\alpha}/2},
\hat{\vartheta}_{\delta}^{A}(x_{0})+\delta(\hat{\vartheta}_{\delta}^{A}(x_{0})\Sigma^{A})^{1/2}q_{1-\overline{\alpha}/2}\right],
\]
with the standard normal $(1-\overline{\alpha}/2)$-quantile $q_{1-\overline{\alpha}/2}$,
has asymptotic coverage $1-\overline{\alpha}$ for $\delta\to0$.
\end{cor}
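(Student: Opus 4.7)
The plan is to reduce the coverage statement to the central limit theorem in Theorem \ref{thm:augMLE_CLT} together with a Slutsky-type argument to replace the unknown variance $\theta(x_0)\Sigma^A$ by its plug-in estimate $\hat\vartheta_\delta^A(x_0)\Sigma^A$. The quantity $\Sigma^A$ itself is deterministic and known from the kernel $K$ (and in the setting of Example \ref{exa:cont} completely explicit), so the only stochastic ingredient in the denominator is $\hat\vartheta_\delta^A(x_0)$.

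First I would observe that Theorem \ref{thm:augMLE_CLT} with $\mu^A=0$ gives directly
\[
\delta^{-1}\bigl(\hat\vartheta_\delta^A(x_0)-\theta(x_0)\bigr)\xrightarrow{d}N\bigl(0,\theta(x_0)\Sigma^A\bigr),
\]
and in particular, since $\delta\to 0$, the estimator is consistent: $\hat\vartheta_\delta^A(x_0)\xrightarrow{\P}\theta(x_0)$. By the continuous mapping theorem (noting $\theta(x_0)>0$ and $\Sigma^A>0$ by the non-degeneracy condition $\Psi(\Delta K,\Delta K)>0$ from Assumption \ref{assu:B}) this yields
\[
\bigl(\hat\vartheta_\delta^A(x_0)\Sigma^A\bigr)^{1/2}\xrightarrow{\P}\bigl(\theta(x_0)\Sigma^A\bigr)^{1/2}.
\]

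Next I would apply Slutsky's lemma to the ratio
\[
T_\delta:=\frac{\hat\vartheta_\delta^A(x_0)-\theta(x_0)}{\delta(\hat\vartheta_\delta^A(x_0)\Sigma^A)^{1/2}}
\]
to conclude $T_\delta\xrightarrow{d}N(0,1)$. The coverage probability of $I^A_{1-\overline{\alpha}}$ equals $\P(|T_\delta|\le q_{1-\overline{\alpha}/2})$, which by the Portmanteau theorem converges to $\P(|Z|\le q_{1-\overline{\alpha}/2})=1-\overline{\alpha}$ for $Z\sim N(0,1)$, by definition of the quantile. This yields the claim.

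There is no real obstacle here; everything is a direct consequence of Theorem \ref{thm:augMLE_CLT} and standard tools. The only small point worth flagging is that one should note $\Sigma^A$ is strictly positive and finite (so the plug-in standard error is well-defined with probability tending to one), which follows from Assumption \ref{assu:B} and $T>0$.
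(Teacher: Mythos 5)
Your proof is correct and is essentially identical to the paper's: both invoke Theorem \ref{thm:augMLE_CLT} with $\mu^A=0$, use consistency $\hat\vartheta_\delta^A(x_0)\xrightarrow{\P}\theta(x_0)$ to justify the plug-in variance via Slutsky's lemma, and read off the coverage from the resulting $N(0,1)$ limit. The added remark that $\Sigma^A>0$ (from the non-degeneracy condition) is a sensible clarification but not a departure from the paper's argument.
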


\begin{proof}
By Theorem \ref{thm:augMLE_CLT} and Slutsky's lemma applied for $\hat{\vartheta}_{\delta}^{A}\left(x_{0}\right)\xrightarrow{\PP}\theta(x_{0})$,
we have
\[
\delta^{-1}(\hat{\vartheta}_{\delta}^{A}\left(x_{0}\right)\Sigma^{A})^{-1/2}\left(\hat{\vartheta}_{\delta}^{A}\left(x_{0}\right)-\theta\left(x_{0}\right)\right)\r dN\left(0,1\right),\,\,\,\,\delta\rightarrow0,
\]
noting $\mu^{A}=0$. This yields $\PP(\theta(x_{0})\in I_{1-\overline{\alpha}}^{A})\to 1-\overline{\alpha}$.
\end{proof}

\subsection{\label{subsec:Proxy-MLE}Results for the proxy MLE}

In the setting described in Section \ref{sec:erranalysis}
we obtain a CLT for the quadratic functional $\c I_{\delta}^{P}$.
The proof uses very precise asymptotic
moment calculations and the fourth moment theorem
in Wiener chaos. Fundamental for this analysis is that $X_{\delta,x_{0}}(t)$
and $X_{\delta,x_{0}}(s)$ quickly decorrelate as $\delta^{-2}\abs{t-s}\to\infty$,
which is also predicted by the scaling limit in Corollary \ref{thm:convToHeatEq}.
Note that this method of proof might also cover time-discrete observations
of $X_{\delta,x_{0}}$ if the sampling frequency increases sufficiently
fast as $\delta\to0$, but this is not pursued here.

The next assumption gathers the conditions required for the analysis of the proxy MLE.

\begin{assumption}\label{ass:proxy}
Let $K=\Delta\tilde{K}$ for $\tilde{K}\in H^{4}(\R^{d})$
with compact support and let $B=M_{\sigma}$ with $\sigma\in C^{1}(\R^{d})$.
Grant Assumption \aswithargsref{assu:X0}{\tilde{K}}{3}, and
for $d=1$ assume $\theta\in C^{1+\alpha'}(\overline{\Lambda})$ for $\alpha'>1/2$ and $\int_{\R}\tilde K(x)dx=0$.
\end{assumption}

The following proposition is proved in Appendix \ref{app:1}.

\begin{prop}
\label{prop:I_P_CLT}
Grant Assumption \ref{ass:proxy}. Then for $\delta\rightarrow0$:
\begin{align*}
\delta^{-1}(\c I_{\delta}^{P}-\theta(x_{0})^{-1}C_{T,K}) & \r dN\bigg(-\theta^{-2}(x_{0})C_{T,K}\mu_{1}^{P},\theta^{-3}(x_{0})C_{T,K}^{2}\Sigma^{P}\bigg),\\
\text{with }C_{T,K} & =\frac{T}{2}\sigma^{2}(x_{0})\norm{\nabla\tilde{K}}_{L^{2}(\R^{d})}^{2},\\
\mu_{1}^{P} & =-\frac{\theta^{2}(x_{0})}{\sigma^{2}(x_{0})}\norm{\nabla\tilde{K}}_{L^2(\R^d)}^{-2}\Big\langle\sc{\nabla(\tfrac{\sigma^{2}}{\theta})(x_{0})}x_{\R^{d}},|\nabla\tilde{K}|^{2}\Big\rangle_{L^{2}(\R^{d})},\\
\Sigma^{P} & =\frac{4}{T}\norm{\nabla\tilde{K}}_{L^{2}(\R^{d})}^{-4}\int_{0}^{\infty}\norm{\nabla e^{(s/2)\Delta}\tilde{K}}_{L^{2}(\R^{d})}^{4}ds.
\end{align*}
\end{prop}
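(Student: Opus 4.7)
The plan is to split $\c I_{\delta}^{P}$ into a deterministic mean contribution (from $X_{0}$) and a centered Gaussian quadratic form, analyse each separately, and derive the CLT for the centered part via the Nualart--Peccati fourth moment theorem. First I would write $X_{\delta,x_{0}}(t)=m_{\delta}(t)+\tilde X_{\delta}(t)$ with $m_{\delta}(t)=\sc{S_{\theta}(t)X_{0}}{K_{\delta,x_{0}}}$ and exploit $K=\Delta\tilde K$ together with Assumption \aswithargsref{assu:X0}{\tilde K}{3} to conclude $\delta^{-2}\int_{0}^{T}m_{\delta}(t)^{2}dt=o(\delta)$. A Cauchy-Schwarz bound against the order-$\delta^{2}$ second moment of $\tilde X_{\delta}$ (from Step 2 below) renders the cross term $2\delta^{-2}\int_{0}^{T}m_{\delta}\tilde X_{\delta}dt$ negligible in $\P$ as well, so it suffices to analyse $\tilde{\c I}_{\delta}^{P}:=\delta^{-2}\int_{0}^{T}\tilde X_{\delta}(t)^{2}dt$.

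For the mean, It\^o's isometry and Lemma \ref{lem:scaling} give
\[
\delta^{-2}\E[\tilde X_{\delta}(t)^{2}]=\int_{0}^{t/\delta^{2}}\bnorm{B_{\delta,x_{0}}^{*}S_{\theta,\delta,x_{0}}^{*}(u)K}_{L^{2}(\Lambda_{\delta,x_{0}})}^{2}du.
\]
Proposition \ref{prop:semigroup_convergence} and Assumption \ref{assu:B} provide pointwise convergence of the integrand to $\|\sigma(x_{0})e^{\theta(x_{0})u\Delta}K\|_{L^{2}(\R^{d})}^{2}$, dominated by the exponential bound in Proposition \ref{prop:semigroup_convergence}(i). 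Integrating over $[0,T]$ together with the identity $\Psi(\Delta\tilde K,\Delta\tilde K)=\tfrac12\sigma(x_{0})^{2}\|\nabla\tilde K\|_{L^{2}(\R^{d})}^{2}$ from Example \ref{exa:B_multiplicationOp}(a) yields the zeroth-order limit $\theta(x_{0})^{-1}C_{T,K}$. To extract the $O(\delta)$ correction I would Taylor expand $\sigma(x_{0}+\delta y)^{2}=\sigma(x_{0})^{2}+\delta\sc{\nabla\sigma^{2}(x_{0})}{y}_{\R^{d}}+O(\delta^{2})$ inside $B_{\delta,x_{0}}^{*}$ and perform a Duhamel expansion $S_{\theta,\delta,x_{0}}^{*}(u)K=e^{\theta(x_{0})u\Delta}K+\delta\int_{0}^{u}e^{\theta(x_{0})(u-r)\Delta}\c A^{(1)}e^{\theta(x_{0})r\Delta}K\,dr+O(\delta^{2})$, where $\c A^{(1)}$ collects the first-order pieces of $A_{\theta,\delta,x_{0}}^{*}-\theta(x_{0})\Delta$ produced by $\nabla\theta(x_{0})$ and $a(x_{0})$. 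Using $K=\Delta\tilde K$ and repeated integration by parts against $\tilde K$, the two $O(\delta)$ contributions recombine into the single factor $\nabla(\sigma^{2}/\theta)(x_{0})$ appearing in $\mu_{1}^{P}$.

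For the centered part, $\tilde{\c I}_{\delta}^{P}-\E[\tilde{\c I}_{\delta}^{P}]$ lies in the second Wiener chaos of $W$, with variance $2\delta^{-4}\int_{0}^{T}\!\int_{0}^{T}c_{\delta}(t,s)^{2}dt\,ds$ where $c_{\delta}(t,s)=\Cov(\tilde X_{\delta}(t),\tilde X_{\delta}(s))$. The same scaling as for the mean, together with dominated convergence, yields
\[
\delta^{-2}c_{\delta}(s+\delta^{2}\tau,s)\to\frac{\sigma(x_{0})^{2}}{2\theta(x_{0})}\bnorm{\nabla e^{\theta(x_{0})\tau\Delta/2}\tilde K}_{L^{2}(\R^{d})}^{2},
\]
and after the substitution $\eta=\theta(x_{0})\tau$ this gives $\delta^{-2}\Var(\tilde{\c I}_{\delta}^{P})\to\theta(x_{0})^{-3}C_{T,K}^{2}\Sigma^{P}$. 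A direct power-count of the same flavour shows $\operatorname{tr}(C_{\delta}^{4})/\operatorname{tr}(C_{\delta}^{2})^{2}=O(\delta^{2})$ for the covariance operator $C_{\delta}$ of $\tilde X_{\delta}$ on $L^{2}([0,T])$, so the fourth moment theorem of \citet{Nualart:2005by} delivers $\delta^{-1}(\tilde{\c I}_{\delta}^{P}-\E[\tilde{\c I}_{\delta}^{P}])\r{d}N(0,\theta(x_{0})^{-3}C_{T,K}^{2}\Sigma^{P})$. Slutsky combined with the mean analysis finishes the proof.

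The hard part will be the mean correction: keeping simultaneous track of the $O(\delta)$ contributions from $B_{\delta,x_{0}}^{*}$, $\Delta_{\theta(x_{0}+\delta\cdot)}$ and $A_{0,\delta,x_{0}}^{*}$, showing that after integration by parts against $\tilde K$ they reorganise into $\nabla(\sigma^{2}/\theta)(x_{0})$, and controlling remainders uniformly in $u\in[0,T/\delta^{2}]$ via the heat-kernel tail bound of Proposition \ref{prop:semigroup_convergence}(i). In $d=1$ the slower heat-kernel decay forces the additional assumptions $\int_{\R}\tilde K(x)dx=0$ and $\theta\in C^{1+\alpha'}(\overline\Lambda)$ with $\alpha'>1/2$, exactly as in Proposition \ref{prop:bias_A_with_delta}.
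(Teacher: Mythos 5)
Your proposal follows the same route as the paper: split off the initial-condition mean, analyse the centered quadratic form in the second Wiener chaos via the Nualart--Peccati fourth moment theorem, and extract the $O(\delta)$ bias from an asymptotic expansion of the integrated variance (which the paper carries out in Proposition~\ref{prop:cov_asymp_special} via a variation-of-parameters decomposition equivalent to your Duhamel expansion). The variance limit and the trace-ratio criterion you invoke are the same as the paper's $\Var(Z_\delta)\to\Sigma$, $\E[Z_\delta^4]\to3\Sigma^2$ verification.

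One step as you have written it does not close: the cross term. You bound
\[
\Big|2\delta^{-2}\int_0^T m_\delta\tilde X_\delta\,dt\Big|\le 2\delta^{-2}\Big(\int_0^T m_\delta^2\Big)^{1/2}\Big(\int_0^T\tilde X_\delta^2\Big)^{1/2}
= 2\delta^{-2}\,o(\delta^{3/2})\,O_\P(\delta)=o_\P(\delta^{1/2}),
\]
which is not $o_\P(\delta)$, so at this stage the cross term is not yet negligible at the CLT scale. What saves it is the fast decorrelation of $\tilde X_\delta$ encoded in the Hilbert--Schmidt norm of its covariance operator $C_\delta$ on $L^2([0,T])$: since $\norm{C_\delta}_{HS}^2=\int_0^T\!\int_0^T c_\delta(t,s)^2\,dt\,ds=\tfrac12\Var\big(\int_0^T\tilde X_\delta^2\big)=O(\delta^6)$ by Proposition~\ref{prop:var_with_delta}(iv), the cross term is a centered Gaussian with variance $4\delta^{-4}\sc{m_\delta}{C_\delta m_\delta}\le 4\delta^{-4}\norm{m_\delta}^2\norm{C_\delta}_{HS}= \delta^{-4}\,o(\delta^3)\,O(\delta^3)=o(\delta^2)$, hence $o_\P(\delta)$. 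So you need the $O(\delta^3)$ Hilbert--Schmidt bound rather than the $O(\delta)$ bound coming from the pointwise second moment; without it the argument stalls. Everything else in your outline (the Taylor/Duhamel bookkeeping for $\mu_1^P$, the $d=1$ extra assumptions, the substitution $\eta=\theta(x_0)\tau$ yielding $\Sigma^P$) matches the paper's proof.
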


This yields asymptotic normality for the proxy MLE $\hat{\theta}_{\delta}^{P}(x_{0})$.

\begin{thm}
\label{thm:ProxyMLE_CLT} Grant Assumption \ref{ass:proxy}. Then for $\delta\rightarrow0$:
\begin{align*}
\delta^{-1}\left(\hat{\vartheta}_{\delta}^{P}\left(x_{0}\right)-\theta\left(x_{0}\right)\right) & \r dN\left(\mu_{1}^{P}+\mu_{2}^{P},\theta(x_{0})\Sigma^{P}\right),\\
\text{with }\mu_{2}^{P} & =\tfrac{\theta(x_{0})}{\sigma^{2}(x_{0})}\norm K_{L^{2}(\R^{d})}^{-2}\sc{\sc{\nabla\sigma^{2}(x_{0})}x_{\R^{d}}}{\left|K\right|^{2}}_{L^{2}(\R^{d})},
\end{align*}
and with $\mu_{1}^{P}$, $\Sigma^{P}$ from Proposition \ref{prop:I_P_CLT}.
\end{thm}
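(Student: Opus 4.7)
The quadratic variation $\langle X_{\delta,x_0}\rangle_T = T\|B^*K_{\delta,x_0}\|^2$, visible from \eqref{eq:tested_eq}, is purely deterministic, so the proxy MLE factorises as $\hat\vartheta_\delta^P = N_\delta/\c I_\delta^P$ with deterministic numerator
\begin{equation*}
N_\delta \;:=\; \frac{\|\nabla\tilde K\|_{L^2(\R^d)}^2}{2\|K\|_{L^2(\R^d)}^2}\,T\,\|B^*K_{\delta,x_0}\|^2.
\end{equation*}
The plan is to combine an $O(\delta)$-expansion of $N_\delta$ with the Gaussian limit of $\delta^{-1}(\c I_\delta^P - D_0)$ supplied by Proposition \ref{prop:I_P_CLT}, where $D_0 := \theta(x_0)^{-1}C_{T,K}$. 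Writing $c := \|\nabla\tilde K\|_{L^2(\R^d)}^2/(2\|K\|_{L^2(\R^d)}^2)$, a short calculation shows that the leading term $N_0 := cT\sigma^2(x_0)\|K\|_{L^2(\R^d)}^2$ equals $C_{T,K}$, so $N_0/D_0 = \theta(x_0)$ and consistency is immediate.

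For the numerator, the substitution $y = \delta^{-1}(x-x_0)$ together with $B^*=M_\sigma$ gives
\begin{equation*}
\|B^*K_{\delta,x_0}\|^2 \;=\; \int_{\R^d}\sigma^2(x_0+\delta y)|K(y)|^2\,dy.
\end{equation*}
Because $\sigma\in C^1(\R^d)$ and $K$ has compact support, a first-order Taylor expansion yields
\begin{equation*}
\|B^*K_{\delta,x_0}\|^2 \;=\; \sigma^2(x_0)\|K\|_{L^2(\R^d)}^2 \;+\; \delta\,\langle\langle\nabla\sigma^2(x_0),\cdot\rangle_{\R^d},|K|^2\rangle_{L^2(\R^d)} \;+\; o(\delta),
\end{equation*}
and hence $\delta^{-1}(N_\delta - N_0) \to D_0\,\mu_2^P$ after substituting the definitions of $c$, $C_{T,K}$ and $\mu_2^P$.

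To assemble the pieces I would use the algebraic identity
\begin{equation*}
\hat\vartheta_\delta^P - \theta(x_0) \;=\; \frac{N_\delta - N_0}{\c I_\delta^P} \;-\; \frac{N_0}{D_0\,\c I_\delta^P}\,(\c I_\delta^P - D_0),
\end{equation*}
multiply by $\delta^{-1}$, and invoke Slutsky's lemma together with $\c I_\delta^P \xrightarrow{\P} D_0$ (a by-product of Proposition \ref{prop:I_P_CLT}). The first term converges in probability to $\mu_2^P$. The second converges in distribution to $-(N_0/D_0^2)Z$ with $Z \sim N(-\theta(x_0)^{-2}C_{T,K}\mu_1^P,\theta(x_0)^{-3}C_{T,K}^2\Sigma^P)$; plugging in $N_0/D_0^2 = \theta(x_0)^2/C_{T,K}$ converts the mean of $-(N_0/D_0^2)Z$ into $\mu_1^P$ and the variance into $\theta(x_0)\Sigma^P$, producing exactly the claimed limit.

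The stochastic heavy lifting is already done in Proposition \ref{prop:I_P_CLT}, so no genuine obstacle remains. The only new phenomenon is the extra bias $\mu_2^P$ stemming from the Taylor expansion of $\sigma^2$ seen through the kernel; it vanishes when $\sigma$ is constant, in which case the numerator $N_\delta$ is $\delta$-independent and the CLT is governed entirely by $\c I_\delta^P$. What remains is careful bookkeeping of the constants via the identity $c\sigma^2(x_0)\|K\|_{L^2(\R^d)}^2 = C_{T,K}/T$.
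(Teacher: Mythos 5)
Your proof is correct and follows essentially the same route as the paper: both decompositions $\hat\vartheta_\delta^P-\theta(x_0)=\frac{N_\delta-N_0}{\c I_\delta^P}-\frac{N_0}{D_0\,\c I_\delta^P}(\c I_\delta^P-D_0)$ and the paper's $(\c I_\delta^P)^{-1}D_{T,K}\,\delta^{-1}(\norm{\sigma K_{\delta,x_0}}^2-D_{T,K}^{-1}C_{T,K})+C_{T,K}\,\delta^{-1}((\c I_\delta^P)^{-1}-\theta(x_0)C_{T,K}^{-1})$ are algebraically identical, and your step of replacing $\c I_\delta^P$ by $D_0$ in the denominator via Slutsky is exactly what the delta method for $x\mapsto 1/x$ does in the paper. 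The Taylor expansion of $\norm{B^*K_{\delta,x_0}}^2$ to extract $\mu_2^P$ and the appeal to Proposition \ref{prop:I_P_CLT} match the paper's argument, so this is the same proof with minor cosmetic rearrangement of the constants.
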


\begin{proof}
Recall the quadratic variation $\langle X_{\delta,x_{0}}\rangle_{T}=T\norm{\sigma K_{\delta,x_{0}}}^{2}$,
the constant $C_{T,K}$ from Proposition \ref{prop:I_P_CLT} and set
\begin{align*}
D_{T,K} & =\frac{T}{2}\norm{\nabla\tilde{K}}_{L^{2}(\R^{d})}^{2}\norm K_{L^{2}(\R^{d})}^{-2}.
\end{align*}
Write $\hat{\theta}_{\delta}^{P}(x_{0})=(\c I_{\delta}^{P})^{-1}D_{T,K}\norm{\sigma K_{\delta,x_{0}}}^{2}$
and decompose
\begin{align*}
\delta^{-1}(\hat{\vartheta}_{\delta}^{P}(x_{0})-\theta(x_{0})) & =(\c I_{\delta}^{P})^{-1}D_{T,K}\delta^{-1}(\norm{\sigma K_{\delta,x_{0}}}^{2}-D_{T,K}^{-1}C_{T,K})\\
 & \quad\quad\quad+C_{T,K}\delta^{-1}((\c I_{\delta}^{P})^{-1}-\theta(x_{0})C_{T,K}^{-1}).
\end{align*}
From the compact support of $K$ we infer for $\delta\to 0$
\begin{align*}
\tfrac{\norm{\sigma K_{\delta,x_{0}}}^{2}-D_{T,K}^{-1}C_{T,K}}{\delta} & =\sc{\tfrac{\sigma^{2}(x_{0}+\delta\cdot)-\sigma^{2}(x_{0})}{\delta}K}K_{L^{2}(\R^{d})}\to \sc{\sc{\nabla\sigma^{2}(x_{0})}x_{\R^{d}}}{\left|K\right|^{2}}_{L^{2}(\R^{d})}.
\end{align*}
Proposition \ref{prop:I_P_CLT} and the
delta method (\citet[Theorem 7]{Ferguson:1996up}) give
\begin{align*}
\delta^{-1}\left((\c I_{\delta}^{P})^{-1}-\theta(x_{0})C_{T,K}^{-1}\right) & \r dN\left(C_{T,K}^{-1}\mu_{1}^{P},\theta(x_{0})C_{T,K}^{-2}\Sigma^{P}\right),
\end{align*}
and, in particular, $(\c I_{\delta}^{P})^{-1}\r{\P}\theta(x_{0})C_{T,K}^{-1}$.
The theorem follows from Slutsky's lemma.
\end{proof}
The dependencies on $\delta,T,\theta,K$ in the CLT are similar as
for $\hat{\theta}_{\delta}^{A}(x_{0})$. It is interesting to note
that the asymptotic bias depends locally at $x_0$ on $\sigma^2, \theta$ and their gradients, while $a$, $b$ do not appear at all. The asymptotic bias vanishes when $\frac{\sigma^{2}}{\theta}$
and $\sigma^{2}$ are constant, but also similar to Example \ref{exa:cont}
if $|\nabla\tilde{K}(-x)|=|\nabla\tilde{K}(x)|$, $|K(-x)|=|K(x)|$,
$x\in\R^{d}$. As for the augmented MLE in Corollary \ref{cor:confInterval_A},
we obtain an asymptotic $(1-\overline{\alpha})$-confidence interval.

\begin{cor}
\label{cor:confInterval_P} Grant Assumption \ref{ass:proxy}, suppose
$\mu_{1}^{P}+\mu_{2}^{P}=0$ and let $\overline{\alpha}\in(0,1)$. Then the confidence
interval for $\theta(x_{0})$
\[
I_{1-\overline{\alpha}}^{P}=\left[\hat{\vartheta}_{\delta}^{P}(x_{0})-\delta(\hat{\vartheta}_{\delta}^{P}(x_{0})\Sigma^{P})^{1/2}q_{1-\overline{\alpha}/2},\hat{\vartheta}_{\delta}^{P}(x_{0})+\delta(\hat{\vartheta}_{\delta}^{P}(x_{0})\Sigma^{P})^{1/2}q_{1-\overline{\alpha}/2}\right],
\]
with the standard normal $(1-\overline{\alpha}/2)$-quantile $q_{1-\overline{\alpha}/2}$,
has asymptotic coverage $1-\overline{\alpha}$ for $\delta\to0$.
\end{cor}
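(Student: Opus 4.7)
The plan is to mirror essentially verbatim the argument used to prove Corollary \ref{cor:confInterval_A} for the augmented MLE, since the structure of the two statements is identical. The central ingredient is Theorem \ref{thm:ProxyMLE_CLT}, which under Assumption \ref{ass:proxy} together with the bias-cancellation hypothesis $\mu_{1}^{P}+\mu_{2}^{P}=0$ directly yields
\[
\delta^{-1}(\hat{\vartheta}_{\delta}^{P}(x_{0})-\theta(x_{0}))\xrightarrow{d}N(0,\theta(x_{0})\Sigma^{P}),\quad\delta\to0,
\]
and in particular the consistency $\hat{\vartheta}_{\delta}^{P}(x_{0})\xrightarrow{\PP}\theta(x_{0})$.

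Next, I would studentize by replacing the unknown $\theta(x_{0})$ appearing inside the asymptotic standard deviation by the estimator $\hat{\vartheta}_{\delta}^{P}(x_{0})$. The continuous mapping theorem applied to the map $\vartheta\mapsto(\vartheta\Sigma^{P})^{1/2}$ gives $(\hat{\vartheta}_{\delta}^{P}(x_{0})\Sigma^{P})^{1/2}\xrightarrow{\PP}(\theta(x_{0})\Sigma^{P})^{1/2}>0$; strict positivity of $\Sigma^{P}$ is manifest from its representation in Proposition \ref{prop:I_P_CLT} as a positive multiple of $\int_{0}^{\infty}\norm{\nabla e^{(s/2)\Delta}\tilde K}_{L^{2}(\R^{d})}^{4}ds$, which is nonzero by the nondegeneracy of $\tilde K$ built into Assumption \ref{ass:proxy}. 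Slutsky's lemma then delivers
\[
\delta^{-1}(\hat{\vartheta}_{\delta}^{P}(x_{0})\Sigma^{P})^{-1/2}(\hat{\vartheta}_{\delta}^{P}(x_{0})-\theta(x_{0}))\xrightarrow{d}N(0,1).
\]
The coverage conclusion is then read off by observing that $\{\theta(x_{0})\in I_{1-\overline{\alpha}}^{P}\}$ is, by construction of $I_{1-\overline{\alpha}}^{P}$, exactly the event that the left-hand side has absolute value at most $q_{1-\overline{\alpha}/2}$; its probability therefore converges to $\PP(|Z|\leq q_{1-\overline{\alpha}/2})=1-\overline{\alpha}$ for $Z\sim N(0,1)$.

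There is essentially no obstacle here once Theorem \ref{thm:ProxyMLE_CLT} is in hand---this is a textbook studentization argument, structurally identical to the proof of Corollary \ref{cor:confInterval_A}. The only minor bookkeeping worth flagging is the positivity of $\Sigma^{P}$ needed to make the studentized quantity well defined in the limit, and this is addressed above.
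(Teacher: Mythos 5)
Your proof is correct and is essentially the argument the paper intends: it omits a proof of Corollary~\ref{cor:confInterval_P} precisely because the studentization argument is identical to the one given explicitly for Corollary~\ref{cor:confInterval_A} (CLT from Theorem~\ref{thm:ProxyMLE_CLT} with vanishing bias, consistency, Slutsky's lemma). Your extra remark on the strict positivity of $\Sigma^{P}$ is a harmless piece of bookkeeping the paper leaves implicit, and it is indeed guaranteed since $\Psi(\Delta K,\Delta K)>0$ in Assumption~\ref{assu:B} forces $K=\Delta\tilde K\neq 0$, hence $\nabla\tilde K\neq 0$.
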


Let us finally compare the  variance factor $\Sigma^{P}$ to $\Sigma^{A}$
from Theorem \ref{thm:augMLE_CLT}.

\begin{lem}
Under Assumption \ref{ass:proxy} the asymptotic variances of $\hat{\vartheta}_{\delta}^{P}$ and $\hat{\vartheta}_{\delta}^{A}$ always satisfy $\theta(x_0)\Sigma^{P}\geq \theta(x_0)\Sigma^{A}$.
\end{lem}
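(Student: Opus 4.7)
My plan is to recast both asymptotic variances in Fourier-analytic form, whereupon $\Sigma^{P}\geq\Sigma^{A}$ reduces to an elementary convexity inequality combined with Cauchy--Schwarz.

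First, under Assumption \ref{ass:proxy} with $B=M_{\sigma}$ and $K=\Delta\tilde K$, Example \ref{exa:cont} (applied with the kernel $\Delta\tilde K$ in place of $K$) yields $\Sigma^{A}=2\norm{\Delta\tilde K}_{L^{2}(\R^{d})}^{2}/(T\norm{\nabla\Delta\tilde K}_{L^{2}(\R^{d})}^{2})$. Using Plancherel together with the Fourier multiplier identity $\widehat{e^{(s/2)\Delta}\tilde K}(\xi)=e^{-(s/2)|\xi|^{2}}\hat{\tilde K}(\xi)$, I set $g(\xi):=|\hat{\tilde K}(\xi)|^{2}$ (absorbing the overall Plancherel constant, which cancels in every ratio below) and define $a:=\int|\xi|^{2}g\,d\xi$, $b:=\int|\xi|^{4}g\,d\xi$, $c:=\int|\xi|^{6}g\,d\xi$. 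These moments are finite since $\tilde K\in H^{4}(\R^{d})$ has compact support, and then $\Sigma^{A}=2b/(Tc)$.

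Second, by expanding the square, applying Fubini, and using the elementary identity $\int_{0}^{\infty}e^{-s(u+v)}\,ds=1/(u+v)$, I obtain
\[
\int_{0}^{\infty}\norm{\nabla e^{(s/2)\Delta}\tilde K}_{L^{2}(\R^{d})}^{4}\,ds
=\iint\frac{|\xi|^{2}|\eta|^{2}}{|\xi|^{2}+|\eta|^{2}}\,g(\xi)g(\eta)\,d\xi\,d\eta.
\]
Introducing the probability density $p(\xi):=a^{-1}|\xi|^{2}g(\xi)$ on $\R^{d}$ and letting $U=|\xi|^{2}$, $V=|\eta|^{2}$ be i.i.d.\ under $p$, one computes $\E U=b/a$, $\E U^{2}=c/a$, and both variances admit the compact representations
\[
\Sigma^{P}=\tfrac{4}{T}\,\E\!\left[\tfrac{1}{U+V}\right],\qquad
\Sigma^{A}=\tfrac{2\E U}{T\,\E U^{2}}.
\]
Hence the desired inequality is equivalent to $2\,\E[1/(U+V)]\geq \E U/\E U^{2}$.

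Finally, since $v\mapsto 1/(u+v)$ is convex on $(0,\infty)$, Jensen's inequality applied successively (conditioning on $U$, then integrating) yields
\[
\E[1/(U+V)]\geq \E[1/(U+\E V)]\geq 1/(\E U+\E V)=1/(2\E U),
\]
so $2\,\E[1/(U+V)]\geq 1/\E U$. Combining this with the Cauchy--Schwarz estimate $(\E U)^{2}\leq \E U^{2}$, which is equivalent to $1/\E U\geq \E U/\E U^{2}$, gives $\Sigma^{P}\geq\Sigma^{A}$, and multiplying by $\theta(x_{0})>0$ concludes the proof. The only non-routine step is the reduction of $\int_{0}^{\infty}\norm{\nabla e^{(s/2)\Delta}\tilde K}^{4}ds$ to the double integral $\iint|\xi|^{2}|\eta|^{2}(|\xi|^{2}+|\eta|^{2})^{-1}g(\xi)g(\eta)\,d\xi\,d\eta$; once that Fourier rewriting is in hand, the inequality collapses to a two-line convexity argument.
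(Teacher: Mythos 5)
Your proof is correct, and it takes a genuinely different route from the paper's. The paper works abstractly with tensor products: writing $f=(-\Delta)^{1/2}\tilde K$ and identifying $L^2(\R^d)\otimes L^2(\R^d)$ with $L^2(\R^{2d})$, it shows
\[
\int_0^\infty\norm{e^{(s/2)\Delta}f}_{L^2}^4\,ds=\norm{(-\Delta\oplus\Delta)^{-1/2}(f\otimes f)}_{L^2(\R^{2d})}^2,
\]
and then applies the duality inequality $\norm{A^{-1/2}g}^2\norm{A^{1/2}g}^2\ge\norm{g}^4$ twice --- once for $A=-\Delta\oplus\Delta$ on $L^2(\R^{2d})$ and once for $A=-\Delta$ on $L^2(\R^d)$ --- to pass from $\Sigma^P$ to $\Sigma^A$. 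You instead diagonalize everything in the Fourier domain: with $g(\xi)=|\hat{\tilde K}(\xi)|^2$ and moments $a,b,c$ you rewrite $\Sigma^P=\tfrac{4}{T}\E[1/(U+V)]$ and $\Sigma^A=\tfrac{2\E U}{T\E U^2}$ for i.i.d.\ $U,V$ under the tilted density $p\propto|\xi|^2 g(\xi)$, and then finish with Jensen and Cauchy--Schwarz. These two arguments are in fact Fourier-duals of one another: your bound $\E[1/(U+V)]\ge 1/(2\E U)$ is precisely the paper's first duality step after applying Plancherel, and $(\E U)^2\le\E U^2$ is the Plancherel form of the second. What your version buys is that the heat-semigroup and tensor-product machinery disappears entirely --- the argument reduces to elementary integration and two convexity inequalities --- and the probabilistic reformulation makes the structure of the inequality much more transparent. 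Two small remarks: your two-step Jensen (condition on $U$, then integrate) can be collapsed to a single application of Jensen to the convex map $x\mapsto 1/x$ with $x=U+V$, and it is worth noting explicitly that $a=\norm{\nabla\tilde K}^2>0$ and $c=\norm{\nabla K}^2>0$ (both forced by the non-degeneracy $K\ne 0$), so that $p$ is a genuine probability density and $\Sigma^A$ is well defined.
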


\begin{proof}
Using the tensor products $\Delta\otimes\Delta$, $f\otimes f$ and
$\Delta\oplus\Delta:=I\otimes\Delta+\Delta\otimes I$, we can write
for $f\in L^{2}(\R^{d})$, identifying $L^{2}(\R^{d})\otimes L^2(\R^{d})=L^{2}(\R^{2d})$,
\begin{align*}
\int_{0}^{\infty}\norm{e^{(s/2)\Delta}f}_{L^{2}(\R^{d})}^{4}ds & =\int_{0}^{\infty}\scapro{(e^{s\Delta}\otimes e^{s\Delta})(f\otimes f)}{f\otimes f}_{L^{2}(\R^{2d})}ds\\
 & =\int_{0}^{\infty}\scapro{e^{s(\Delta\oplus\Delta)}(f\otimes f)}{f\otimes f}_{L^{2}(\R^{2d})}ds\\
 & =\norm{(-\Delta\oplus\Delta)^{-1/2}(f\otimes f)}_{L^{2}(\R^{2d})}^{2},
\end{align*}
provided the last norm is finite, e.g. if $f=(-\Delta)^{1/2}\tilde{K}$.
With this $f$ we conclude via two duality arguments, using $\Delta\tilde K=K$:
\begin{align*}
\Sigma^{P} & =\frac{4}{T}\frac{\norm{(-\Delta\oplus\Delta)^{-1/2}(f\otimes f)}_{L^{2}(\R^{2d})}^{2}}{\norm{f\otimes f}_{L^{2}(\R^{2d})}^{2}}\ge\frac{4}{T}\frac{\norm{f\otimes f}_{L^{2}(\R^{2d})}^{2}}{\norm{(-\Delta\oplus\Delta)^{1/2}(f\otimes f)}_{L^{2}(\R^{2d})}^{2}}\\
 & =\frac{4}{T}\frac{\norm{f\otimes f}_{L^{2}(\R^{2d})}^{2}}{\scapro{(-\Delta\oplus\Delta)(f\otimes f)}{f\otimes f}_{L^{2}(\R^{2d})}}=\frac{2}{T}\frac{\norm{(-\Delta)^{1/2}\tilde{K}}_{L^{2}(\R^{d})}^{2}}{\norm K_{L^{2}(\R^{d})}^{2}}\\
 & \ge\frac{2}{T}\frac{\norm K_{L^{2}(\R^{d})}^{2}}{\norm{(-\Delta)^{1/2}K}_{L^{2}(\R^{d})}^{2}}=\Sigma^{A},
\end{align*}
which yields the result.
\end{proof}

Consequently, the proxy MLE has a larger variance than the augmented
MLE, but the loss in precision is not severe if $K$ has a well concentrated
Fourier spectrum (consider $\Delta$ as a multiplier in the Fourier domain).

Let us point out that in the one-dimensional parametric case with $A_\theta=\theta\partial_{xx}+a\partial_x+b$ and $B=M_\sigma$ for constant $\sigma$, \citet{Bibinger:2019de} construct least-squares estimators for $\sigma^2/\sqrt{\theta}$ and $a/\theta$ from discrete high frequency observations in time at two spatial points $x_1,x_2$. Compared to this, the proxy MLE uses spatial averages of the solution in infinitesimally small neighbourhoods, observed continuously in time, to estimate $\theta$ itself, without having to know $a$ or $\sigma$. A similar phenomenon has been observed by \citet{Cialenco:2019ci} for discrete observations when $a=0$, but they achieve only consistent estimation of $\theta$ and $\sigma$. A more profound comparison of both approaches would be highly desirable.

\subsection{Rate optimality}

Let us address the question of optimality of the above estimators
by providing a minimax lower bound. For minimax lower bounds it suffices
to consider a subclass of the original model and we thus assume here
that $X_{\delta,x_{0}}$ is observed with $A_{\theta}=\theta\Delta$,
$B=I$ and a stationary initial condition $X_{\delta,x_{0}}$. Then
the following result establishes that the rate of convergence $\delta$
is optimal and gives some lower bound for the dependence on $T$,
$\theta$ and $K$.
\begin{prop}
\label{prop:lowerBound}Assume $A_{\theta}=\theta\Delta$, $\theta>0$,
$B=I$, $K\in H^{1}(\R^{d})$ with compact support and that $X_{\delta,x_{0}}$
is stationary. For $\theta_{0}>0$ and $\delta\to0$ we have the asymptotic
local lower bound for the root mean squared error
\[
\inf_{\hat{\theta}}\sup_{\theta\in[\theta_{0},\theta_{0}(1+\delta)]}\E_{\theta}\left[(\hat{\theta}-\theta)^{2}\right]^{1/2}\ge\bar{c}\left(\frac{(\theta_{0}\wedge1)\norm{(I-\Delta)^{-1}K}_{L^{2}(\R^{d})}^{2}}{\sqrt{T}(\norm K_{L^{2}(\R^{d})}^{2}+\norm{\nabla K}_{L^{2}(\R^{d})}^{2})}\right)\delta,
\]
where $\Delta$ is the Laplace operator on $L^2(\R^d)$, $\bar{c}>0$ is some constant and the infimum is taken over
all estimators $\hat{\theta}$ based on observing $X_{\delta,x_{0}}$.
\end{prop}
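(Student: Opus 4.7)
The plan is a standard two-hypothesis Le Cam reduction. Pick $\theta_{1}=\theta_{0}(1+c\delta)$ with $c>0$ to be calibrated, set equal to the prefactor in the statement at the end. By the two-point minimax lemma (e.g.\ Theorem~2.2 in Tsybakov), it suffices to show $\KL(\P_{\theta_{0}}^{\delta,x_{0}}\|\P_{\theta_{1}}^{\delta,x_{0}})\le\kappa$ for a universal $\kappa<2$; the RMSE lower bound then follows with constant of order $c$.

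Under the stationary parametric setup, $X_{\delta,x_{0}}$ is a centred stationary Gaussian process on $[0,T]$ with autocovariance $c_{\theta}(\tau)=\frac{1}{2\theta}\sc{e^{\theta|\tau|\Delta}(-\Delta)^{-1}K_{\delta,x_{0}}}{K_{\delta,x_{0}}}$ and spectral density $f_{\theta}(\omega)=\sc{(\theta^{2}A^{2}+\omega^{2})^{-1}K_{\delta,x_{0}}}{K_{\delta,x_{0}}}$ with $A=-\Delta|_{\Lambda}$. Exploiting the rescaling of Lemma~\ref{lem:scaling} and passing to free space as $\Lambda_{\delta,x_{0}}\uparrow\R^{d}$ (cf.\ Proposition~\ref{prop:semigroup_convergence}(ii)), one obtains $f_{\theta}(\omega)=(\delta^{4}/\theta^{2})f^{\ast}(\omega\delta^{2}/\theta)$ with the $\delta$-independent function $f^{\ast}(v)=\int_{\R^{d}}|\hat{K}(\xi)|^{2}(|\xi|^{4}+v^{2})^{-1}d\xi$. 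The Gaussian KL formula $\frac{1}{2}[\trace(C_{1}^{-1}C_{0}-I)-\log\det(C_{1}^{-1}C_{0})]$ combined with $x-1-\log x\le(x-1)^{2}/2$ near $x=1$ controls $\KL$ by $\frac{1}{4}\|C_{1}^{-1/2}C_{0}C_{1}^{-1/2}-I\|_{\mathrm{HS}}^{2}$, and a Whittle-type representation or a direct resolvent expansion of the Toeplitz-like $C_{\theta}$ yields
\[
\KL(\P_{\theta_{0}}\|\P_{\theta_{1}})\;\le\;\bar{C}\,\frac{T(\theta_{1}-\theta_{0})^{2}}{\theta_{0}\delta^{2}}\,I^{\ast},\qquad I^{\ast}:=\int_{\R}\bigl(2+v(\log f^{\ast})'(v)\bigr)^{2}dv.
\]
Here the $\delta^{-2}$ factor arises from the change of variables $v=\omega\delta^{2}/\theta$ and matches the sharp $\delta^{2}$-scaling of the minimax risk.

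The crucial step is the $K$-dependent bound $I^{\ast}\le C(\|K\|^{2}+\|\nabla K\|^{2})^{2}/\|(I-\Delta)^{-1}K\|_{L^{2}(\R^{d})}^{4}$. Using the identity $2f^{\ast}+v(f^{\ast})'=2\int|\hat{K}(\xi)|^{2}|\xi|^{4}/(|\xi|^{4}+v^{2})^{2}d\xi$, a Cauchy--Schwarz argument on the spectral measure $|\hat{K}(\xi)|^{2}d\xi$ pointwise in $v$ controls $(2+v(\log f^{\ast})')^{2}$ via $\|K\|^{2}+\|\nabla K\|^{2}$ (absorbing the $|\xi|^{4}$ factor and giving the $v^{-4}$ decay at infinity), while the denominator $f^{\ast}(v)^{2}$ is minimised near $v=0$ by $\|(-\Delta)^{-1}K\|^{4}$, replaced by the universally finite $\|(I-\Delta)^{-1}K\|^{4}$ through a regularisation step. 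Substituting $(\theta_{1}-\theta_{0})=c\theta_{0}\delta$ into the KL bound yields $\bar{C}c^{2}T\theta_{0}I^{\ast}\le\kappa$, which determines the maximal admissible $c$ of order $\|(I-\Delta)^{-1}K\|^{2}/[\sqrt{T\theta_{0}}(\|K\|^{2}+\|\nabla K\|^{2})]$; together with $|\theta_{1}-\theta_{0}|=c\theta_{0}\delta$ and the elementary inequality $\sqrt{\theta_{0}}\ge\theta_{0}\wedge 1$, this produces the stated lower bound with the $(\theta_{0}\wedge 1)$ prefactor.

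The main obstacle is the sharp spectral estimate for $I^{\ast}$ with the correct free-space $K$-norms. The use of $(I-\Delta)^{-1}$ rather than $(-\Delta)^{-1}$ in the statement is essential: in low dimensions $(-\Delta)^{-1}K\notin L^{2}(\R^{d})$ for generic compactly supported $K$ (notably when $\int K\ne 0$), and the regularised operator delivers a uniformly finite quantity in all $d\ge 1$. A secondary technical issue is that the Whittle approximation to the Gaussian KL is only asymptotic in $T$, so a non-asymptotic expansion of $C_{\theta}$, or an equivalent direct continuous-time likelihood computation, is required to track the constants precisely; a domain-monotonicity comparison between the Dirichlet Laplacian on $\Lambda$ and the free-space Laplacian on $\R^{d}$ is needed to transfer the spectral estimates to the clean free-space norms appearing in the proposition.
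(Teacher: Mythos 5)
Your high-level reduction---a two-hypothesis Le Cam argument with separation $\theta_1-\theta_0\asymp\delta$ controlled through the spectral density of the stationary autocovariance---is the right idea, and the Fourier symbol $\scapro{(\theta^2\Delta^2+\omega^2I)^{-1}K_{\delta,x_0}}{K_{\delta,x_0}}$ you write down is exactly the one appearing in the paper's Lemma~\ref{lemH1Cov}. Two of your steps, however, are left as genuine gaps, and they are the load-bearing ones. First, the Whittle-type passage from the Gaussian KL formula to the spectral integral $\sim T(\theta_1-\theta_0)^2 I^\ast/(\theta_0\delta^2)$ is, as you yourself acknowledge, only asymptotic in the effective time horizon; making it non-asymptotic is essentially the whole difficulty, not a side issue. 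The paper avoids Whittle entirely: it bounds the squared Hellinger distance by $\norm{C_{\theta_0,\delta}^{-1}(C_{\theta_\delta,\delta}-C_{\theta_0,\delta})}_{HS(L^2([0,T\delta^{-2}]))}^2$ via Feldman--Hajek, and then factorizes this HS norm as a Maurin Sobolev-embedding factor $\norm{\iota_1}_{HS(H^1\to L^2)}^2\lesssim T\delta^{-2}$ times the $L^2\to H^1$ operator norm of the covariance ratio, which is a Fourier multiplier and equals $\sup_w(1+w^2)^{1/2}\abs{\mathrm{symbol}(w)}$. This weighted-sup bound replaces your $L^2$-integral $I^\ast$ and delivers the $\sqrt{T}\,\delta^{-1}$ scaling cleanly and non-asymptotically.

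Second, the estimate $I^\ast\lesssim(\norm K^2+\norm{\nabla K}^2)^2/\norm{(I-\Delta)^{-1}K}^4$ is asserted rather than derived, and the heuristic you give for it is off: $f^\ast$ attains its \emph{maximum} at $v=0$, not its minimum, so ``the denominator $f^\ast(v)^2$ is minimised near $v=0$'' cannot be the mechanism. With $K\in H^1$ only (the stated hypothesis) even the decay of $g(v)=2+v(\log f^\ast)'(v)$ at $\abs v\to\infty$ requires care, since $\norm{\Delta K}$ may be infinite and the naive expansion suggesting $g\sim v^{-2}$ does not apply. The paper instead bounds the sup of the symbol directly by functional calculus, obtaining $1\vee\norm{(-\Delta)^{1/2}K}^2/\norm{(I+\Delta^2)^{-1/2}K}^2$, and then invokes the spectral comparison $(I+\Delta^2)^{-1/2}\ge(I-\Delta)^{-1}$. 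Your remark that the regularised resolvent $(I-\Delta)^{-1}$ rather than $(-\Delta)^{-1}$ must appear (because $\norm{(-\Delta)^{-1}K}$ can be infinite in low dimension) is correct and matches the paper; the domain-to-$\R^d$ transfer you flag but defer is also needed, and the paper handles it via the limit $\norm{(I-A_{1,\delta})^{-1}K}_{L^2(\Lambda_\delta)}\to\norm{(I-\Delta)^{-1}K}_{L^2(\R^d)}$, which follows from the semigroup convergence in Proposition~\ref{prop:semigroup_convergence}.
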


\begin{proof}
The autocovariance function of the stationary process $(\delta^{-1}X_{\delta,x_{0}}(\delta^{2}t))_{t\in\R}$
is given by
\begin{align*}
c_{\theta,\delta}(t) & :=\delta^{-2}\E[X_{\delta,x_{0}}(\delta^{2}t)X_{\delta,x_{0}}(0)]\\
 & =\delta^{-2}\int_{-\infty}^{0}\scapro{S_{\theta}(\delta^{2}\abs t-s)K_{\delta,x_{0}}}{S_{\theta}(-s)K_{\delta,x_{0}}}ds\\
 & =\scapro{(-2A_{\theta,\delta,x_{0}})^{-1}S_{\theta,\delta,x_{0}}(\abs t)K}{K}_{L^{2}(\Lambda_{\delta,x_{0}})},
\end{align*}
using the scaling in Lemma \ref{lem:scaling} and $\frac{d}{ds}S_{\theta,\delta,x_{0}}(s)=A_{\theta,\delta,x_{0}}S_{\theta,\delta,x_{0}}(s)$
in the last line. The covariance operator for $\delta^{-1}X_{\delta,x_{0}}(\delta^{2}\cdot)$
on $L^{2}(\R)$ is obtained by convolution:
\begin{equation}
C_{\theta,\delta}f(t)=(c_{\theta,\delta}\ast f)(t),\quad t\in\R.\label{eq:cov_operator}
\end{equation}
The squared Hellinger distance $H^{2}(\theta,\theta_{0})$ between
two equivalent centered Gaussian measures can be bounded in terms
of the Hilbert-Schmidt norm of the covariance operators, see e.g.
the proof of the Feldman-Hajek Theorem in \citet[Theorem 2.25]{DaPrato:2014wq}.
For the laws of $(\delta^{-1}X_{\delta,x_{0}}(\delta^{2}t))_{t\in[0,T\delta^{-2}]}$
under $\theta_{0}$ and $\theta$ we can thus bound the corresponding
Hellinger distance via
\[
H^{2}(\theta,\theta_{0})\le\norm{C_{\theta_{0},\delta}^{-1}(C_{\theta,\delta}-C_{\theta_{0},\delta})}_{HS(L^{2}([0,T\delta^{-2}]))}^{2}.
\]
Since the Hellinger distance is invariant under bi-measurable bijective
transformations, $H(\theta,\theta_{0})$ denotes equally the Hellinger
distance between the observation laws of $(X_{\delta,x_{0}}(t))_{t\in[0,T]}$.

Let now $\theta_{\delta}=\theta_{0}+c\delta$ for some small $c>0$,
which we choose below, and assume that we can show $H^{2}(\theta_{\delta},\theta_{0})\le1$
for sufficiently small $\delta$. Then we obtain from the general
lower bound scheme in \citet{tsybakov2008introduction}, using his
Theorem 2.2(ii) and (2.9), that
\begin{equation}
\inf_{\hat{\theta}}\max_{\theta\in\{\theta_{0},\theta_{\delta}\}}\E_{\theta}\left[(\hat{\theta}-\theta)^{2}\right]\ge\tfrac{2-\sqrt{3}}{4}(\theta_{\delta}-\theta_{0})^{2}=\tfrac{2-\sqrt{3}}{4}c^{2}\delta^2.\label{eqLB}
\end{equation}
From this we will obtain the claimed lower bound.

In order to show $H^{2}(\theta_{\delta},\theta_{0})\le1$, denote
by $\iota_{1}:H^{1}([0,T\delta^{-2}])\to L^{2}([0,T\delta^{-2}])$
the Sobolev embedding operator. It is known from Maurin's Theorem,
see e.g. the proof of \citet[Theorem 6.61]{adams2003sobolev}, that
$\iota_{1}$ is Hilbert-Schmidt with
\[
\norm{\iota_{1}}_{HS(H^{1}([0,T\delta^{-2}]),L^{2}([0,T\delta^{-2}]))}^{2}\le K_{HS}T\delta^{-2}
\]
for some constant $K_{HS}>0$. By Hilbert-Schmidt norm calculus (in particular, $\norm{AB}_{HS(H_2,H_3)}\le\norm{A}_{HS(H_1,H_3)}\norm{B}_{H_2\to H_1}$ with obvious notation for the Hilbert-Schmidt and operator norms between Hilbert spaces $H_1,H_2,H_3$), the
implicit restriction of the covariance operators and by the covariance bound of Lemma \ref{lemH1Cov}
below we conclude for $\theta_{\delta}>\theta_{0}$ that
\begin{align*}
 & H^{2}(\theta_{\delta},\theta_{0})\le\norm{C_{\theta_{0},\delta}^{-1}(C_{\theta_{\delta},\delta}-C_{\theta_{0},\delta})}_{HS(L^{2}([0,T\delta^{-2}]))}^{2}\\
 & \le\norm{\iota_{1}}_{HS(H^{1}([0,T\delta^{-2}]),L^{2}([0,T\delta^{-2}]))}^{2}\norm{C_{\theta_{0},\delta}^{-1}(C_{\theta_{\delta},\delta}-C_{\theta_{0},\delta})}_{L^{2}([0,T\delta^{-2}])\to H^{1}([0,T\delta^{-2}])}^{2}\\
 & \le K_{HS}T\delta^{-2}\norm{C_{\theta_{0},\delta}^{-1}(C_{\theta_{\delta},\delta}-C_{\theta_{0},\delta})}_{L^{2}(\R)\to H^{1}(\R)}^{2}\\
 & \le K_{HS}T\left(\theta_{0}^{-2}+\theta_{0}^{-1}\frac{\norm K_{L^{2}(\R^{d})}^{2}+\norm{\nabla K}_{L^{2}(\R^{d})}^{2}}{\norm{(I-A_{1,\delta,x_0})^{-1}K}_{L^{2}(\Lambda_{\delta,x_0})}^{2}}\right)^{2}\frac{(\theta_{\delta}^{2}-\theta_{0}^{2})^{2}}{\delta^{2}}.
\end{align*}
Hence, $H^{2}(\theta_{\delta},\theta)\le1$ holds whenever
\[
\theta_{\delta}^{2}-\theta_{0}^{2}\le\frac{\theta_{0}^{2}}{\sqrt{K_{HS}T}}\left(1+\theta_{0}\frac{\norm K_{L^{2}(\R^{d})}^{2}+\norm{\nabla K}_{L^{2}(\R^{d})}^{2}}{\norm{(I-A_{1,\delta,x_0})^{-1}K}_{L^{2}(\Lambda_{\delta,x_0})}^{2}}\right)^{-1}\delta.
\]
Noting the convergence $\norm{(I-A_{1,\delta,x_0})^{-1}K}_{L^{2}(\Lambda_{\delta,x_0})}\to\norm{(I-\Delta)^{-1}K}_{L^{2}(\R^{d})}$
from Lemma \ref{lemH1Cov} below, we can thus find a sufficiently
small constant $c'>0$ such that, with
\[
c=c'\theta_{0}\frac{(1\wedge\theta_{0}^{-1})\norm{(I-\Delta)^{-1}K}_{L^{2}(\R^{d})}^{2}}{\sqrt{T}(\norm K_{L^{2}(\R^{d})}^{2}+\norm{\nabla K}_{L^{2}(\R^{d})}^{2})},
\]
(\ref{eqLB}) holds for $\theta_{\delta}=\theta_{0}+c\delta$. This
yields the result.
\end{proof}

\section{\label{sec:Discussion-and-numerical}A numerical example}

\begin{figure}
	\includegraphics[width=\textwidth]{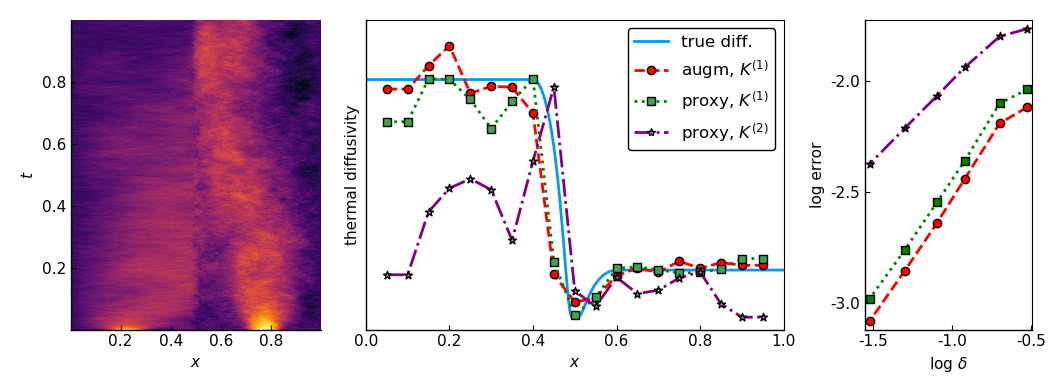}
	\caption{\label{fig:Estimation} (left) heat map for a typical realisation of $X(t,x)$; (center) true $\vartheta$ compared to $\hat{\vartheta}_{\delta}^{A}$ and $\hat{\vartheta}_{\delta}^{P}$ at $\delta=0.12$ with two different kernels; (right) $\log_{10}$-$\log_{10}$ plot of root mean squared estimation errors at $x_0=0.6$ for the estimators in the center.}
\label{fig:1}
\end{figure}

In this section we briefly illustrate the main results from above with simulation results.
Let $\Lambda=(0,1)$, $T=1$, and consider the stochastic heat equation
\[
dX(t)=\Delta_{\theta}X(t)dt+dW(t)
\]
with Dirichlet boundary conditions and with spatially varying diffusivity
$\theta$, which is smooth (true diffusivity in Figure \ref{fig:1}
(center)). Assume that $X_{0}$ is zero, except for two equally high
``peaks'' at $x=0.2$ and $x=0.8$. The heat map for a typical realisation
is presented in Figure \ref{fig:1} (left) and we see already qualitatively
that the heat diffusion is higher for $x\leq1/2$.

An approximate solution $\tilde{X}(t_{k},y_{j})\approx X(t_{k})(y_{j})$
is obtained with respect to a regular time-space grid $\{(t_{k},y_{j}):t_{k}=k/N,y_{j}=j/M,k=0,\dots,N,j=0,\dots,M\}$
by a semi-implicit Euler scheme and a finite difference approximation
of $\Delta_{\theta}$ (\citet[Section 10.5]{Lord:2014wd}). Since the solution
is tested against functions $K_{\delta,x_{0}}$ and $\Delta K_{\delta,x_{0}}$
with small support, $M$ needs to be relatively large, while it is
well-known that accurate simulation requires $N\asymp M^{2}$, see
\citet[p. 458]{Lord:2014wd}. We therefore choose $M=2000$, $N=10^{6}$.

Consider the kernels $K^{(1)}=\varphi'''$, $K^{(2)}=\varphi'$ with
a smooth bump function
\[
\varphi(x)=\exp(-\frac{12}{1-x^{2}}),\,\,\,\,x\in(-1,1).
\]
For $\delta\in\{0.03,0.05,0.08,0.12,0.2,0.3\}$ and $x_{0}\in(0,1)$
on a regular grid we obtain approximate local measurements $\tilde{X}_{\delta,x_{0}}$,
$\tilde{X}_{\delta,x_{0}}^{\Delta}$ for $K^{(1)}$ and $K^{(2)}$,
respectively, from which the augmented MLE $\hat{\theta}_{\delta}^{A}(x_{0})$
and the proxy MLE $\hat{\theta}_{\delta}^{P}(x_{0})$ are computed.
For $x_{0}$ near the boundary and $i=1,2$ set
\[
K_{\delta,x_{0}}^{(i)}=\begin{cases}
K_{\delta,\delta}^{(i)}, & x_{0}<\delta,\\
K_{\delta,1-\delta}^{(i)}, & x_{0}>1-\delta.
\end{cases}
\]

Figure \ref{fig:1} (center) shows pointwise estimation results for
$\theta(x_{0})$ at $\delta=0.12$ and for different $x_{0}$, while
Figure \ref{fig:1} (right) presents a $\log_{10}$-$\log_{10}$ plot of root mean squared estimation errors at $x_{0}=0.6$ for $\delta\rightarrow0$,
obtained by 5.000 Monte-Carlo runs.

Already at the relatively large resolution $\delta=0.12$ both $\hat{\theta}_{\delta}^{A}(x_{0})$
and $\hat{\theta}_{\delta}^{P}(x_{0})$ perform surprisingly well.
For $K^{(1)}$ both estimators are close together and achieve after
a burn-in phase the convergence rate $\delta$, as predicted by Theorems
\ref{thm:augMLE_CLT} and \ref{thm:ProxyMLE_CLT}. Note that $K^{(1)}=\Delta\tilde{K}$
for $\tilde{K}=\varphi'$ and $\int_{\R}\tilde{K}(x)dx=0$ such that the
assumptions of Theorem \ref{thm:ProxyMLE_CLT} are satisfied. With
respect to $K^{(2)}$ those assumptions are not met and indeed $\hat{\theta}_{\delta}^{P}(x_{0})$
deviates considerably from $\theta(x_{0})$, but still seems to be
consistent with rate of convergence dropping to about $\delta^{3/4}$.
Estimation by $\hat{\theta}_{\delta}^{A}(x_{0})$ is unaffected by choosing $K^{(2)}$ instead of $K^{(1)}$ (not
shown).

\appendix

\section{Proofs}

For a better understanding we structure the appendix such that the
proofs for the main theorems of Section \ref{sec:Main-results}
are given in Section \ref{app:1}. Only afterwards, we provide the
technical tools used for the main proofs. Section \ref{subsec:Analytical-results}
contains analytical results for rescaled semigroups and heat kernels,
while Section \ref{subsec:asympForCovars} assembles precise asymptotics
for variance and covariance expressions.

From now on, without loss of generality replace $\Lambda$ with $\Lambda-x_{0}$
and assume $x_{0}=0$. In particular, we estimate $\theta(0)$ and
ease notation by removing the subindex $x_{0}$ and write $\Lambda_{\delta}=\Lambda_{\delta,x_{0}}$,
$z_{\delta}=z_{\delta,x_{0}}$ and $X_{\delta}=X_{\delta,x_{0}}$.
Unless stated otherwise, all limits are for $\delta\rightarrow0$.
$C$ always denotes a generic positive constant, which may depend
on $T$, if not made explicit otherwise, and changes from line to line.
$A\lesssim B$ means $A\leq CB$. For $z\in L^1(\R^d)\cap L^2(\R^d)$ define the norm
\[
\norm{z}_{L^{1}\cap L^{2}(\R^{d})}:=\norm{z}_{L^{1}(\R^{d})}+\norm{z}_{L^{2}(\R^{d})},
\]
and for $z$ with partial derivatives up to second order in $L^1(\R^d)\cap L^2(\R^d)$ set
\[
\norm{z}_{W_{1,2}^{2}(\R^{d})}:=\norm{z + |\nabla z| + \Delta z}_{L^1\cap L^2(\R^d)}.
\]
We write throughout $\sc{X(t)}z=\sc{\tilde{X}(t)}z+\sc{S_{\theta}(t)X_{0}}z$ for
$z\in L^{2}(\Lambda)$ with $\sc{\tilde{X}(t)}z$ being defined as
$\sc{X(t)}z$, but with $X_{0}=0$. Note that $\E[\sc{\tilde{X}(t)}z]=0$ and $\E[\sc{X(t)}z]=\sc{S_{\theta}(t)X_{0}}z$. Set also $\tilde{X}_{\delta}(t)=\sc{\tilde{X}(t)}{K_{\delta}}$, $\tilde{X}_{\delta}^{\Delta}(t)=\sc{\tilde{X}(t)}{\Delta K_{\delta}}$. We will use frequently, without explicit mention, that $\Delta K_{\delta}=\delta^{-2}(\Delta K)_{\delta}$
by Lemma \ref{lem:scaling}.

\subsection{Proofs for Section \ref{sec:Main-results}}

\label{app:1}
\begin{proof}[Proof of Proposition \ref{prop:pseudo_Fisher_info_A}]
We show the result first for  $\tilde{\c I}_{\delta}^{A}=\int_{0}^{T}\tilde{X}_{\delta}^{\Delta}(t)^{2}dt$. Propositions
\ref{prop:cov_asymp}(ii) and \ref{prop:var_with_delta}(ii) below
with $w^{(\delta)}=\Delta K$, $z=K$ yield
\[
\E[\delta^{2}\tilde{\c I}_{\delta}^{A}]=\delta^{2}\int_{0}^{T}\Var(\tilde{X}_{\delta}^{\Delta}(t))dt\rightarrow T\theta(0)^{-1}\Psi(\Delta K,\Delta K), \quad \text{Var}\big(\delta^2\tilde{\c I}_{\delta}^{A}\big)\to 0.
\]
In particular, $\Var(\tilde{\c I}_{\delta}^{A})/\E[\tilde{\c I}_{\delta}^{A}]^{2}\r{\P}0$
and thus $\tilde{\c I}_{\delta}^{A}/\E[\c{\tilde{I}}_{\delta}^{A}]\r{\P}1$.
To finish the proof, decompose
\begin{equation}\label{eq.ItildeI}
\delta^2\c I_{\delta}^{A}=\delta^2\c{\tilde{I}}_{\delta}^{A}+\int_{0}^{T}\delta^2\E[X_{\delta}^{\Delta}(t)]^{2}dt+2\int_{0}^{T}\delta^2 \tilde{X}_{\delta}^{\Delta}(t)\E[X_{\delta}^{\Delta}(t)]\,dt.
\end{equation}
Assumption \aswithargsref{assu:X0}{K}{2} gives
\begin{equation}
\int_{0}^{T}\delta^2\E[X_{\delta}^{\Delta}(t)]^{2}dt=\delta^{-2} \int_{0}^{T}\scapro{S_{\theta}(t)X_{0}}{(\Delta K)_{\delta}}^{2}dt\to 0.\label{eq:X_delta_0}
\end{equation}
By the Cauchy-Schwarz inequality, the cross-term in \eqref{eq.ItildeI} is therefore also negligible and the result follows.
\end{proof}

\begin{proof}[Proof of Proposition \ref{prop:bias_A_with_delta}]
Define $\tilde{R}_\delta^A$ as $R_{\delta}^A$, but with respect to $\tilde{X}(\cdot)$. In terms of $\beta^{(\delta)}:=\delta^{-1}(A_{\vartheta,\delta}^{*}-\theta(0)\Delta)K$
we have $\delta R_{\delta}^{A}=\int_{0}^{T}X_{\delta}^{\Delta}(t)\langle X(t),\beta_{\delta}^{(\delta)}\rangle dt$ and $\delta \tilde{R}_{\delta}^{A}=\int_{0}^{T}\tilde{X}_{\delta}^{\Delta}(t)\langle \tilde{X}(t),\beta_{\delta}^{(\delta)}\rangle dt$.
$\beta^{(\delta)}$ and $\beta$ correspond to $v^{(\delta)}$ and
$v$ from Lemma \ref{lem:v0} below with $z=K$, and therefore $\beta^{(\delta)}\rightarrow\beta$
in $L^{2}(\R^{d})$. Decompose $R_{\delta}^{A}=\tilde{R}_{\delta}^{A}+V_{1,\delta}+V_{2,\delta}$,
where
\begin{align*}
V_{1,\delta} & =\int_{0}^{T}\sc{S_{\theta}(t)X_{0}}{\delta^{-2}(\Delta K)_{\delta}}(\sc{\tilde{X}(t)}{\beta_{\delta}^{(\delta)}}+\sc{S_{\theta}(t)X_{0}}{\beta_{\delta}^{(\delta)}})dt,\\
V_{2,\delta} & =\int_{0}^{T}\tilde{X}_{\delta}^{\Delta}(t)\sc{S_{\theta}(t)X_{0}}{\beta_{\delta}^{(\delta)}}dt.
\end{align*}
We infer $\delta V_{2,\delta}\r{\P}0$
from $\E[\delta^2\tilde{\c I}_{\delta}^{A}]=O(1)$ by Proposition
\ref{prop:pseudo_Fisher_info_A} and from
\[
\int_{0}^{T}\sc{S_{\theta}(t)X_{0}}{\beta_{\delta}^{(\delta)}}^{2}dt=O(\delta\norm{\beta^{(\delta)}}_{L^1\cap L^2(\R^d)}^{2})\to 0
\]
by Lemma \ref{lem:X0}(i) below with $u=\beta^{(\delta)}$.
This, (\ref{eq:X_delta_0}) and the Cauchy-Schwarz inequality together
with
\[
\int_{0}^{T}\E[\sc{\tilde{X}(t)}{\beta_{\delta}^{(\delta)}}^{2}]dt=\int_{0}^{T}\int_{0}^{t}\norm{B^{*}S_{\theta}^{*}\left(s\right)\beta_{\delta}^{(\delta)}}^{2}ds\lesssim\norm{\beta_{\delta}^{(\delta)}}^{2}<\infty,
\]
also imply $\delta V_{1,\delta}\r{\P}0$. By Proposition \ref{prop:pseudo_Fisher_info_A}
it therefore suffices to show
\begin{align*}
\E[\delta\tilde{R}_{\delta}^{A}] & \rightarrow T\theta\left(0\right)^{-1}\Psi(\Delta K,\beta),\,\,\,\,\text{Var}(\delta\tilde{R}_{\delta}^{A})\to0.
\end{align*}
The convergence of $\E[\delta\tilde{R}_{\delta}^{A}]$ follows for
$d\geq2$ from Proposition \ref{prop:cov_asymp}(iii) below with $w^{(\delta)}=\Delta K$,
$z=K$, $u^{(\delta)}=\beta^{(\delta)}$, $u=\beta$. For $d=1$, $\theta \in C^{1+\alpha'}(\overline{\Lambda})$ for $\alpha'>1/2$ and $\int_{\R}K(x)dx=0$,
it follows from Lemma \ref{lem:v0}(ii) that there
is a compactly supported $\tilde{\beta}\in H^{2}(\R)$ with $\beta=\Delta\tilde{\beta}$, $\norm{\beta^{(\delta)}-\Delta\tilde{\beta}}_{L^1\cap L^2(\R)}\leq C \delta^{\alpha'}$.
Then, by polarisation and Proposition \ref{prop:cov_asymp}(ii), $\E[\delta\tilde{R}_{\delta}^{A}]$
converges to
\begin{align*}
 & \frac{T}{4\theta(0)}\left(\Psi(\Delta(K+\tilde{\beta}),\Delta(K+\tilde{\beta}))-\Psi(\Delta(K-\tilde{\beta}),\Delta(K-\tilde{\beta}))\right)=\frac{T}{\theta(0)}\Psi(\Delta K,\beta).
\end{align*}
Next, $\Var(\delta\tilde{R}_{\delta}^{A})=\Var(\int_{0}^{T}X_{\delta}^{\Delta}(t)\langle X(t),\beta_{\delta}^{(\delta)}\rangle dt)\rightarrow0$
follows for $d\geq2$ by Proposition \ref{prop:var_with_delta}(i)
below with $z=K$, $u^{(\delta)}=\beta^{(\delta)}$, $u=\beta$.  If
$\theta\in C^{1+\alpha'}(\overline{\Lambda})$ and $\int_{\R}K(x)dx=0$, then $\Var(\delta\tilde{R}_{\delta}^{A})\rightarrow0$
by Proposition \ref{prop:var_with_delta}(ii) with $z=K$, $w^{(\delta)}=\beta^{(\delta)}$,
$m=\tilde{\beta}$.%
\end{proof}

\begin{proof}[Proof of Proposition \ref{prop:I_P_CLT}]
Define  $\tilde{\c I}_{\delta}^{P}$
as $\c I_{\delta}^{P}$, but with respect to $\tilde{X}(\cdot)$. By Assumption
\aswithargsref{assu:X0}{\tilde{K}}{3} and $K_{\delta}=(\Delta\tilde{K})_{\delta}$
we have $\delta^{-3}\int_{0}^{T}\sc{S_{\theta}(t)X_{0}}{K_{\delta}}^{2}dt\to 0$ whence $\delta^{-1}(\c I_{\delta}^{P}-\tilde{\c I}_{\delta}^{P})\to 0$ follows by $\tilde{\c I}_{\delta}^{P}=O_{\P}(\delta)$ and the Cauchy-Schwarz inequality.

It remains to prove the result for $\tilde{\c I}_{\delta}^{P}$. Note that
\begin{equation}\label{eq:proofIPCLT}
\delta^{-1}\left(\tilde{\c I}_{\delta}^{P}-\theta(0)^{-1}C_{T,K}\right)=Z_{\delta}+\delta^{-1}\Big(\delta^{-2}\int_{0}^{T}\Var(\tilde{X}_{\delta}(t))dt
-\theta(0)^{-1}C_{T,K}\Big).
\end{equation}
with $Z_{\delta}:=\delta^{-3}\int_{0}^{T}(\tilde{X}_{\delta}(t)^{2}-\E[\tilde{X}_{\delta}(t)^{2}])dt$.
$\tilde{X}_{\delta}$ is a centered Gaussian process and $Z_{\delta}$
is an element of the second Wiener chaos. By the fourth moment
theorem (\citet[Theorem 1]{Nualart:2005by}) it suffices to prove
$\text{Var}(Z_{\delta})\r{}\Sigma$ and $\E[Z_{\delta}^{4}]\rightarrow3\Sigma^{2}$
to conclude  $Z_{\delta}\r dN(0,\Sigma)$. Propositions \ref{prop:var_with_delta}(iv) and
\ref{prop:eightMom} below (with $w^{(\delta)}=\Delta\tilde{K}$) provide exactly these convergences with
\[
\Sigma=\frac{4T}{\theta^{3}(0)}\int_{0}^{\infty}\Psi(e^{s\Delta}\Delta\tilde{K},\Delta\tilde{K})^{2}ds=\frac{T\sigma^{4}(0)}{\theta^{3}(0)}\int_{0}^{\infty}\norm{\nabla e^{(s/2)\Delta}\tilde{K}}_{L^{2}(\R^{d})}^{4}ds,
\]
where the last identity is (\ref{eq:Psi_Delta_example}).
The claim follows from applying  Proposition \ref{prop:cov_asymp_special} below with $z=\tilde{K}$ to the second term  in \eqref{eq:proofIPCLT} and Slutsky's lemma.
\end{proof}

\begin{lem}
\label{lemH1Cov} Assume the setting of Proposition \ref{prop:lowerBound} and recall the operator $C_{\theta,\delta}$ from (\ref{eq:cov_operator}). We have for $\theta>\theta_{0}>0$
\[
\norm{C_{\theta_{0},\delta}^{-1}(C_{\theta,\delta}-C_{\theta_{0},\delta})}_{L^{2}(\R)\to H^{1}(\R)}\le\frac{\theta^{2}-\theta_{0}^{2}}{\theta_{0}^{2}}\left(1+\theta_{0}\frac{\norm K_{L^{2}(\R^{d})}^{2}+\norm{\nabla K}_{L^{2}(\R^{d})}^{2}}{\norm{(I-A_{1,\delta})^{-1}K}_{L^{2}(\Lambda_{\delta})}^{2}}\right).
\]
Moreover, we have $\norm{(I-A_{1,\delta})^{-1}K}_{L^{2}(\Lambda_{\delta})}\to\norm{(I-\Delta)^{-1}K}_{L^{2}(\R^{d})}$
for $\delta\to0$, where $\Delta$ is the Laplace operator on $L^{2}(\R^{d})$.
\end{lem}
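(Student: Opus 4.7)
The strategy is to work entirely in the temporal Fourier domain, where every operator in sight becomes multiplication by a scalar symbol. First, I would identify the symbol of $C_{\theta,\delta}$. Using Lemma \ref{lem:scaling} with $A_{\theta}=\theta\Delta$ and writing $L:=-A_{1,\delta}$ for the positive Dirichlet Laplacian on $\Lambda_{\delta}$, the autocovariance takes the form $c_{\theta,\delta}(t)=\scapro{(2\theta L)^{-1}e^{-\theta|t|L}K}{K}_{L^{2}(\Lambda_{\delta})}$. Combined spectrally with the elementary Fourier identity $\int_{\R}e^{-i\xi t}(2\alpha)^{-1}e^{-\alpha|t|}dt=(\xi^{2}+\alpha^{2})^{-1}$, this gives the symbol $\hat c_{\theta,\delta}(\xi)=\scapro{(\xi^{2}I+\theta^{2}L^{2})^{-1}K}{K}_{L^{2}(\Lambda_{\delta})}$. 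The $L^{2}(\R)\to H^{1}(\R)$ operator norm in question then equals $\sup_{\xi}(1+\xi^{2})^{1/2}|m(\xi)|$ with $m(\xi):=\hat c_{\theta,\delta}(\xi)/\hat c_{\theta_{0},\delta}(\xi)-1$, which by the triangle inequality is bounded by $\sup_{\xi}|m(\xi)|+\sup_{\xi}|\xi m(\xi)|$.

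For $\sup|m|$ the resolvent identity yields $\hat c_{\theta,\delta}-\hat c_{\theta_{0},\delta}=-(\theta^{2}-\theta_{0}^{2})\scapro{L^{2}P_{\theta}P_{\theta_{0}}K}{K}$ with $P_{\tau}:=(\xi^{2}+\tau^{2}L^{2})^{-1}$. Using the spectral inequalities $L^{2}P_{\theta_{0}}\le\theta_{0}^{-2}I$ and $P_{\theta}\le P_{\theta_{0}}$ (for $\theta\ge\theta_{0}$), I obtain $|m(\xi)|\le(\theta^{2}-\theta_{0}^{2})/\theta_{0}^{2}$. The crucial lower bound on the denominator is $\hat c_{\theta_{0},\delta}(\xi)\ge\|(I-A_{1,\delta})^{-1}K\|_{L^{2}(\Lambda_{\delta})}^{2}/(\xi^{2}+\theta_{0}^{2})$, which follows from the scalar inequality $(\xi^{2}+\theta_{0}^{2})(1+\lambda)^{2}\ge\xi^{2}+\theta_{0}^{2}\lambda^{2}$ (expand the square; all cross-terms are nonnegative) lifted to operators via functional calculus and written in the form $(I-A_{1,\delta})^{-1}=(I+L)^{-1}$.

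The delicate part is bounding $\sup|\xi m(\xi)|$, where I need to extract $\|\nabla K\|^{2}$ rather than $\|\Delta K\|^{2}$. The key move is the symmetric splitting $L^{2}P_{\theta}P_{\theta_{0}}=L^{1/2}(LP_{\theta}P_{\theta_{0}})L^{1/2}$, after which Cauchy--Schwarz gives $\scapro{L^{2}P_{\theta}P_{\theta_{0}}K}{K}\le\|LP_{\theta}P_{\theta_{0}}\|_{\mathrm{op}}\|L^{1/2}K\|^{2}=\|LP_{\theta}P_{\theta_{0}}\|_{\mathrm{op}}\|\nabla K\|^{2}$ (valid because $K\in H_{0}^{1}(\Lambda_{\delta})$ for small $\delta$, by compact support). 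Then weighted AM-GM $a+b\ge(4/3^{3/4})a^{3/4}b^{1/4}$ applied to each of the two factors in the symbol of $LP_{\theta}P_{\theta_{0}}$ bounds $\sup_{\lambda}\lambda/[(\xi^{2}+\theta^{2}\lambda^{2})(\xi^{2}+\theta_{0}^{2}\lambda^{2})]$ by an absolute constant times $(\xi^{3}\sqrt{\theta\theta_{0}})^{-1}$. Combining with the lower bound on $\hat c_{\theta_{0},\delta}$ gives $|\xi m(\xi)|\lesssim(\theta^{2}-\theta_{0}^{2})\|\nabla K\|^{2}(\xi^{2}+\theta_{0}^{2})/\bigl(\xi^{2}\sqrt{\theta\theta_{0}}\|(I-A_{1,\delta})^{-1}K\|^{2}\bigr)$. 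A routine case analysis, using this estimate for $\xi^{2}\ge\theta_{0}^{2}$ and the crude $|\xi m(\xi)|\le\theta_{0}(\theta^{2}-\theta_{0}^{2})/\theta_{0}^{2}$ for $\xi^{2}\le\theta_{0}^{2}$, followed by absorbing $\theta_{0}$ into $\theta_{0}\|K\|^{2}/\|(I-A_{1,\delta})^{-1}K\|^{2}\ge\theta_{0}$ to reassemble $\|K\|^{2}+\|\nabla K\|^{2}$, yields the advertised bound.

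For the second assertion I would represent $(I-A_{1,\delta})^{-1}K=\int_{0}^{\infty}e^{-t}S_{1,\delta}(t)K\,dt$. Proposition \ref{prop:semigroup_convergence}(ii) gives pointwise convergence $S_{1,\delta}(t)(K|_{\Lambda_{\delta}})\to e^{t\Delta}K$ in $L^{2}(\R^{d})$, and the $L^{2}$-contractivity of the Dirichlet heat semigroup provides the uniform majorant $e^{-t}\|K\|_{L^{2}(\R^{d})}$. Dominated convergence then yields $(I-A_{1,\delta})^{-1}K\to(I-\Delta)^{-1}K$ in $L^{2}(\R^{d})$, hence the $L^{2}$-norms converge. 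The main obstacle throughout is the precise case analysis in the third paragraph: one must track constants carefully through the AM-GM bound and through the transition between the small-$\xi$ and large-$\xi$ regimes so that the $L^{1/2}$-splitting recovers $\|\nabla K\|^{2}$ and not the stronger seminorm $\|\Delta K\|^{2}$.
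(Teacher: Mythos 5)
Your proposal takes the same essential route as the paper: pass to the temporal Fourier domain, identify $\hat c_{\theta,\delta}(\xi)=\scapro{(\xi^2 I+\theta^2 L^2)^{-1}K}{K}_{L^2(\Lambda_\delta)}$ with $L=-A_{1,\delta}$, and bound the symbol of the multiplier $C_{\theta_0,\delta}^{-1}(C_{\theta,\delta}-C_{\theta_0,\delta})$ in the weighted sup-norm $\sup_\xi(1+\xi^2)^{1/2}|m(\xi)|$ via spectral calculus. The $\sup|m|$ part and the treatment of the second assertion (Bochner integral plus Proposition \ref{prop:semigroup_convergence}(ii) plus dominated convergence, with $L^2$-contractivity giving the majorant) match the paper directly.

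Where you diverge is in the large-frequency bound. The paper first rescales $w=\theta_0 w'$, splits $|w'|\le 1$ vs.\ $|w'|>1$, and for $|w'|>1$ uses the single scalar inequality $w^{-1/2}\lambda(1+w^{-2}\lambda^2)^{-1}\le\lambda^{1/2}$ to land directly on $\norm{(-\Delta)^{1/2}K}^2=\norm{\nabla K}^2$; the $\norm{K}^2$ term then appears by replacing the ``$1\vee$'' with a sum using $\norm{(I-A_{1,\delta})^{-1}K}\le\norm K$. You instead use the symmetric splitting $L^2P_\theta P_{\theta_0}=L^{1/2}(LP_\theta P_{\theta_0})L^{1/2}$ with Cauchy--Schwarz to produce $\norm{L^{1/2}K}^2=\norm{\nabla K}^2$, and then weighted AM--GM $a+b\ge(4/3^{3/4})a^{3/4}b^{1/4}$ on each resolvent factor to get $\norm{LP_\theta P_{\theta_0}}_{\mathrm{op}}\le(3^{3/2}/16)(\xi^3\sqrt{\theta\theta_0})^{-1}$. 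Your route works and, if you track the constants rather than writing $\lesssim$, even gives a slightly better numerical factor ($3\sqrt3/8<1$ for the regime $\xi^2\ge\theta_0^2$), so the stated bound follows with room to spare. The paper's scalar inequality is tidier and avoids the case analysis, while your AM--GM argument is perhaps more transparent about where the $\norm{\nabla K}^2$ comes from (via the $L^{1/2}$-split). One small point to watch: your estimate depends on replacing ``$\lesssim$'' with explicit constants before asserting the final displayed inequality; you have sketched enough that this is routine, but as written the constant-free claim in the lemma does not follow from a $\lesssim$.
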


\begin{proof}
For simplicity write in the following proof $\theta\Delta$ and $e^{\theta t\Delta}$
instead of $A_{\theta,\delta}$ and $S_{\theta,\delta}(t)$. In the
Fourier domain, the convolution operator $C_{\theta,\delta}$ is given
by
\begin{align*}
{\mathcal{F}}c_{\theta,\delta}(w) & =\int_{0}^{\infty}\scapro{(-2\theta\Delta)^{-1}e^{\theta t\Delta}K}{K}_{L^{2}(\Lambda_{\delta})}(e^{iwt}+e^{-iwt})dt\\
 & =\scapro{(-2\theta\Delta)^{-1}\int_{0}^{\infty}(e^{t(\theta\Delta+iwI)}+e^{t(\theta\Delta-iwI)})K\,dt}{K}_{L^{2}(\Lambda_{\delta})}\\
 & =\scapro{(-2\theta\Delta)^{-1}(-(\theta\Delta+iwI)^{-1}-(\theta\Delta-iwI)^{-1})K}{K}_{L^{2}(\Lambda_{\delta})}\\
 & =\scapro{(\theta^{2}\Delta^{2}+w^{2}I)^{-1}K}{K}_{L^{2}(\Lambda_{\delta})}.
\end{align*}
The operator $C_{\theta_{0},\delta}^{-1}(C_{\theta,\delta}-C_{\theta_{0},\delta})$
is expressed in the Fourier domain by multiplication with
\[
\frac{{\mathcal{F}}c_{\theta,\delta}(w)-{\mathcal{F}}c_{{\theta_{0}},\delta}(w)}{{\mathcal{F}}c_{\theta_{0},\delta}(w)}=(\theta^{2}-\theta_{0}^{2})\frac{\scapro{(\theta^{2}\Delta^{2}+w^{2}I)^{-1}\Delta^{2}(\theta_{0}^{2}\Delta^{2}+w^{2}I)^{-1}K}{K}_{L^{2}(\Lambda_{\delta})}}{\scapro{(\theta_{0}^{2}\Delta^{2}+w^{2}I)^{-1}K}{K}_{L^{2}(\Lambda_{\delta})}}.
\]
Using the description of $H^{1}(\R)$ in the Fourier domain and functional
calculus for the Laplacian $\Delta$ on $L^{2}(\Lambda_{\delta})$
yields therefore for $\theta>\theta_{0}$ that
\begin{align*}
 & \norm{C_{\theta_{0},\delta}^{-1}(C_{\theta,\delta}-C_{\theta_{0},\delta})}_{L^{2}(\R)\to H^{1}(\R)}\\
 & =\sup_{w\in\R}\babs{(1+w^{2})^{1/2}\frac{{\mathcal{F}}c_{\theta,\delta}(w)-{\mathcal{F}}c_{{\theta_{0}},\delta}(w)}{{\mathcal{F}}c_{\theta_{0},\delta}(w)}}\\
 & \le(\theta^{2}-\theta_{0}^{2})\sup_{w'\in\R}\babs{(1+(\theta_{0}w')^{2})^{1/2}\frac{\scapro{\Delta^{2}(\theta_{0}^{2}\Delta^{2}+\theta_{0}^{2}(w')^{2}I)^{-2}K}{K}_{L^{2}(\Lambda_{\delta})}}{\scapro{(\theta_{0}^{2}\Delta^{2}+\theta_{0}^{2}(w')^{2}I)^{-1}K}{K}_{L^{2}(\Lambda_{\delta})}}}\\
 & =\frac{\theta^{2}-\theta_{0}^{2}}{\theta_{0}^{2}}\sup_{w\in\R}\babs{(1+(\theta_{0}w)^{2})^{1/2}\frac{\norm{w^{-1}\Delta(w^{-2}\Delta^{2}+I)^{-1}K}_{L^{2}(\Lambda_{\delta})}^{2}}{\norm{(w^{-2}\Delta^{2}+I)^{-1/2}K}_{L^{2}(\Lambda_{\delta})}^{2}}}\\
 & \le\frac{\theta^{2}-\theta_{0}^{2}}{\theta_{0}^{2}}\left(1+\theta_{0}\left(1\vee\frac{\sup_{w>1}\norm{w^{-1/2}\Delta(w^{-2}\Delta^{2}+I)^{-1}K}_{L^{2}(\Lambda_{\delta})}^{2}}{\inf_{w>1}\norm{(w^{-2}\Delta^{2}+I)^{-1/2}K}_{L^{2}(\Lambda_{\delta})}^{2}}\right)\right)\\
 & \le\frac{\theta^{2}-\theta_{0}^{2}}{\theta_{0}^{2}}\left(1+\theta_{0}\left(1\vee\frac{\norm{(-\Delta)^{1/2}K}_{L^{2}(\Lambda_{\delta})}^{2}}{\norm{(I+\Delta^{2})^{-1/2}K}_{L^{2}(\Lambda_{\delta})}^{2}}\right)\right),
\end{align*}
where we used in the last line $w^{-1/2}{\lambda}(1+w^{-2}\lambda^{2})^{-1}\le{\lambda}^{1/2}$
for all $\lambda,w>0$. For this and similar arguments note that by spectral calculus with  a self-adjoint operator $A$, e.g. $-\Delta$, we have $\norm{f(A)K}\le \norm{g(A)K}$ whenever $\abs{f}\le \abs{g}$ for bounded $f,g$ on the spectrum of $A$. Since $\scapro{-\Delta K}{K}_{L^{2}(\Lambda_{\delta})}=\norm{\nabla K}_{L^{2}(\R^{d})}^{2}$,
the numerator is independent of $\delta$. For the denominator write
again $A_{1,\delta}=\Delta$ and note similarly  $(I+A_{1,\delta}^{2})^{-1/2}\ge(I-A_{1,\delta})^{-1}$,
where we have explicitly, cf. \citet[Chapter 2.6]{Pazy:1983us},
\[
(I-A_{1,\delta})^{-1}K=\int_{0}^{\infty}e^{-t}S_{1,\delta}(t)K\,dt.
\]
Proposition \ref{prop:semigroup_convergence} yields then first, approximating $K$ by continuous functions, that $\norm{S_{1,\delta}(t)K}_{L^{2}(\Lambda_{\delta})}\lesssim\norm K_{L^{2}(\R^{d})}$
uniformly in $\delta$, and second, the convergence
\[
\norm{(I-A_{1,\delta})^{-1}K}_{L^{2}(\Lambda_{\delta})}\rightarrow\norm{(I-\Delta)^{-1}K}_{L^{2}(\R^{d})},\,\,\,\delta\rightarrow0.\qedhere
\]
\end{proof}

\subsection{\label{subsec:Analytical-results}Analytical results}
Recall that the  solution of the heat equation $\frac{d}{dt}u(t)=\lambda\Delta u(t)$,
$\lambda>0$, on $\R^{d}$ with initial value $w\in L^{2}(\R^{d})$
is given by the convolution
\begin{equation}
u(t)=e^{\lambda t\Delta}w=q_{\lambda t}*w,\label{eq:fundamental_solution}
\end{equation}
with the heat kernel $q_{t}(x)=(4\pi t)^{-d/2}\exp(-|x|^{2}/(4t))$, $x\in\R^{d}$.
\begin{lem}
\label{lem:LaplaceSemigroup_props}We have for $u\in L^{2}(\R^{d})$,
$t>0$:
\begin{enumerate}
\item $\norm{e^{t\Delta}u}_{L^{2}(\R^{d})}\lesssim(1\wedge t^{-d/4})\norm u_{L^1\cap L^2(\R^d)}$,
$\norm{\Delta e^{t\Delta}u}_{L^{2}(\R^{d})}\lesssim t^{-1}\norm u_{L^{2}(\R^{d})}$.
\item $xe^{\theta(0)t\Delta}u(x)=-2\theta(0)t\nabla e^{\theta(0)t\Delta}u(x)+e^{\theta(0)t\Delta}(xu)(x)$, $x\in\R^d$.
\item $\norm{|x|^{2}e^{\theta(0)t\Delta}u}_{L^{2}(\R^{d})}\lesssim(1\vee t)(1\wedge t^{-d/4})\norm{(1+|x|+|x|^{2})u}_{L^1\cap L^2(\R^d)}$.
\end{enumerate}
\end{lem}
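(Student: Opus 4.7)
The plan for (i) is to apply Young's convolution inequality to $e^{t\Delta}u=q_{t}\ast u$ from \eqref{eq:fundamental_solution}: the identity $\norm{q_{t}}_{L^{1}(\R^{d})}=1$ yields the $L^{2}$-contraction $\norm{e^{t\Delta}u}_{L^{2}(\R^{d})}\le\norm{u}_{L^{2}(\R^{d})}$, while the explicit Gaussian bound $\norm{q_{t}}_{L^{2}(\R^{d})}\lesssim t^{-d/4}$ gives $\norm{e^{t\Delta}u}_{L^{2}(\R^{d})}\lesssim t^{-d/4}\norm{u}_{L^{1}(\R^{d})}$. Taking the better of the two bounds in each regime yields the first claim. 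The second estimate is the standard analyticity bound for the self-adjoint generator $\Delta$ on $L^{2}(\R^{d})$, read off spectrally from $\sup_{\mu\ge 0}\mu e^{-t\mu}\le(et)^{-1}$.

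For (ii) I would start from the explicit computation $\nabla_{x}q_{\lambda t}(x)=-\tfrac{x}{2\lambda t}q_{\lambda t}(x)$, hence $(x-y)q_{\lambda t}(x-y)=-2\lambda t\,\nabla_{x}q_{\lambda t}(x-y)$ with $\lambda=\theta(0)$. Splitting $x=(x-y)+y$ inside $xe^{\lambda t\Delta}u(x)=\int x\,q_{\lambda t}(x-y)u(y)\,dy$, the first piece produces $-2\lambda t\,\nabla e^{\lambda t\Delta}u(x)$ after pulling $\nabla_{x}$ outside the integral, while the second piece is exactly $e^{\lambda t\Delta}(xu)(x)$.

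The main work is (iii), which I intend to obtain by iterating (ii). Apply (ii) once to $u$ and once componentwise to the vector-valued function $xu$; then rewrite $x\cdot\nabla e^{\lambda t\Delta}u$ via the product rule $x_{i}\partial_{i}w=\partial_{i}(x_{i}w)-w$ applied to $w=e^{\lambda t\Delta}u$, and use (ii) one more time on $\nabla\!\cdot\!(x w)$. Collecting terms gives the four-term decomposition
\begin{equation*}
|x|^{2}e^{\lambda t\Delta}u=4\lambda^{2}t^{2}\,\Delta e^{\lambda t\Delta}u-4\lambda t\,\nabla\!\cdot\! e^{\lambda t\Delta}(xu)+2d\lambda t\,e^{\lambda t\Delta}u+e^{\lambda t\Delta}(|x|^{2}u),
\end{equation*}
with $\lambda=\theta(0)$.

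Each of the four terms is then bounded in $L^{2}(\R^{d})$ by (i) or its gradient variant. For the Laplacian term I would split $\Delta e^{\lambda t\Delta}=\Delta e^{\lambda(t/2)\Delta}\,e^{\lambda(t/2)\Delta}$ and combine analyticity with (i), giving the factor $t^{-1}(1\wedge t^{-d/4})$. For the gradient term I would use the heat-kernel estimates $\norm{\nabla q_{\lambda t}}_{L^{1}(\R^{d})}\lesssim t^{-1/2}$ and $\norm{\nabla q_{\lambda t}}_{L^{2}(\R^{d})}\lesssim t^{-(d+2)/4}$, which by Young's inequality yield $\norm{\nabla e^{\lambda t\Delta}f}_{L^{2}(\R^{d})}\lesssim t^{-1/2}(1\wedge t^{-d/4})\norm{f}_{L^{1}\cap L^{2}(\R^{d})}$. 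Absorbing the prefactors $t^{2}\cdot t^{-1}=t$, $t$, and $t\cdot t^{-1/2}=t^{1/2}$ into $1\vee t$, and using the triangle inequality to bundle the three weights $1,|x|,|x|^{2}$ into $\norm{(1+|x|+|x|^{2})u}_{L^{1}\cap L^{2}(\R^{d})}$, closes the estimate. The only care point is the uniform handling of the four different $t$-prefactors across the short- and long-time regimes; no deeper obstacle is anticipated.
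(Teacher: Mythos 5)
Your proofs of (i) and (ii) coincide with the paper's, and your strategy for (iii) is the same in spirit (iterate the identity from (ii), then control each resulting term via (i) and the analyticity bound). The differences are two minor ones. First, your explicit four-term identity
\begin{equation*}
|x|^{2}e^{\lambda t\Delta}u=4\lambda^{2}t^{2}\Delta e^{\lambda t\Delta}u-4\lambda t\,\nabla\!\cdot e^{\lambda t\Delta}(xu)+2d\lambda t\,e^{\lambda t\Delta}u+e^{\lambda t\Delta}(|x|^{2}u)
\end{equation*}
is the correct bookkeeping; the paper writes an intermediate three-term expansion (omitting the $2d\lambda t\,e^{\lambda t\Delta}u$ contribution and with a coefficient discrepancy on the cross term), and immediately passes to an ``up to a constant'' upper bound that is nonetheless valid because the missing term, $t\norm{e^{\lambda t\Delta}u}_{L^{2}}\lesssim t(1\wedge t^{-d/4})\norm{u}_{L^{1}\cap L^{2}}$, is dominated by the other contributions. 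Second, for the divergence term $\nabla\!\cdot e^{\lambda t\Delta}(xu)$ the paper uses the interpolation $\norm{\partial_{i}v_{i}}_{L^{2}}^{2}\le\norm{\Delta v_{i}}_{L^{2}}\norm{v_{i}}_{L^{2}}$ (integration by parts plus (i) applied via the semigroup splitting), whereas you go directly through the heat-kernel gradient bounds $\norm{\nabla q_{\lambda t}}_{L^{1}}\lesssim t^{-1/2}$, $\norm{\nabla q_{\lambda t}}_{L^{2}}\lesssim t^{-(d+2)/4}$ and Young's inequality. Both yield the same factor $t^{-1/2}(1\wedge t^{-d/4})$, so the approaches are interchangeable; yours is marginally more self-contained, the paper's re-uses (i) and the second estimate in (i) and so needs no new kernel computations.
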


\begin{proof}
(i). For the second part use functional calculus. The first part follows
from
\[
\norm{e^{t\Delta}u}_{L^{2}(\R^{d})}=\norm{q_{t}*u}_{L^{2}(\R^{d})}\lesssim  \min\Big(\norm{u}_{L^2(\R^d)},t^{-d/4}\norm{u}_{L^1(\R^d)}\Big).
\]
(ii). Let $i\in\{1,\dots,d\}$. The result follows from
\begin{align*}
 & x_{i}\left(e^{\theta(0)t\Delta}u\right)(x)=x_{i}(q_{\theta(0)t}*u)(x)\\
 & \quad=\theta(0)t\int_{\R^{d}}\frac{x_{i}-y_{i}}{\theta(0)t}q_{\theta(0)t}(x-y)u(y)dy+\int_{\R^{d}}y_{i}q_{\theta(0)t}(x-y)u(y)dy\\
 & \quad=-2\theta(0)t(\partial_{i}q_{\theta(0)t}*u)(x)+(q_{\theta(0)t}*(x_{i}u))(x).
\end{align*}
(iii). Applying the proof in (ii) twice for $i\in\{1,\dots,d\}$ gives
\begin{align*}
 & x_{i}^{2}\left(e^{\theta(0)t\Delta}u\right)(x)=-2x_{i}\theta(0)t(\partial_{i}q_{\theta(0)t}*u)(x)+x_{i}(q_{\theta(0)t}*(x_{i}u))(x)\\
 & =4\theta^{2}(0)t^{2}(\partial_{ii}^{2}q_{\theta(0)t}*u)(x)-2\theta(0)t(\partial_{i}q_{\theta(0)t}*(x_{i}u))(x)+(q_{\theta(0)t}*(x_{i}^{2}u))(x).
\end{align*}
Summing over $i$ with $v_{i}=e^{\theta(0)t\Delta}(x_{i}u)$ obtain
from this for $\norm{|x|^{2}e^{\theta(0)t\Delta}u}_{L^{2}(\R^{d})}$
up to a constant the upper bound
\begin{align*}
 & t^{2}\norm{\Delta e^{\theta(0)t\Delta}u}_{L^{2}(\R^{d})}+t\sum_{i=1}^{d}\norm{\partial_{i}v_{i}}_{L^{2}(\R^{d})}+\norm{e^{\theta(0)t\Delta}(|x|^{2}u)}_{L^{2}(\R^{d})}.
\end{align*}
Using $e^{\theta(0)t\Delta}=e^{\theta(0)(t/2)\Delta}e^{\theta(0)(t/2)\Delta}$
and the two statements in (i) yield for the first and last terms the
claimed bound. For the second term integration by parts implies $\norm{\partial_{i}v_{i}}_{L^{2}(\R^{d})}^{2}\leq\sc{-\Delta v_{i}}{v_{i}}_{L^{2}(\R^{d})}\leq\norm{\Delta v_{i}}_{L^{2}(\R^{d})}\norm{v_{i}}_{L^{2}(\R^{d})}$.
The result follows again from applying (i).
\end{proof}

\begin{lem}
\label{lem:laplace_to_nabla} If $z\in H^{2}(\R^{d})$
has compact support, then $\sc z{\partial_{i}z}_{L^{2}(\R^{d})}=0$, $\left\langle x_{i}\Delta z,z\right\rangle_{L^{2}(\R^{d})}=-\left\langle x_{i},|\nabla z|^{2}\right\rangle _{L^{2}(\R^{d})}$ for $i=1,\ldots,d$.
If $z\in H^{4}(\R^{d})$, then also $\sc{\Delta z}{e^{t\Delta}\Delta\partial_{i}z}_{L^{2}(\R^{d})}=0$,
$t\geq0$.
\end{lem}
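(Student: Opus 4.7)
The plan is to prove the three identities sequentially, each by integration by parts, with the second and third reducing in essence to the first.

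For the first identity with $z \in H^2(\R^d)$ of compact support, the observation $z\partial_i z = \tfrac{1}{2}\partial_i(z^2)$ together with the divergence theorem (or, equivalently, the fundamental theorem of calculus applied slicewise combined with compact support) gives $\sc{z}{\partial_i z}_{L^2(\R^d)} = 0$ immediately.

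For the second identity, the plan is to integrate by parts twice. Writing $\Delta z = \sum_{j=1}^d \partial_{jj} z$ and integrating by parts once (no boundary contribution by compact support) produces
\[
\sc{x_i \Delta z}{z}_{L^2(\R^d)} = -\sum_{j=1}^d \int_{\R^d} \partial_j(x_i z)\, \partial_j z\, dx.
\]
Expanding $\partial_j(x_i z) = \delta_{ij} z + x_i \partial_j z$ splits this into a cross term $-\int_{\R^d} z\,\partial_i z\, dx$, which vanishes by the first identity, and the remainder $-\sum_{j=1}^d \int_{\R^d} x_i(\partial_j z)^2\, dx = -\sc{x_i}{|\nabla z|^2}_{L^2(\R^d)}$, which is the desired right-hand side.

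For the third identity, I would exploit that $\partial_i$ and $e^{t\Delta}$ commute on $L^2(\R^d)$ (both being Fourier multipliers) and that $e^{t\Delta}$ is self-adjoint with the semigroup property $e^{t\Delta} = e^{(t/2)\Delta}\, e^{(t/2)\Delta}$. Setting $w := e^{(t/2)\Delta}\Delta z$, which lies in $H^2(\R^d)$ whenever $z \in H^4(\R^d)$, these properties yield
\[
\sc{\Delta z}{e^{t\Delta}\Delta\partial_i z}_{L^2(\R^d)} = \sc{\Delta z}{\partial_i e^{t\Delta}\Delta z}_{L^2(\R^d)} = \sc{w}{\partial_i w}_{L^2(\R^d)}.
\]
The main obstacle is that $w$, being the heat-semigroup smoothing of $\Delta z$, need not have compact support, so the first identity does not apply directly. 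The fix is to extend $\sc{w}{\partial_i w}_{L^2(\R^d)} = 0$ to arbitrary $w \in H^1(\R^d)$: by density of $C_c^\infty(\R^d)$ in $H^1(\R^d)$ and continuity of the bilinear form $(u, v) \mapsto \int u\,\partial_i v\, dx$ on $H^1(\R^d) \times H^1(\R^d)$, approximating $w$ by compactly supported smooth functions and invoking the first identity for each approximant gives the claim in the limit.
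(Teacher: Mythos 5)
Your proof is correct and follows essentially the same route as the paper's: integration by parts for the first two identities, then reduction of the third to the first via the heat semigroup at half time. The only notable variation is in the third step, where you apply the (density-extended) first identity directly to $w=e^{(t/2)\Delta}\Delta z$, whereas the paper instead writes the quantity as $\tfrac{d^2}{dt^2}\langle\tilde z_t,\partial_i\tilde z_t\rangle$ with $\tilde z_t=e^{(t/2)\Delta}z$ and differentiates the identically-zero function; your version is a bit more direct, and you are also more explicit than the paper about needing to drop the compact support assumption via density in $H^1(\R^d)$, which is precisely the point the paper's parenthetical ``extend by continuity'' is gesturing at.
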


\begin{proof}
Integration by parts gives $\langle z,\partial_{i}z\rangle_{L^{2}(\R^{d})}=-\langle\partial_{i}z,z\rangle_{L^{2}(\R^{d})}$ (argue with compactly supported $z$ first, then extend by continuity),
implying $\sc z{\partial_{i}z}_{L^{2}(\R^{d})}=0$ and $\langle x_{i}\partial_{jj}z,z\rangle_{L^{2}(\R^{d})}=-\langle x_{i},(\partial_{j}z)^{2}\rangle_{L^{2}(\R^{d})}$
for $j=1,\ldots,d$. The last part follows from the first one for
$\tilde{z}_{t}=e^{(t/2)\Delta}z\in H^{2}(\R^{d})$ using
\[
\left\langle e^{t\Delta}\Delta^{2}z,\partial_{i}z\right\rangle _{L^{2}(\R^{d})}=-\frac{d^{2}}{dt^{2}}\left\langle e^{t\Delta}z,\partial_{i}z\right\rangle _{L^{2}(\R^{d})}=-\frac{d^{2}}{dt^{2}}\left\langle \tilde{z}_{t},\partial_{i}\tilde{z}_{t}\right\rangle _{L^{2}(\R^{d})}.\qedhere
\]
\end{proof}
The upper bounds in the next Proposition are well-known for analytic
semigroups. The main difficulty is to ensure that they hold for growing
domains, uniformly in $\delta>0$.

\begin{prop}
\label{prop:semigroup_bounds}There exist universal constants $M_{0},M_{1}$
such that for $\delta,t>0$
\begin{enumerate}
\item $\norm{S_{\theta,\delta}^{*}(t)}_{L^{2}(\Lambda_{\delta})}\le M_{0}e^{C\delta^{2}t}$,
\item $\norm{tA_{\theta,\delta}^{*}S_{\theta,\delta}^{*}(t)}_{L^{2}(\Lambda_{\delta})}\le M_{1}e^{C\delta^{2}t}$.
\end{enumerate}
\end{prop}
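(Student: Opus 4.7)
The plan is to show that a shift of $A^{*}_{\theta,\delta}$ by $c_0\delta^{2}$, for a suitable constant $c_0=c_0(\theta_{0},\|a\|_{\infty},\|b\|_{\infty})$ with $\theta_{0}=\min\theta>0$, yields a family of operators whose associated sesquilinear forms are \emph{uniformly sectorial} in $\delta$. Then both bounds (i) and (ii) will follow from the standard abstract theory of analytic semigroups applied to this shifted family, and transferred back to $A^{*}_{\theta,\delta}$ at the price of the exponential factor $e^{C\delta^{2}t}$.

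\textbf{Step 1: shifted form and dissipativity.} Set $\tilde A_\delta:=A^{*}_{\theta,\delta}-c_0\delta^{2}I$ and define on $H_{0}^{1}(\Lambda_{\delta})$ the sesquilinear form $\mathfrak a_\delta(u,v):=\scapro{-\tilde A_\delta u}{v}_{L^{2}(\Lambda_{\delta})}$. Writing out (\ref{eq:A_0_star_delta}) and integrating by parts the first-order term against $\bar v$, one finds
\[
\mathfrak a_\delta(u,v)=\int_{\Lambda_{\delta}}\theta(\delta\cdot)\nabla u\cdot\overline{\nabla v}\,dx-\delta\int_{\Lambda_{\delta}}a(\delta\cdot)u\cdot\overline{\nabla v}\,dx+\int_{\Lambda_{\delta}}(c_0\delta^{2}-\delta^{2}b(\delta\cdot))u\bar v\,dx.
\]
Applying Cauchy--Schwarz and Young's inequality with weight $\theta_{0}/2$ to the cross term, and choosing $c_0$ large enough depending on $\theta_{0},\|a\|_{\infty},\|b\|_{\infty}$, one obtains uniformly in $\delta\in(0,1]$
\[
\Re\mathfrak a_\delta(u,u)\ge\tfrac{\theta_{0}}{2}\|\nabla u\|_{L^{2}(\Lambda_{\delta})}^{2},\qquad |\Im\mathfrak a_\delta(u,u)|\le C\delta\|u\|_{L^{2}(\Lambda_{\delta})}\|\nabla u\|_{L^{2}(\Lambda_{\delta})}.
\]
Combining the two with Cauchy--Schwarz in $\delta\|u\|\cdot\|\nabla u\|\le(\delta^{2}\|u\|^{2})^{1/2}(\|\nabla u\|^{2})^{1/2}$ and using $\delta^{2}\|u\|^{2}\le c_0^{-1}(\Re\mathfrak a_\delta(u,u)+ c_0\delta^{2}\|u\|^{2})\le c_0^{-1}\Re\mathfrak a_\delta(u,u)+\|u\|^{2}$ (wait: simply bound by the coercivity lower bound) yields $|\Im\mathfrak a_\delta(u,u)|\le C'\Re\mathfrak a_\delta(u,u)$ with $C'$ independent of $\delta$. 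Hence $\mathfrak a_\delta$ is sectorial of angle $\arctan C'$, with the crucial point that $C'$ does not depend on $\delta$ or $\Lambda_{\delta}$.

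\textbf{Step 2: uniform resolvent bound and analytic semigroup.} By Kato's representation theorem (or equivalently Ouhabaz, \textit{Analysis of Heat Equations on Domains}, Ch.~1), the form $\mathfrak a_\delta$ defines a sectorial, m-accretive realization of $-\tilde A_\delta$ on $L^{2}(\Lambda_{\delta})$, and there exists a sector $\Sigma_{\omega}=\{\lambda\in\C:|\arg\lambda|<\omega\}$ with $\omega>\pi/2$ and a constant $\tilde M$, both depending only on $\theta_{0}$, $\|a\|_{\infty}$, $\|b\|_{\infty}$ (and hence universal in $\delta$), such that
\[
\bnorm{(\lambda-\tilde A_\delta)^{-1}}_{L^{2}(\Lambda_{\delta})}\le\frac{\tilde M}{|\lambda|},\qquad \lambda\in\Sigma_{\omega}\setminus\{0\}.
\]
Representing the semigroup generated by $\tilde A_\delta$ by the Dunford--Cauchy integral $\tilde S_\delta(t)=\frac{1}{2\pi i}\int_{\Gamma}e^{\lambda t}(\lambda-\tilde A_\delta)^{-1}d\lambda$ along a suitable contour $\Gamma\subset\Sigma_{\omega}$ and similarly $\tilde A_\delta\tilde S_\delta(t)=\frac{1}{2\pi i}\int_{\Gamma}\lambda e^{\lambda t}(\lambda-\tilde A_\delta)^{-1}d\lambda$, the standard bookkeeping (see e.g.\ Pazy, Thm.~2.5.2) gives
\[
\bnorm{\tilde S_\delta(t)}_{L^{2}(\Lambda_{\delta})}\le M_{0},\qquad \bnorm{t\tilde A_\delta\tilde S_\delta(t)}_{L^{2}(\Lambda_{\delta})}\le M_{1}',
\]
with $M_{0},M_{1}'$ depending only on $\tilde M,\omega$, hence universal.

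\textbf{Step 3: transfer back to $A^{*}_{\theta,\delta}$.} Since $S^{*}_{\theta,\delta}(t)=e^{c_0\delta^{2}t}\tilde S_\delta(t)$ and $A^{*}_{\theta,\delta}S^{*}_{\theta,\delta}(t)=e^{c_0\delta^{2}t}(\tilde A_\delta\tilde S_\delta(t)+c_0\delta^{2}\tilde S_\delta(t))$, (i) follows immediately and (ii) from
\[
\bnorm{tA^{*}_{\theta,\delta}S^{*}_{\theta,\delta}(t)}_{L^{2}(\Lambda_{\delta})}\le e^{c_0\delta^{2}t}\bigl(M_{1}'+c_0(t\delta^{2})M_{0}\bigr)\le M_{1}e^{C\delta^{2}t}
\]
after absorbing $t\delta^{2}e^{c_0\delta^{2}t}$ by a standard inequality $xe^{c_0 x}\le C e^{2c_0 x}$ into a slightly larger exponential factor.

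The main obstacle is Step 1--Step 2: the bounds must be genuinely uniform in $\delta$, i.e.\ independent of the growing domain $\Lambda_{\delta}$. This uniformity is exactly what the form approach delivers, because both the ellipticity constant $\theta_{0}$ and the $L^{\infty}$ norms of the rescaled coefficients $a(\delta\cdot),b(\delta\cdot)$ are controlled uniformly in $\delta$ by the $C^{\alpha}$ bounds on $\theta,a,b$ over $\overline{\Lambda}$; the weak Dirichlet boundary condition $H_{0}^{1}(\Lambda_{\delta})$ requires no regularity of $\partial\Lambda_{\delta}$ beyond that already implied by $\partial\Lambda\in C^{2}$.
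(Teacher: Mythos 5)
Your proof is correct and takes a genuinely different route from the paper. The paper works directly on the resolvent: it uses functional calculus to get the exact resolvent estimate for the self-adjoint part $\Delta_{\theta(\delta\cdot)}$ (with $w=0$, $M=1$ on a full sector), shows via \eqref{eq:A_bounded} that $A_{0,\delta}^*$ is $\Delta_{\theta(\delta\cdot)}$-bounded with small relative bound and $\delta^{2}$-scaled lower-order constant, and then invokes the perturbation result Lemma~III.2.6 of \citet{engel1999one} together with a careful shift-and-sector bookkeeping (their \eqref{eq:lambda_bounds}--\eqref{eq:lambda_bounds_3}) to land in Lunardi's Proposition~2.1.1. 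You instead shift first, pass to the sesquilinear form on $H_0^1(\Lambda_\delta)$, prove uniform sectoriality of the form, and invoke Kato/Ouhabaz form theory to get a uniform resolvent bound on a fixed sector. The form approach is arguably cleaner: the coercivity and numerical-range estimates are elementary, the sector angle drops out directly from $C'$, and you avoid the paper's explicit sector geometry; the price is the implicit appeal to form theory and the (unstated but true) identification of the form operator with $A_{\theta,\delta}^*-c_0\delta^2 I$ on $H_0^1\cap H^2$, which holds here because $\partial\Lambda_\delta$ inherits the $C^2$ regularity of $\partial\Lambda$. One imprecision in Step~1: to conclude $|\Im\mathfrak a_\delta(u,u)|\le C'\Re\mathfrak a_\delta(u,u)$ uniformly, you should choose $c_0$ large enough that the coercivity estimate reads $\Re\mathfrak a_\delta(u,u)\ge\tfrac{\theta_0}{2}\|\nabla u\|^2+\delta^{2}\|u\|^2$ (e.g.\ $c_0\ge\|b\|_\infty+\|a\|_\infty^2/(2\theta_0)+1$), and then apply Young's inequality to $\delta\|u\|\,\|\nabla u\|\le\tfrac12(\delta^{2}\|u\|^2+\|\nabla u\|^2)$; the parenthetical self-correction and the line before it are muddled and should be replaced by this. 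With that cleanup, Steps~2 and~3 go through exactly as you write, and the doubling of the exponential rate $e^{c_0\delta^2 t}\mapsto e^{2c_0\delta^2 t}$ in Step~3 is harmless since the constant $C$ in the statement is unspecified.
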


\begin{proof}
The claimed bounds in the statement follow from Proposition 2.1.1
of \citet{lunardi2012analytic}, if we can show
\begin{equation}
\norm{(\lambda I-A_{\theta,\delta}^{*})^{-1}}_{L^{2}(\Lambda_{\delta})}\le\frac{M}{\left|\lambda-w\right|},\label{eq:resolvent_inequality}
\end{equation}
with $w=c_{1}\delta^{2}$ for all $\lambda\in\Sigma_{\sigma,w}:=\{\rho\in\mathbb{C}:|\text{arg}(\rho-w)|<\sigma\}\backslash\{w\}$
and with constants $c_{1},M>0,\sigma\in(\pi/2,\pi)$ independent of
$\delta$. Since the self-adjoint operator $\Delta_{\theta(\delta\cdot)}$
has strictly negative spectrum for all $\delta>0$ (cf. \citet[Section 6.5]{Evans:2010wj}),
by functional calculus (\ref{eq:resolvent_inequality}) holds indeed
for $\Delta_{\theta(\delta\cdot)}$ with $w=0$, $M=1$ and any $\sigma\in(\pi/2,\pi)$.

In order to extend this to $A_{\theta,\delta}^{*}$, we consider it
as a perturbation of $\Delta_{\theta(\delta\cdot)}$. We show first
that $A_{0,\delta}^{*}$ is $\Delta_{\theta(\delta\cdot)}$-bounded,
i.e.
\begin{equation}
\norm{A_{0,\delta}^{*}v}_{L^{2}(\Lambda_{\delta})}\le c_{2}\epsilon\norm{\Delta_{\theta(\delta\cdot)}v}_{L^{2}(\Lambda_{\delta})}+\left(\frac{1}{4\epsilon}+c_{3}\right)\delta^{2}\norm v_{L^{2}(\Lambda_{\delta})}\label{eq:A_bounded}
\end{equation}
for $\epsilon>0$, $v\in H_{0}^{1}(\Lambda_{\delta})\cap H^{2}(\Lambda_{\delta})$
and absolute constants $c_{2},c_{3}>0$. For this note that $\norm{A_{0,\delta}^{*}v}_{L^{2}(\Lambda_{\delta})}$
is upper bounded by
\begin{align*}
 & \norm{\delta\sc{a(\delta\cdot)}{\nabla v}}_{L^{2}(\Lambda_{\delta})}+\delta^{2}\left(\norm{v\,\text{div}\left(a(\delta\cdot)\right)}_{L^{2}(\Lambda_{\delta})}+\norm b_{\infty}\norm v_{L^{2}(\Lambda_{\delta})}\right).
\end{align*}
Moreover, $\norm{\delta\sc{a(\delta\cdot)}{\nabla v}}_{L^{2}(\Lambda_{\delta})}$
is upper bounded by
\begin{align}
 & \delta d^{1/2}\sup_{i=1,\dots,d}\norm{a_{i}}_{\infty}\left(\sum_{i=1}^{d}\norm{\partial_{i}v}_{L^{2}(\Lambda_{\delta})}^{2}\right)^{1/2}\nonumber \\
 & \qquad\le\delta\frac{d^{1/2}\sup_{i=1,\dots,d}\norm{a_{i}}_{\infty}}{\min_{x}\theta(x)^{1/2}}\sc{(-\Delta_{\theta\left(\delta\cdot\right)}v)}v_{L^{2}(\Lambda_{\delta})}^{1/2}\nonumber \\
 & \qquad\le\frac{d^{1/2}\sup_{i=1,\dots,d}\norm{a_{i}}_{\infty}}{\min_{x}\theta(x)^{1/2}}\norm{\Delta_{\theta\left(\delta\cdot\right)}v}_{L^{2}(\Lambda_{\delta})}^{1/2}\delta\norm v_{L^{2}(\Lambda_{\delta})}^{1/2}\label{eq:A0_upper_bound}\\
 & \qquad\le c_{2}\epsilon\norm{\Delta_{\theta\left(\delta\cdot\right)}v}_{L^{2}(\Lambda_{\delta})}+\frac{\delta^{2}}{4\epsilon}\norm v_{L^{2}(\Lambda_{\delta})},\nonumber
\end{align}
with $c_{2}:=\frac{d\sup_{i=1,\dots,d}\norm{a_{i}}_{\infty}^{2}}{\min_{x}\theta(x)}$,
where we used in the last line the basic inequality $xy\le\epsilon x^{2}+\frac{1}{4\epsilon}y^{2}$
for $x,y>0$. This shows (\ref{eq:A_bounded}) with $c_{3}:=\sum_{i=1}^{d}\norm{\partial_{i}a_{i}}_{\infty}+\norm b_{\infty}$.

Choosing $\epsilon$ sufficiently small, the proof of Lemma III.2.6
in \citet{engel1999one} implies (\ref{eq:resolvent_inequality})
for all $\lambda\in\Sigma_{\sigma,0}\cap\{\rho\in\mathbb{C}:|\rho|>c_{4}\delta^{2}\}$
with $c_{4}=\frac{(4\epsilon)^{-1}+c_{3}}{1-2c_{2}\epsilon}$, $\sigma=3\pi/4$
and $M'>0$ instead of $M$. Setting $w=(1+c_{5})c_{4}\delta^{2}$,
for a suitable constant $c_{5}>0$ to be determined later, and assuming
that for these $\lambda$
\begin{equation}
\lambda+w\in\Sigma_{\sigma,0}\cap\{\rho\in\mathbb{C}:|\rho|>c_{4}\delta^{2}\},\,\,\,\,|\lambda+w|\geq C|\lambda|,\label{eq:lambda_bounds}
\end{equation}
with a universal constant $C$, we can therefore conclude for any
$\lambda\in\Sigma_{\sigma,0}\cap\{\rho\in\mathbb{C}:|\rho|>c_{4}\delta^{2}\}$
that
\begin{equation}
\norm{((\lambda+w)I-A_{\theta,\delta}^{*})^{-1}}_{L^{2}(\Lambda_{\delta})}\le\frac{M'}{\left|\lambda+w\right|}\leq\frac{M'C}{\left|\lambda\right|}.\label{eq:resolvent_inequality_2}
\end{equation}
In order to obtain (\ref{eq:resolvent_inequality}) from this let
$\lambda\in\Sigma_{\sigma,w}$ such that $\lambda-w\in\Sigma_{\sigma,0}$.
Assume that we can also show
\begin{equation}
\left|\lambda-w\right|>c_{4}\delta^{2}.\label{eq:lambda_bounds_2}
\end{equation}
Then the result follows from (\ref{eq:resolvent_inequality_2}) with
$c_{1}=(1+c_{5})c_{4}$, $M=M'C$, because
\[
\norm{(\lambda I-A_{\theta,\delta}^{*})^{-1}}_{L^{2}(\Lambda_{\delta})}=\norm{\left(((\lambda-w)+w)I-A_{\theta,\delta}^{*}\right)^{-1}}_{L^{2}(\Lambda_{\delta})}\le\frac{M'C}{\left|\lambda-w\right|}.
\]
We are left with showing (\ref{eq:lambda_bounds}) and (\ref{eq:lambda_bounds_2}).
For (\ref{eq:lambda_bounds}) note that $\lambda\in\Sigma_{\sigma,0}$
already yields $\lambda+w\in\Sigma_{\sigma,0}$, because $w>0$, while
the inequality $|\lambda+w|>c_{4}\delta^{2}$ holds clearly, if $|\text{Im}(\lambda)|>c_{4}\delta^{2}$.
On the other hand, $|\text{arg}(\lambda)|<\sigma$ implies $|\text{Re}(\lambda)|<c_{5}|\text{Im}(\lambda)|$
for a constant $c_{5}>0$ and thus, if $|\text{Im}(\lambda)|\leq c_{4}\delta^{2}$,
then
\begin{equation}
\left|\lambda+w\right|\geq w-|\text{Re}(\lambda)|\geq w-c_{5}|\text{Im}(\lambda)|>c_{4}\delta^{2}.\label{eq:lambda_bounds_3}
\end{equation}
In order to find the constant $C$ in (\ref{eq:lambda_bounds}), note
that $|\lambda+w|\geq|\lambda|$ holds always if $\text{Re}(\lambda)\geq0$,
and that $|\lambda+w|\geq\frac{1}{2}|\lambda|$ whenever $2w\leq|\lambda|$.
Let now $\text{Re}(\lambda)<0$ and $|\lambda|<2w$ such that by (\ref{eq:lambda_bounds})
$\left|\lambda+w\right|>c_{4}\delta^{2}=\frac{2w}{2(1+c_{5})}>C\left|\lambda\right|$,
with $C:=\frac{1}{2(1+c_{5})}$. Finally, with respect to (\ref{eq:lambda_bounds_2}),
$|\lambda-w|>c_{4}\delta^{2}$ holds always, if $|\text{Im}(\lambda)|>c_{4}\delta^{2}$.
On the other hand, $|\text{arg}(\lambda-w)|<\sigma$ implies $|\text{arg}(\lambda)|<\sigma$
and hence for $|\text{Im}(\lambda)|\leq c_{4}\delta^{2}$, as in (\ref{eq:lambda_bounds_3}),
$|\lambda-w|\geq w-|\text{Re}(\lambda)|>c_{4}\delta^{2}$.
\end{proof}
With these preparations we can proceed to proving Proposition \ref{prop:semigroup_convergence}.

\begin{proof}[Proof of Proposition \ref{prop:semigroup_convergence}]
(i). The proof is based on giving a stochastic representation for
$S_{\theta,\delta}^{*}(t)z$ via the Feynman-Kac formulas. Without
loss of generality let $\theta\in C^{1+\alpha}(\R^{d})$, $a\in C^{1+\alpha}(\R^d;\R^d)$, $b\in C^{\alpha}(\R^{d})$, $\alpha>0$, with $\min_{x\in\R^{d}}\theta(x)>0$.
Then for $f\in C_{c}^{2}(\R^{d})$
\begin{equation}
A_{\theta,\delta}^{*}f(x)=\theta(\delta x)\Delta f(x)+\sc{\tilde{a}_{\delta}(x)}{\nabla f(x)}_{\R^{d}}+\tilde{b}_{\delta}(x)f(x),\,\,\,\,x\in\R^{d},\label{eq:A_adjoint}
\end{equation}
where $\tilde{a}_{\delta}=\delta(\nabla\theta(\delta\cdot)-a(\delta\cdot))\in C^{\alpha}(\R^{d})$,
$\tilde{b}_{\delta}=\delta^{2}(b(\delta\cdot)-\text{div}(a(\delta\cdot))\in C^{\alpha}(\R^{d})$.
By \citet[Theorem 5.4.22]{Karatzas1991} we can find a process $Y^{(\delta)}=(Y_{t}^{(\delta)})_{t\geq0}$
being a weak solution of the $d$-dimensional stochastic differential
equation
\[
dY_{t}^{(\delta)}=\tilde{a}_{\delta}(Y_{t}^{(\delta)})dt+\sqrt{2}\theta(\delta Y_{t}^{(\delta)})^{1/2}d\tilde{W}_{t},\,\,\,\,Y_{0}^{(\delta)}=x\in\R^{d},
\]
on a filtered probability space $(\tilde{\Omega},\tilde{\c F},(\tilde{\c F}_{t})_{t\geq0},\tilde{\P})$
carrying a scalar Brownian motion $(\tilde{W}_{t})_{t\geq0}$. We
show below for $x\in\R^{d}$
\begin{align}
(S_{\theta,\delta}^{*}\left(t\right)z)\left(x\right) & =\tilde{\E}_{x}\left[z(Y_{t}^{(\delta)})\exp\Big(\int_{0}^{t}\tilde{b}_{\delta}(Y_{s}^{(\delta)})ds\Big)\I_{\left\{ t<\tau_{\delta}(Y^{(\delta)})\right\} }\right],\label{eq:feynman_Kac}
\end{align}
where $\tilde{\P}_{x}$ and $\tilde{\E}_{x}$ indicate the initial
value and $\tau_{\delta}(Y^{(\delta)}):=\inf\{t\geq0:Y_{t}^{(\delta)}\notin\Lambda_{\delta}\}$.
Assume first this holds true. Denote the transition densities of $Y^{(\delta)}$
by $p_{\delta,t}(x,y)$, $x,y\in\R^{d}$. According to \citet[Eq. (1.4)]{Sheu:1991ep}
we have $p_{\delta,t}(x,y)\leq c_{3}q_{c_{2}t}(x-y)$ for universal
constants $c_{2},c_{3}>0$. Then by (\ref{eq:feynman_Kac}), using
$\norm{\tilde{b}_{\delta}}_{\infty}\leq c_{1}\delta^{2}$ for some
constant $c_{1}>0$, it follows
\begin{align*}
\left|(S_{\theta,\delta}^{*}\left(t\right)z)\left(x\right)\right| & \leq e^{c_{1}t\delta^{2}}\tilde{\E}_{x}\left[|z(Y_{t}^{(\delta)})|\right]=e^{c_{1}t\delta^{2}}\int_{\R^{d}}|z(y)|p_{\delta,t}(x,y)dy\\
 & \le c_{3}e^{c_{1}t\delta^{2}}q_{c_{2}t}*|z|(x)=c_{3}e^{c_{1}t\delta^{2}}(e^{c_{2}t\Delta}\left|z\right|)(x).
\end{align*}
We are left with showing (\ref{eq:feynman_Kac}). The proof is similar
to \citet[Theorem 6.5.2]{1975iv} and extends \citet[Theorem 7.44]{peres2010},
which applies only to Brownian motion. It is enough to consider $x\in\overline{\Lambda}_{\delta}$,
because otherwise $(S_{\theta,\delta}^{*}(t)z)(x)=0$ and $\I_{\{t<\tau_{\delta}(Y_{\delta})\}}=0$
$\tilde{\P}_{x}$-a.s. and so (\ref{eq:feynman_Kac}) holds trivially.
The function $u(t)=u(t,\cdot)$ with $u(t,x):=(S_{\theta,\delta}^{*}(t)z)(x)$
for $t\geq0$, $x\in\overline{\Lambda}_{\delta}$, is the unique solution
in $L^{2}(\Lambda_{\delta})$ of
\[
\begin{cases}
\frac{d}{dt}u\left(t\right)=A_{\theta,\delta}^{*}u\left(t\right),\,\,\,\,t>0,\\
u(0)=z,\,\,\,\,u(t)|_{\partial\Lambda_{\delta}}=0,\,\,\,t\geq0,
\end{cases}
\]
where the derivative is taken in $L^{2}(\Lambda_{\delta})$. Classical
PDE theory yields $u\in C([0,\infty),\overline{\Lambda}_{\delta})\cap C^{1,2}([\epsilon,\infty),\overline{\Lambda}_{\delta})$
for any $\epsilon>0$, see for example \citet[Theorem 6.3.6]{1975iv}
(here we use the regularity assumptions on $\theta, a, b$).
Set $h(t)=\exp(\int_{0}^{t}\tilde{b}_{\delta}(Y_{s}^{(\delta)})ds)$
and let $\rho=\inf\{t\geq0:Y_{t}^{(\delta)}\notin U\}$ for a compact
set $U\subset\Lambda_{\delta}$. Set $g(t',x)=u(t-t',x)$, $0\leq t'\leq t$.
Noting that $A_{\theta,\delta}^{*}-\tilde{b}_{\delta}$ generates
the transition semigroup of $Y^{(\delta)}$ and $h'(t)=\tilde{b}_{\delta}(Y_{t}^{(\delta)})h(t)$,
Itô's formula shows for any $0\leq t'<t$
\begin{align*}
 & g\left(t'\wedge\rho,Y_{t'\wedge\rho}^{(\delta)}\right)h\left(t'\wedge\rho\right)=g\left(0,Y_{0}^{(\delta)}\right)+\int_{0}^{t'\wedge\rho}\bigg(A_{\theta,\delta}^{*}g(s,\cdot)(Y_{s}^{(\delta)})\\
 & \qquad-\frac{d}{ds}g(s,Y_{s}^{(\delta)})\bigg)h(s)ds+\int_{0}^{t'\wedge\rho}\left\langle \nabla g\left(s,Y_{s}^{(\delta)}\right)h\left(s\right),d\tilde{W}_{s}\right\rangle _{\R^{d}}.
\end{align*}
Using the previous display, the second term vanishes. Taking expectations
and letting $t'\rightarrow t$ yields therefore
\begin{align*}
 & \left(S_{\theta,\delta}^{*}\left(t\right)z\right)\left(x\right)=g\left(0,Y_{0}^{(\delta)}\right)=\tilde{\E}_{x}\left[u\left(t-t\wedge\rho,Y_{t\wedge\rho}^{(\delta)}\right)h\left(t\wedge\rho\right)\right]\\
 & \quad=\tilde{\E}_{x}\left[z(Y_{t}^{(\delta)})h\left(t\right)\I_{\left\{ \rho>t\right\} }\right]+\tilde{\E}_{x}\left[u(t-\rho,Y_{\rho}^{(\delta)})h\left(\rho\right)\I_{\left\{ \rho\le t\right\} }\right].
\end{align*}
If $U$ exhausts $\Lambda_{\delta}$, then $\rho\rightarrow\tau_{\delta}(Y^{(\delta)})$
and $Y_{\rho}^{(\delta)}\rightarrow0$ such that $u(t-\rho,Y_{\rho}^{(\delta)})\rightarrow0$.
This implies (\ref{eq:feynman_Kac}).

(ii). We can assume $z\in C(\overline{\Lambda}_{\delta})$ for sufficiently
small $\delta$. Indeed, for $z\in L^{2}(\R^{d})$ let $z^{(\epsilon)}\in C_{c}(\R^{d})$
converge to $z$ in $L^{2}(\R^{d})$ as $\epsilon\rightarrow0$. For
small $\delta$ we have $z^{(\epsilon)}\in C(\overline{\Lambda}_{\delta})$.
Applying Proposition \ref{prop:semigroup_bounds}(i) to $S_{\theta,\delta}^{*}\left(t\right)(z|_{\Lambda_{\delta}}-z^{(\epsilon)})$,
Lemma \ref{lem:LaplaceSemigroup_props}(i) to $e^{\theta(0)t\Delta}(z-z^{(\epsilon)})$
we have
\begin{align*}
 & \norm{(S_{\theta,\delta}^{*}\left(t\right)(z|_{\Lambda_{\delta}})-e^{\theta(0)t\Delta}z}_{L^{2}(\R^{d})}\\
 & \quad\lesssim(e^{c_{1}\delta^{2}t}+1)\norm{z-z^{(\epsilon)}}_{L^{2}(\R^{d})}+\norm{(S_{\theta,\delta}^{*}\left(t\right)-e^{\theta(0)t\Delta})z^{(\epsilon)}}_{L^{2}(\R^{d})}.
\end{align*}
Using the statement with respect to $z^{(\epsilon)}$, and letting
first $\delta\rightarrow0$ and then $\epsilon\rightarrow0$, the
last line tends to zero.

For $z\in C(\overline{\Lambda}_{\delta})$ it is enough to show $(S_{\theta,\delta}^{*}(t)z)(x)\rightarrow(e^{\vartheta(0)\Delta t}z)(x)$
pointwise for $x\in\R^{d}$. $L^{2}(\R^{d})$-covergence follows then
from (i) and dominated convergence. Using the notation from (i) we
have the representation $(e^{\vartheta(0)t\Delta}z)\left(x\right)=\tilde{\E}_{x}[z(Y_{t}^{(0)})]$
for $Y_{t}^{(0)}=x+\sqrt{2}\theta(0)^{1/2}\tilde{W}_{t}$. (\ref{eq:feynman_Kac})
therefore allows us to write $S_{\theta,\delta}^{*}\left(t\right)z-e^{\theta(0)t\Delta}z=:T_{1}+T_{2}+T_{3}$
with
\begin{align*}
T_{1} & =\tilde{\E}_{x}\left[z(Y_{t}^{(\delta)})-z(Y_{t}^{(0)})\right],\\
T_{2} & =\tilde{\E}_{x}\left[z(Y_{t}^{(\delta)})\left(\exp\left(\int_{0}^{t}\tilde{b}_{\delta}(Y_{s}^{(\delta)})ds\right)-1\right)\I_{\left\{ t<\tau_{\delta}(Y^{(\delta)})\right\} }\right],\\
T_{3} & =-\tilde{\E}_{x}\left[z(Y_{t}^{(\delta)})\I_{\left\{ t\geq\tau_{\delta}(Y^{(\delta)})\right\} }\right].
\end{align*}
We shall show that $T_{i}\rightarrow0$, $i=1,2,3$. The transition
semigroup of $(Y_{t}^{(0)})_{t\geq0}$ is generated by $\c A^{(0)}=\theta(0)\Delta$.
Since $\c A^{(\delta)}f\rightarrow\c A^{(0)}f$ uniformly on $\R^{d}$
for $f\in C_{c}^{\infty}(\R^{d})$ as $\delta\rightarrow0$, it follows
from \citet[Theorem 19.25]{kallenberg2002foundations} that $Y^{(\delta)}\r dY^{(0)}$
with respect to the uniform topology on compacts in $\R_{+}$. This
yields $T_{1}\rightarrow0$. As $z$ is bounded and $\sup_{s>0}|\tilde{b}_{\delta}(Y_{s}^{(\delta)})|\lesssim\delta^{2}$,
we also have $|T_{2}|\lesssim e^{Ct\delta^{2}}\delta^{2}$ and $|T_{3}|\lesssim\tilde{\P}_{x}(\tau_{\delta}(Y^{(\delta)})\leq t)$.
To see why $\tilde{\P}_{x}(\tau_{\delta}(Y^{(\delta)})\leq t)\rightarrow0$
holds let $Z_{s}^{(\delta)}=Y_{s}^{(\delta)}-x-\int_{0}^{s}\tilde{a}_{\delta}(Y_{s'}^{(\delta)})ds'$
and observe that $|\int_{0}^{s}\tilde{a}_{\delta}(Y_{s'}^{(\delta)})ds'|\lesssim\delta t$
such that
\begin{align*}
\tilde{\P}_{x}(\tau_{\delta}(Y^{(\delta)})\leq t) & \leq\sum_{i=1}^{d}\tilde{\P}_{x}\left(\max_{0\leq s\leq t}|Z_{s}^{(\delta,i)}|\geq C\delta^{-1}\right),
\end{align*}
where $Z^{(\delta)}=(Z^{(\delta,i)})_{1\leq i\leq d}$. Since each
$Z^{(\delta,i)}$ is a continuous martingale vanishing at $0$ such
that $\langle Z^{(\delta,i)}\rangle_{s}=2\int_{0}^{s}\theta(\delta Y_{s'}^{(\delta)})ds'\leq cs$,
$c>0$, uniformly in $i=1,\dots,d$, we find for some scalar Brownian
motion $(\tilde{B}_{s})_{s\geq0}$ and $\tilde{c}>0$
\begin{align*}
\tilde{\P}_{x}(\tau_{\delta}(Y^{(\delta)})\leq t) & \leq d\tilde{\P}_{x}\left(\max_{0\leq s\leq t}|\tilde{B}_{cs}|\geq C\delta^{-1}\right)\lesssim e^{-\tilde{c}\delta^{-2}t^{-1}},
\end{align*}
because the density of the running maximum of a Brownian motion decays exponentially
(\citet[Chapter 2.8]{Karatzas1991}). This yields $T_{3}\rightarrow0$.
\end{proof}

\begin{lem}
\label{lem:v0} Let $z\in H^{2}(\R^{d})$ have compact support in $\Lambda_{\delta'}$ for some $\delta'>0$. For $0<\delta\le\delta'$ set $v^{(\delta)}:=\delta^{-1}(A_{\theta,\delta}^{*}-\theta(0)\Delta)z$ and define 
\[
v:=\Delta(\scapro{\nabla\theta(0)}{x}_{\R^{d}}z)(x)-\scapro{\nabla\vartheta(0)+a(0)}{\nabla z(x)}_{\R^{d}}.
\]
Then the following holds:
\begin{enumerate}
\item $\norm{v^{(\delta)}}_{L^{1}\cap L^{2}(\R^{d})}\leq C\norm{z}_{W_{1,2}^{2}(\R^{d})}$
and $v^{(\delta)}\rightarrow v$ in $L^{2}(\R^{d})$ for $\delta\rightarrow0$.

\item If $\int_{\R^{d}}z(x)dx=0$, then $v=\Delta m$ for $m\in H^{2}(\R^{d})$ with compact support. Moreover, if $\theta\in C^{1+\alpha'}(\overline{\Lambda})$ for $0\leq \alpha' \leq 1$, then
$\norm{v^{(\delta)}-v}_{L^1\cap L^2(\R^d)}\leq C\delta^{\alpha'}\norm z_{W_{1,2}^{2}(\R^{d})}$.
\item If  $\int_{\R^{d}}z(x)dx=0$
and $\int_{\R^{d}}xz(x)dx=0$, then $z=\Delta m$ for $m\in H^{4}(\R^{d})$
with compact support.
\end{enumerate}
\end{lem}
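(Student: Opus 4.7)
The plan is to treat the three parts in sequence: the quantitative content in (i)--(ii) reduces to Taylor expansion on the fixed compact support of $z$, while the analytic heart is the construction of compactly supported primitives under $\Delta$ in (ii) and (iii).

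For (i), first compute $v^{(\delta)}$ explicitly. Expanding $\Delta_{\theta(\delta\cdot)}z = \theta(\delta x)\Delta z + \delta\langle(\nabla\theta)(\delta x),\nabla z\rangle_{\R^d}$ and $A_{0,\delta}^{*}z = -\delta^{2}(\text{div }a)(\delta x)z - \delta\langle a(\delta x),\nabla z\rangle_{\R^d} + \delta^{2}b(\delta x)z$ (using $\text{div}(a(\delta\cdot)z) = \delta(\text{div }a)(\delta x)z + \langle a(\delta x),\nabla z\rangle_{\R^d}$), one obtains
\[
v^{(\delta)}(x) = \tfrac{\theta(\delta x)-\theta(0)}{\delta}\Delta z(x) + \langle(\nabla\theta)(\delta x)-a(\delta x),\nabla z(x)\rangle_{\R^d} - \delta(\text{div }a)(\delta x)z(x) + \delta b(\delta x)z(x).
\]
Every term has support in $\supp(z)$, a fixed compact set on which $|x|$ is bounded, so the pointwise estimate $|v^{(\delta)}(x)|\lesssim |z(x)|+|\nabla z(x)|+|\Delta z(x)|$ (uniform in $\delta$) follows from boundedness of $\theta,a,b$ and their first derivatives, yielding the claimed $L^{1}\cap L^{2}$-bound. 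For the $L^{2}$-convergence, Taylor-expand $\delta^{-1}(\theta(\delta x)-\theta(0)) \to \langle\nabla\theta(0),x\rangle_{\R^d}$ using $\theta\in C^{1+\alpha}$ together with continuity of $\nabla\theta$ and $a$ at $0$; the identity $\Delta(\langle\nabla\theta(0),x\rangle z) = \langle\nabla\theta(0),x\rangle\Delta z + 2\langle\nabla\theta(0),\nabla z\rangle_{\R^d}$ then rewrites the pointwise limit as the prescribed $v$.

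For (ii), split $v = \Delta(\langle\nabla\theta(0),x\rangle z) - \langle c,\nabla z\rangle_{\R^d}$ with $c:=\nabla\theta(0)+a(0)$. The first summand is already $\Delta m_{1}$ for $m_{1}:=\langle\nabla\theta(0),x\rangle z$, compactly supported. For the second, construct compactly supported $m_{2}$ with $\Delta m_{2} = -\langle c,\nabla z\rangle_{\R^d}$: in $d=1$ take the primitive $m_{2}(x):=-c\int_{-\infty}^{x}z(t)\,dt$, which is compactly supported precisely because $\int z = 0$ and satisfies $m_{2}''=-cz'$; in $d\ge 2$ an analogous compactly supported $m_{2}$ is produced via divergence-equation solvability (Bogovski operator) applied to the compactly supported, mean-zero datum $-\langle c,\nabla z\rangle_{\R^d}$. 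Setting $m := m_{1}+m_{2}$ gives $\Delta m = v$ with $m\in H^{2}(\R^d)$ compactly supported. The rate bound follows from Taylor remainders $|(\nabla\theta)(\delta x)-\nabla\theta(0)|\lesssim \delta^{\alpha'}|x|^{\alpha'}$ and $|\delta^{-1}(\theta(\delta x)-\theta(0))-\langle\nabla\theta(0),x\rangle_{\R^d}|\lesssim \delta^{\alpha'}|x|^{1+\alpha'}$ under $\theta\in C^{1+\alpha'}$, the lower-order $a,b$-contributions being $O(\delta)$ and hence dominated by $\delta^{\alpha'}$.

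For (iii), given compactly supported $z$ with $\int z = 0$ and $\int xz = 0$, the goal is $\Delta m = z$ with $m\in H^{4}(\R^d)$ compactly supported. In $d=1$ the construction is explicit: set $m(x):=\int_{-\infty}^{x}\int_{-\infty}^{y}z(t)\,dt\,dy$, so $m''=z$ and $m\in H^{4}$ from $z\in H^{2}$. The condition $\int z = 0$ makes $m'(y)=\int_{-\infty}^{y}z$ vanish outside $\supp(z)$; a Fubini exchange then yields $\int_{-\infty}^{\infty}m'(y)\,dy = -\int y z(y)\,dy$, which vanishes by the second moment condition, so $m$ itself has compact support. In $d\ge 2$ the construction proceeds in two stages: $\int z = 0$ enables the Bogovskii operator to produce a compactly supported vector field $Z$ with $\text{div}(Z)=z$, while integration by parts shows the second moment condition $\int xz=0$ is equivalent to $\int Z = 0$, allowing a further antidifferentiation step to arrive at a compactly supported scalar potential. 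The main obstacle throughout is exactly this compact-support construction: the prescribed moment conditions are precisely what forces the iterated primitives to vanish outside $\supp(z)$, and everything else is routine Taylor-expansion bookkeeping on that fixed compact set.
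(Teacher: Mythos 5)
Part (i) is correct and essentially identical to the paper's argument: the same explicit formula for $v^{(\delta)}$, the boundedness by support considerations, and Taylor expansion for the limit. For parts (ii) and (iii) in $d=1$, your iterated-primitive construction (taking $m_2(x)=-c\int_{-\infty}^{x}z$ in (ii), and the double antiderivative in (iii), with the moment conditions forcing the primitives to vanish outside $\supp(z)$) is correct and is a genuinely different, more hands-on route than the paper's. The paper instead works in the Fourier domain, defining $m$ as the inverse Fourier transform of $-\mathcal{F}z(\omega)/|\omega|^2$ and recovering compact support from the Paley--Wiener theorem after arguing that $\mathcal{F}z/|\omega|^{2}$ has removable singularities. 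In $d=1$ these two approaches are equivalent in content; yours is arguably cleaner.

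For $d\geq 2$ your construction does not work, and the gap is substantive. The Bogovski\u{\i} operator produces a compactly supported \emph{vector field} $w$ with $\text{div}\,w=f$; it does not produce a compactly supported scalar $m_2$ with $\Delta m_2=f$, since $w$ is generally not a gradient and cannot be antidifferentiated once more to a compactly supported potential. In fact the statements of (ii) and (iii) fail for $d\geq 2$ under the given hypotheses: if $m\in H^2$ is compactly supported with $\Delta m=f$, then Green's identity on a ball containing $\supp(m)$ forces $\int f\,h=\int m\,\Delta h=0$ for \emph{every} harmonic polynomial $h$, not just $h=1$ and $h=x_i$. For (iii), taking $z=\partial_1\partial_2\phi$ with a bump function $\phi$, $\int\phi\neq 0$, gives $\int z=\int x_iz=0$ but $\int x_1x_2\,z=\int\phi\neq 0$, so no compactly supported $m$ exists; a parallel counterexample works for (ii) whenever $c=\nabla\theta(0)+a(0)\neq 0$ and $\int x_i z\neq 0$ for some $i$. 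The paper's Paley--Wiener argument has the same gap for $d\geq 2$, because $\omega_1^2+\cdots+\omega_d^2$ vanishes on the full isotropic cone in $\C^d$, not just at $\omega=0$, and the numerator does not vanish there. This does not affect the paper's results, since Lemma \ref{lem:v0}(ii)--(iii) are only invoked when $d=1$ (cf.\ Proposition \ref{prop:bias_A_with_delta} and Lemma \ref{lem:S_delta_bounds}(iii)); but the multidimensional step in your write-up is neither proved nor, as stated, provable.
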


\begin{proof}
(i). Without loss of generality let $\theta,a,b$ as well as the partial derivatives of $\theta,a$ be bounded on $\R^d$. Then for $x\in\R^d$
\begin{align}
v^{(\delta)}(x) & =\frac{\vartheta\left(\delta x\right)-\vartheta\left(0\right)}{\delta}\Delta z(x)+\left\langle \nabla\theta(\delta x)-a(\delta x),\nabla z(x)\right\rangle _{\R^{d}}\label{eq:v_delta}\\
 & +\delta\left(b\left(\delta x\right)-(\text{div}a)(\delta x)\right)z(x).\nonumber
\end{align}
From this obtain the upper bound on $v^{(\delta)}$ and the convergence in $L^2(\R^d)$ to
\begin{align*}
& \scapro{\nabla\theta(0)}{x}_{\R^{d}}\Delta z(x)-\scapro{-\nabla\vartheta(0)+a(0)}{\nabla z(x)}_{\R^{d}}=v(x).
\end{align*}

(ii). In order to find $m$, as $z$ has
compact support, it suffices to find a compactly supported function
$g\in H^{2}(\R^{d})$ with $\Delta g=\scapro{\nabla\vartheta(0)+a(0)}{\nabla z}_{\R^{d}}$
in $L^{2}(\R^{d})$ and to set $m:=\scapro{\nabla\theta(0)}{x}_{\R^{d}}z(x)-g(x)$.
Using the Fourier transform ${\cal F}g(\omega)=\int_{\R^{d}}g(x)e^{i\scapro{x}{\omega}}dx$
this means by usual Fourier calculus
\[
-\abs{\omega}^{2}{\cal F}g(\omega)=\scapro{\nabla\vartheta(0)+a(0)}{i\omega}_{\R^{d}}{\cal F}z(\omega),\quad\omega\in\R^{d}.
\]
By the compact support of $z$ and $\int_{\R^{d}}z(x)dx=0$ the Fourier
transform ${\cal F}z$ is analytic with ${\cal F}z(0)=0$. We can
thus define
\[
g(x):={\cal F}^{-1}[u](x)\text{ with }u(\omega):=\scapro{\nabla\vartheta(0)+a(0)}{-\abs{\omega}^{-1}i\omega}_{\R^{d}}\frac{{\cal F}z(\omega)}{\abs{\omega}}
\]
as the inverse Fourier transform of the $L^{2}$-function $u$. Noting
$z\in H^{2}(\R^{d})$ and $\abs{u(\omega)}\lesssim\abs{{\cal F}z(\omega)}$
for $\abs{\omega}\to\infty$, we see $g\in H^{2}(\R^{d})$ and $\Delta g=\scapro{\nabla\vartheta(0)+a(0)}{\nabla z}_{\R^{d}}$
in $L^{2}(\R^{d})$.

For compactness of $g$ we use the Paley-Wiener Theorem (\citet[Theorem II.7.22]{rudin2006functional})
to deduce from the compact support of $z$ that ${\cal F}z$ can be
extended to an entire function on $\C^{d}$, satisfying the exponential
growth condition $\abs{{\cal F}z(\omega)}\le\gamma_{N}(1+\abs{\omega})^{-N}\exp(r\abs{\Im(\omega)})$,
$\omega\in\C^{d}$, for all $N\in\N$ and suitable positive constants
$\gamma_{N}$, $r$. Hence, $u$ is the quotient of an entire function
and $\abs{\omega}^{2}$, which is also entire. A meromorphic function
with removable singularity extends continuously to an entire function.
Consequently, we can work with an entire function $u$, which by definition
satisfies the same exponential growth condition. A reverse application
of the Paley-Wiener Theorem shows that $g$ has compact support.

Finally, we can assume that $\nabla \theta,a$  are uniformly $\alpha'$-H{\"o}lder continuous on $\R^d$. The upper bound on $\norm{v^{(\delta)}-v}_{L^1\cap L^2(\R^d)}$ follows then for $x\in\R^d$ using \eqref{eq:v_delta}  from
\begin{eqnarray*}
& |v^{(\delta)}(x) - v(x)| \lesssim \left|\frac{\vartheta(\delta x)-\theta(0)}{\delta} - \langle\nabla \theta(0),x\rangle_{\R^d}\right| |\Delta z(x)| \\
& \qquad \qquad+ \left|\nabla \theta(\delta x) -\nabla \theta(0) + a(0)-a(\delta x)\right| |\nabla z(x)| + \delta |z(x)|.
\end{eqnarray*}

(iii). The argument is similar to (ii). As above, the Fourier transform
$\c Fz$ is analytic with $\c Fz(0)=0$. The assumption $\int_{\R^{d}}x_{i}z(x)dx=0$ gives
also $\partial_{i}(\c Fz)(0)=0$, $i=1,\dots,d$. It follows for
\[
m(x):={\cal F}^{-1}[u](x)\text{ with }u(\omega):=-\frac{{\cal F}z(\omega)}{\abs{\omega}^{2}}
\]
that $m\in H^{4}(\R^{d})$ and $\Delta m=z$. A Paley-Wiener
argument as in (ii) shows that $m$ has compact support.
\end{proof}

The following heat kernel bounds will be used frequently. The conditions
in (iii) are essential for $d=1$ to improve on (ii).

\begin{lem}
\label{lem:S_delta_bounds} Let the functions $u,w\in L^{2}(\R^{d})$, $z\in H^{2}(\R^{d})$
have compact support in $\Lambda_\delta$ for some $\delta>0$. Then for $0<t\leq T\delta^{-2}$:
\begin{enumerate}
\item $\norm{S_{\theta,\delta}^{*}\left(t\right)u}_{L^{2}(\Lambda_{\delta})}\leq e^{CT}(1\wedge t^{-d/4})\norm u_{L^1\cap L^2(\R^d)}.$
\item If $\norm{w-\Delta z}_{L^1\cap L^2(\R^d)}\leq C\delta^{\alpha'}\norm{z}_{W_{1,2}^2(\R^d)}$ for $0\leq \alpha'\leq 1$, then
\[\norm{S_{\theta,\delta}^{*}(t)w}_{L^{2}(\Lambda_{\delta})}\leq e^{CT}(1\wedge t^{-\alpha'/2-d/4})\norm{z}_{W^2_{1,2}(\R^{d})}.\]
\item If $\theta\in C^{1+\alpha'}(\overline{\Lambda})$ for $0\leq \alpha'\leq 1$ and $\int_{\R^{d}}z(x)dx=0$,
then
\[\norm{S_{\theta,\delta}^{*}(t)\Delta z}_{L^{2}(\Lambda_{\delta})}\leq e^{CT}(1\wedge t^{-1/2-\alpha'/2-d/4})\norm{z}_{W^2_{1,2}(\R^{d})}.\]
\end{enumerate}
\end{lem}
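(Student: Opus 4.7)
The plan is to prove the three parts in sequence, each building on the previous.

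For (i), I would combine the pointwise upper bound of Proposition \ref{prop:semigroup_convergence}(i), $|S_{\theta,\delta}^{*}(t)u(x)|\le c_{3}e^{c_{1}\delta^{2}t}(e^{c_{2}t\Delta}|u|)(x)$, with the heat kernel smoothing $\norm{e^{c_{2}t\Delta}|u|}_{L^{2}(\R^{d})}\lesssim(1\wedge t^{-d/4})\norm{u}_{L^{1}\cap L^{2}(\R^{d})}$ of Lemma \ref{lem:LaplaceSemigroup_props}(i), absorbing $e^{c_{1}\delta^{2}t}\le e^{c_{1}T}$ via $t\le T\delta^{-2}$.

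For (ii), I would decompose $w=\Delta z+(w-\Delta z)$. The perturbation $S_{\theta,\delta}^{*}(t)(w-\Delta z)$ is controlled by (i) using $\norm{w-\Delta z}_{L^{1}\cap L^{2}(\R^{d})}\le C\delta^{\alpha'}\norm{z}_{W_{1,2}^{2}(\R^{d})}$ together with the elementary inequality $\delta^{\alpha'}\le T^{\alpha'/2}t^{-\alpha'/2}$ (a direct consequence of $t\le T\delta^{-2}$), which converts the $\delta$-smallness into the required $t^{-\alpha'/2}$ decay on $\{t\ge 1\}$. For the main piece $S_{\theta,\delta}^{*}(t)\Delta z$ I would first establish the universal baseline $\norm{S_{\theta,\delta}^{*}(t)\Delta z}_{L^{2}(\Lambda_{\delta})}\le e^{CT}(1\wedge t^{-1/2-d/4})\norm{z}_{W_{1,2}^{2}(\R^{d})}$: for $t\le 1$ the contraction estimate of Proposition \ref{prop:semigroup_bounds}(i) applied to $\Delta z\in L^{2}$ is enough; for $t\ge 1$ I would use the identity $\theta(0)\Delta z=A_{\theta,\delta}^{*}z-\delta v^{(\delta)}$ from Lemma \ref{lem:v0}, combining $\norm{A_{\theta,\delta}^{*}S_{\theta,\delta}^{*}(t/2)}_{L^{2}\to L^{2}}\lesssim t^{-1}$ (Proposition \ref{prop:semigroup_bounds}(ii)) with (i) applied to $S_{\theta,\delta}^{*}(t/2)z$ on the $A^{*}z$-piece, and (i) together with $\norm{v^{(\delta)}}_{L^{1}\cap L^{2}}\le C\norm{z}_{W_{1,2}^{2}}$ (Lemma \ref{lem:v0}(i)) and $\delta\le T^{1/2}t^{-1/2}$ on the $\delta v^{(\delta)}$-piece. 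Since $t^{-1/2-d/4}\le t^{-\alpha'/2-d/4}$ for all $\alpha'\le 1$ and $t\ge 1$, this baseline implies (ii).

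For (iii), I would refine the (ii)-scheme by exploiting $\int_{\R^{d}}z(x)dx=0$ and $\theta\in C^{1+\alpha'}(\overline{\Lambda})$. Lemma \ref{lem:v0}(ii) then produces a compactly supported $m\in H^{2}(\R^{d})$ with $\norm{v^{(\delta)}-\Delta m}_{L^{1}\cap L^{2}(\R^{d})}\le C\delta^{\alpha'}\norm{z}_{W_{1,2}^{2}(\R^{d})}$ and a uniform size bound $\norm{m}_{W_{1,2}^{2}(\R^{d})}\lesssim\norm{z}_{W_{1,2}^{2}(\R^{d})}$, placing the pair $(v^{(\delta)},m)$ within the hypothesis of part (ii) at H\"older exponent $\alpha'$. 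Applying part (ii) instead of part (i) to the $\delta v^{(\delta)}$-piece upgrades the factor $1\wedge t^{-d/4}$ to the sharper $1\wedge t^{-\alpha'/2-d/4}$; combined once more with $\delta\le T^{1/2}t^{-1/2}$ this produces exactly the additional $t^{-1/2-\alpha'/2}$ decay claimed in (iii).

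The main obstacle will be the baseline estimate $\norm{S_{\theta,\delta}^{*}(t)\Delta z}_{L^{2}(\Lambda_{\delta})}\lesssim(1\wedge t^{-1/2-d/4})\norm{z}_{W_{1,2}^{2}(\R^{d})}$, where the analytic-semigroup smoothing $\norm{A_{\theta,\delta}^{*}S_{\theta,\delta}^{*}(t/2)}\lesssim t^{-1}$, the $L^{1}$-$L^{2}$ smoothing of (i) and the trade-off $\delta\le T^{1/2}t^{-1/2}$ have to be balanced uniformly on the growing range $0<t\le T\delta^{-2}$; the pointwise heat-kernel bound alone is insufficient because it destroys the derivative structure of $\Delta z$. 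A second delicate point for closing the iteration in (iii) is to extract the uniform-in-$\delta$ size bound $\norm{m}_{W_{1,2}^{2}}\lesssim\norm{z}_{W_{1,2}^{2}}$ from the Paley-Wiener construction in Lemma \ref{lem:v0}(ii) by tracking the support of $m$.
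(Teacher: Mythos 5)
Your proposal is correct and follows essentially the same route as the paper's proof: part (i) combines Proposition~\ref{prop:semigroup_convergence}(i) with Lemma~\ref{lem:LaplaceSemigroup_props}(i); parts (ii) and (iii) both rest on the decomposition $\theta(0)\Delta z = A_{\theta,\delta}^{*}z-\delta v^{(\delta)}$, the analytic-semigroup smoothing $\norm{A_{\theta,\delta}^{*}S_{\theta,\delta}^{*}(t)}\lesssim t^{-1}$, the trade-off $\delta\le (T/t)^{1/2}$, and (for (iii)) the antiderivative $m$ of $v$ from Lemma~\ref{lem:v0}(ii). The organizational differences are minor --- the paper packages $(w-\Delta z)$ and $-\theta(0)^{-1}\delta v^{(\delta)}$ into a single remainder $u^{(\delta)}$ and applies (i) once, and in (iii) splits $v^{(\delta)}=(v^{(\delta)}-\Delta m)+\Delta m$ and applies (i) and (ii) separately, whereas you establish a ``baseline'' for $S_{\theta,\delta}^{*}(t)\Delta z$ first and then apply (ii) directly to the pair $(v^{(\delta)},m)$; these yield the same bounds. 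Your flag that the $W_{1,2}^{2}$-norm of $m$ must be controlled uniformly in terms of that of $z$ (or equivalently that constants must be tracked through the hypothesis of (ii)) is a genuine point the paper leaves implicit; it does follow from the explicit Fourier-side construction of $m$ in Lemma~\ref{lem:v0}(ii).
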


\begin{proof}
(i). The semigroup bound in Proposition \ref{prop:semigroup_convergence}(i),
applied to a sequence of continuous functions approximating $u$,
and Lemma \ref{lem:LaplaceSemigroup_props}(i) show for $t\leq T\delta^{-2}$
\begin{align*}
\norm{S_{\theta,\delta}^{*}\left(t\right)u}_{L^{2}(\Lambda_{\delta})} & \lesssim e^{CT}\norm{e^{c_{2}t\Delta}|u|}_{L^{2}(\R^{d})}\lesssim e^{CT}(1\wedge t^{-d/4})\norm u_{L^1\cap L^2(\R^d)}.
\end{align*}
(ii). Write $w=u^{(\delta)}+\theta(0)^{-1}A_{\theta,\delta}^{*}z$ for $u^{(\delta)}=(w-\Delta z)-\theta(0)^{-1}\delta v^{(\delta)}$ and $v^{(\delta)}=\delta^{-1}(A_{\theta,\delta}^{*}-\theta(0)\Delta)z$ such that
\begin{align*}
\norm{S_{\theta,\delta}^{*}(t)w}_{L^{2}\left(\Lambda_{\delta}\right)} & \leq\norm{S_{\theta,\delta}^{*}\left(t\right)u^{(\delta)}}_{L^{2}(\Lambda_{\delta})}+\theta(0)^{-1}\norm{A_{\theta,\delta}^{*}S_{\theta,\delta}^{*}(t)z}_{L^{2}(\Lambda_{\delta})}.
\end{align*}
The second term is up to a constant bounded by $e^{CT}(1\wedge t^{-1})\norm{S_{\theta,\delta}^{*}(t/2)z}_{L^{2}(\Lambda_{\delta})}$, using Proposition \ref{prop:semigroup_bounds}(ii) for $t\leq T\delta^{-2}$
and $S_{\theta,\delta}^{*}(t)=S_{\theta,\delta}^{*}(t/2)S_{\theta,\delta}^{*}(t/2)$. Applying (i) to $u=z$ gives the upper bound $e^{CT}(1\wedge t^{-1-d/4})\norm{z}_{L^2(\R^d)}$. The result follows from applying (i) to $u=u^{(\delta)}$ in the last display and  noting that $\norm{u^{(\delta)}}_{L^1\cap L^2(\R^d)}\leq e^{CT}(1\wedge t^{-\alpha'/2})\norm{z}_{W^{2}_{1,2}(\R^{d})}$ by Lemma \ref{lem:v0}(i) and $\delta \leq (T/t)^{1/2}$, as well as adjusting the constant $C$.

(iii). Following the proof of (ii) for $w=\Delta z$ it is enough to show the improved upper bound $\norm{S_{\theta,\delta}^{*}\left(t\right)u^{(\delta)}}_{L^{2}(\Lambda_{\delta})}\lesssim e^{CT} (1\wedge t^{-1/2-\alpha'/2-d/4})\norm{z}_{W_{1,2}^2(\R^d)}$. Lemma \ref{lem:v0}(ii) shows the existence of a compactly
supported $m\in H^{2}(\R^{d})$ such that $\norm{v^{(\delta)}-\Delta m}_{L^1\cap L^2(\R^d)}\leq C\delta^{\alpha'}\norm z_{W_{1,2}^{2}(\R^{d})}$. With $\tilde{u}^{(\delta)}=v^{(\delta)}-\Delta m$ write $u^{(\delta)}=\theta(0)^{-1}\delta\tilde{u}^{(\delta)}+\theta(0)^{-1}\delta\Delta m$. Applying (i) to $u=\tilde{u}^{(\delta)}$ and (ii) to $z=m$ yields
\[
\norm{S_{\theta,\delta}^{*}\left(t\right)u^{(\delta)}}_{L^{2}(\Lambda_{\delta})}\lesssim e^{CT} (\delta^{1+\alpha'}(1\wedge t^{-d/4}) + \delta (1\wedge t^{-1/2-d/4})) \norm{z}_{W_{1,2}^2(\R^d)}.
\]
For $\delta \leq (T/t)^{1/2}$ the order in $t$ is $1\wedge t^{-1/2-\alpha'/2-d/4}$, as claimed.
\end{proof}
\begin{lem}
\label{lem:X0} Let $u\in L^{2}(\R^{d})$, $z\in H^{2}(\R^{d})$ have
compact support in $\Lambda_{\delta}$ for some $\delta>0$. Using $\ell_{d,2}(\delta)$ as in Assumption \ref{assu:X0}, we have:
\begin{enumerate}
\item $\int_{0}^{T}\left\langle S_{\theta}\left(t\right)X_{0},u_{\delta}\right\rangle ^{2}dt\leq e^{CT}\norm{X_0}^2\ell_{d,2}(\delta)\delta^{2\wedge d}\norm u_{L^1\cap L^2(\R^d)}^{2}$.
\item If $X_{0}\in L^{p}(\Lambda)$, $p\geq2$, $1/p+1/p'=1$, then, with $\gamma(d,p):=2\frac{1+d/p}{1+d/2}+d(1-\frac{2}{p})$,
\[
 \int_{0}^{T}\langle S_{\theta}(t)X_{0},(\Delta z)_{\delta}\rangle^{2}dt\leq e^{CT}\norm{X_{0}}_{L^{p}(\Lambda)}^{2}\delta^{\gamma(d,p)}(\norm{\Delta z}_{L^{p'}(\R^{d})}^{2}+\norm{z}^2_{W_{1,2}^{2}(\R^{d})}).
\]
\item If $X_{0}\in\c D(A_{\theta})$, then 
\[\int_{0}^{T}\langle S_{\theta}(t)X_{0},(\Delta z)_{\delta}\rangle^{2}dt\leq e^{CT}(\norm{X_0}^2 + \norm{A_{\theta}X_0}^2)\ell_{d,2}(\delta)\delta^{3}\norm{z}^2_{W_{1,2}^{2}(\R^{d})}.
\]
\end{enumerate}
\end{lem}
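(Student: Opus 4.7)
The three parts have different difficulty: (i) is a direct semigroup estimate; (iii) reduces to (i) via an algebraic decomposition of $(\Delta z)_\delta$; (ii) requires a genuine space--time split that exploits the mean--zero property of $(\Delta z)_\delta$ together with $L^p$--$L^{p'}$ duality. I would tackle them in the order (i), (iii), (ii).

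\textbf{Proof of (i).} My plan is to use $L^2$--duality and Cauchy--Schwarz, writing $\langle S_\theta(t)X_0,u_\delta\rangle = \langle X_0,S_\theta^*(t)u_\delta\rangle$, which reduces to bounding $\|S_\theta^*(t)u_\delta\|_{L^2(\Lambda)}$. The scaling identity $S_\theta^*(t)u_\delta = (S_{\theta,\delta}^*(t\delta^{-2})u)_\delta$ from Lemma~\ref{lem:scaling}(ii), combined with $L^2$--invariance under rescaling and the semigroup bound in Lemma~\ref{lem:S_delta_bounds}(i), yields
\[
\langle S_\theta(t)X_0, u_\delta\rangle^2 \le e^{CT}\|X_0\|^2\bigl(1\wedge (t\delta^{-2})^{-d/2}\bigr)\|u\|_{L^1\cap L^2(\R^d)}^2.
\]
Substituting $s=t\delta^{-2}$ in the time integral and splitting $\int_0^{T\delta^{-2}}(1\wedge s^{-d/2})\,ds$ at $s=1$ produces $O(\delta^{-1})$ for $d=1$, $O(\log\delta^{-1})$ for $d=2$, and $O(1)$ for $d\ge 3$, which combined with the $\delta^2$ prefactor gives the claimed $\ell_{d,2}(\delta)\delta^{2\wedge d}$.

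\textbf{Proof of (iii).} I reduce to (i). Using the identity $A_{\theta,\delta}^*z = \theta(0)\Delta z + \delta v^{(\delta)}$ from the definition of $v^{(\delta)}$ in Lemma~\ref{lem:v0}, together with the scaling $A_\theta^*z_\delta = \delta^{-2}(A_{\theta,\delta}^*z)_\delta$ from Lemma~\ref{lem:scaling}(i), rewrite
\[
(\Delta z)_\delta = \theta(0)^{-1}\delta^2\,A_\theta^* z_\delta \;-\; \theta(0)^{-1}\delta\,(v^{(\delta)})_\delta.
\]
Because $X_0\in\c D(A_\theta)$, $\langle S_\theta(t)X_0,A_\theta^*z_\delta\rangle = \langle S_\theta(t)A_\theta X_0,z_\delta\rangle$. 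Squaring, applying $(a+b)^2\le 2(a^2+b^2)$, integrating, and invoking (i) twice---once with $u=z$ paired with the initial value $A_\theta X_0$, and once with $u=v^{(\delta)}$ paired with $X_0$, using $\|v^{(\delta)}\|_{L^1\cap L^2}\lesssim\|z\|_{W_{1,2}^2}$ from Lemma~\ref{lem:v0}(i)---gives the combined bound $Ce^{CT}\ell_{d,2}(\delta)\delta^{2\wedge d}\bigl(\delta^4\|A_\theta X_0\|^2+\delta^2\|X_0\|^2\bigr)\|z\|_{W_{1,2}^2}^2$. Since $(2\wedge d)+2\ge 3$ (with equality for $d=1$) and $\delta\le 1$, this is at most $Ce^{CT}\ell_{d,2}(\delta)\delta^3(\|X_0\|^2+\|A_\theta X_0\|^2)\|z\|_{W_{1,2}^2}^2$.

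\textbf{Proof of (ii) and the main obstacle.} I would split $[0,T]$ at $t=\delta^2$. On the short interval $[0,\delta^2]$, $L^p$--$L^{p'}$ duality together with the boundedness $\|S_\theta^*(t)f\|_{L^{p'}(\Lambda)}\le e^{CT}\|f\|_{L^{p'}(\Lambda)}$ of the Dirichlet semigroup and the scaling $\|(\Delta z)_\delta\|_{L^{p'}(\R^d)} = \delta^{d/p'-d/2}\|\Delta z\|_{L^{p'}}$ give a contribution of order $e^{CT}\delta^{d(1-2/p)+2}\|X_0\|_{L^p}^2\|\Delta z\|_{L^{p'}}^2$. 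On $[\delta^2,T]$, the crucial point is that the compact support of $z$ together with integration by parts forces $\int_{\R^d}\Delta z\,d\xi = 0$, so that $(\Delta z)_\delta$ has zero integral and support in a ball of radius $O(\delta)$ around $x_0$. Subtracting the constant $(S_\theta(t)X_0)(x_0)$ in the pairing and applying Taylor's inequality yield
\[
\bigl|\langle S_\theta(t)X_0,(\Delta z)_\delta\rangle\bigr| \le C\delta^{1+d/2}\|\nabla S_\theta(t)X_0\|_{L^\infty(B_{C\delta}(x_0))}\|\Delta z\|_{L^1}.
\]
Classical Aronson--type pointwise gradient bounds for the Dirichlet heat kernel $p_t(x,\cdot)$ of $A_\theta$ with $C^{1+\alpha}$ coefficients give $\|\nabla_x p_t(x,\cdot)\|_{L^{p'}(\Lambda)}\le C t^{-1/2-d/(2p)}$ uniformly for $x$ at positive distance from $\partial\Lambda$, hence $\|\nabla S_\theta(t)X_0\|_{L^\infty(B_{C\delta}(x_0))}\le Ct^{-1/2-d/(2p)}\|X_0\|_{L^p}$. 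Squaring and using $\int_{\delta^2}^T t^{-1-d/p}\,dt\lesssim \delta^{-2d/p}$ produces another contribution of order $e^{CT}\delta^{d(1-2/p)+2}\|X_0\|_{L^p}^2\|z\|_{W_{1,2}^2}^2$. A direct computation,
\[
\gamma(d,p)-\bigl(d(1-2/p)+2\bigr) = \frac{-2d(p-2)}{p(2+d)} \le 0 \quad\text{for } p\ge 2,
\]
gives $\delta^{d(1-2/p)+2}\le \delta^{\gamma(d,p)}$ and concludes (ii). The main obstacle is the Aronson--type pointwise gradient estimate for the Dirichlet heat kernel: it requires the $C^{1+\alpha}$ regularity of $\theta,a$ (available by assumption) and $x_0$ being at a fixed positive distance from $\partial\Lambda$, which holds for all sufficiently small $\delta$ since $x_0\in\Lambda$ is fixed in the interior.
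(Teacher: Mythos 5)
For parts (i) and (iii) your plan reproduces the paper's proof almost verbatim: (i) is the scaling identity from Lemma~\ref{lem:scaling}(ii) together with Lemma~\ref{lem:S_delta_bounds}(i) and the substitution $s=t\delta^{-2}$; (iii) is exactly the paper's decomposition $\theta(0)(\Delta z)_\delta=-\delta v^{(\delta)}_\delta+\delta^2 A_\theta^* z_\delta$, the commutation $\langle S_\theta(t)X_0,A_\theta^* z_\delta\rangle=\langle S_\theta(t)A_\theta X_0,z_\delta\rangle$ for $X_0\in\c D(A_\theta)$, two applications of (i), and the elementary comparison of exponents. Nothing to add there.

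Part (ii) is where your route genuinely diverges. Both you and the paper treat the short-time interval by $L^p$--$L^{p'}$ duality and $L^{p'}$-boundedness of the semigroup (via the Feynman--Kac domination $|S_\theta(t)X_0|\lesssim e^{CT}q_{c_2 t}*|X_0|$). For the tail, the paper stays in $L^2$: it pulls back to the rescaled semigroup and uses Lemma~\ref{lem:S_delta_bounds}(ii), which exploits the analytic smoothing $\|A^*_{\theta,\delta}S^*_{\theta,\delta}(t/2)\|\lesssim t^{-1}$ after rewriting $\Delta z\approx \theta(0)^{-1}A^*_{\theta,\delta}z$, to get decay $\|S^*_{\theta,\delta}(t)\Delta z\|_{L^2}\lesssim 1\wedge t^{-1-d/4}$; it then optimizes the cut $\epsilon=\delta^{2(1+d/p)/(1+d/2)}$ to obtain $\delta^{\gamma(d,p)}$, needing only $X_0\in L^2$ for this half. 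You instead fix the cut at $\epsilon=\delta^2$ and, on the tail, exploit that $\int\Delta z=0$ to subtract $(S_\theta(t)X_0)(x_0)$, apply the mean value theorem over the $O(\delta)$-ball, and invoke an interior Gaussian gradient bound $\|\nabla_x p_t(x,\cdot)\|_{L^{p'}}\lesssim t^{-1/2-d/(2p)}$ for the Dirichlet heat kernel. This yields the cleaner (and indeed stronger, since you verify $\gamma(d,p)\le d(1-2/p)+2$) exponent without optimization. The trade-off, which you correctly flag as the main obstacle, is that the interior gradient Gaussian bound is a substantially heavier input than anything the paper establishes: the appendix only proves the zeroth-order Feynman--Kac domination via Sheu's density estimate, never a gradient estimate, and your argument additionally needs interior parabolic regularity to apply the mean value theorem at all. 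It is available in the literature under the $C^{1+\alpha}$ hypotheses (interior Schauder/heat kernel theory), but it is not ``classical Aronson'' in the sense of just uniform ellipticity, and importing it makes the proof considerably less self-contained than the paper's, which derives everything from Lemma~\ref{lem:S_delta_bounds}. So: correct in spirit, stronger conclusion, but resting on an unproved external estimate where the paper's $\epsilon$-balancing is a deliberate device to avoid precisely that dependence.
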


\begin{proof}
(i). By Lemma \ref{lem:S_delta_bounds}(i) and the scaling in Lemma
\ref{lem:scaling} we find
\begin{align*}
\int_{0}^{T}\left\langle S_{\theta}\left(t\right)X_{0},u_{\delta}\right\rangle ^{2}dt & \leq\norm{X_{0}}^{2}\delta^{2}\int_{0}^{T\delta^{-2}}\norm{S_{\theta,\delta}^{*}\left(t\right)u}_{L^{2}(\Lambda_{\delta})}^{2}dt\\
 & \lesssim e^{CT}\norm{X_{0}}^{2}\delta^{2}\int_{0}^{T\delta^{-2}}(1\wedge t^{-d/2})dt\norm u_{L^1\cap L^2(\R^d)}^{2}.
\end{align*}
The claim follows, because the integral has order $O(1)$ for $d\ge3$,
order $O(\log(T\delta^{-2}))$ for $d=2$ and order $O(T^{1/2}\delta^{-1})$
for $d=1$.

(ii). It is enough to consider continuous $X_{0}$. Using the Hölder
inequality and Proposition \ref{prop:semigroup_convergence}(i), we
obtain
\begin{align*}
\sc{S_{\theta}(t)X_{0}}{(\Delta z)_{\delta}}^{2} & \leq\norm{S_{\theta}(t)X_{0}}_{L^{p}(\Lambda)}^{2}\norm{\left(\Delta z\right)_{\delta}}_{L^{p'}(\Lambda)}^{2}\\
 & \lesssim\norm{q_{c_{2}t}*\left|X_{0}\right|}_{L^{p}(\R^{d})}^{2}\delta^{d(2/p'-1)}\norm{\Delta z}_{L^{p'}(\R^{d})}^{2}.
\end{align*}
Here, $\norm{q_{c_{2}t}*\left|X_{0}\right|}_{L^{p}(\R^{d})}\leq \norm{X_0}_{L^p(\Lambda)}$. For $\epsilon>0$, Lemmas \ref{lem:scaling} and \ref{lem:S_delta_bounds}(ii) show
\begin{align*}
\int_{\epsilon}^{T}\langle S_{\theta}(t)X_{0},\left(\Delta z\right)_{\delta}\rangle^{2}dt & \leq\norm{X_{0}}^{2}\int_{\epsilon}^{T}\norm{S_{\theta,\delta}^{*}(t\delta^{-2})\Delta z}_{L^{2}(\Lambda_{\delta})}^{2}dt\\
 & \lesssim e^{CT}\norm{X_{0}}^{2}\int_{\epsilon}^{T}(t\delta^{-2})^{-1-d/2}dt\,\norm{z}_{W_{1,2}^{2}(\R^d)}^2.
\end{align*}
Splitting up the integral and adjusting the constant $C$ yields thus
\begin{align*}
\int_{0}^{T}\langle S_{\theta}(t)X_{0},\left(\Delta z\right)_{\delta}\rangle^{2}dt & \lesssim e^{CT}\norm{X_{0}}^{2}(\delta^{d(2/p'-1)}\epsilon+\delta^{2+d}\epsilon^{-d/2})\\
 & \quad \cdot(\norm{\Delta z}_{L^{p'}(\R^{d})}^{2}+\norm{z}_{W_{1,2}^{2}(\R^d)}^2).
\end{align*}
The claim follows with $\norm{X_{0}}\lesssim\norm{X_{0}}_{L^{p}(\Lambda)}$ and $\epsilon=\delta^{2\frac{1+d/p}{1+d/2}}$.

(iii). With $v^{(\delta)}:=\delta^{-1}(A_{\theta,\delta}^{*}-\theta(0)\Delta)z$
as in Lemma \ref{lem:v0} and using the scaling in Lemma \ref{lem:scaling},
write $\theta(0)(\Delta z)_{\delta}=-\delta v_{\delta}^{(\delta)}+\delta^{2}A_{\theta}^{*}z_{\delta}$.
Then
\[
\langle S_{\theta}(t)X_{0},(\Delta z)_{\delta}\rangle^{2}\lesssim\delta^{2}\langle S_{\theta}(t)X_{0},v_{\delta}^{(\delta)}\rangle^{2}+\delta^{4}\langle S_{\theta}(t)A_{\theta}X_{0},z_{\delta}\rangle^{2},
\]
and the claim follows from applying (i) with $u=v$ and $u=z$ (with
$A_{\theta}X_{0}\in L^{2}(\Lambda)$ instead of $X_{0}$).
\end{proof}

\subsection{\label{subsec:asympForCovars}Asymptotic results for the covariances}

The general idea for the proofs in this section is to apply the scaling
in Lemma \ref{lem:scaling} to the covariance function as in Section
\ref{subsec:From-bounded-to} and to deduce a limit for the integral
using the heat kernel bounds and the convergence of the semigroups
from the last section.
\begin{prop}
\label{prop:cov_asymp} Grant Assumption \ref{assu:B}. Consider functions $z\in H^{2}(\R^{d})$, $u\in L^{2}(\R^{d})$,
$(w^{(\delta)})_{\delta>0}$, $(u^{(\delta)})_{\delta>0}\subset L^{2}(\R^{d})$
with compact support in $\Lambda_{\delta'}$ for some $\delta'>0$. Assume for $0<\delta\leq\delta'$  that $\norm{w^{(\delta)}-\Delta z}_{L^1\cap L^2(\R^d)}\leq C \delta^{\alpha'}$ for $\alpha'>1/2$, 
$\norm{u^{(\delta)}-u}_{L^{2}(\R^{d})}\rightarrow0$ as $\delta\rightarrow0$. Then with $\Psi$ from \eqref{eq:Psi}:
\begin{enumerate}
\item $\delta^{-2}\Var(\sc{\tilde{X}(t)}{w_{\delta}^{(\delta)}})\rightarrow\theta(0)^{-1}\Psi(\Delta z,\Delta z)$,
$t>0$.
\item $\delta^{-2}\int_{0}^{T}\Var(\sc{\tilde{X}(t)}{w_{\delta}^{(\delta)}})dt\rightarrow\ T\theta(0)^{-1}\Psi(\Delta z,\Delta z)$.
\item If $d\geq2$, then $\delta^{-2}\int_{0}^{T}\Cov(\sc{\tilde{X}(t)}{w_{\delta}^{(\delta)}},\sc{\tilde{X}(t)}{u_{\delta}^{(\delta)}})dt\rightarrow\ T\theta(0)^{-1}\Psi(\Delta z,u)$.
\end{enumerate}
\end{prop}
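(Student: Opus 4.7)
The plan is to reduce all three claims to applications of dominated convergence on a rescaled time interval. Combining Proposition \ref{prop:solution} with Lemma \ref{lem:scaling}(ii), the intertwining in Assumption \ref{assu:B}, the isometry $\norm{(f)_\delta}_{L^{2}(\R^{d})}=\norm{f}_{L^{2}(\Lambda_\delta)}$, and the time change $\sigma=(t-s)/\delta^{2}$, one obtains
\begin{equation*}
\delta^{-2}\Cov\bigl(\sc{\tilde X(t)}{w_\delta^{(\delta)}},\sc{\tilde X(t)}{u_\delta^{(\delta)}}\bigr)=\int_{0}^{t\delta^{-2}}\sc{B_\delta^{*}S_{\theta,\delta}^{*}(\sigma)w^{(\delta)}}{B_\delta^{*}S_{\theta,\delta}^{*}(\sigma)u^{(\delta)}}_{L^{2}(\Lambda_\delta)}\,d\sigma.
\end{equation*}
Part (i) is just the case $u^{(\delta)}=w^{(\delta)}$, $u=\Delta z$, while (ii) and (iii) will follow by a further integration in $t\in[0,T]$.

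For the pointwise limit at fixed $\sigma>0$ I would split $S_{\theta,\delta}^{*}(\sigma)w^{(\delta)}=S_{\theta,\delta}^{*}(\sigma)(w^{(\delta)}-\Delta z)+S_{\theta,\delta}^{*}(\sigma)(\Delta z)$. The first summand tends to zero in $L^{2}(\R^{d})$ by Lemma \ref{lem:S_delta_bounds}(i) applied to $u=w^{(\delta)}-\Delta z$, whose $L^{1}\cap L^{2}$-norm is $O(\delta^{\alpha'})$, and the second converges to $e^{\theta(0)\sigma\Delta}\Delta z$ by Proposition \ref{prop:semigroup_convergence}(ii). Analogously $S_{\theta,\delta}^{*}(\sigma)u^{(\delta)}\to e^{\theta(0)\sigma\Delta}u$ in $L^{2}(\R^{d})$. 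Invoking Assumption \ref{assu:B} together with the uniform boundedness principle (as in the proof of Theorem \ref{thm:convToHeatEq}), the integrand then converges pointwise to $\sc{B_{0,x_{0}}^{*}e^{\theta(0)\sigma\Delta}\Delta z}{B_{0,x_{0}}^{*}e^{\theta(0)\sigma\Delta}u}_{L^{2}(\R^{d})}$, and the substitution $\sigma\mapsto\sigma/\theta(0)$ identifies $\int_{0}^{\infty}(\,\cdot\,)\,d\sigma$ with $\theta(0)^{-1}\Psi(\Delta z,u)$, matching all three target limits.

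The hard part is producing an integrable $\delta$-uniform majorant on $(0,\infty)$. By the Cauchy--Schwarz inequality and $\sup_{\delta}\norm{B_\delta^{*}}<\infty$ (from Assumption \ref{assu:B} plus the uniform boundedness principle) it suffices to dominate $\norm{S_{\theta,\delta}^{*}(\sigma)w^{(\delta)}}\,\norm{S_{\theta,\delta}^{*}(\sigma)u^{(\delta)}}$. Lemma \ref{lem:S_delta_bounds}(ii) applied to $w^{(\delta)}$ yields the factor $e^{CT}(1\wedge\sigma^{-\alpha'/2-d/4})$. For (i) and (ii) both factors are of this type, producing the bound $e^{CT}(1\wedge\sigma^{-\alpha'-d/2})$, integrable on $(0,\infty)$ iff $\alpha'+d/2>1$, which under the standing hypothesis $\alpha'>1/2$ holds in every $d\ge 1$. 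For (iii) the second factor comes instead from Lemma \ref{lem:S_delta_bounds}(i) applied to the compactly supported $u^{(\delta)}$ (so that $\norm{u^{(\delta)}}_{L^{1}\cap L^{2}}$ is uniformly bounded), giving $e^{CT}(1\wedge\sigma^{-d/4})$ and an overall majorant of order $\sigma^{-\alpha'/2-d/2}$. Its integrability requires $\alpha'/2+d/2>1$; under $\alpha'>1/2$ this forces precisely $d\ge 2$, matching the dimension restriction imposed in (iii).

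With pointwise convergence and majorant in place, dominated convergence proves (i). For (ii) and (iii), the displayed $t$-integrand is also uniformly bounded by $\int_{0}^{\infty}$ of the same $\sigma$-majorant, which is $t$-independent and finite; a second application of dominated convergence against the constant limit from (i), respectively its polarised counterpart, then yields the factor $T$.
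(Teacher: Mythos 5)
Your proof is correct and proceeds along essentially the same lines as the paper for parts (i) and (iii): rescale the covariance via Lemma \ref{lem:scaling}(ii) and Assumption \ref{assu:B}, identify the pointwise limit of the integrand through Proposition \ref{prop:semigroup_convergence}(ii) and the uniform boundedness principle, produce the uniform majorant $1\wedge\sigma^{-\alpha'-d/2}$ (resp. $1\wedge\sigma^{-\alpha'/2-d/2}$) from Lemma \ref{lem:S_delta_bounds}, and conclude by dominated convergence. Your accounting of the exponents correctly explains why $\alpha'>1/2$ suffices in all dimensions for (i)--(ii) but only $d\ge2$ works for (iii).

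Where you diverge from the paper is in part (ii). You integrate the uniform $\sigma$-majorant over $(0,\infty)$, observe the result is a $t$-independent constant, and apply dominated convergence again to the $t$-integral against the constant limit from (i). The paper instead splits the argument: Fatou's lemma (using nonnegativity of the variance) gives the $\liminf$ lower bound, and then the monotonicity of $t\mapsto\Var(\sc{\tilde{X}(t)}{w_{\delta}^{(\delta)}})$ from \eqref{eq:cov_function} gives $\int_{0}^{T}\le T\,\Var(\cdot)|_{t=T}$, whose limit is known from (i). The paper's route is slightly more economical — it does not need the uniform-in-$t$ majorant that you produce — but your version is also correct because that majorant is available anyway (it was already required for (i)). Both arguments exploit the same ingredients; yours reuses them more systematically, while the paper's uses the extra structure (nonnegativity, monotonicity) specific to the variance.
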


\begin{proof}

(i). By \eqref{eq:scaled_cov_function} for $t=t'$ we have
\[
\delta^{-2}\Var(\langle \tilde{X}(t),w_{\delta}^{(\delta)}\rangle)=\int_{0}^{t\delta^{-2}}\norm{B_{\delta}^{*}S_{\theta,\delta}^{*}\left(s\right)w^{(\delta)}}_{L^{2}(\Lambda_{\delta})}^{2}ds = \int_{0}^{\infty}f_{\delta}(s)ds,
\]
with $f_{\delta}(s)=\norm{B_{\delta}^{*}S_{\theta,\delta}^{*}\left(s\right)w^{(\delta)}}_{L^{2}(\Lambda_{\delta})}^{2}\I_{\{s\leq t\delta^{-2}\}}
$. Set $f(s)=\norm{B_{0}^{*}e^{\theta(0)s\Delta}\Delta z}_{L^{2}(\R^{d})}^{2}$
and note $\int_{0}^{\infty}f(s)\theta(0)ds=\Psi(\Delta z,\Delta z)$,
substituting $ds'=\theta(0)ds$. By assumption $w^{(\delta)}\r{}\Delta z$
in $L^{2}(\Lambda_{\delta})$ and by Proposition \ref{prop:semigroup_convergence}(ii)
above $S_{\theta,\delta}^{*}(s)\Delta z\rightarrow e^{\theta(0)s\Delta}\Delta z$
in $L^{2}(\R^{d})$. From Proposition \ref{prop:semigroup_bounds}(i)
above we have $\sup_{0\leq s\le t/\delta^{2}}\norm{S_{\theta,\delta}^{*}(s)}_{L^{2}(\Lambda_{\delta})}<\infty$,
as well as $\sup_{0<\delta\leq 1}\norm{B_{\delta}^{*}}_{L^{2}(\R^{d})}<\infty$
by Assumption \ref{assu:B} and the uniform boundedness principle.
We deduce
\begin{align*}
 & \norm{B_{\delta}^{*}S_{\theta,\delta}^{*}(s)w^{(\delta)}-B_{0}^{*}e^{\theta(0)s\Delta}\Delta z}_{L^{2}(\R^{d})}\\
 & \le\norm{B_{\delta}^{*}}\left(\norm{S_{\theta,\delta}^{*}(s)}_{L^{2}(\Lambda_{\delta})}\norm{w^{(\delta)}-\Delta z}_{L^{2}(\R^{d})}+\norm{S_{\theta,\delta}^{*}(s)\Delta z-e^{\theta(0)s\Delta}\Delta z}_{L^{2}(\R^{d})}\right)\\
 & \quad+\norm{(B_{\delta}^{*}-B_{0}^{*})e^{\theta(0)s\Delta}\Delta z}_{L^{2}(\R^{d})}\rightarrow0,
\end{align*}
which implies $f_{\delta}(s)\r{}f(s)$ pointwise. Lemma \ref{lem:S_delta_bounds}(ii)
yields $\left|f_{\delta}\left(s\right)\right|\lesssim1\wedge s^{-\alpha'-d/2}$.
Since $\alpha'>1/2$, $\sup_{0<\delta\leq 1}f_{\delta}(\cdot)\in L^{1}([0,\infty))$,
for any fixed $t$, and the result follows from the dominated convergence
theorem.

(ii). By (i) and Fatou's lemma we obtain
\[
\liminf_{\delta\to0}\delta^{-2}\int_{0}^{T}\Var(\sc{\tilde{X}(t)}{w_{\delta}^{(\delta)}})dt\ge T\theta(0)^{-1}\Psi(\Delta z,\Delta z).
\]
On the other hand, $\Var(\sc{\tilde{X}(t)}{w_{\delta}^{(\delta)}})$ is increasing
in $t$, cf. (\ref{eq:cov_function}). The result follows from
\begin{align*}
\limsup_{\delta\to0}\delta^{-2}\int_{0}^{T}\Var(\sc{\tilde{X}(t)}{w_{\delta}^{(\delta)}})dt & \le\lim_{\delta\to0}\delta^{-2}T\Var(\sc{\tilde{X}(T)}{w_{\delta}^{(\delta)}})\\
 & =T\theta(0)^{-1}\Psi(\Delta z,\Delta z).
\end{align*}
(iii). Revisiting the derivations in (i) and (ii), we obtain
\begin{align*}
 & \delta^{-2}\Cov(\sc{\tilde{X}(t)}{w_{\delta}^{(\delta)}},\sc{\tilde{X}(t)}{u_{\delta}^{(\delta)}})=\int_{0}^{\infty}f_{\delta}(s)\,ds,\\
 & \text{with }\,\,f_{\delta}(s):=\scapro{B_{\delta}^{*}S_{\theta,\delta}^{*}\left(s\right)w^{(\delta)}}{B_{\delta}^{*}S_{\theta,\delta}^{*}\left(s\right)u^{(\delta)}}_{L^{2}(\Lambda_{\delta})}\I_{\{s\leq t\delta^{-2}\}}.
\end{align*}
Putting $f(s):=\scapro{B_{0}^{*}e^{\theta(0)s\Delta}\Delta z}{B_{0}^{*}e^{\theta(0)s\Delta}u}$,
we obtain as in (i), (iii) that $f_{\delta}(s)\to f(s)$ holds pointwise
for $\delta\to0$ by the $L^{2}$-continuity of the scalar product.
Furthermore, the Cauchy-Schwarz inequality and Lemma \ref{lem:S_delta_bounds}(i,ii)
yield the bound
\begin{align}
\abs{f_{\delta}(s)} & \lesssim\norm{S_{\theta,\delta}^{*}\left(s\right)w^{(\delta)}}_{L^{2}(\Lambda_{\delta})}\norm{S_{\theta,\delta}^{*}\left(s\right)u^{(\delta)}}_{L^{2}(\Lambda_{\delta})}\I_{\{s\le t/\delta^{2}\}}\nonumber \\
 & \lesssim e^{CT}(1\wedge s^{-\alpha'/2-d/2})\lesssim1\wedge s^{-5/4}\label{eq:covbound}
\end{align}
for $d\geq2$. Since this bound is integrable in $s\ge0$, we conclude
that
\[
\delta^{-2}\Cov(\sc{\tilde{X}(t)}{w_{\delta}^{(\delta)}},\sc{\tilde{X}(t)}{u_{\delta}^{(\delta)}})\rightarrow\int_{0}^{\infty}f(s)ds=\theta(0)^{-1}\Psi(\Delta z,u),
\]
meaning in particular that $\Psi(\Delta z,u)$ is well defined. What
is more, the bound (\ref{eq:covbound}) also shows that the covariance
is uniformly bounded in $t\in[0,T]$ so that another application of
the dominated convergence theorem shows that the integral over $t\in[0,T]$
converges to $T\theta(0)^{-1}\Psi(\Delta z,u)$.
\end{proof}

\begin{prop}
\label{prop:var_with_delta} Grant Assumption \ref{assu:B}. Consider functions $z,m\in H^{2}(\R^{d})$,
$u\in L^{2}(\R^{d})$, $(w^{(\delta)})_{\delta>0}$, $(u^{(\delta)})_{\delta>0}\subset L^{2}(\R^{d})$ with compact support in $\Lambda_{\delta'}$ for some $\delta'>0$. Assume for $0<\delta\leq\delta'$ that $\norm{w^{(\delta)}-\Delta m}_{L^1\cap L^2(\R^d)}\leq C\delta^{\alpha'}$ for $\alpha'>1/2$,
$\norm{u^{(\delta)}-u}_{L^{2}(\R^{d})}\rightarrow0$ as $\delta\rightarrow0$.
Then:
\begin{enumerate}
\item For $d\geq2$, $\Var(\int_{0}^{T}\langle \tilde{X}(t),(\Delta z)_{\delta}\rangle\langle \tilde{X}(t),u_{\delta}^{(\delta)}\rangle dt)=O(\delta^{6}\ell_{d,2}(\delta)^{3})$.
\item $\Var(\int_{0}^{T}\langle \tilde{X}(t),(\Delta z)_{\delta}\rangle\langle \tilde{X}(t),w_{\delta}^{(\delta)}\rangle dt)=o(\delta^{4})$.
\item For $d\geq2$, $\Var(\int_{0}^{T}\langle \tilde{X}(t),u_{\delta}^{(\delta)}\rangle^{2}dt)=O(\delta^{4}\ell_{d,2}(\delta)^{2})$.
\item Let $d\geq2$, or let $\theta\in C^{1+\alpha'}(\overline{\Lambda})$ and $\int_{\R^{d}}m(x)dx=0$.
Then with $\Psi$ from (\ref{eq:Psi})
\[
\delta^{-6}\Var(\int_{0}^{T}\langle \tilde{X}(t),w_{\delta}^{(\delta)}\rangle^{2}dt)\rightarrow4T\theta(0)^{-3}\int_{0}^{\infty}\Psi(e^{s\Delta}\Delta m,\Delta m)^{2}ds.
\]
\end{enumerate}
\end{prop}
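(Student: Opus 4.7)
Since $\tilde X$ is a centered Gaussian field, Isserlis' formula gives, for $F(t) = \langle \tilde X(t), h_\delta\rangle$ and $G(t) = \langle \tilde X(t), k_\delta\rangle$,
$$\Cov(F(t)G(t), F(s)G(s)) = \phi_{F,F}(t,s)\phi_{G,G}(t,s) + \phi_{F,G}(t,s)\phi_{G,F}(t,s),$$
where $\phi_{h',k'}(t,s) := \Cov(\langle \tilde X(t), h'_\delta\rangle, \langle \tilde X(s), k'_\delta\rangle)$, so each variance in the statement is the double integral of this identity in $(t,s)$. By Lemma \ref{lem:scaling}, Assumption \ref{assu:B} and the substitution $r \mapsto s - \delta^{2} r$ in \eqref{eq:cov_function}, one obtains for $t \geq s$
$$\phi_{h,k}(t,s) = \delta^{2} \int_{0}^{s/\delta^{2}} \langle B_{\delta}^{*} S_{\theta,\delta}^{*}((t-s)/\delta^{2}+r) h, B_{\delta}^{*} S_{\theta,\delta}^{*}(r) k\rangle_{L^{2}(\Lambda_{\delta})} dr.$$
Assumption \ref{assu:B} and the uniform boundedness principle force $\sup_{\delta \leq 1}\|B_{\delta}^{*}\|_{L^{2}(\R^{d})} < \infty$, and then Cauchy--Schwarz together with Lemma \ref{lem:S_delta_bounds} yields
$$|\phi_{h,k}(t,s)| \lesssim \delta^{2} \int_{0}^{s/\delta^{2}} (1 \wedge ((t-s)/\delta^{2}+r)^{-\gamma_{h}})(1 \wedge r^{-\gamma_{k}}) dr,$$
with decay exponents $\gamma = d/4$ for $u^{(\delta)}$ (via Lemma \ref{lem:S_delta_bounds}(i)), $\gamma = 1/2 + d/4$ for $\Delta z$ and, when $d \geq 2$, for $w^{(\delta)}$ (via Lemma \ref{lem:S_delta_bounds}(ii) with $\min(\alpha', 1)$), and the sharper $\gamma = 3/4 + \alpha'/2$ for $w^{(\delta)}$ in the $d = 1$ case (via Lemma \ref{lem:S_delta_bounds}(iii)).

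For parts (i)--(iii), the plan is to substitute $u = (t-s)/\delta^{2}$, extract one factor $\delta^{2}$, and reduce the estimate to $\delta^{6} T \int_{0}^{T/\delta^{2}} I_{h,k}(u) I_{h',k'}(u) du$ with $I_{h,k}(u) := \int_{0}^{T/\delta^{2}} (1 \wedge (u+r)^{-\gamma_{h}})(1 \wedge r^{-\gamma_{k}}) dr$. A case analysis by dimension then closes the estimate: when the combined exponents strictly exceed the integrability threshold $1$ the inner integrals are uniformly bounded, producing the stated $O(\delta^{4})$ or $O(\delta^{6})$ rates; at the borderline $d = 2$ (where $\gamma_{u} = 1/2$ and $\gamma_{\Delta z} = 1$ sit exactly on the threshold) logarithmic divergences in the cutoff $T/\delta^{2}$ generate the $\ell_{d,2}(\delta) = \log(\delta^{-1})$ factors, of multiplicity $2$ in (iii) and $3$ in (i). For (ii), splitting $w^{(\delta)} = \Delta m + (w^{(\delta)} - \Delta m)$ and applying Lemma \ref{lem:S_delta_bounds}(i) to the error term sharpens the $O(\delta^{4})$ bound to $o(\delta^{4})$.

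For part (iv), set $\psi_{\delta}(t, u) := \delta^{-2} \phi_{w,w}(t, t - \delta^{2} u)$. The substitution $s = t - \delta^{2} u$ rewrites the claim as
$$\delta^{-6} \Var\Bigl(\int_{0}^{T} \langle \tilde X(t), w^{(\delta)}_{\delta}\rangle^{2} dt\Bigr) = 4 \int_{0}^{T} \int_{0}^{t/\delta^{2}} \psi_{\delta}(t, u)^{2} du\,dt \longrightarrow 4 T \theta(0)^{-3} \int_{0}^{\infty} \Psi(e^{v\Delta} \Delta m, \Delta m)^{2} dv.$$
Proposition \ref{prop:semigroup_convergence}(ii) together with Assumption \ref{assu:B} provides the pointwise convergence $B_{\delta}^{*} S_{\theta,\delta}^{*}(r) w^{(\delta)} \to B_{0}^{*} e^{\theta(0) r \Delta} \Delta m$ in $L^{2}(\R^{d})$, and under the hypotheses of (iv) the exponent $\gamma_{w}$ satisfies $2\gamma_{w} > 1$, so $u \mapsto 1 \wedge u^{-2\gamma_{w}}$ is integrable. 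Two applications of dominated convergence---first in the $r$-integral defining $\psi_{\delta}$, yielding $\psi_{\delta}(t, u) \to \theta(0)^{-1} \Psi(e^{\theta(0) u \Delta} \Delta m, \Delta m)$, then in the outer $(t, u)$-integral---together with the substitution $v = \theta(0) u$ deliver the stated limit.

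The main technical obstacle is securing integrable domination in the borderline cases. In $d = 2$ the decay exponents of $\Delta z$ and $u^{(\delta)}$ sit exactly at the integrability threshold, forcing the logarithmic factors $\ell_{d,2}(\delta)$ via careful truncation estimates that split the integral in $u$ near $1$ and near $T/\delta^{2}$. In $d = 1$, Lemma \ref{lem:S_delta_bounds}(ii) alone yields only $\gamma_{w} \leq 3/4$; the sharper bound \ref{lem:S_delta_bounds}(iii) is invoked to push $\gamma_{w}$ above $1$, which is precisely what the hypotheses $\int m = 0$ and $\theta \in C^{1+\alpha'}$ with $\alpha' > 1/2$ guarantee.
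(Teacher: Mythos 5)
Your proposal follows the paper's argument essentially step for step: Wick/Isserlis expansion of the variance into covariance products, rescaling of the covariance kernel via Lemma~\ref{lem:scaling} and Assumption~\ref{assu:B} (this is exactly the paper's $f_\delta$-based representation (\ref{eq:f_g_delta})), the decay exponents from Lemma~\ref{lem:S_delta_bounds}, the resulting $\delta^{6}T\int I_{h,k}(u)\,I_{h',k'}(u)\,du$ bound, and dominated convergence for~(iv). One small imprecision in (iv): the envelope $\psi_\delta(t,u)^{2}\lesssim 1\wedge u^{-2\gamma_w}$ requires $\gamma_w>1$ (so that the inner $r$-integral is uniformly $O(1\wedge u^{-\gamma_w})$); for $d=2$ and $\alpha'\in(1/2,1]$ one only has $\gamma_w\in(3/4,1]$, and then the correct uniform bound is $\psi_\delta(t,u)\lesssim\min(1,u^{1-2\gamma_w})$, whose square is still integrable precisely because $\gamma_w>3/4$ — the paper handles this borderline case by redistributing the exponent in $1\wedge(s+s')^{-\gamma_w}$ between the $s$ and $s'$ factors; the conclusion is unaffected, but your stated dominating function is a little too strong there.
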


\begin{proof}

We first make some preliminary remarks. For $v,\tilde{v}\in L^{2}(\Lambda_{\delta})$
set $\xi(t)=\langle \tilde{X}(t),v_{\delta}\rangle$, $\tilde{\xi}(t)=\langle \tilde{X}(t),\tilde{v}_{\delta}\rangle$.
The random variables $\{\xi(t)\,|\,t\ge0\}\cup\{\tilde{\xi}(t)\,|\,t\ge0\}$
are jointly Gaussian and centered and so it follows from Wick's formula
(\citet[Theorem 1.28]{Janson:1997uy}) that

\begin{align}
 & \delta^{-6}\Var(\int_{0}^{T}\xi(t)\tilde{\xi}(t)dt)=\delta^{-6}\int_{0}^{T}\int_{0}^{T}\text{Cov}(\xi(t)\tilde{\xi}(t),\xi(s)\tilde{\xi}(s))dtds\nonumber \\
 & =\delta^{-6}\int_{0}^{T}\int_{0}^{T}(\text{Cov}(\xi(t),\tilde{\xi}(s))^{2}+\text{Cov}(\xi(t),\tilde{\xi}(s))\Cov(\tilde{\xi}(t),\xi(s)))dtds\nonumber \\
 & =:2V_{1}+2V_{2},\label{eq:var}
\end{align}
with $V_{1}=V(v,v,\tilde{v},\tilde{v})$, $V_{2}=V(v,\tilde{v},\tilde{v},v)$,
where for $k,\tilde{k}\in L^{2}(\Lambda_{\delta})$
\begin{align}
V(v,\tilde{v},k,\tilde{k}) & :=\delta^{-6}\int_{0}^{T}\int_{0}^{t}\text{Cov}(\sc{\tilde{X}(t)}{v_{\delta}},\sc{\tilde{X}(s)}{\tilde{v}_{\delta}})\nonumber \\
 & \qquad\cdot\text{Cov}(\sc{\tilde{X}(t)}{k_{\delta}},\sc{\tilde{X}(s)}{\tilde{k}_{\delta}})dsdt,\label{eq:V}
\end{align}
and $V(v):=V(v,v,v,v)$. It is thus enough to study $V_{1},V_{2}$.
Set 
\[f_{\delta}((s,v),(s',\tilde{v}))=\langle B_{\delta}^{*}S_{\theta,\delta}^{*}(s)v,B_{\delta}^{*}S_{\theta,\delta}^{*}(s')\tilde{v}\rangle_{L^{2}(\Lambda_{\delta})}\I_{\{0<s,s'\leq T\delta^{-2}\}},\,\,s,s'\geq0.
\] 
Then by \eqref{eq:scaled_cov_function}, $V(v,\tilde{v},k,\tilde{k})$ equals
\begin{align}
 & \delta^{-2}\int_{0}^{T}\int_{0}^{t}\Big(\int_{0}^{s\delta^{-2}}f_{\delta}((t\delta^{-2}-r',v),(s\delta^{-2}-r',\tilde{v}))dr'\Big)\nonumber \\
 & \qquad\cdot\Big(\int_{0}^{s\delta^{-2}}f_{\delta}((t\delta^{-2}-r'',k),(s\delta^{-2}-r'',\tilde{k}))dr''\Big)dsdt\nonumber \\
 & =\int_{0}^{T}\int_{0}^{t\delta^{-2}}\Big(\int_{0}^{t\delta^{-2}-s}f_{\delta}((s+s',v),(s',\tilde{v}))ds'\Big)\nonumber \\
 & \qquad\cdot\Big(\int_{0}^{t\delta^{-2}-s}f_{\delta}((s+s'',k),(s'',\tilde{k}))ds''\Big)dsdt,\label{eq:f_g_delta}
\end{align}
substituting $ds'=s\delta^{-2}-dr'$, $ds''=s\delta^{-2}-dr''$ and $ds'''=\delta^{-2}(t-ds)$, but writing again $s$ for $s'''$. With this we prove now the Proposition.

(i). Let $v=\Delta z$, $\tilde{v}=u^{(\delta)}$. By the rescaling
with $\delta^{-6}$ it is enough to show $V_{i}=O(\ell_{d,2}(\delta)^{3})$,
$i=1,2$. Observe by Lemma \ref{lem:S_delta_bounds}(i,ii) that
\begin{align}
|f_{\delta}((s+s'',u^{(\delta)}),(s'',u^{(\delta)}))| & \lesssim\norm{S_{\theta,\delta}^{*}(s+s'')u^{(\delta)}}_{L^{2}(\Lambda_{\delta})}\norm{S_{\theta,\delta}^{*}(s'')u^{(\delta)}}_{L^{2}(\Lambda_{\delta})}\nonumber \\
 & \lesssim(1\wedge(s+s'')^{-d/4})(1\wedge(s'')^{-d/4})\lesssim1\wedge(s'')^{-d/2},\label{eq:f_delta_bounds_0}\\
\left|f_{\delta}\left((s+s',\Delta z),(s',\Delta z)\right)\right| & \lesssim\norm{S_{\theta,\delta}^{*}(s+s')\Delta z}_{L^{2}(\Lambda_{\delta})}\norm{S_{\theta,\delta}^{*}(s')\Delta z}_{L^{2}(\Lambda_{\delta})}\nonumber \\
 & \lesssim(1\wedge(s+s')^{-1/2-d/4})(1\wedge(s')^{-1/2-d/4})\label{eq:f_delta_bounds}\\
 & \leq(1\wedge s{}^{-1/2-d/4})(1\wedge(s')^{-1/2-d/4}).\nonumber
\end{align}
These bounds yield in (\ref{eq:f_g_delta}) when $d\geq2$ for $V(\Delta z,\Delta z,u^{(\delta)},u^{(\delta)})$
up to a constant the upper bound
\begin{align}
 & \Big(\int_{0}^{T\delta^{-2}}\big(1\wedge(s'')^{-d/2}\big)ds''\Big)\Big(\int_{0}^{T\delta^{-2}}\big(1\wedge s^{-1/2-d/4}\big)ds\Big)\nonumber \\
 & \qquad\cdot\Big(\int_{0}^{T\delta^{-2}}\big(1\wedge(s')^{-1/2-d/4}\big)ds'\Big)\lesssim\ell_{d,2}(\delta)^{3}.\label{eq:f_delta_bounds_1}
\end{align}
Similarly, $\left|f_{\delta}((s+s'',\Delta z),(s'',u^{(\delta)}))\right|\lesssim(1\wedge s{}^{-d/4})(1\wedge(s'')^{-1/2-d/4})$,
$\left|f_{\delta}\left((s+s',u^{(\delta)}),(s',\Delta z)\right)\right|\lesssim(1\wedge s{}^{-d/4})(1\wedge(s')^{-1/2-d/4})$,
implying the upper bound $\ell_{d,2}(\delta)^{3}$ also for $V(\Delta z,u^{(\delta)},u^{(\delta)},\Delta z)$.
In all, we find $|V_{1}|$, $|V_{2}|\lesssim\ell_{d,2}(\delta)^{3}$.

(ii). Let $v=\Delta z$, $\tilde{v}=w^{(\delta)}$. By the rescaling of $V$ it is
enough to show $\delta^2 V_{i}\rightarrow 0$, $i=1,2$. We have
by Lemma \ref{lem:S_delta_bounds}(ii) for any $v_{1},v_{2}\in\{v,\tilde{v}\}$
\begin{align*}
\left|f_{\delta}((s+s'',v_{1}),(s'',v_{2}))\right| & \lesssim(1\wedge(s+s'')^{-\alpha'/2-d/4})(1\wedge(s'')^{-\alpha'/2-d/4}),\\
\left|f_{\delta}((s+s',v_{1}),(s',v_{2}))\right| & \lesssim(1\wedge s^{-\alpha'/2-d/4})(1\wedge(s')^{-\alpha'/2-d/4}).
\end{align*}
Therefore, as in (\ref{eq:f_delta_bounds_1}) but this time for all
$d\geq1$, $|V(v_{1},v_{2},v_{3},v_{4})|\lesssim\delta^{4(\alpha'-1)}$,
$v_{3},v_{4}\in\{v,\tilde{v}\}$.

(iii). Let $v=\tilde{v}=u^{(\delta)}$. The claim is a direct consequence
of (\ref{eq:f_delta_bounds_0}) and (\ref{eq:f_delta_bounds_1}) for
$d\geq2$, with the $ds$-integral of order $O(\delta^{-2})$ this
time.

(iv). Let $v=\tilde{v}=w^{(\delta)}$. Since $V_{1}=V_{2}=V(w^{(\delta)})$,
it is enough to show
\begin{equation}
V_{1}=V(w^{(\delta)})\rightarrow T\theta(0)^{-3}\int_{0}^{\infty}\Psi(e^{s\Delta}\Delta m,\Delta m)^{2}ds.\label{eq:V_lim}
\end{equation}
We argue by dominated convergence. Set
\[
f((s,\Delta m),(s',\Delta m))=\langle B_{0}^{*}e^{\theta(0)s\Delta}\Delta m,B_{0}^{*}e^{\theta(0)s'\Delta}\Delta m\rangle _{L^{2}(\R^{d})}.
\]
Exactly as in the proof of Proposition \ref{prop:cov_asymp}(i) we
get pointwise by polarisation
\[
f_{\delta}((s+s',w^{(\delta)}),(s',w^{(\delta)}))\r{}f((s+s',\Delta m),(s',\Delta m))
\]
In order to conclude observe for $d\geq2$ by (\ref{eq:f_delta_bounds})
(with $w^{(\delta)}$ instead of $\Delta z$) that
\[
\left|f_{\delta}\left((s+s',w^{(\delta)}),(s',w^{(\delta)})\right)\right|\lesssim(1\wedge s^{-1/2-d/8})(1\wedge(s')^{-1/2-3d/8}).
\]
If $\theta\in C^{1+\alpha'}(\overline{\Lambda})$ for $\alpha'>1/2$ and $\int_{\R^{d}}m(x)dx=0$,
the improved bound in Lemma \ref{lem:S_delta_bounds}(iii) for (\ref{eq:f_delta_bounds})
gives
\[
\left|f_{\delta}\left((s+s',w^{(\delta)}),(s',w^{(\delta)})\right)\right|\lesssim(1\wedge s^{-1/2-\alpha'/2-d/4})(1\wedge(s')^{-1/2-\alpha'/2-d/4}).
\]
In both cases (\ref{eq:V_lim}) follows from dominated convergence,
noting 
\[
\int_{0}^{\infty}f((s+s',\Delta m),(s',\Delta m))ds'=\theta(0)^{-1}\Psi(e^{\theta(0)s\Delta}\Delta m,\Delta m).\qedhere
\]
\end{proof}
The next result improves on Proposition \ref{prop:cov_asymp}(ii)
when $B$ is a multiplication operator, by making lower order terms
explicit. This is necessary for the proof of Theorem \ref{thm:ProxyMLE_CLT}.
The main difficulty is to work around not having a rate of convergence
in Proposition \ref{prop:semigroup_convergence}(ii).
\begin{prop}
\label{prop:cov_asymp_special}Let $z\in H^{4}(\R^{d})$ have compact
support in $\Lambda_{\delta'}$ for some $\delta'>0$ and suppose that $B=M_{\sigma}$ with $\sigma\in C^{1}(\overline{\Lambda})$.
If $d=1$, then assume $\theta\in C^{1+\alpha'}(\overline{\Lambda})$ for $\alpha'>1/2$ and
$\int_{\R}z(x)dx=0$. Then for $0<\delta\leq\delta'$ and $\delta\rightarrow 0$
\begin{align*}
 & \delta^{-2}\int_{0}^{T}\Var(\sc{\tilde{X}(t)}{(\Delta z)_{\delta}})dt=\frac{T\sigma^{2}(0)}{2\theta(0)}\norm{\nabla z}_{L^{2}(\R^{d})}^{2}\\
 & \qquad+\frac{\delta T}{2}\sc{\sc{\nabla\left(\tfrac{\sigma^{2}}{\theta}\right)(0)}x_{\R^{d}}}{|\nabla z|^{2}}_{L^{2}(\R^{d})}+o(\delta).
\end{align*}
\end{prop}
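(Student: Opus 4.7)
The strategy is to isolate two independent $\delta$-corrections—one from expanding $B_\delta^{*}=M_{\sigma(x_0+\delta\cdot)}$ around $\sigma(0)$, one from expanding $S_{\theta,\delta}^{*}$ around $e^{\theta(0)s\Delta}$—and to check that the cross-terms involving $a(0)$ and $b$ drop out, leaving the $\nabla(\sigma^2/\theta)$-prefactor. Using \eqref{eq:scaled_cov_function} and Fubini,
\[
\delta^{-2}\int_0^T\Var(\sc{\tilde X(t)}{(\Delta z)_\delta})\,dt = T J(\delta)-\delta^2\int_0^{T\delta^{-2}}s\,\norm{B_\delta^{*}S_{\theta,\delta}^{*}(s)\Delta z}^{2}\,ds,
\]
with $J(\delta):=\int_0^{T\delta^{-2}}\norm{B_\delta^{*}S_{\theta,\delta}^{*}(s)\Delta z}_{L^2(\Lambda_\delta)}^{2}\,ds$. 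Lemma~\ref{lem:S_delta_bounds}(ii) for $d\ge 2$ and Lemma~\ref{lem:S_delta_bounds}(iii) for $d=1$ (exploiting $\int_\R z\,dx=0$ and $\theta\in C^{1+\alpha'}$ with $\alpha'>1/2$) yield $\norm{S_{\theta,\delta}^{*}(s)\Delta z}^{2}\lesssim 1\wedge s^{-\beta}$ with $\beta>2$, up to a harmless $\log$-factor in $d=2$, making the second term above $o(\delta)$.

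I then expand $\sigma^2(x_0+\delta x)=\sigma^2(0)+\delta\sc{\nabla\sigma^2(0)}{x}_{\R^d}+\rho(\delta,x)$, where the $C^1$-remainder satisfies $|\rho(\delta,x)|\le\delta|x|\omega(\delta|x|)$ with $\omega$ the modulus of continuity of $\nabla\sigma^2$. Splitting the $x$-integral into $|x|\le\epsilon/\delta$ (controlled by $\omega(\epsilon)\to 0$) and $|x|>\epsilon/\delta$ (controlled via the pointwise Gaussian decay of Proposition~\ref{prop:semigroup_convergence}(i) combined with the moment bound of Lemma~\ref{lem:LaplaceSemigroup_props}(iii)) and letting $\epsilon\to 0$ diagonally with $\delta$ gives $J(\delta)=\sigma^2(0)K_0(\delta)+\delta K_1(\delta)+o(\delta)$, where $K_0(\delta)=\int_0^{T\delta^{-2}}\norm{S_{\theta,\delta}^{*}(s)\Delta z}^{2}\,ds$ and $K_1(\delta)\to\int_0^{\infty}\!\!\int\sc{\nabla\sigma^2(0)}{x}_{\R^d}(e^{\theta(0)s\Delta}\Delta z)^2\,dx\,ds$ by dominated convergence on the back of Proposition~\ref{prop:semigroup_convergence}(ii) and Lemma~\ref{lem:LaplaceSemigroup_props}(iii).

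For $K_0(\delta)$, the Duhamel identity on $\R^d$ yields $S_{\theta,\delta}^{*}(s)\Delta z = e^{\theta(0)s\Delta}\Delta z+\delta\int_0^{s}e^{\theta(0)(s-r)\Delta}L_0 e^{\theta(0)r\Delta}\Delta z\,dr+o_{L^2}(\delta)$, with $L_0 w:=\sc{\nabla\theta(0)}{x}_{\R^d}\Delta w+\sc{\nabla\theta(0)-a(0)}{\nabla w}_{\R^d}$ the limit of $\delta^{-1}(A_{\theta,\delta}^{*}-\theta(0)\Delta)$ identified in Lemma~\ref{lem:v0}(i). Expanding $\norm{\cdot}^2$ to first order in $\delta$, swapping the $(s,r)$-order of integration, and using the identity $\int_0^{\infty}e^{2\theta(0)u\Delta}\Delta z\,du=-(2\theta(0))^{-1}z$ reduce the $\delta$-correction of $\sigma^2(0)K_0(\delta)$ to $-\sigma^2(0)\theta(0)^{-1}\int_0^{\infty}\sc{e^{\theta(0)r\Delta}z}{L_0 e^{\theta(0)r\Delta}\Delta z}\,dr$. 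Integration by parts based on Lemma~\ref{lem:laplace_to_nabla}—in particular $\int\Delta\phi\,\partial_i\phi\,dx=0$ for smooth $\phi$—then annihilates the $\sc{\nabla\theta(0)-a(0)}{\nabla w}_{\R^d}$-part of $L_0$, leaving only $-\delta\sigma^2(0)\theta(0)^{-1}\!\int_0^{\infty}\!\!\int\sc{\nabla\theta(0)}{x}_{\R^d}(e^{\theta(0)s\Delta}\Delta z)^2\,dx\,ds$.

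Summing the $\nabla\sigma^2$-correction with the $\nabla\theta$-correction produces the common prefactor $\delta\theta(0)\sc{\nabla(\sigma^2/\theta)(0)}{\iint x(e^{\theta(0)s\Delta}\Delta z)^2\,dx\,ds}_{\R^d}$, and a direct heat-kernel computation—using $\int x_i q_t(x-y_1)q_t(x-y_2)\,dx=\tfrac{y_{1,i}+y_{2,i}}{2}q_{2t}(y_1-y_2)$, the identity $\int_0^{\infty}e^{2\theta(0)s\Delta}\Delta z\,ds=-(2\theta(0))^{-1}z$, and Lemma~\ref{lem:laplace_to_nabla}—shows $\theta(0)\iint x_i(e^{\theta(0)s\Delta}\Delta z)^2\,dx\,ds=\tfrac{1}{2}\int x_i|\nabla z(x)|^2\,dx$, which matches the claim after re-inserting the outer factor $T$. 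The main obstacle is the rigorous $o(\delta)$-control of the $\rho$-remainder and of the Duhamel tail uniformly on the growing domain $\Lambda_\delta$ for the low dimensions $d=1,2$, where heat-kernel decay is only marginal; the centering assumption $\int_\R z\,dx=0$ in $d=1$ combined with the improved decay of Lemma~\ref{lem:S_delta_bounds}(iii) is precisely what pushes these errors below order $\delta$.
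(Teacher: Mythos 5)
Your formal computation is correct — the Fubini reduction, the cancellation of the $\sc{\nabla\theta(0)-a(0)}{\nabla\cdot}$-terms via Lemma~\ref{lem:laplace_to_nabla}, and the Gaussian identity $\int x_i q_t(x-y_1)q_t(x-y_2)\,dx=\tfrac{y_{1,i}+y_{2,i}}{2}q_{2t}(y_1-y_2)$ all check out and reproduce the claimed $\nabla(\sigma^2/\theta)$-prefactor — and your decomposition is genuinely different from the paper's: you do a direct double Taylor expansion (of $B_\delta^*$ and of $S_{\theta,\delta}^*$), whereas the paper introduces the auxiliary process $\overline X$ driven by the self-adjoint generator $\Delta_{\theta(\delta\cdot)}$ and decomposes the variance algebraically into $T_1,T_2,T_3,R_1,R_2$.

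The gap lies precisely where you flag it, and it is not merely a ``rigorous bookkeeping'' issue. Your Duhamel step
\[
S_{\theta,\delta}^{*}(s)\Delta z = e^{\theta(0)s\Delta}\Delta z+\delta\int_0^{s}e^{\theta(0)(s-r)\Delta}L_0 e^{\theta(0)r\Delta}\Delta z\,dr+o_{L^2}(\delta)
\]
is not an exact identity, because $S_{\theta,\delta}^{*}$ lives on $L^2(\Lambda_\delta)$ with Dirichlet boundary conditions while $e^{\theta(0)s\Delta}$ lives on $\R^d$; the iterates $e^{\theta(0)r\Delta}\Delta z$ do not satisfy the boundary condition, so the variation-of-constants formula simply does not apply across these two generators. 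More importantly, even modulo the boundary issue, your $o_{L^2}(\delta)$-remainder would require a \emph{quantitative rate} for the convergence in Proposition~\ref{prop:semigroup_convergence}(ii), which the paper explicitly states is unavailable (``The main difficulty is to work around not having a rate of convergence in Proposition~\ref{prop:semigroup_convergence}(ii).''), and designing around this is the whole point of the paper's decomposition. The paper's device is to compare $S_{\theta,\delta}^*$ only with $\overline{S}_{\theta,\delta}$ (same domain, same Dirichlet conditions, so variation of parameters is exact; Lemma~\ref{lem:variationOfParameters} then shows the $A_0$-contribution is $o(\delta)$ using the same cancellation you invoke), and then to exploit the self-adjointness of $\overline S_{\theta,\delta}$ to evaluate $\int_0^{t\delta^{-2}}\|\overline S_{\theta,\delta}(s)\Delta_{\theta(\delta\cdot)}z\|^2\,ds$ by an exact semigroup-derivative identity (Lemma~\ref{lem:T_3}): this reduces the $\delta$-correction of the variance integral to the finite-dimensional quantity $\sc{(\Delta_{\theta(\delta\cdot)}-\theta(0)\Delta)z}{z}_{L^2(\R^d)}$, whose Taylor expansion in $\delta$ is elementary and needs no semigroup-convergence rate. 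Your argument would become rigorous if you replaced $e^{\theta(0)s\Delta}$ by the Dirichlet constant-coefficient semigroup $S_{\theta(0),\delta}$ on $\Lambda_\delta$ as the Duhamel reference (so the identity is exact and the leading order admits the same algebraic evaluation), but as written the use of the full-space heat semigroup is the missing link.
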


\begin{proof}

Let $\sc{\overline{X}(t)}{\cdot}$ be defined as $\sc{\tilde{X}(t)}{\cdot}$
in (\ref{eq:weak_solution_l}), but with semigroup  $(\overline{S}_{\theta}(t))_{t\geq0}$ on $L^{2}(\Lambda)$ generated by $A_{\theta}=\Delta_{\theta}$. As before, $(\overline{S}_{\theta,\delta}(t))_{t\geq0}$ is the corresponding semigroup on $L^2(\Lambda_{\delta})$ generated by $\Delta_{\theta(\delta\cdot)}$. Note that the $\overline{S}_{\theta,\delta}(t)$
are self-adjoint. With $v^{(\delta)}:=\delta^{-1}(\Delta_{\theta(\delta\cdot)}-\theta(0)\Delta)z$ set for $0\leq t\leq T$
\begin{align*}
T_{1}(t) & =-\frac{2\delta^{-1}}{\theta^{2}(0)}\Cov(\langle\overline{X}(t),(\Delta_{\theta(\delta\cdot)}z)_{\delta}\rangle,\sc{\overline{X}(t)}{v_{\delta}^{(\delta)}}),\\
T_{2}(t) & =\frac{\delta^{-2}}{\theta^{2}(0)}\Var(\sc{\overline{X}(t)}{(\Delta_{\theta(\delta\cdot)}z)_{\delta}})-\frac{\sigma^2(0)}{\theta^{2}(0)}\int_{0}^{t\delta^{-2}}\norm{\overline{S}_{\theta,\delta}\left(s\right)\Delta_{\theta(\delta\cdot)}z}_{L^{2}(\Lambda_{\delta})}^{2}ds,\\
T_{3}(t) & =\frac{\sigma^2(0)}{\theta^{2}(0)}\int_{0}^{t\delta^{-2}}\norm{\overline{S}_{\theta,\delta}\left(s\right)\Delta_{\theta(\delta\cdot)}z}_{L^{2}(\Lambda_{\delta})}^{2}ds-\frac{\sigma^{2}(0)}{2\theta(0)}\norm{\nabla z}_{L^{2}(\R^{d})}^{2},\\
R_{1}(t) & =\delta^{-2}\Cov(\sc{\tilde{X}(t)}{(\Delta z)_{\delta}}-\sc{\overline{X}(t)}{(\Delta z)_{\delta}},\sc{\tilde{X}(t)}{(\Delta z)_{\delta}}+\sc{\overline{X}(t)}{(\Delta z)_{\delta}}),\\
R_{2}(t) & =\frac{1}{\theta^{2}(0)}\Var(\sc{\overline{X}(t)}{v_{\delta}^{(\delta)}}),
\end{align*}
and introduce the decompositions
\begin{align*}
 \delta^{-2}\int_{0}^{T}\Var(\sc{\tilde{X}(t)}{(\Delta z)_{\delta}})dt & =\delta^{-2}\int_{0}^{T}\Var(\sc{\overline{X}(t)}{(\Delta z)_{\delta}})dt+\int_{0}^{T}R_{1}(t)dt\\
 \quad =  \frac{\delta^{-2}}{\theta^2(0)}\int_{0}^{T}\Var(\sc{\overline{X}(t)}{&(\Delta_{\theta(\delta\cdot)} z)_{\delta}})dt +  \int_{0}^{T}(T_{1}(t)+R_{2}(t))dt + o(\delta)\\
 \quad=\frac{T\sigma^{2}(0)}{2\theta(0)}\norm{\nabla z}_{L^{2}(\R^{d})}^{2}+&\int_{0}^{T}(T_{1}(t) +T_{2}(t)+T_{3}(t))dt + o(\delta),
\end{align*}
where we use for the remainder terms that $\int_{0}^{T}R_{i}(t)dt=o(\delta)$, $i=1,2$, by Lemmas \ref{lem:variationOfParameters} and \ref{lem:R_2}
below. The claim follows from Lemmas \ref{lem:T_1}, \ref{lem:T_2}
and \ref{lem:T_3} below, which show
\begin{align*}
\delta^{-1}\int_{0}^{T}(T_{1}(t)+T_{2}(t)+T_{3}(t))dt & \rightarrow\frac{T}{2}\sc{\sc{\tfrac{\theta(0)}{\sigma^{2}(0)}\nabla\sigma^{2}(0)-\nabla\theta(0)}x_{\R^{d}}}{|\nabla z|^{2}}_{L^{2}(\R^{d})}.
\end{align*}
\end{proof}

\begin{lem}
\label{lem:T_1} In Proposition \ref{prop:cov_asymp_special} we have
\[
\delta^{-1}\int_{0}^{T}T_{1}(t)dt\rightarrow-\frac{T\sigma^2(0)}{\theta^2(0)}\left\langle \sc{\nabla\theta(0)}x_{\R^{d}},|\nabla z|^{2}\right\rangle _{L^{2}(\R^{d})}.
\]
\end{lem}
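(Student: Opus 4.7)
The plan is to rescale the covariance via Lemma \ref{lem:scaling}, swap the order of integration by Fubini, and then conclude by dominated convergence. First I would insert the covariance formula from Proposition \ref{prop:solution} (with $B=M_\sigma$ and $\overline{X}$ in place of $X$) into $T_1(t)$, apply $B^*(h_\delta)=(B_\delta^* h)_\delta$ from Example \ref{exa:B_multiplicationOp}(a), the semigroup scaling $\overline{S}_\theta^*(s)h_\delta = (\overline{S}_{\theta,\delta}^*(s\delta^{-2})h)_\delta$, and the substitution $r=(t-s)\delta^{-2}$. After swapping the $t$- and $r$-integrals one obtains
\begin{equation*}
\delta^{-1}\int_0^T T_1(t)\,dt = -\frac{2}{\theta^2(0)}\int_0^{T\delta^{-2}}(T-\delta^2 r)\,f_\delta(r)\,dr,
\end{equation*}
where $f_\delta(r):=\sc{B_\delta^*\overline{S}_{\theta,\delta}^*(r)\Delta_{\theta(\delta\cdot)}z}{B_\delta^*\overline{S}_{\theta,\delta}^*(r)v^{(\delta)}}_{L^2(\Lambda_\delta)}$.

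Next I would prove dominated convergence of the rescaled integrand. Pointwise, combine $\Delta_{\theta(\delta\cdot)}z\to\theta(0)\Delta z$ and $v^{(\delta)}\to v$ in $L^2(\mathbb{R}^d)$ from Lemma \ref{lem:v0}(i) (applied with $a\equiv 0$, since $\overline{X}$ is driven by $\Delta_\theta$ only), the semigroup convergence $\overline{S}_{\theta,\delta}^*(r)h\to e^{\theta(0)r\Delta}h$ from Proposition \ref{prop:semigroup_convergence}(ii), and $B_\delta^*\to M_{\sigma(0)}$ from Example \ref{exa:B_multiplicationOp}(a), together with the uniform bound $\sup_{0<\delta\le 1,\,0\le r\le T\delta^{-2}}\norm{\overline{S}_{\theta,\delta}^*(r)}_{L^2(\Lambda_\delta)}\lesssim 1$ from Proposition \ref{prop:semigroup_bounds}(i); this yields $f_\delta(r)\to \sigma^2(0)\theta(0)\sc{e^{\theta(0)r\Delta}\Delta z}{e^{\theta(0)r\Delta}v}_{L^2(\mathbb{R}^d)}$. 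For a dominating function, use Cauchy--Schwarz. The identity $\Delta_{\theta(\delta\cdot)}z-\theta(0)\Delta z=\delta v^{(\delta)}$ with $\norm{v^{(\delta)}}_{L^1\cap L^2(\mathbb{R}^d)}\lesssim\norm z_{W^2_{1,2}(\mathbb{R}^d)}$ from Lemma \ref{lem:v0}(i) lets me apply Lemma \ref{lem:S_delta_bounds}(ii) with reference $\theta(0)z$ and $\alpha'=1$, bounding the first factor by $1\wedge r^{-1/2-d/4}$. For $d\ge 2$, Lemma \ref{lem:S_delta_bounds}(i) bounds the second factor by $1\wedge r^{-d/4}$, giving $\abs{f_\delta(r)}\lesssim 1\wedge r^{-1/2-d/2}$, integrable on $[0,\infty)$. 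For $d=1$, the added hypotheses $\theta\in C^{1+\alpha'}$ and $\int_{\mathbb{R}}z\,dx=0$ let Lemma \ref{lem:v0}(ii) produce a compactly supported $m\in H^2(\mathbb{R})$ with $v=\Delta m$ and $\norm{v^{(\delta)}-\Delta m}_{L^1\cap L^2}\le C\delta^{\alpha'}$, so Lemma \ref{lem:S_delta_bounds}(ii) bounds the second factor by $1\wedge r^{-\alpha'/2-1/4}$, yielding $\abs{f_\delta(r)}\lesssim 1\wedge r^{-1-\alpha'/2}$, which is integrable since $\alpha'>1/2$. The weight $T-\delta^2 r$ is uniformly bounded by $T$ and converges to $T$ pointwise.

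Finally, applying dominated convergence gives
\begin{equation*}
\delta^{-1}\int_0^T T_1(t)\,dt \;\to\; -\frac{2T\sigma^2(0)}{\theta(0)}\int_0^\infty \sc{e^{2\theta(0)r\Delta}\Delta z}{v}_{L^2(\mathbb{R}^d)}dr = \frac{T\sigma^2(0)}{\theta^2(0)}\,\sc{z}{v}_{L^2(\mathbb{R}^d)},
\end{equation*}
where I use the identity $\int_0^\infty e^{2\theta(0)r\Delta}\Delta z\,dr=-z/(2\theta(0))$ (from the fundamental theorem of calculus applied to $(2\theta(0))^{-1}\tfrac{d}{dr}e^{2\theta(0)r\Delta}z$). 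Since $v=\Delta(\sc{\nabla\theta(0)}{x}_{\mathbb{R}^d}z)-\sc{\nabla\theta(0)}{\nabla z}_{\mathbb{R}^d}$, Lemma \ref{lem:laplace_to_nabla} gives $\sc{z}{\sc{\nabla\theta(0)}{\nabla z}}_{L^2(\mathbb{R}^d)}=0$ and $\sc{z}{\Delta(\sc{\nabla\theta(0)}{x}z)}_{L^2(\mathbb{R}^d)} = -\sc{\sc{\nabla\theta(0)}{x}_{\mathbb{R}^d}}{\abs{\nabla z}^2}_{L^2(\mathbb{R}^d)}$, producing the claimed limit. The main obstacle is the $d=1$ case: there the naive bound on $f_\delta(r)$ is only $O(r^{-1})$ at infinity, so the hypotheses $\int z\,dx=0$ and $\alpha'>1/2$ are essential to gain an extra $r^{-\alpha'/2}$ decay via Lemma \ref{lem:v0}(ii) and secure integrability.
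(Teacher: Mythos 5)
Your proof is correct, and the answer matches, but it takes a genuinely different route from the paper. The paper's proof does not redo the dominated-convergence argument; instead it invokes the already-established Proposition~\ref{prop:cov_asymp}: for $d\ge 2$ it applies part~(iii) directly with $w^{(\delta)}=\theta(0)^{-1}\Delta_{\theta(\delta\cdot)}z$ and $u^{(\delta)}=v^{(\delta)}$ to get $\delta^{-1}\int_0^T T_1(t)\,dt\to -2T\theta(0)^{-2}\Psi(\Delta z,v)$, and then uses the identity $\Psi(\Delta z,v)=-\tfrac{\sigma^2(0)}{2}\langle z,v\rangle_{L^2(\R^d)}$ from \eqref{eq:Psi_Delta_example} together with Lemma~\ref{lem:laplace_to_nabla}; for $d=1$ it uses $v=\Delta m$ from Lemma~\ref{lem:v0}(ii) and recovers the same limit by polarisation from part~(ii) of the same proposition. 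What you do, by contrast, is essentially re-derive the relevant special case of Proposition~\ref{prop:cov_asymp} from scratch: you set up the rescaled integral directly, establish pointwise convergence from the semigroup and multiplication-operator limits, and build a dominating function from Lemma~\ref{lem:S_delta_bounds}. Both routes rely on the same ingredients (semigroup convergence, heat-kernel decay rates, the $d=1$ cancellation through $v=\Delta m$, and Lemma~\ref{lem:laplace_to_nabla} for the final identity); your version is more self-contained and makes the role of each bound explicit, while the paper's is shorter because it reuses a proposition already proved for exactly this kind of rescaled covariance. One small point: your final integrability estimate in $d=1$ only requires $\alpha'>0$, not $\alpha'>1/2$; the stronger hypothesis is needed elsewhere in the overall Proposition~\ref{prop:cov_asymp_special} argument (e.g.\ for the $T_2$, $R_1$ terms and in Proposition~\ref{prop:var_with_delta}), so stating it as ``essential'' at this step slightly overclaims.
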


\begin{proof}
Lemma \ref{lem:v0} above with $A_{\theta}=\Delta_{\theta}$ yields $v^{(\delta)}\rightarrow v:=\Delta(\scapro{\nabla\theta(0)}{x}_{\R^{d}}z)-\scapro{\nabla\vartheta(0)}{\nabla z}_{\R^{d}}$ in $L^{2}$. 
Moreover, since $\theta \in C^1(\overline{\Lambda})$, we have $\norm{\Delta_{\theta(\delta\cdot)}z-\theta(0)\Delta z}_{L^1\cap L^2(\R^d)}\leq C \delta$. If $d\geq2$, then Proposition \ref{prop:cov_asymp}(iii) with $w^{(\delta)}=\theta(0)^{-1}\Delta_{\theta(\delta\cdot)}z$, $u^{(\delta)}=v^{(\delta)}$ already implies $\delta^{-1}\int_{0}^{T}T_{1}(t)dt\rightarrow-2T\theta^{-2}(0)\Psi(\Delta z,v)$, and the claim follows from Lemma \ref{lem:laplace_to_nabla}
above, recalling the identity $\Psi(\Delta z,v)=-\frac{\sigma^{2}(0)}{2}\sc zv_{L^{2}(\R^{d})}$
from (\ref{eq:Psi_Delta_example}). For $d=1$, on the other hand, the properties $\theta\in C^{1+\alpha'}(\overline{\Lambda})$ for $\alpha'>1/2$ and $\int_{\R}z(x)dx=0$ ensure by Lemma \ref{lem:v0}(ii) that $v=\Delta m$
for a compactly supported $m\in H^{2}(\R)$ with $\norm{v^{(\delta)}-\Delta m}_{L^1\cap L^2(\R)}\leq C \delta^{\alpha'}$. By polarisation and
Proposition \ref{prop:cov_asymp}(ii) (with $w^{(\delta)}=\theta(0)^{-1}\Delta_{\theta(\delta\cdot)}z$ and $w^{(\delta)}=v^{(\delta)}$) $\delta^{-1}\int_{0}^{T}T_{1}(t)dt$
converges again to the claimed limit.
\end{proof}

\begin{lem}
\label{lem:T_2} In Proposition \ref{prop:cov_asymp_special} we have
\begin{align*}
\delta^{-1}\int_{0}^{T}T_{2}(t)dt & \r{}\frac{T}{2\theta(0)}\left\langle \left\langle \nabla\sigma^{2}(0),x\right\rangle _{\R^{d}},|\nabla z|^{2}\right\rangle _{L^{2}(\R^{d})}.
\end{align*}
\end{lem}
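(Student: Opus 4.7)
The plan is to first rewrite $T_{2}(t)$ in the scaled domain using the multiplicative structure $B=M_{\sigma}$, then transport everything to the whole space $\R^{d}$ via dominated convergence, and finally identify the limit by two integration-by-parts identities.

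First, I would use $B^{*}=M_{\sigma}$ together with the self-adjointness of $\Delta_{\theta}$ and the scaling in Lemma \ref{lem:scaling}. Substituting $s=(t-r)\delta^{-2}$ in the covariance formula \eqref{eq:cov_function} applied to $\overline{X}$, a change of variables $y=\delta^{-1}x$ in the spatial integral yields
\[
T_{2}(t)=\frac{1}{\theta^{2}(0)}\int_{0}^{t\delta^{-2}}\int_{\Lambda_{\delta}}\bigl(\sigma^{2}(\delta y)-\sigma^{2}(0)\bigr)\bigl|\overline{S}_{\theta,\delta}(s)\Delta_{\theta(\delta\cdot)}z(y)\bigr|^{2}dy\,ds.
\]
Integrating in $t\in[0,T]$, applying Fubini, and dividing by $\delta$ gives the representation
\[
\delta^{-1}\!\!\int_{0}^{T}\!\!T_{2}(t)dt=\frac{1}{\theta^{2}(0)}\!\int_{0}^{T\delta^{-2}}\!(T-s\delta^{2})\!\int_{\Lambda_{\delta}}\!\frac{\sigma^{2}(\delta y)-\sigma^{2}(0)}{\delta}\bigl|\overline{S}_{\theta,\delta}(s)\Delta_{\theta(\delta\cdot)}z(y)\bigr|^{2}dy\,ds.
\]

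Next, I would pass to the limit. Since $\sigma\in C^{1}(\overline{\Lambda})$, we have $\delta^{-1}(\sigma^{2}(\delta y)-\sigma^{2}(0))\to\sc{\nabla\sigma^{2}(0)}y_{\R^{d}}$ pointwise with $\abs{\sigma^{2}(\delta y)-\sigma^{2}(0)}\lesssim\delta\abs y$ for $\delta y\in\Lambda$. Proposition \ref{prop:semigroup_convergence}(ii), combined with $\Delta_{\theta(\delta\cdot)}z\to\theta(0)\Delta z$ in $L^{2}(\R^{d})$ and the uniform operator bounds from Proposition \ref{prop:semigroup_bounds}, gives the pointwise convergence $\overline{S}_{\theta,\delta}(s)\Delta_{\theta(\delta\cdot)}z\to\theta(0)e^{\theta(0)s\Delta}\Delta z$ in $L^{2}(\R^{d})$. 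Dominated convergence is then applied using the semigroup bounds in Lemma \ref{lem:S_delta_bounds}(ii), and in the case $d=1$ the sharper bound in Lemma \ref{lem:S_delta_bounds}(iii), which requires precisely the standing assumptions $\theta\in C^{1+\alpha'}(\overline{\Lambda})$ and $\int_{\R}z(x)dx=0$ of Proposition \ref{prop:cov_asymp_special}. The factor $(T-s\delta^{2})$ is controlled similarly, the $s\delta^{2}$ contribution being negligible because the semigroup decay balances the extra weight $s$. After substituting $s'=\theta(0)s$ the limit equals
\[
\frac{T}{\theta(0)}\int_{0}^{\infty}\!\int_{\R^{d}}\sc{\nabla\sigma^{2}(0)}y_{\R^{d}}\bigl|e^{s\Delta}\Delta z(y)\bigr|^{2}dy\,ds.
\]

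It remains to evaluate this integral. For each coordinate $i$, Lemma \ref{lem:LaplaceSemigroup_props}(ii) (with $\theta(0)=1$) yields $y_{i}e^{s\Delta}\Delta z=-2s\partial_{i}e^{s\Delta}\Delta z+e^{s\Delta}(y_{i}\Delta z)$. Testing against $e^{s\Delta}\Delta z$, the first contribution vanishes by integration by parts ($\int u\partial_{i}u\,dy=0$). For the second, self-adjointness of $e^{s\Delta}$ and $\int_{0}^{\infty}e^{2s\Delta}ds=\tfrac{1}{2}(-\Delta)^{-1}$ give
\[
\int_{0}^{\infty}\sc{e^{s\Delta}\Delta z}{e^{s\Delta}(y_{i}\Delta z)}_{L^{2}(\R^{d})}ds=\tfrac{1}{2}\sc{\Delta z}{(-\Delta)^{-1}(y_{i}\Delta z)}_{L^{2}(\R^{d})}=-\tfrac{1}{2}\sc{z}{y_{i}\Delta z}_{L^{2}(\R^{d})},
\]
the last equality by two integrations by parts. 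Lemma \ref{lem:laplace_to_nabla} rewrites this as $\tfrac{1}{2}\sc{y_{i}}{\abs{\nabla z}^{2}}_{L^{2}(\R^{d})}$. Summing over $i$ produces the claimed constant $\tfrac{T}{2\theta(0)}\sc{\sc{\nabla\sigma^{2}(0)}x_{\R^{d}}}{\abs{\nabla z}^{2}}_{L^{2}(\R^{d})}$.

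The main obstacle is the tightness of the integrand as $\delta\to 0$: the weight $\abs{y}$ grows on $\Lambda_{\delta}$ and one must bound $\int_{\Lambda_{\delta}}\abs{y}\abs{\overline{S}_{\theta,\delta}(s)\Delta_{\theta(\delta\cdot)}z(y)}^{2}dy$ uniformly in $\delta$ by an integrable function of $s$ on $[0,\infty)$. I expect to handle this by splitting $s\in[0,C]\cup[C,T\delta^{-2}]$, using Proposition \ref{prop:semigroup_convergence}(i) to dominate $\overline{S}_{\theta,\delta}(s)$ by the heat kernel on $\R^{d}$ and then exploiting Gaussian decay together with Lemma \ref{lem:LaplaceSemigroup_props}(iii) to absorb the polynomial weight, the tail contribution being made small uniformly in $\delta$ before sending $\delta\to0$ on the bounded interval.
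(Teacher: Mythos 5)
Your overall strategy — rewrite $T_2$ as an $s$-integral of a weighted spatial inner product, interchange integrals, pass to the limit by dominated convergence, and evaluate the limiting $s$-integral via the commutation identity of Lemma \ref{lem:LaplaceSemigroup_props}(ii) and the integration-by-parts identities of Lemma \ref{lem:laplace_to_nabla} — is the one the paper uses, and your evaluation of the limiting constant is correct (including the observation that the $-2s\nabla$ term drops by $\sc{u}{\partial_i u}=0$).

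The gap is in the construction of the $s$-integrable dominator, which you yourself flag as "the main obstacle" but then propose to close with tools that are not sufficient on their own. If you dominate $\overline{S}_{\theta,\delta}(s)\Delta_{\theta(\delta\cdot)}z$ pointwise by the heat semigroup via Proposition \ref{prop:semigroup_convergence}(i) and apply Lemma \ref{lem:LaplaceSemigroup_props}(iii), you can only conclude
\[
\int_{\Lambda_\delta}\lvert y\rvert\,\lvert u(y)\rvert^2\,dy
\;\lesssim\;\norm{\lvert y\rvert^2 u}_{L^2}^{1/2}\norm{u}_{L^2}^{3/2}
\;\lesssim\;(1\vee s)^{1/2}(1\wedge s^{-d/2}),
\]
where $u=\overline{S}_{\theta,\delta}(s)\Delta_{\theta(\delta\cdot)}z$, because the pointwise domination destroys the cancellation in $\Delta_{\theta(\delta\cdot)}$ and only yields $\norm{u}_{L^2}\lesssim 1\wedge s^{-d/4}$. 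The resulting bound is \emph{not} integrable on $[0,\infty)$ for $d\leq 3$, and the tail truncation at $s\geq C$ does not help either, since for those dimensions $\int_C^{T\delta^{-2}}(1\vee s)^{1/2}(1\wedge s^{-d/2})\,ds$ grows as $\delta\to 0$. The extra ingredient you need — and which the paper uses explicitly — is the analytic-semigroup decay of Proposition \ref{prop:semigroup_bounds}(ii), which gives $\norm{\overline{S}_{\theta,\delta}(s)\Delta_{\theta(\delta\cdot)}z}_{L^2}\lesssim 1\wedge s^{-1}$ for $s\leq T\delta^{-2}$ \emph{uniformly in $\delta$} (this crucially exploits that you are hitting $z$ with a full second-order operator). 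Feeding this decay into the two factors $\norm{u}^{3/2}$ in the Cauchy–Schwarz interpolation above, while using the heat-kernel domination and Lemma \ref{lem:LaplaceSemigroup_props}(iii) only for the $\norm{\lvert y\rvert^2 u}^{1/2}$ factor, yields the dominator $(1\vee s)^{1/2}(1\wedge s^{-d/8})(1\wedge s^{-3/2})\lesssim 1\wedge s^{-1-d/8}$, which is integrable for all $d\geq 1$. Note also that with this bound the $d=1$ assumptions $\theta\in C^{1+\alpha'}(\overline\Lambda)$ and $\int_\R z\,dx=0$ are in fact not needed for Lemma \ref{lem:T_2} itself — they are carried along only because they are standing hypotheses of Proposition \ref{prop:cov_asymp_special} — so appealing to Lemma \ref{lem:S_delta_bounds}(iii) as "required" here is slightly misleading.
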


\begin{proof}
Using \eqref{eq:scaled_cov_function}, we have $\theta^2(0)T_{2}(t)=\int_{0}^{t\delta^{-2}}f_{\delta}(s)ds$
for
\begin{align*}
f_{\delta}(s) & =\sc{(\sigma^{2}(\delta\cdot)-\sigma^{2}(0))\overline{S}_{\theta,\delta}(s)\Delta_{\theta(\delta\cdot)}z}{\overline{S}_{\theta,\delta}(s)\Delta_{\theta(\delta\cdot)}z}_{L^{2}(\Lambda_{\delta})}\\
 & =\delta\int_{0}^{1}\sc{\sc{\nabla\sigma^{2}(\delta rx)}x_{\R^{d}}\overline{S}_{\theta,\delta}(s)\Delta_{\theta(\delta\cdot)}z}{\overline{S}_{\theta,\delta}(s)\Delta_{\theta(\delta\cdot)}z}_{L^{2}(\Lambda_{\delta})}dr.
\end{align*}
By the Cauchy-Schwarz inequality and the semigroup bounds in Proposition
\ref{prop:semigroup_bounds}(i,ii) above this means
\begin{align}
\left|\delta^{-1}f_{\delta}\left(s\right)\right| & \lesssim\norm{\left|x\right|\overline{S}_{\theta,\delta}(s)\Delta_{\theta(\delta\cdot)}z}_{L^{2}(\Lambda_{\delta})}\left(1\wedge s^{-1}\right)\nonumber\\ 
 & \lesssim\norm{\left|x\right|^{2}\overline{S}_{\theta,\delta}(s)\Delta_{\theta(\delta\cdot)}z}_{L^{2}(\Lambda_{\delta})}^{1/2}(1\wedge s^{-3/2})\nonumber \\ 
 & \lesssim\norm{|x|^{2}e^{\theta(0)s\Delta}|\Delta_{\theta(\delta\cdot)}z|}_{L^{2}(\R^{d})}^{1/2}(1\wedge s^{-3/2}) \nonumber \\
 & \lesssim(1\vee s)^{1/2}(1\wedge s^{-d/4})^{1/2}(1\wedge s^{-3/2})\lesssim(1\wedge s^{-1-d/8}),\label{eq:f_delta_bound_2}
\end{align}
where we used Proposition \ref{prop:semigroup_convergence}(i) for an approximating sequence of $\Delta_{\theta(\delta\cdot)}z$ with continuous functions and Lemma \ref{lem:LaplaceSemigroup_props}(iii) in the last two lines. 
We conclude from Proposition \ref{prop:semigroup_convergence}(ii) that
\[
\delta^{-1}f_{\delta}(s)\rightarrow f(s):=\theta^{2}(0)\sc{\sc{\nabla\sigma^{2}(0)}x_{\R^{d}}e^{\theta(0)s\Delta}\Delta z}{e^{\theta(0)s\Delta}\Delta z}_{L^{2}(\R^{d})}.
\]
Combining this with \eqref{eq:f_delta_bound_2}, the dominated convergence theorem shows $\theta^{2}(0)\delta^{-1}\int_{0}^{T}T_{2}(t)dt\rightarrow T\int_{0}^{\infty}f(s)ds$.
For the result note that by Lemmas \ref{lem:LaplaceSemigroup_props}(ii)
and \ref{lem:laplace_to_nabla} (here we need $z\in H^{4}(\R^{d})$)
$\theta^{-2}(0)\int_{0}^{\infty}f(s)ds$ equals
\begin{align*}
 & \int_{0}^{\infty}\langle\langle\nabla\sigma^{2}(0),-2\theta(0)s\nabla e^{\theta(0)s\Delta}\Delta z+e^{\theta(0)s\Delta}(x\Delta z)\rangle_{\R^{d}},e^{\theta(0)s\Delta}\Delta z\rangle_{L^{2}(\R^{d})}ds\\
 & \quad=\int_{0}^{\infty}\left\langle \langle\nabla\sigma^{2}(0),x\rangle_{\R^{d}}\Delta z,e^{2\theta(0)s\Delta}\Delta z\right\rangle _{L^{2}(\R^{d})}ds\\
 & \quad=-\frac{1}{2\theta(0)}\left\langle \langle\nabla\sigma^{2}(0),x\rangle_{\R^{d}}z,\Delta z\right\rangle _{L^{2}(\R^{d})}=\frac{1}{2\theta(0)}\left\langle \langle\nabla\sigma^{2}(0),x\rangle_{\R^{d}},\left|\nabla z\right|^{2}\right\rangle _{L^{2}(\R^{d})}.\qedhere
\end{align*}
\end{proof}

\begin{lem}
\label{lem:T_3}In Proposition \ref{prop:cov_asymp_special} we have
\[
\delta^{-1}\int_{0}^{T}T_{3}(t)dt\rightarrow\frac{T\sigma^2(0)}{2\theta^2(0)}\left\langle \sc{\nabla\theta(0)}x_{\R^{d}},|\nabla z|^{2}\right\rangle _{L^{2}(\R^{d})}.
\]
\end{lem}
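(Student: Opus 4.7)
My plan is to use the self-adjointness of $\overline{S}_{\theta,\delta}$ and a fundamental-theorem-of-calculus identity for the semigroup to reduce the time integral of $\|\overline{S}_{\theta,\delta}(s)\Delta_{\theta(\delta\cdot)}z\|^2_{L^{2}(\Lambda_\delta)}$ to a pair of boundary terms, and then extract the leading $\delta$-order correction from the single surviving expression.

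Concretely, writing $L_\delta:=\Delta_{\theta(\delta\cdot)}$ and using that $(\overline{S}_{\theta,\delta}(s))_{s\ge 0}$ is a self-adjoint analytic semigroup generated by $L_\delta$ on $L^2(\Lambda_\delta)$, one computes
\[
\int_{0}^{t\delta^{-2}}\|\overline{S}_{\theta,\delta}(s)L_\delta z\|_{L^{2}(\Lambda_\delta)}^{2}ds
=\Big\langle \tfrac12 L_\delta^{-1}\bigl[\overline{S}_{\theta,\delta}(2t\delta^{-2})-I\bigr]L_\delta z,\,L_\delta z\Big\rangle
=-\tfrac12\langle z,L_\delta z\rangle+E_\delta(t),
\]
where $E_\delta(t):=\tfrac12\langle \overline{S}_{\theta,\delta}(2t\delta^{-2})z,L_\delta z\rangle$. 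Integration by parts (valid since $z$ has compact support in $\Lambda_\delta$ for small $\delta$) gives $-\tfrac12\langle z,L_\delta z\rangle=\tfrac12\int_{\R^d}\theta(\delta x)|\nabla z(x)|^2 dx$. Substituting this into the definition of $T_3(t)$ and subtracting $\frac{\sigma^2(0)}{2\theta(0)}\|\nabla z\|_{L^2(\R^d)}^2=\frac{\sigma^2(0)}{2\theta^2(0)}\int\theta(0)|\nabla z|^2 dx$ yields the clean decomposition
\[
T_3(t)=\frac{\sigma^{2}(0)}{2\theta^{2}(0)}\int_{\R^{d}}\bigl[\theta(\delta x)-\theta(0)\bigr]|\nabla z(x)|^{2}dx+\frac{\sigma^{2}(0)}{\theta^{2}(0)}E_\delta(t).
\]

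For the main term, since $\theta\in C^{1+\alpha}(\overline{\Lambda})$ and $|\nabla z|^{2}$ has compact support, $[\theta(\delta x)-\theta(0)]/\delta\to\langle\nabla\theta(0),x\rangle_{\R^{d}}$ boundedly on $\supp z$. Dominated convergence after multiplying by $\delta^{-1}$ and integrating over $t\in[0,T]$ produces exactly the asserted limit $\frac{T\sigma^{2}(0)}{2\theta^{2}(0)}\langle\langle\nabla\theta(0),x\rangle_{\R^d},|\nabla z|^{2}\rangle_{L^{2}(\R^{d})}$.

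The remaining task, and the only delicate point, is to prove $\delta^{-1}\int_{0}^{T}|E_\delta(t)|\,dt\to0$. I will use two complementary bounds. Trivially $|E_\delta(t)|\le\tfrac12\|\overline{S}_{\theta,\delta}(2t\delta^{-2})\|_{L^2(\Lambda_\delta)}\|z\|\|L_\delta z\|\lesssim 1$, uniformly in $t,\delta$. On the other hand, rewriting $E_\delta(t)=\tfrac12\langle L_\delta\overline{S}_{\theta,\delta}(2t\delta^{-2})z,z\rangle$ and invoking the analyticity estimate of Proposition \ref{prop:semigroup_bounds}(ii) with $s=2t\delta^{-2}\le 2T\delta^{-2}$ gives $\|L_\delta\overline{S}_{\theta,\delta}(2t\delta^{-2})z\|\lesssim e^{CT}\delta^{2}t^{-1}\|z\|$, hence $|E_\delta(t)|\lesssim\delta^{2}/t$. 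Combining, $|E_\delta(t)|\lesssim\min(1,\delta^{2}/t)$, so
\[
\delta^{-1}\int_{0}^{T}|E_\delta(t)|\,dt\lesssim\delta^{-1}\Big(\delta^{2}+\int_{\delta^{2}}^{T}\frac{\delta^{2}}{t}\,dt\Big)=O\bigl(\delta\log\delta^{-1}\bigr)=o(1),
\]
which closes the argument. Note this step is dimension-free and needs no extra hypotheses in $d=1$, because $E_\delta(t)$ is controlled through $L_\delta\overline{S}_{\theta,\delta}$ rather than through the heat kernel decay of $\overline{S}_{\theta,\delta}z$ itself.
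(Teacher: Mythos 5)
Your proof is correct and rests on the same key idea as the paper's: use self-adjointness of $\overline S_{\theta,\delta}$ and the fundamental theorem of calculus to collapse the $s$-integral of $\|\overline S_{\theta,\delta}(s)\Delta_{\theta(\delta\cdot)}z\|^2$ into the boundary terms $-\tfrac12\langle z,\Delta_{\theta(\delta\cdot)}z\rangle + E_\delta(t)$. You deviate in two minor but genuine ways, both sound. For the main term you integrate by parts once to get $-\tfrac12\langle z,\Delta_{\theta(\delta\cdot)}z\rangle = \tfrac12\int \theta(\delta x)|\nabla z|^2\,dx$ and then pass to the limit by dominated convergence, avoiding Lemma \ref{lem:laplace_to_nabla} entirely; the paper instead Taylor-expands $(\Delta_{\theta(\delta\cdot)}-\theta(0)\Delta)z$ as an operator and then invokes that lemma to evaluate $\langle\langle\nabla\theta(0),x\rangle\Delta z,z\rangle$ and to kill the first-order term — your route is more direct. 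For the remainder you establish a pointwise-in-$t$ bound $|E_\delta(t)|\lesssim\min(1,\delta^2/t)$ via the analyticity estimate of Proposition \ref{prop:semigroup_bounds}(ii), giving $\delta^{-1}\int_0^T|E_\delta|\,dt=O(\delta\log\delta^{-1})$; the paper integrates in time first (effectively a second FTC) and gets the sharper $O(\delta^2)$, so $\delta^{-1}\int_0^T|E_\delta|\,dt=O(\delta)$. Both suffice since only $o(1)$ is needed. Your observation that this lemma is dimension-free and does not require the $d=1$ hypotheses ($\int z=0$, $\alpha'>1/2$) is also consistent with the paper: those hypotheses enter Proposition \ref{prop:cov_asymp_special} only through the other lemmas (\ref{lem:T_1}, \ref{lem:variationOfParameters}, \ref{lem:R_2}), not through Lemma \ref{lem:T_3}.
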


\begin{proof}
Since $\overline{S}_{\theta,\delta}(s)$ is self-adjoint, we have
\begin{align*}
\frac{2\theta^2(0)}{\sigma^2(0)}T_{3}(t) & =\left\langle \Delta_{\theta(\delta\cdot)}\overline{S}_{\theta,\delta}\left(2t\delta^{-2}\right)z,z\right\rangle _{L^{2}(\Lambda_{\delta})}-\left\langle (\Delta_{\theta(\delta\cdot)}-\theta(0)\Delta)z,z\right\rangle _{L^{2}(\R^{d})}.
\end{align*}
Integrating over $0\leq t\leq T$ and using the semigroup bound in
Proposition \ref{prop:semigroup_bounds}(i) the first term is of order
$O(\delta^{2})$. Since $\theta\in C^{1}(\overline{\Lambda})$, the result follows from Lemma \ref{lem:laplace_to_nabla} and
\begin{align*}
(\Delta_{\theta(\delta\cdot)}-\theta(0)\Delta)z & =\delta(\int_{0}^{1}\sc{\nabla\theta(s\delta\cdot)}x_{\R^{d}}ds)\,\Delta z+\sc{\nabla\theta(\delta\cdot)}{\nabla z}_{\R^{d}}\\
 & =\delta\sc{\nabla\theta(0)}x_{\R^{d}}\Delta z+\sc{\nabla\theta(0)}{\nabla z}_{\R^{d}}+o(\delta).\qedhere
\end{align*}
\end{proof}
\begin{lem}
\label{lem:variationOfParameters}In Proposition \ref{prop:cov_asymp_special}
we have $\delta^{-1}\int_{0}^{T}R_{1}(t)dt\rightarrow 0$.
\end{lem}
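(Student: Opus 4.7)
The plan is to rewrite $\int_0^T R_1(t)\,dt$ as a weighted integral in the rescaled time variable, identify a pointwise limit of its rescaled integrand, show that this limit integrates to zero, and close the argument by dominated convergence. For the reformulation, the identity $\Cov(X-Y,X+Y)=\Var(X)-\Var(Y)$ for real centered Gaussians, the covariance formula~\eqref{eq:cov_function}, Lemma~\ref{lem:scaling}, and the substitution $r=\delta^2 r'$ give
\[
R_1(t)=\int_0^{t\delta^{-2}}f(r)\,dr,\quad f(r):=\|\sigma(\delta\cdot)S^*_{\theta,\delta}(r)\Delta z\|^2_{L^2(\Lambda_\delta)}-\|\sigma(\delta\cdot)\overline S^*_{\theta,\delta}(r)\Delta z\|^2_{L^2(\Lambda_\delta)},
\]
and Fubini yields $\delta^{-1}\int_0^T R_1(t)\,dt=\int_0^{T\delta^{-2}}(T-r\delta^2)_+\,f(r)/\delta\,dr$.

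For the pointwise analysis, I would polarise $f(r)=\sc{\sigma^2(\delta\cdot)D_\delta(r)}{\Sigma_\delta(r)}_{L^2(\Lambda_\delta)}$ with $D_\delta(r):=(S^*_{\theta,\delta}-\overline S^*_{\theta,\delta})(r)\Delta z$ and $\Sigma_\delta(r):=(S^*_{\theta,\delta}+\overline S^*_{\theta,\delta})(r)\Delta z$, and apply Duhamel's principle
\[
D_\delta(r)=\int_0^r S^*_{\theta,\delta}(r-s)A_{0,\delta}^*\overline S^*_{\theta,\delta}(s)\Delta z\,ds.
\]
The explicit factorisation $A_{0,\delta}^*=-\delta\sc{a(\delta\cdot)}{\nabla\cdot}_{\R^d}+\delta^2(b-\operatorname{div}a)(\delta\cdot)$ extracts the $\delta$-factor. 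Testing against $\phi\in L^2(\R^d)$, using Proposition~\ref{prop:semigroup_convergence}(ii), $a(\delta\cdot)\to a(0)$ and $\sigma(\delta\cdot)\to\sigma(0)$, together with the commutation of the constant-coefficient operator $\sc{a(0)}{\nabla\cdot}_{\R^d}$ with the limiting heat semigroup on $\R^d$, I obtain $D_\delta(r)/\delta\to-re^{\theta(0)r\Delta}\sc{a(0)}{\nabla\Delta z}_{\R^d}$ weakly in $L^2(\R^d)$; coupled with the strong convergence of $\Sigma_\delta(r)$ to $2e^{\theta(0)r\Delta}\Delta z$ by weak-strong convergence this yields
\[
f(r)/\delta\to g(r):=-2r\sigma^2(0)\sc{\sc{a(0)}{\nabla\Delta z}_{\R^d}}{e^{2\theta(0)r\Delta}\Delta z}_{L^2(\R^d)}.
\]
The weak identity $\int_0^\infty r\,e^{2\theta(0)r\Delta}\Delta z\,dr=(4\theta(0)^2)^{-1}z$ (integration by parts in $r$, with vanishing boundary term by Lemma~\ref{lem:LaplaceSemigroup_props}(i)) combined with $\sc{\partial_iz}{\Delta z}_{L^2(\R^d)}=0$ for compactly supported $z\in H^2(\R^d)$ from Lemma~\ref{lem:laplace_to_nabla} then shows $\int_0^\infty g(r)\,dr=0$; equivalently, in the Fourier domain the integrand is proportional to the odd function $\sc{a(0)}{\xi}_{\R^d}$ times the even $|\hat z(\xi)|^2$.

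The hard part will be establishing a uniform-in-$\delta$ dominating function $h\in L^1((0,\infty))$ with $|f(r)/\delta|\le h(r)$. By Duhamel and $\|A_{0,\delta}^*w\|_{L^2(\Lambda_\delta)}\lesssim\delta\|\nabla w\|_{L^2(\Lambda_\delta)}+\delta^2\|w\|_{L^2(\Lambda_\delta)}$, this reduces to the uniform gradient estimate $\|\nabla\overline S^*_{\theta,\delta}(s)\Delta z\|_{L^2(\Lambda_\delta)}\lesssim e^{CT}s^{-1/2}(1\wedge s^{-1/2-d/4})\|z\|_{W_{1,2}^2(\R^d)}$, which I would derive from the energy inequality $(\min_{\overline\Lambda}\theta)\,\|\nabla u\|^2\le\sc{-\Delta_{\theta(\delta\cdot)}u}{u}$, the analyticity bound of Proposition~\ref{prop:semigroup_bounds}(ii), and Lemma~\ref{lem:S_delta_bounds}. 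This gives $\|\sigma(\delta\cdot)D_\delta(r)\|\lesssim\delta$ uniformly in $r\le T\delta^{-2}$, whence $|f(r)/\delta|\lesssim 1\wedge r^{-1/2-d/4}$, integrable on $[0,\infty)$ for $d\ge 3$. For $d\le 2$ one exploits the truncation factor $(T-r\delta^2)_+$, which vanishes for $r\ge T\delta^{-2}$ and restricts the effective range of integration; in the critical case $d=1$ the hypothesis $\int_\R z(x)\,dx=0$ of the enclosing Proposition~\ref{prop:cov_asymp_special} permits applying the sharper decay of Lemma~\ref{lem:S_delta_bounds}(iii). Once the dominating bound is secured, dominated convergence with $(T-r\delta^2)_+\to T$ yields $\delta^{-1}\int_0^T R_1(t)\,dt\to T\int_0^\infty g(r)\,dr=0$.
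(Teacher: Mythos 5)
Your overall strategy coincides with the paper's: rewrite $R_1$ via the rescaled covariance, apply Duhamel's formula to the difference of semigroups to expose the $\delta$ factor through the lower-order part $A_{0,\delta}^*$, identify the pointwise limit of $\delta^{-1}f(r)$, and close by dominated convergence. The identification of the limit $g(r)$ is correct, but two comments are in order. First, the intermediate Fourier identity $\int_0^\infty r e^{2\theta(0)r\Delta}\Delta z\,dr=(4\theta(0)^2)^{-1}z$ is false: in Fourier it is $-|\xi|^{-2}\hat z(\xi)/(4\theta(0)^2)$, i.e.\ $-(4\theta(0)^2)^{-1}(-\Delta)^{-1}z$, not $z$. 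Fortunately this does not affect the conclusion, because the pairing $\scapro{\scapro{a(0)}{\nabla\Delta z}_{\R^d}}{e^{2\theta(0)r\Delta}\Delta z}_{L^2(\R^d)}$ already vanishes for \emph{every} $r$ by the last assertion of Lemma~\ref{lem:laplace_to_nabla} (this is exactly what the paper uses), so in fact $g\equiv 0$; your weaker claim $\int_0^\infty g\,dr=0$ is not needed.

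The genuine gap is in the domination. Your candidate dominating function $1\wedge r^{-1/2-d/4}$ is integrable on $(0,\infty)$ only for $d\ge 3$; for $d=2$ it is $1\wedge r^{-1}$, and the truncation factor $(T-r\delta^2)_+$ does \emph{not} rescue the argument: it merely restricts the integration to $[0,T\delta^{-2}]$, and
\[
\int_0^{T\delta^{-2}}(T-r\delta^2)_+\,(1\wedge r^{-1})\,dr\asymp T\log(\delta^{-1}),
\]
which diverges rather than tends to zero. You cannot even split $[0,M]\cup[M,T\delta^{-2}]$ and treat the tail separately, since the tail contribution is $\asymp T\log(T\delta^{-2}/M)$ and still blows up. So the domination is not closed for $d=2$. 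The paper works instead with the polarised form $f_\delta(s)=\scapro{g_\delta(s)}{h_\delta(s)}_{L^2(\Lambda_\delta)}$ and, crucially, establishes a product bound $|\delta^{-1}f_\delta(s)|\lesssim 1\wedge s^{-3/2}$ that is integrable uniformly in $\delta$ and $d\ge 1$, using a finer splitting of the Duhamel integral together with the energy estimate $\norm{\scapro{a(\delta\cdot)}{\nabla w}_{\R^d}}^2\lesssim\norm{\Delta_{\theta(\delta\cdot)}w}\norm{w}$ from~\eqref{eq:A0_upper_bound} and the sharper decay of $\norm{h_\delta(s)}$ from Lemma~\ref{lem:S_delta_bounds}(ii,iii). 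You would need to incorporate this sharper mechanism to make the domination go through in the borderline dimension $d=2$; your argument as written establishes the claim only for $d\ge 3$ and, after the remark about Lemma~\ref{lem:S_delta_bounds}(iii), for $d=1$.
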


\begin{proof}
By \eqref{eq:scaled_cov_function} write $R_{1}(t)=\int_{0}^{t\delta^{-2}}f_{\delta}(s)ds$ with $f_{\delta}(s)=\sc{g_{\delta}(s)}{h_{\delta}(s)}_{L^{2}(\Lambda_{\delta})}$ for $s\geq0$, where
\begin{align*}
g_{\delta}\left(s\right) & =(S_{\theta,\delta}^{*}\left(s\right)-\overline{S}_{\theta,\delta}(s))\Delta z,\\
h_{\delta}\left(s\right) & =\sigma^{2}(\delta\cdot)(S_{\theta,\delta}^{*}\left(s\right)+\overline{S}_{\theta,\delta}(s))\Delta z.
\end{align*}
An application of the dominated convergence theorem then proves the result, if
\begin{align}
\left|\delta^{-1}f_{\delta}(s)\right| & \lesssim1\wedge s^{-3/2},\,\,\,\,0<s\leq t\delta^{-2},\label{eq:T_4_1}\\
\delta^{-1}f_{\delta}\left(s\right) & \r{}0.\label{eq:T_4_2}
\end{align}
In order
to show (\ref{eq:T_4_1}) and (\ref{eq:T_4_2}) we use the \emph{variation
of parameters formula }(\citet[p. 162]{engel1999one}): The function
$[0,s]\ni s'\mapsto S_{\theta,\delta}^{*}\left(s'\right)\overline{S}_{\theta,\delta}(s-s')\Delta z$
has derivative $S_{\theta,\delta}^{*}\left(s'\right)(A_{\theta,\delta}^{*}-\Delta_{\theta(\delta\cdot)})\overline{S}_{\theta,\delta}\left(s-s'\right)\Delta z$,
implying
\begin{align*}
g_{\delta}\left(s\right) & =\int_{0}^{s}S_{\theta,\delta}^{*}\left(s'\right)(A_{\theta,\delta}^{*}-\Delta_{\theta(\delta\cdot)})\overline{S}_{\theta,\delta}\left(s-s'\right)\Delta z\,ds'.
\end{align*}
Since the operator $A_{\theta,\delta}^{*}-\Delta_{\theta(\delta\cdot)}=A_{0,\delta}^{*}$
is not bounded, a careful analysis is required. Decomposing it into
first and zero order terms we have
\begin{align}
g_{\delta}\left(s\right) & =-\delta\int_{0}^{s}S_{\theta,\delta}^{*}\left(s'\right)\sc{a(\delta\cdot)}{\nabla\overline{S}_{\theta,\delta}\left(s-s'\right)\Delta z}_{\R^{d}}ds'\nonumber \\
 & +\delta^{2}\int_{0}^{s}S_{\theta,\delta}^{*}\left(s'\right)(b(\delta\cdot)-(\text{div}a)(\delta\cdot))\overline{S}_{\theta,\delta}\left(s-s'\right)\Delta z\,ds'.
\end{align}
The semigroup bounds in Proposition \ref{prop:semigroup_bounds}(ii)
and in Lemma \ref{lem:S_delta_bounds}(i,ii,iii) above, subject to
$d\geq2$ or  $d=1$ with $\theta\in C^{1+\alpha'}(\overline{\Lambda})$ for $\alpha'>1/2$ and $\int_{\R}z(x)dx=0$,
show for sufficiently small $\delta$ and $0\leq s'<s\leq t\delta^{-2}$
that
\begin{align*}
 & \norm{\Delta_{\theta,\delta}\overline{S}_{\theta,\delta}(s-s')\Delta z}_{L^{2}(\Lambda_{\delta})}=\norm{\Delta_{\theta,\delta}\overline{S}_{\theta,\delta}(\frac{s-s'}{2})\overline{S}_{\theta,\delta}(\frac{s-s'}{2})\Delta z}_{L^{2}(\Lambda_{\delta})}\nonumber \\
 & \lesssim(s-s')^{-1}\norm{\overline{S}_{\theta,\delta}(\frac{s-s'}{2})\Delta z}_{L^{2}(\Lambda_{\delta})}\lesssim(s-s')^{-1}(1\wedge(s-s')^{-1}).
\end{align*}
Hence, the computations in (\ref{eq:A0_upper_bound}) above show
\begin{align*}
 & \norm{\sc{a(\delta\cdot)}{\nabla\overline{S}_{\theta,\delta}\left(s-s'\right)\Delta z}_{\R^{d}}}_{L^{2}(\Lambda_{\delta})}\nonumber \\
 & \qquad\lesssim\norm{\Delta_{\theta(\delta\cdot)}\overline{S}_{\theta,\delta}\left(s-s'\right)\Delta z}_{L^{2}(\Lambda_{\delta})}^{1/2}\norm{\overline{S}_{\theta,\delta}\left(s-s'\right)\Delta z}_{L^{2}(\Lambda_{\delta})}^{1/2}\nonumber \\
 & \qquad\lesssim(s-s')^{-1/2}(1\wedge(s-s')^{-1}).
\end{align*}
Because of Proposition \ref{prop:semigroup_bounds}(i) this yields
for $0<s\leq t\delta^{-2}$ 
\begin{align*}
 \norm{\delta^{-1}g_{\delta}\left(s\right)}_{L^{2}(\Lambda_{\delta})} & 
 \lesssim\int_{0}^{s}\left((s-s')^{-1/2}(1\wedge(s-s')^{-1})+\delta(1\wedge(s-s')^{-1})\right)ds' \\
 & \lesssim\int_{0}^{s}(s'){}^{-1/2}(1\wedge(s')^{-1})ds'+t^{1/4}\delta^{1/2}\int_{0}^{s}(1\wedge(s')^{-1-1/4})ds'.
\end{align*}
In all, this is of order $1\wedge s^{-1/2}$. Since also $\norm{h_{\delta}(s)}_{L^{2}(\Lambda_{\delta})}\lesssim1\wedge s^{-1}$ by Lemma \ref{lem:S_delta_bounds}(ii,iii), subject to $d\geq 2$ or the conditions in $d=1$, we obtain (\ref{eq:T_4_1}). With respect to (\ref{eq:T_4_2}) fix $s$ and observe by the convergence of
the semigroups in Proposition \ref{prop:semigroup_convergence}(ii)
above and $\sigma^{2}(\delta\cdot)\rightarrow \sigma^{2}(0)$
that $h_{\delta}(s)\rightarrow2\sigma^{2}(0)e^{\theta(0)s\Delta}\Delta z$
in $L^{2}(\R^{d})$. Therefore, $f_{\delta}\left(s\right)=-2\delta\sigma^2(0)f_{\delta}^{(1)}\left(s\right)+o\left(\delta\right)$, uniformly in $s$, for
\begin{align*}
f_{\delta}^{(1)}(s) & =\int_{0}^{s}\sc{S_{\theta,\delta}^{*}\left(s'\right)\sc{a(\delta\cdot)}{\nabla\overline{S}_{\theta,\delta}\left(s-s'\right)\Delta z}_{\R^{d}}}{e^{\theta(0)s\Delta}\Delta z}_{L^{2}(\Lambda_{\delta})}ds'\\
 & =\sum_{i=1}^{d}\int_{0}^{s}\sc{\partial_{i}\overline{S}_{\theta,\delta}\left(s-s'\right)\Delta z}{a_{i}(\delta\cdot)S_{\theta,\delta}\left(s'\right)(e^{\theta(0)s\Delta}\Delta z)|_{\Lambda_{\delta}}}_{L^{2}(\Lambda_{\delta})}ds'.
\end{align*}
In the same way, since $a_{i}(\delta\cdot)S_{\theta,\delta}\left(s'\right)(v|_{\Lambda_{\delta}})\rightarrow a_{i}(0)e^{\theta(0)s'\Delta}v$
for $v\in L^{2}(\R^{d})$ by Proposition \ref{prop:semigroup_convergence}(ii),
we have $f_{\delta}^{(1)}(s)=f_{\delta}^{(2)}(s)+o(\delta)$ for
\begin{align*}
f_{\delta}^{(2)}(s) & = \sum_{i=1}^{d}a_{i}(0)\int_{0}^{s}\sc{\partial_{i}\overline{S}_{\theta,\delta}\left(s-s'\right)\Delta z}{e^{\theta(0)(s'+s)\Delta}\Delta z}_{L^{2}(\Lambda_{\delta})}ds'\\
 & =-\sum_{i=1}^{d}a_{i}(0)\int_{0}^{s}\sc{\overline{S}_{\theta,\delta}\left(s-s'\right)\Delta z}{e^{\theta(0)(s'+s)\Delta}\Delta\partial_{i}z}_{L^{2}(\Lambda_{\delta})}ds'.
\end{align*}
Noting that $\overline{S}_{\theta,\delta}\left(s-s'\right)\Delta z\rightarrow e^{\theta(0)(s-s')\Delta}\Delta z$
in $L^{2}(\R^{d})$, we finally obtain
\begin{align*}
\delta^{-1}f_{\delta}(s) & \rightarrow2\sigma^{2}(0)\sum_{i=1}^{d}a_{i}(0)\int_{0}^{s}\sc{e^{\theta(0)(s-s')\Delta}\Delta z}{e^{\theta(0)(s'+s)\Delta}\Delta\partial_{i}z}_{L^{2}(\Lambda_{\delta})}ds'\\
 & =\frac{2\sigma^{2}(0)s}{\theta(0)}\sum_{i=1}^{d}a_{i}(0)\sc{\Delta z}{e^{2\theta(0)s\Delta}\Delta\partial_{i}z}_{L^{2}(\R^{d})},
\end{align*}
which vanishes according to Lemma \ref{lem:laplace_to_nabla}.
\end{proof}
\begin{lem}
\label{lem:R_2} In Proposition \ref{prop:cov_asymp_special} we have
$\delta^{-1}\int_{0}^{T}R_{2}(t)dt\rightarrow 0$.
\end{lem}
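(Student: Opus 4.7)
The plan is to rewrite $R_{2}(t)$ by the variance formula \eqref{eq:cov_function}, to apply the scaling of Lemma \ref{lem:scaling} exactly as in \eqref{eq:scaled_cov_function}, and then to conclude by the $L^{2}$-heat-kernel bounds from Section \ref{subsec:Analytical-results}.

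Since $B=M_{\sigma}$ commutes with localisation via $M_{\sigma}(w)_{\delta}=(M_{\sigma(\delta\cdot)}w)_{\delta}$, and since $\overline{S}_{\theta,\delta}$ is self-adjoint, the same computation leading to \eqref{eq:scaled_cov_function} yields
\[
R_{2}(t)=\theta^{-2}(0)\,\delta^{2}\int_{0}^{t\delta^{-2}}\norm{M_{\sigma(\delta\cdot)}\overline{S}_{\theta,\delta}(s)v^{(\delta)}}_{L^{2}(\Lambda_{\delta})}^{2}\,ds.
\]
Boundedness of $\sigma$ and Fubini therefore reduce the claim to
\[
\delta\int_{0}^{T\delta^{-2}}\norm{\overline{S}_{\theta,\delta}(s)v^{(\delta)}}_{L^{2}(\Lambda_{\delta})}^{2}\,ds\to 0.
\]

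For $d\ge 2$, Lemma \ref{lem:v0}(i) gives $\sup_{0<\delta\le 1}\norm{v^{(\delta)}}_{L^{1}\cap L^{2}(\R^{d})}<\infty$, and Lemma \ref{lem:S_delta_bounds}(i) then delivers $\norm{\overline{S}_{\theta,\delta}(s)v^{(\delta)}}_{L^{2}(\Lambda_{\delta})}^{2}\lesssim 1\wedge s^{-d/2}$. The $s$-integral is of order $\ell_{d,2}(\delta)$, so the whole expression is $O(\delta\,\ell_{d,2}(\delta))=o(1)$.

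For $d=1$ the straightforward bound $\int_{0}^{T\delta^{-2}}(1\wedge s^{-1/2})\,ds=O(\delta^{-1})$ is not enough, and this is where the extra assumptions $\theta\in C^{1+\alpha'}(\overline{\Lambda})$ with $\alpha'>1/2$ and $\int_{\R}z(x)\,dx=0$ become essential. By Lemma \ref{lem:v0}(ii), there exists a compactly supported $m\in H^{2}(\R)$ with $v=\Delta m$ and $\norm{v^{(\delta)}-\Delta m}_{L^{1}\cap L^{2}(\R)}\le C\delta^{\alpha'}$. Applying Lemma \ref{lem:S_delta_bounds}(ii) with $z=m$ and $w=v^{(\delta)}$ yields $\norm{\overline{S}_{\theta,\delta}(s)v^{(\delta)}}_{L^{2}(\Lambda_{\delta})}^{2}\lesssim 1\wedge s^{-\alpha'-1/2}$, which is integrable on $[0,\infty)$ precisely because $\alpha'>1/2$. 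Hence the $s$-integral stays bounded and the full expression is $O(\delta)\to 0$.

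The only non-routine step is the $d=1$ case, where the naive $L^{1}\cap L^{2}$-bound on $v^{(\delta)}$ must be traded for the better decay coming from the representation $v^{(\delta)}=\Delta m+(v^{(\delta)}-\Delta m)$; this is exactly the mechanism already exploited for similar one-dimensional difficulties elsewhere, e.g. in Lemma \ref{lem:S_delta_bounds}(iii) and in the proof of Lemma \ref{lem:variationOfParameters}.
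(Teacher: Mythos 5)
Your proof is correct and follows essentially the same route as the paper: rewrite $R_2(t)$ via \eqref{eq:scaled_cov_function}, then control $\norm{\overline{S}_{\theta,\delta}(s)v^{(\delta)}}_{L^{2}(\Lambda_{\delta})}$ by the heat-kernel bounds of Lemma \ref{lem:S_delta_bounds}, splitting the cases $d\ge 2$ and $d=1$. The only differences are cosmetic: for $d\ge 2$ the paper closes with a $\delta^{1/2}$ bound (trading $1\wedge s^{-1}$ for $1\wedge s^{-5/4}$) while you close with $\delta\,\ell_{d,2}(\delta)$, and for $d=1$ you invoke Lemma \ref{lem:S_delta_bounds}(ii) with $w=v^{(\delta)}$ directly whereas the paper references part (iii) (implicitly via the decomposition $v^{(\delta)}=(v^{(\delta)}-\Delta m)+\Delta m$) — both yield the needed integrability from $\alpha'>1/2$.
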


\begin{proof}
By \eqref{eq:scaled_cov_function} 
we have for $0\leq t\leq T$
\begin{align*}
R_{2}(t) & \lesssim\delta^{2}\int_{0}^{t\delta^{-2}}\norm{\overline{S}_{\theta,\delta}(s)v^{(\delta)}}_{L^{2}(\Lambda_{\delta})}^{2}ds.
\end{align*}
Recall from Lemma \ref{lem:T_1} that $v^{(\delta)}$ converges
in $L^{2}(\R^{d})$. For $d\geq 2$, Lemma \ref{lem:S_delta_bounds}(i) then shows uniformly in $0\leq t\leq T$ that
\[
\delta^{-1}R_{2}(t)\lesssim\delta\int_{0}^{t\delta^{-2}}1\wedge s^{-1}ds\lesssim\delta^{1/2}\int_{0}^{\infty}1\wedge(s^{-1-1/4})ds\rightarrow 0,
\]
implying the claim in this case. For $d=1$ it is enough to recall from Lemma \ref{lem:T_1} that $\norm{v^{(\delta)}-\Delta m}_{L^{1}\cap L^{2}(\R)}\leq C\delta^{\alpha'}$, and so the claim follows from Lemma \ref{lem:S_delta_bounds}(iii).
\end{proof}
\begin{prop}
\label{prop:eightMom} Grant Assumption \ref{assu:B}.
Let $z\in H^{2}(\R^{d})$ have compact support in $\Lambda_{\delta'}$ for some $\delta'>0$ and for $d=1$ assume
$\theta\in C^{1+\alpha'}(\overline{\Lambda})$ for $\alpha'>1/2$, $\int_{\R}z(x)dx=0$.
For $0<\delta\leq\delta'$ set $\xi_{\delta}(t)=\sc{\tilde{X}(t)}{(\Delta z)_{\delta}}$.
Then the fourth moment of $\delta^{-3}\int_{0}^{T}(\xi_{\delta}(t)^{2}-\E[\xi_{\delta}(t)^{2}])dt$
converges, with $\Psi$ from (\ref{eq:Psi}), for $\delta\rightarrow 0$ to
\[
3\left(4T\theta(0)^{-3}\int_{0}^{\infty}\Psi(e^{s\Delta}\Delta z,\Delta z)^{2}ds\right)^{2}.
\]
\end{prop}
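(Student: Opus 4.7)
The plan is to apply Wick's formula (the diagram formula for products of Wick squares) to compute the fourth moment. Set $Z_\delta := \delta^{-3}\int_0^T(\xi_\delta(t)^2-\E[\xi_\delta(t)^2])dt$ and $c_\delta(t,s) := \Cov(\xi_\delta(t),\xi_\delta(s))$. Since $\xi_\delta$ is centered Gaussian, the quantity $\E[\prod_{i=1}^4(\xi_\delta(t_i)^2 - \E[\xi_\delta(t_i)^2])]$ equals the sum over pairings of the eight copies of $\xi_\delta(t_i)$ in which no pair lies within the same index $i$. These pairings correspond to $2$-regular multigraphs on $\{1,2,3,4\}$, which are of only two types: (a) disjoint unions of two $2$-cycles ($12$ pairings in total, each contributing an integrand $c_\delta(t_i,t_j)^2 c_\delta(t_k,t_l)^2$); and (b) single $4$-cycles ($48$ pairings, each contributing a cyclic product $c_\delta(t_{\sigma(1)},t_{\sigma(2)})c_\delta(t_{\sigma(2)},t_{\sigma(3)})c_\delta(t_{\sigma(3)},t_{\sigma(4)})c_\delta(t_{\sigma(4)},t_{\sigma(1)})$).

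After integration over $[0,T]^4$ and multiplication by $\delta^{-12}$, type (a) collapses to $12\delta^{-12}\bigl(\int_{[0,T]^2}c_\delta^2\,dt\,ds\bigr)^2 = 3\Var(Z_\delta)^2$, using the Gaussian identity $\Var(\int_0^T\xi_\delta^2\,dt) = 2\int c_\delta^2$. Applying Proposition~\ref{prop:var_with_delta}(iv) with $w^{(\delta)}=\Delta z$ and $m=z$ (the hypotheses on $\theta$ and on $\int_\R z$ for $d=1$ are exactly those currently assumed) yields $\Var(Z_\delta)\to\Sigma := 4T\theta(0)^{-3}\int_0^\infty\Psi(e^{s\Delta}\Delta z,\Delta z)^2\,ds$, so the type (a) contribution converges to $3\Sigma^2$, the target limit.

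It remains to show that the type (b) contribution $48\delta^{-12}\int_{[0,T]^4}c_\delta(t_1,t_2)c_\delta(t_2,t_3)c_\delta(t_3,t_4)c_\delta(t_4,t_1)\,dt$ vanishes. Rescaling via $u_i = t_i/\delta^2$ and invoking \eqref{eq:scaled_cov_function} with $K$ replaced by $\Delta z$, one gets $|c_\delta(\delta^2 u,\delta^2 v)|\le\delta^2 h_\delta(|u-v|)$, where $h_\delta(x) := \int_0^\infty g_\delta(x+w)g_\delta(w)\,dw$ with $g_\delta(u) := \|B_\delta^* S_{\theta,\delta}^*(u)\Delta z\|_{L^2(\Lambda_\delta)}$. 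Translating by $u_1$ and extending the three difference variables $v_j = u_{j+1}-u_j$ to $\R^3$ bounds type (b) by $48T\delta^2 \cdot (\bar h_\delta^{*4})(0)$, where $\bar h_\delta$ is the even extension of $h_\delta$ to $\R$ and $*4$ denotes fourfold convolution. A standard application of Young's inequality (together with $\|h_\delta\|_1\le\|g_\delta\|_1^2$ and $\|h_\delta\|_\infty\le\|g_\delta\|_2^2$) yields $(\bar h_\delta^{*4})(0)\lesssim\|g_\delta\|_1^6\|g_\delta\|_2^2$.

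The hard part is obtaining uniform-in-$\delta$ control of $\|g_\delta\|_1$ and $\|g_\delta\|_2$ on $[0,T\delta^{-2}]$. Lemma~\ref{lem:S_delta_bounds}(ii) gives $g_\delta(u)\lesssim 1\wedge u^{-1/2-d/4}$, which is uniformly $L^2$-integrable in every dimension. The $L^1$-bound is uniform for $d\ge 3$, grows like $\log(\delta^{-1})$ for $d=2$ (a harmless factor absorbed by the prefactor $\delta^2$), and for $d=1$ the hypotheses $\int_\R z = 0$ and $\theta\in C^{1+\alpha'}$ with $\alpha'>1/2$ activate Lemma~\ref{lem:S_delta_bounds}(iii), sharpening the decay to exponent $1/2+\alpha'/2+1/4>1$ and restoring uniform $L^1$-integrability. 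Combining these estimates gives type (b) $= O(\delta^2\,\mathrm{polylog}(\delta^{-1}))\to 0$, which closes the argument.
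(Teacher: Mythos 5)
Your proof is correct and follows essentially the same strategy as the paper's: Wick's formula isolates the type (a) part as $3\Var(Z_\delta)^2$ (handled by Proposition \ref{prop:var_with_delta}(iv)) and reduces the remainder to the $48$ cyclic pairings, which are then shown to vanish via covariance-decay estimates ultimately coming from Lemma \ref{lem:S_delta_bounds}. The only cosmetic difference is that you package the four-cycle contribution as a fourfold convolution $(\bar h_\delta^{*4})(0)$ and apply Young's inequality with $L^1$/$L^2$ bounds on $g_\delta$, whereas the paper substitutes to the time differences and integrates a concrete pointwise bound $\bar c_\delta$ directly; both packagings yield the same $O(\delta^2\,\mathrm{polylog}(\delta^{-1}))$ estimate.
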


\begin{proof}

In view of Proposition \ref{prop:var_with_delta}(iv) it is enough
to show
\begin{align*}
R_{\delta} & :=\E\left[\left(\int_{0}^{T}(\xi_{\delta}(t)^{2}-\E[\xi_{\delta}(t)^{2}])dt\right)^{4}\right]-3\Var\left(\int_{0}^{T}\xi_{\delta}(t)^{2}dt\right)^{2}\\
 & =o\left(\Var\left(\int_{0}^{T}\xi_{\delta}(t)^{2}dt\right)^{2}\right)=o(\delta^{12}).
\end{align*}
Abbreviating $c_{\delta}(t,s)=\text{Cov}(\sc{\tilde{X}(t)}{(\Delta z)_{\delta}},\sc{\tilde{X}(s)}{(\Delta z)_{\delta}})$,
recall from (\ref{eq:var}) that
\begin{align*}
 & 3\Var\left(\int_{0}^{T}\xi_{\delta}(t)^{2}dt\right)^{2}=3\left(\int_{0}^{T}\int_{0}^{T}2c_{\delta}(t,s)^{2}dtds\right)^{2}.
\end{align*}
Wick's formula (\citet[Theorem 1.28]{Janson:1997uy}) for 8th centered
Gaussian moments $\E[\prod_{i=1}^{8}Z_{i}]=\sum_{\pi\in\Pi_{2}(8)}\prod_{(i,j)\in\pi}\E[Z_{i}Z_{j}]$
applied to $Z_{i}=\xi_{\delta}(t_{i})$ for $0\leq t_{i}\leq T$,
with $\Pi_{2}(8)$ being the set of all partitions $\pi$ of $\{1,\ldots,8\}$
into 2-tuples, therefore yields, using the symmetry of the integrand
in $(t_{1},t_{2},t_{3},t_{4})$,
\begin{align*}
R_{\delta} & =48\int_{0}^{T}\int_{0}^{T}\int_{0}^{T}\int_{0}^{T}c_{\delta}(t_{1},t_{2})c_{\delta}(t_{2},t_{3})c_{\delta}(t_{3},t_{4})c_{\delta}(t_{4},t_{1})\,dt_{1}dt_{2}dt_{3}dt_{4}.
\end{align*}
The calculations in the proof of Proposition \ref{prop:var_with_delta}(iv)
show for $s\leq t$, both when $d\geq 2$ and when $d=1$, $\theta\in C^{1+\alpha'} (\overline{\Lambda})$ for $\alpha'>1/2$,
$\int_{\R}z(x)dx=0$, that
\begin{align}
\abs{c_{\delta}(t,s)} & =\left|\int_{0}^{s}f_{\delta}\left((\delta^{-2}(t-s'),\Delta z),(\delta^{-2}(s-s'),\Delta z)\right)ds'\right|\label{EqCovBound}\\
 & =\delta^{2}\left|\int_{0}^{s\delta^{-2}}f_{\delta}\left((\delta^{-2}(t-s)+s',\Delta z),(s',\Delta z)\right)ds'\right|\nonumber \\
 & \lesssim\delta^{2}\int_{0}^{s\delta^{-2}}\left(1\wedge(\delta^{-2}|t-s|+s')^{-1}\right)\left(1\wedge(s')^{-1}\right)ds'\nonumber \\
 & \lesssim\delta^{2}\ell_{d,2}(1\wedge(\delta^{-2}|t-s|){}^{-1})=:\bar{c}_{\delta}(\delta^{-2}(t-s)),\nonumber
\end{align}
so that as in (\ref{eq:f_g_delta}), substituting $s_{i}=\delta^{-2}(t_{i+1}-t_{i})$,
\begin{align*}
R_{\delta} & \lesssim\delta^{6}\int_{0}^{T}\int_{0}^{T\delta^{-2}}\int_{0}^{T\delta^{-2}}\int_{0}^{T\delta^{-2}}\bar{c}_{\delta}(s_{1})\bar{c}_{\delta}(s_{2})\bar{c}_{\delta}(s_{3})\\
 & \qquad\cdot\bar{c}_{\delta}(s_{1}+s_{2}+s_{3})dt_{1}ds_{1}ds_{2}ds_{3}\\
 & \lesssim T\delta^{14}\ell_{d,2}^{4}\left(\int_{0}^{T\delta^{-2}}(1\wedge s{}^{-1})ds\right)^{3}=o(\delta^{12}).\qedhere
\end{align*}
\end{proof}

\bibliographystyle{apalike2}
\bibliography{refs_SPDE}

\end{document}